\documentclass[11pt,reqno]{amsart}
\usepackage[a4paper,margin=30mm,headheight=14pt]{geometry}
\usepackage{amsmath,amssymb,amsthm,mathtools,mathrsfs,booktabs,array}
\usepackage{xcolor,enumitem,etoolbox,microtype,needspace}
\usepackage[T1]{fontenc}
\usepackage{lmodern}
\usepackage{hyperref}
\usepackage{aliascnt}
\usepackage[nameinlink,capitalize,noabbrev]{cleveref}
\hypersetup{
  colorlinks=true,
  linkcolor=blue!45!black,
  citecolor=blue!45!black,
  urlcolor=blue!45!black,
  pdftitle={Sharp noncommutative Hilbert and Riesz inequalities: classification and spectral corrections},
  pdfauthor={Zhouyu Long},
  pdfsubject={Sharp noncommutative Hilbert and Riesz bounds, classification and spectral corrections},
  pdfkeywords={Hilbert transform, Schatten class, Volterra operator, Riesz-Titchmarsh transform, Riesz projection, subdiagonal algebra, spectral correction}
}
\numberwithin{equation}{section}

\newtheorem{theorem}{Theorem}[section]
\theoremstyle{plain}
\newaliascnt{lemma}{theorem}
\newtheorem{lemma}[lemma]{Lemma}
\aliascntresetthe{lemma}
\theoremstyle{plain}
\newaliascnt{proposition}{theorem}
\newtheorem{proposition}[proposition]{Proposition}
\aliascntresetthe{proposition}
\theoremstyle{plain}
\newaliascnt{corollary}{theorem}
\newtheorem{corollary}[corollary]{Corollary}
\aliascntresetthe{corollary}
\theoremstyle{definition}
\newaliascnt{definition}{theorem}
\newtheorem{definition}[definition]{Definition}
\aliascntresetthe{definition}
\theoremstyle{remark}
\newaliascnt{remark}{theorem}
\newtheorem{remark}[remark]{Remark}
\aliascntresetthe{remark}
\theoremstyle{remark}
\newaliascnt{example}{theorem}
\newtheorem{example}[example]{Example}
\aliascntresetthe{example}
\theoremstyle{plain}
\crefname{theorem}{Theorem}{Theorems}
\Crefname{theorem}{Theorem}{Theorems}
\crefname{lemma}{Lemma}{Lemmas}
\Crefname{lemma}{Lemma}{Lemmas}
\crefname{proposition}{Proposition}{Propositions}
\Crefname{proposition}{Proposition}{Propositions}
\crefname{corollary}{Corollary}{Corollaries}
\Crefname{corollary}{Corollary}{Corollaries}
\crefname{definition}{Definition}{Definitions}
\Crefname{definition}{Definition}{Definitions}
\crefname{remark}{Remark}{Remarks}
\Crefname{remark}{Remark}{Remarks}
\crefname{example}{Example}{Examples}
\Crefname{example}{Example}{Examples}
\newcommand{\R}{\mathbb R}\newcommand{\C}{\mathbb C}\newcommand{\Z}{\mathbb Z}
\newcommand{\T}{\mathbb T}\newcommand{\D}{\mathbb D}
\newcommand{\tr}{\operatorname{tr}}\newcommand{\spec}{\operatorname{spec}}
\newcommand{\supp}{\operatorname{supp}}\newcommand{\Rea}{\operatorname{Re}}\newcommand{\Ima}{\operatorname{Im}}
\newcommand{\Sp}{\mathcal S}\newcommand{\dd}{\,\mathrm d}\newcommand{\Hs}{\mathcal H}
\newcommand{\Euler}{\mathcal E}\newcommand{\EulerQ}{\mathcal Q}
\newcommand{\av}{\mathop{\mathrm{av}}\nolimits_\theta}
\newcommand{\diag}{\operatorname{diag}}
\newcommand{\norm}[1]{\left\lVert#1\right\rVert}

\newcommand{\Lp}{\mathcal L_p}
\newcommand{\HSch}{\mathscr H_0}
\newcommand{\id}{\operatorname{id}}

\newcommand{\M}{\mathcal M}
\newcommand{\B}{\mathcal B}
\newcommand{\Id}{I}
\newcommand{\Ds}{\mathcal D_s}
\title[Sharp noncommutative Hilbert and Riesz inequalities]{Sharp noncommutative Hilbert and Riesz inequalities:\\classification and spectral corrections}
\author[Zhouyu Long]{\bfseries Zhouyu Long$^{*}$}
\thanks{$^{*}$Department of Mathematical Sciences, Tsinghua University,
Beijing 100084, China.\newline
E-mail: \href{mailto:longzy25@mails.tsinghua.edu.cn}{\nolinkurl{longzy25@mails.tsinghua.edu.cn}}}

\makeatletter
\patchcmd{\@setauthors}{\centering\footnotesize}{\centering\large}{}%
  {\ClassError{amsart}{Unable to set author font}{Check the amsart author layout.}}
\patchcmd{\@setauthors}{\MakeUppercase{\authors}}{\authors}{}%
  {\ClassError{amsart}{Unable to preserve author case}{Check the amsart author layout.}}
\def\author@andify{\nxandlist{\unskip,\quad}{\unskip,\quad}{\unskip,\quad}}
\let\addresses\@empty
\makeatother
\date{}

\begin{document}
\begin{abstract}
For every $1<p<\infty$, with $p^*=\max\{p,p/(p-1)\}$,
we determine the sharp Hilbert norm $C_p=\cot(\pi/(2p^*))$
on $L^p(\R;\Sp_p)$ and the sharp uniform bound $D_p=\csc(\pi/p)$
for retained and strict Riesz projections of maximal subdiagonal
algebras in finite tracial von Neumann algebras.
The Hilbert value is also the common Fourier--Schur completely
bounded norm.
For finite matrices, a nonnegative logarithmic determinant defect
connects Hein\"avaara's tracial joint spectral measure to eigenvalues.
The resulting transfer identity classifies finite signed real ridge
inequalities on nilpotents and identifies their least additive
eigenvalue corrections. A dual theorem characterizes weak-* limits
of weighted angular measures of finite nilpotents.
In general finite tracial algebras, separate positive-kernel and
sharp determinant-covariance comparisons yield the Riesz bounds.
Consequences include the sharp Volterra constant, Riesz--Titchmarsh
and shifted-transform formulas, joint real-pencil norms, and exact
triangular and operator-valued Fourier Riesz norms.
\end{abstract}
\keywords{Hilbert transform, Riesz projection, Schatten class, subdiagonal algebra, spectral correction}
\subjclass[2020]{Primary 47B10; Secondary 42A50, 47A30, 46L52}
\maketitle

\section{Introduction}\label{sec:intro}
Throughout, $1<p<\infty$, $p'=p/(p-1)$, and
$p^*=\max\{p,p'\}$. We determine the noncommutative sharp constants
\[
 C_p=\cot\frac{\pi}{2p^*},\qquad D_p=\csc\frac{\pi}{p}.
\]
The Hilbert transform on $L^p(\R;\Sp_p)$ and the ordered Hilbert
Schur multiplier have norm $C_p$. Every maximal subdiagonal algebra
in a finite tracial von Neumann algebra has retained and strict
Riesz projections bounded by $D_p$, optimally over this class.
The first three theorems state the analytic-distance bound, the
determinant-covariance estimate, and the classification with least
spectral correction.

The scalar Hilbert value is due to Pichorides \cite{Pichorides},
and the real-pencil values to Hollenbeck--Kalton--Verbitsky
\cite{HKV}. Neuwirth--Ricard identified the Hilbert Fourier--Schur
and completely bounded norms for every exponent
\cite[Theorem~6.2]{NR}, and computed their value for powers of two
and their conjugate exponents. We determine that value throughout
the exponent range. The shifted-transform formula includes the
Riesz--Titchmarsh conjecture at the half shift and Laeng's full
Conjecture~5.7 \cite{Laeng}; it extends the even-integer and
conjugate-exponent cases of Ba\~nuelos--Kwa\'snicki \cite{BKRT}.
The Volterra consequence gives the conjectured sharp comparison
of the self-adjoint parts of compact quasinilpotent operators,
recorded in \cite{MS}; see also the classical work
\cite{Matsaev61,GK68,GK70}.

Hollenbeck--Verbitsky determined the scalar Riesz value $D_p$
\cite{rz:HV}. Randrianantoanina proved boundedness for finite maximal
subdiagonal algebras \cite{rz:Rand}. Neuwirth--Ricard identified the
triangular Schatten norm with the completely bounded Fourier Riesz
norm and computed its value at $p=4$ \cite[Proposition~6.3]{NR};
duality gives $p=4/3$, besides the elementary case $p=2$.
Here the sharp uniform upper bound is proved directly in arbitrary
finite tracial algebras.

The proofs use two spectral comparisons with exact nonnegative
remainders. For finite matrices, our logarithmic determinant defect
compares eigenvalues with Hein\"avaara's positive tracial joint
spectral measure \cite{Hein}. The ridge transform and its
one-variable differential inverse are already present in
\cite[Remark~2, p.~40]{HeinThesis}; the additional ingredient here
is the nonnegative eigenvalue defect and its optimal spectral
correction. For finite tracial algebras, logarithmic rearrangement
and determinant covariance give the analytic witnesses needed for
the Riesz theorem. Remark~\ref{rem:mergedscope} specifies the
distinct scopes of these comparisons.

\subsection{Sharp Riesz bounds in finite tracial algebras}
For the subdiagonal results, $(\M,\tau)$ is a finite von Neumann algebra with a faithful normal tracial state and $\tau(\Id)=1$. For $1\leq p<\infty$, $L^p(\M)$ is the completion of $\M$ for
$\|x\|_p=(\tau|x|^p)^{1/p}$, identified with the corresponding affiliated measurable operators. We use the trace pairing $\langle x,y\rangle=\tau(y^*x)$, H\"older's inequality, cyclicity for integrable products, and $L^p$--$L^{p'}$ duality, where $p'=p/(p-1)$. Since the trace is normalized, $\|x\|_q\leq\|x\|_p$ when $q\leq p$.

A \emph{subdiagonal algebra} is a weak-star closed unital algebra $\B\subset\M$ for which $\mathcal D=\B\cap\B^*$ admits a trace-preserving normal conditional expectation $\Phi:\M\to\mathcal D$, multiplicative on $\B$, and $\B+\B^*$ is weak-star dense in $\M$. It is \emph{maximal} if it is maximal among the subdiagonal algebras for the same expectation. Write $\B_0=\B\cap\ker\Phi$ and
\[
 H^p=\overline{\B}^{\,L^p},\qquad H_0^p=\overline{\B_0}^{\,L^p},\qquad
 \mathcal C=\B+\B_0^*.
\]
On this bounded core the retained-diagonal projection is $P(U+Y)=U$ and the strict projection is $P_0=P-\Phi$. Their well-definedness and core density are proved below.

\Needspace{9\baselineskip}
\begin{theorem}\label{rz:allp}
For every maximal subdiagonal algebra in $(\M,\tau)$ and every
$1<p<\infty$,
\begin{align}
 \|U+Y\|_p&\ge\sin\frac\pi p\,\|U\|_p,
 &U&\in H^p,\quad Y\in(H_0^p)^*,\notag\\
 \|U_0+Y_0\|_p&\ge\sin\frac\pi p\,\|U_0\|_p,
 &U_0&\in H_0^p,\quad Y_0\in(H^p)^*.
 \label{rz:distance}
\end{align}
Here $(H^p)^*=\{x^*:x\in H^p\}$ and similarly for $H_0^p$.
Consequently $P$ and $P_0$ extend to bounded operators on $L^p(\M)$,
and
\begin{equation}\label{rz:Pbound}
 \quad\|P\|_{L^p\to L^p}\leq\csc\frac\pi p,
 \qquad \|P_0\|_{L^p\to L^p}\leq\csc\frac\pi p.\quad
\end{equation}
The distance coefficient and both projection bounds are optimal
uniformly over this class. Equality is not asserted for every
individual algebra.
\end{theorem}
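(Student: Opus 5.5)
The proof is organized around the distance inequality \eqref{rz:distance}. The projection bounds \eqref{rz:Pbound} then follow formally, and optimality is a separate reduction to the scalar case.

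\emph{Reduction from the distance bound to the projection bounds.}
The algebraic part is routine. On $\mathcal C=\B+\B_0^*$ the decomposition $x=U+Y$ with $U\in\B$ and $Y\in\B_0^*$ is unique, since $\B\cap\B_0^*=\{0\}$. To see this, let $z$ lie in the intersection. Then $z^*\in\B_0\subset\B$, so $z\in\mathcal D$, and hence $z=\Phi(z)=0$. So $P$ is well defined on $\mathcal C$. Density of $\mathcal C$ in $L^p$ follows from weak-$*$ density of $\B+\B^*$ together with $\B^*=\B_0^*+\mathcal D$, using $\mathcal D\subset\B$.

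Given \eqref{rz:distance}, for $x=U+Y\in\mathcal C$ we get $\|Px\|_p=\|U\|_p\le\csc(\pi/p)\|x\|_p$, and $P$ extends by continuity. For $P_0$ we write $x=U_0+Y_0$ with $U_0=U-\Phi(U)\in\B_0$ and $Y_0=Y+\Phi(U)\in\B^*$, and the second inequality applies. By density, \eqref{rz:distance} only needs to be proved for bounded $U\in\B$ and $Y\in\B_0^*$.

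\emph{Proof of the distance inequality: a subharmonic-type witness.}
The plan is the Hollenbeck--Verbitsky plurisubharmonic-minorant method, transplanted to $L^p(\M)$. The scalar inequality $|u+\bar y|^p\ge\sin^p(\pi/p)|u|^p$ is not pointwise. Instead it comes from a function $G(u,y)$ with two properties:
\begin{itemize}
\item $G$ is plurisubharmonic in $(u,y)\in\C^2$;
\item $G(u,y)\le |u+\bar y|^p-\sin^p(\pi/p)|u|^p$, with $G(u,0)\ge 0$ for $y=0$ data.
\end{itemize}
Integrating against harmonic measure then gives $\int G(U,Y)\ge G(U(0),Y(0))\ge0$, using $Y(0)=0$.

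In the tracial setting the "mean value" is replaced by $\Phi$. For analytic elements the relevant Jensen-type inequality is Arveson's determinant inequality $\Delta(x)\ge\Delta(\Phi(x))$ for $x\in H^p$. This is valid for maximal subdiagonal algebras through the Labuschagne--Bekjan Szeg\H{o} theorem.

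So the concrete step is to express the witness through log-determinants. The family $t\mapsto\log\Delta(U+tY^*\ldots)$, together with the logarithmic rearrangement $\log\Delta(x)=\tau(\log|x|)$ and Fuglede--Kadison subharmonicity in the analytic variable, should produce a "positive-kernel" comparison of the form
\[
 \|U+Y\|_p^p-\sin^p\tfrac\pi p\,\|U\|_p^p\ \ge\ \int \bigl(\text{nonnegative remainder}\bigr).
\]
The remainder is obtained by writing $|z|^p$, restricted to the sector $|\arg z|\le\pi/p$, as a superposition of $\log|z-\lambda|$ terms with positive weights. The determinant-covariance comparison then controls the cross term between $U$ and $Y$. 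This is the step I expect to be the main obstacle: noncommutativity prevents direct use of $\arg(U+Y)$. The plan is to work with $\tau(\log|e^{i\theta}(U+Y)-\lambda|)$, averaged over $\theta$, and to show that the deviation from the scalar identity is a covariance that is nonnegative by the determinant inequality applied to $U-\lambda'$, which is invertible analytic.

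\emph{Optimality.}
Take $\M=L^\infty(\T)$ with $\B=H^\infty$, a maximal subdiagonal algebra with $\Phi$ the mean. Then $P$ is the scalar Riesz projection onto $H^p$, and $P_0$ is its strict version. Their norms equal $\csc(\pi/p)$ by Hollenbeck--Verbitsky. The extremal functions are approximations of $z\mapsto((1+z)/(1-z))^{2/p\cdot\gamma}$ near the critical angle; the mean value is negligible in the limit, so $P_0$ attains the same value. Since $\|P\|=\csc(\pi/p)$, the constant $\sin(\pi/p)$ in \eqref{rz:distance} cannot be improved either.
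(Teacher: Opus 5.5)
\textbf{This is a genuine gap.} Your reduction of \eqref{rz:Pbound} to the distance inequality, and your optimality argument on $H^\infty(\T)$, are fine. The middle section, however, carries the whole content of the theorem, and it is not an argument; the mechanism you sketch cannot work as stated.

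\emph{Why the proposed mechanism fails.} A plurisubharmonic minorant $G(u,y)$ on $\C^2$ cannot be evaluated on the noncommuting pair $(U,Y)$, because there is no joint functional calculus. The noncommutative subharmonicity that is available is the Jensen inequality $\operatorname{Det}_\tau(b)\ge\operatorname{Det}_\tau(\Phi b)$ for a \emph{single} analytic element $b$. Since $U+Y$ is neither analytic nor coanalytic, superposing kernels $\log|z-\lambda|$ (even after averaging over rotations $e^{i\theta}$) only sees the Brown measure of $U+Y$. That data cannot bound $\|U\|_p$ from below. For example, take $Y=0$ and $U\ne0$ strictly upper triangular in the triangular algebra: its Brown measure is $\delta_0$. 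Your ``covariance controlled by the determinant inequality for $U-\lambda'$'' is also never connected to $\|U+Y\|_p$. Remark~\ref{rem:mergedscope} records essentially this obstruction.

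\emph{The missing idea: eliminate $Y$ by duality.} For $W\in H^{p'}$ and $Y\in\B_0^*$ one has $\tau(Y^*W)=0$, hence $\|U+Y\|_p\ge\Rea\tau(U^*W)/\|W\|_{p'}$ (Lemma~\ref{rz:analytic_testing}). The task then becomes a one-element problem, where determinant tools do apply: for each bounded $U\in\B$, construct an analytic witness $W$ with $\Rea\tau(U^*W)\ge\sin(\pi/p)\|U\|_p\|W\|_{p'}$.
\begin{itemize}
\item For $p>4$ the paper takes $W=hU$ with $h=\mathcal T_{\B}\bigl((UU^*)^{(p-2)/2}\bigr)$. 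The pairing then equals $\|U\|_p^p$ exactly, and one needs $\|h\|_s\le\sec(\pi/(2s))\|\Rea h\|_s$ for $s=p/(p-2)\in(1,2)$. This comes from the accretive trace inequality of Corollary~\ref{rz:budget}, built from singular-value dominance, logarithmic rearrangement, the determinant bridge for $\Rea\tau\log(\Id+H/u)$, and the Mellin formula (valid only for $1<s<2$). It is combined with $\Rea\tau b^s=\tau(\Phi b)^s\ge0$ for analytic accretive $b$, and with a Yosida regularization because $h$ is unbounded in general.
\item For $2<p<4$ one has $s>2$, so a different witness is needed. Arveson's factorization $R^*R=(U^*U)^{(p-2)/2}$ and the determinant-covariance bound of Theorem~\ref{rz:detcovariance} give an $L^2$ angle estimate and the witness $W=(P_2UR^*)R$.
\item The case $p=4$ follows by continuity, and $1<p<2$ by duality.
\item The strict inequality needs a separate argument, since a general algebra has no multiplication by $z$. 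The paper realizes $P_0$ as a corner of the retained projection for the maximal subdiagonal algebra $\left(\begin{smallmatrix}\B&\B_0\\\B&\B\end{smallmatrix}\right)\subset M_2(\M)$.
\end{itemize}
None of these steps is present in your proposal.
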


\paragraph{The new upper bound.}
The scalar sharp value supplies a lower bound; the known
Fourier--Schur transference relations do not supply the missing
all-exponent upper bound. We prove that upper bound in the finite
algebra itself. Determinant covariance handles $2<p<4$;
positive-kernel comparison and independently established analytic
regularization handle $p>4$. Continuity, duality, and a subdiagonal
matrix construction complete the retained and strict bounds.
Section~\ref{rz:applicationssection} then gives the exact uniform
triangular norm and the exact Fourier Riesz norm with any nonzero
finite coefficient algebra
(Corollaries~\ref{rz:triangularnorm} and~\ref{rz:Fourier_exact}).

The trace inequality below supplies the middle-exponent estimate.
Here $\operatorname{Det}_\tau$ is the Fuglede--Kadison determinant,
with its definition and structural properties recalled in
Appendix~\ref{rz:prelim}.

\Needspace{9\baselineskip}
\begin{theorem}\label{rz:detcovariance}
Let $(\M,\tau)$ be any finite von Neumann algebra with a faithful
normal trace satisfying $\tau(\Id)=1$, and let $N\in\M$ satisfy
$\operatorname{Det}_\tau(\Id+\zeta N)\ge1$ for every $\zeta\in\mathbb C$.
Then $\tau N=0$, and for every bounded invertible $D>0$ and $0<\beta<1$,
\begin{equation}\label{rz:weightedineq}
 |\tau D^\beta N|^2\le
 \sin^2\frac{\pi\beta}{2(1+\beta)}\,
 \tau D^{1+\beta}\,\tau(N^*D^{\beta-1}N).
\end{equation}
When $v=\tau(N^*D^{\beta-1}N)>0$, the deficit has the nonnegative
entropy and determinant residual \eqref{rz:covariance_certificate}.
For each fixed $\beta$, the squared
coefficient is optimal uniformly over normalized finite tracial algebras;
this is proved after the circle construction in Section~\ref{rz:completion}.
\end{theorem}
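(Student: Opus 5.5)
We first show that $\tau N=0$ follows from the determinant hypothesis alone. Recall from Appendix~\ref{rz:prelim} that $\log\operatorname{Det}_\tau(\Id+\zeta N)=\tau\log|\Id+\zeta N|$. For $|\zeta|$ small this has the expansion $\Rea(\zeta\,\tau N)+O(|\zeta|^2)$. If this quantity is $\ge0$ for all small $\zeta$, then choosing $\arg\zeta$ so that $\zeta\,\tau N$ is negative real forces $\tau N=0$.

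More is true. The function $u(\zeta)=\log\operatorname{Det}_\tau(\Id+\zeta N)$ is subharmonic, and it is pluriharmonic wherever $\Id+\zeta N$ is invertible. The hypothesis $u\ge0=u(0)$ together with the Brown measure $\mu_N$ (the Riesz measure of $u$) should force $\mu_N=\delta_0$. Concretely, $u(\zeta)=\int\log|1+\zeta\lambda|\,d\mu_N(\lambda)$. Averaging over $\arg\zeta$ gives $\int\log^+|\zeta\lambda|\,d\mu_N$. Comparing with the pointwise bound and the harmonicity of $u$ near $0$ then gives $u\equiv0$ near $0$, hence $\tau N^k=0$ for all $k\ge1$.

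So $N$ is ``tracially quasinilpotent''. The useful consequence is that for any invertible $A$ the function $\zeta\mapsto\operatorname{Det}_\tau(A+\zeta AN)$ has constant modulus on average. This is the determinant covariance we need: $\tau\log|\Id+\zeta N|\ge0$ for all $\zeta$, with equality in the mean over circles.

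To prove \eqref{rz:weightedineq}, the plan is to reduce to the scalar sharp inequality, in the form due to Hollenbeck--Verbitsky and the weighted sector estimate. The scalar model is as follows. If $h$ is analytic in the unit disc with $h(0)=0$, then for the weight exponent $\beta$ one has $|\int w^\beta h|^2\le\sin^2\frac{\pi\beta}{2(1+\beta)}\int w^{1+\beta}\int |h|^2w^{\beta-1}$. The sine value is exactly the extremal value attached to the sector harmonic majorant $r^{\gamma}\cos(\gamma\theta)$ with $\gamma=\frac{1}{1+\beta}$-type exponents.

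To transfer this, consider the one-parameter family $X_\zeta=D^{(1+\beta)/2}+\zeta\,D^{(\beta-1)/2}N$, or a variant chosen so that the Hölder-type pairing reproduces $\tau D^\beta N$, $\tau D^{1+\beta}$ and $\tau(N^*D^{\beta-1}N)$ at second order. Apply the determinant hypothesis in the form
\[
\log\operatorname{Det}_\tau\bigl(D^{\frac{1+\beta}{2}}(\Id+\zeta D^{-1}N)\bigr)\ge \log\operatorname{Det}_\tau D^{\frac{1+\beta}{2}},
\]
suitably twisted by the Fuglede--Kadison multiplicativity. Combine this with the concavity estimate $\log\operatorname{Det}_\tau|X|\le\frac1q\log\tau|X|^q$ (the entropy step). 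This yields a family of inequalities in $\zeta=re^{i\theta}$ of the form
\[
0\le \frac{1}{q}\log\tau\bigl|D^{a}+\zeta D^{b}N\bigr|^{q}-\frac1q\log\tau D^{aq}.
\]
Expanding $|D^a+\zeta D^bN|^q$ exactly is impossible noncommutatively. Instead, I would choose $q=2$ after a preliminary change of weight, so that only $\tau D^{1+\beta}$, $\Rea(\zeta\,\tau D^\beta N)$ and $|\zeta|^2v$ appear. The $\beta$-dependence of the constant would then enter through integrating the resulting family against the extremal harmonic function on a sector of angle determined by $\beta$.

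Optimizing over $r$ and $\theta$, with a sectorial rather than full-circle average, should produce the coefficient $\sin^2\frac{\pi\beta}{2(1+\beta)}$. The deficit of the optimization is then the nonnegative entropy-plus-determinant residual.

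The main obstacle is precisely this step. A naive quadratic expansion gives only the trivial Cauchy--Schwarz constant $1$, not the sine. The sharp constant requires using the determinant lower bound for all $\zeta$ simultaneously, through the subharmonic extremal problem of Hollenbeck--Verbitsky type.

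My first attempt would be the following. Define the scalar function
\[
\phi(\zeta)=\tau\log\bigl|D^{\frac{1+\beta}2}+\zeta D^{\frac{\beta-1}{2}}N\bigr|^{2}
\]
relative to its value at $0$. Show that it is subharmonic and bounded below by $0$ (from the determinant hypothesis), and bounded above by an explicit quadratic in $\zeta$ built from the three traces (from concavity of $\log$). Then apply the sharp one-variable lemma: a subharmonic function that is $\ge0$ everywhere but whose upper quadratic majorant has linear part $2\Rea(\zeta c)$ must satisfy $|c|^2\le\sin^2(\cdot)\,AB$. Checking that the exponents $\beta\pm1$ produce exactly the angle $\pi\beta/(1+\beta)$ is where I expect the delicate computation. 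Optimality is deferred, as stated, to the circle construction in Section~\ref{rz:completion}.
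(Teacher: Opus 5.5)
Your derivation of $\tau N=0$ from the first-order expansion of $\log\operatorname{Det}_\tau(\Id+\zeta N)$ is correct, and it is exactly how the paper begins. The weighted inequality itself, however, is not reached.

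\textbf{The determinant step fails.} You use $\phi(\zeta)\ge\phi(0)$, i.e.\ $\operatorname{Det}_\tau(\Id+\zeta D^{-1}N)\ge1$. This does not follow from the hypothesis: multiplicativity only splits off factors of known determinant, and it cannot move $D^{-1}$ into the factor $\Id+\zeta N$. The claim is in fact false. By your own first-order argument it would force $\tau(D^{-1}N)=0$. Take $\M=M_2(\C)$ with normalized trace and $N=e_{12}$, which is nilpotent, so the hypothesis holds. With $D^{-1}=\bigl(\begin{smallmatrix}2&1\\1&2\end{smallmatrix}\bigr)$ one gets $\det(\Id+\zeta D^{-1}N)=1+\zeta$, which vanishes at $\zeta=-1$.

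\textbf{Nothing produces the sharp angle.} Write $S=\tau D^{1+\beta}$, $A=\tau(D^\beta N)$ and $v=\tau(N^*D^{\beta-1}N)$. Even granting your step, combine it with Jensen's bound $\phi(\zeta)\le\log(S+2\Rea(\zeta A)+|\zeta|^2v)$ and minimize in $\zeta$. You obtain only $|A|^2\le v\,(S-\operatorname{Det}_\tau(D)^{1+\beta})$, whose effective constant tends to the Cauchy--Schwarz value $1$ as $D$ spreads out. The angle $\pi\beta/(2(1+\beta))$ never appears. The ``sharp one-variable lemma'' and the sectorial average are never formulated. There is also no transference from a circle inequality to an arbitrary finite algebra under an abstract determinant hypothesis.

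\textbf{A side error.} Your Brown-measure remark is wrong. A Haar unitary has $\operatorname{Det}_\tau(\Id+\zeta U)=\max\{1,|\zeta|\}$, so its Brown measure is not $\delta_0$ and its circle means are not zero. The identities $\tau N^k=0$ do follow from the minimum principle near $0$, but they are not needed.

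\textbf{The missing idea.} Apply the hypothesis only in the form given, to $\lambda_t+zN=\lambda_t(\Id+(z/\lambda_t)N)$ with a complex scalar $\lambda_t=te^{i\theta}$. This gives $\operatorname{Det}_\tau(\lambda_t+zN)\ge t$. Bring in $D$ only through the resolvent $R_t=(D+\lambda_t)^{-1}$, whose determinant is explicit. Set $W_t=R_t^*R_t$ and $T_t(z)=|R_t(\lambda_t+zN)|^2$. The entropy inequality $\tfrac12\tau(T_t-\Id-\log T_t)\ge0$, together with the determinant bound, then shows the following for each $t>0$: an explicit quadratic in $z$ is nonnegative, with coefficients $\tau(W_tN)$, $\tau(N^*W_tN)$ and a constant depending only on $D$.

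\textbf{How the sine arises.} Superpose these inequalities along the ray before optimizing in $z$, by integrating against $t^\beta\,dt$ with $\theta=\pi/(2(1+\beta))$.
\begin{itemize}
\item Partial fractions and the Mellin identities turn $\int_0^\infty t^\beta W_t\,dt$ into $C_\beta D^{\beta-1}$.
\item Using $\tau N=0$ as the compensation, they turn $\int_0^\infty t^{\beta+1}(W_t-t^{-2})\,dt$ into $-d_\beta D^\beta$.
\item The compensated logarithmic $D$-terms integrate to a multiple of $\cos((1+\beta)\theta)=0$.
\end{itemize}
What remains is $\tfrac{C_\beta}{2}(S+v|z|^2)-d_\beta\Rea(e^{-i\theta}zA)\ge0$ for all $z$. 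Minimizing in $z$ gives the coefficient $(C_\beta/d_\beta)^2=\bigl(\sin(\beta\theta)/\sin((1+\beta)\theta)\bigr)^2=\sin^2(\beta\theta)$. The integrated entropy-plus-determinant term is the exact residual. Some mechanism of this kind, which superposes the determinant bound over a one-parameter family of complex shifts, is what your proposal lacks.
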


For $N\in\B_0$ or $N\in\B_0^*$, the determinant hypothesis follows
from the general Jensen inequality
(Corollary~\ref{rz:weighted}). With $\beta=(p-2)/2$, the squared
coefficient in \eqref{rz:weightedineq} is $\cos^2(\pi/p)$.
Analytic factorization turns this covariance estimate into the
$L^2$ angle bound used for $2<p<4$. The proof and residual are given
in Section~\ref{rz:sec:spectral}; sharpness follows from the scalar
circle construction in Section~\ref{rz:completion}.

\subsection{Matrix classification and optimal spectral correction}
The underlying questions are: which finite signed real ridge
inequalities hold for all nilpotent matrices, and what is their
least additive eigenvalue correction for arbitrary matrices?
Theorem~\ref{thm:classification} answers both through the least
homogeneous superharmonic majorant, extending the scalar
analytic-test/majorant principle of \cite[Theorem~2.1]{HKV}.

All Hilbert spaces are complex; traces are unnormalized unless
specified otherwise. Let $m_2$ denote planar Lebesgue measure and
set $\Delta=\partial_x^2+\partial_y^2$.
We write $\|\cdot\|_p$ for the Schatten $p$-norm; unadorned
matrix norms are operator norms. A compact quasinilpotent operator
is called a Volterra operator. For $T=A+iB$, set
$A=\Rea T=(T+T^*)/2$ and $B=\Ima T=(T-T^*)/(2i)$.
For real coefficients $c_j,a_j,b_j$, define the finite signed
real ridge sum and its trace functional by

\begin{align}
 F(x+iy)&=\sum_{j=1}^m c_j|a_jx+b_jy|^p,\label{eq:F}\\
 \mathcal F_p(A,B)&=\sum_{j=1}^m c_j\tr|a_jA+b_jB|^p,
 \qquad A=A^*,\quad B=B^*.\label{eq:traceF}
\end{align}
Let $\mathcal U_p$ be the class of real $p$-homogeneous functions
$U\in C^1(\C)\cap W^{2,q}_{\mathrm{loc}}(\C)$ for some $q>1$ such
that $\Delta U\le0$ distributionally.  Thus $U(rz)=r^pU(z)$ for
$r\ge0$.

\begin{theorem}\label{thm:classification}
For $F$ in \eqref{eq:F}, the following are equivalent.
\begin{enumerate}
\item[(N)] $\mathcal F_p(\Rea N,\Ima N)\le0$ for every finite
nilpotent complex matrix $N$.
\item[(P)] For every complex polynomial $q$ with $q(0)=0$,
\begin{equation}\label{eq:disccondition}
 \int_0^{2\pi}F(q(e^{it}))\frac{\dd t}{2\pi}\le0.
\end{equation}
\item[(S)] There exists $U\in\mathcal U_p$ such that $U\ge F$ on
$\C$.
\end{enumerate}
When these conditions hold there is a least superharmonic majorant
$U_*$ of $F$.  It is $p$-homogeneous, even, and belongs to
$\mathcal U_p$.  Writing
$U_*(re^{i\theta})=r^pu(\theta)$ and
$F(re^{i\theta})=r^pf(\theta)$, one has
\begin{equation}\label{eq:obstacle}
 u\ge f,\qquad u''+p^2u\le0,\qquad
 u''+p^2u=0\quad\text{on }\{u>f\}.
\end{equation}
It is enough in (N) to test $N=q(S_n)$, where $S_n$ is the nilpotent
shift and $q(0)=0$.

Moreover, for every $z\in\C$,
\begin{align}
 U_*(z)
 &=\sup_{\substack{n\ge1,\ T\in M_n(\C)\\\spec T=\{z\}}}
       \frac1n\mathcal F_p(\Rea T,\Ima T)\notag\\
 &=\sup_{\substack{n\ge1,\ q\ {\rm polynomial}\\q(0)=0}}
       \frac1n\mathcal F_p\bigl(\Rea(zI_n+q(S_n)),
                                  \Ima(zI_n+q(S_n))\bigr).
 \label{eq:optimalcorrection}
\end{align}
Consequently a real-valued function $G$ satisfies
\begin{equation}\label{eq:universalcorrection}
 \mathcal F_p(\Rea T,\Ima T)\le\sum_{k=1}^nG(\lambda_k(T))
 \quad\text{for every finite }T
\end{equation}
if and only if $G\ge U_*$. Eigenvalues are counted with algebraic
multiplicity. The comparison holds for arbitrary real-valued $G$.
\end{theorem}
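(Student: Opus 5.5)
We will organize the proof around a transfer identity that links the trace functional $\mathcal F_p$ to planar integrals against a positive measure. For a finite matrix $T$ we use Hein\"avaara's tracial joint spectral measure $\mu_T$ on $\C$. This is a positive measure, and it represents ridge traces exactly:
\[
 \tr|a\Rea T+b\Ima T|^p=\int_\C |ax+by|^p\dd\mu_T(x+iy).
\]
Summing over ridges gives $\mathcal F_p(\Rea T,\Ima T)=\int F\dd\mu_T$. The key additional ingredient is the logarithmic determinant defect. It is the function
\[
 \Lambda_T(w)=\int\log|w-\zeta|\dd\mu_T(\zeta)-\log|\det(T-w)|.
\]
We want to show that $\Lambda_T\ge0$. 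We also want to identify
\[
 \frac{1}{2\pi}\Delta\Lambda_T=\mu_T-\sum_k\delta_{\lambda_k(T)}
\]
as distributions. We would prove the inequality first for $w$ outside the numerical range, by comparing moments: the measure $\mu_T$ reproduces the traces $\tr T^k$, so the two logarithmic potentials agree near infinity. Positivity inside would then come from a Jensen/subharmonicity argument for $\log|\det|$ along real pencils. We expect this defect step to be the main obstacle. Our first attempt would be to express $\log|\det(T-w)|$ via averages over the ridge directions $\theta$ of $\log|\det|$ of self-adjoint pencils, and then apply concavity of the ridge transform's inverse.

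Given the defect, the direction (S)$\Rightarrow$ inequality is the following computation. Take $U\in\mathcal U_p$ with $U\ge F$. Then
\[
 \mathcal F_p\le\int U\dd\mu_T=\sum_kU(\lambda_k)+\frac1{2\pi}\int\Lambda_T\,\Delta U .
\]
The last term is $\le0$ because $\Lambda_T\ge0$ and $\Delta U\le0$. The regularity $U\in C^1\cap W^{2,q}_{\rm loc}$ together with $p$-growth, set against the logarithmic decay of $\Lambda_T$, justifies the integration by parts; we would use a cutoff argument. This yields \eqref{eq:universalcorrection} with $G=U$, so in particular every $G\ge U_*$ works. Specializing to nilpotent $T$, where all eigenvalues vanish and $U(0)=0$, gives (N). The implication (N)$\Rightarrow$(P) comes from testing $N=q(S_n)$ and letting $n\to\infty$. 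The finite sections of the analytic Toeplitz operator with symbol $q$ satisfy $\frac1n\tr|a\Rea q(S_n)+b\Ima q(S_n)|^p\to\int_0^{2\pi}|a\Rea q+b\Ima q|^p\frac{dt}{2\pi}$ by Szeg\H{o}/Avram--Parter for banded Toeplitz matrices. This also shows that testing shifts suffices once the cycle closes.

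For (P)$\Rightarrow$(S) we follow \cite[Theorem~2.1]{HKV}. Define
\[
 U_*(z)=\sup_q\int F(z+q(e^{it}))\frac{dt}{2\pi},
\]
with the supremum over polynomials $q$ vanishing at $0$. Condition (P) together with homogeneity gives $U_*(z)\le C|z|^p<\infty$. The choice $q=0$ gives $U_*\ge F$. Concatenating polynomials gives the sub-mean-value property, hence superharmonicity. $U_*$ is the least superharmonic majorant: any majorant $U$ satisfies $U(z)\ge\int U(z+q)\ge\int F(z+q)$ because $U(z+q(\cdot))$ is superharmonic on the disc. Homogeneity and evenness follow from the scalings $q\mapsto rq$ and $q\mapsto -q$, and from $F$ being even. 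In polar form, superharmonicity becomes $u''+p^2u\le0$. On $\{u>f\}$ a standard perturbation argument gives harmonicity, which yields \eqref{eq:obstacle}. Since $f$ is a finite ridge sum, $f$ is piecewise smooth with bounded second derivatives away from finitely many angles. The obstacle ODE then gives $u\in C^{1}$ and $u''$ a bounded measure with $L^q$ density, so $U_*\in\mathcal U_p$.

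Finally, for the formula \eqref{eq:optimalcorrection}: any $T$ with $\spec T=\{z\}$ satisfies $\frac1n\mathcal F_p\le U_*(z)$ by the inequality already proved. Conversely, the choice $T=zI_n+q(S_n)$ with $n\to\infty$ and the Szeg\H{o} limit recover $\int F(z+q)$, so both suprema equal $U_*(z)$. For the converse of the characterization of $G$, suppose $G$ satisfies \eqref{eq:universalcorrection}. Applying it to $T=zI_n+q(S_n)$ gives $\frac1n\mathcal F_p\le G(z)$, hence $G\ge U_*$. This step needs no regularity of $G$.
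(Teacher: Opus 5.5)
Your overall architecture matches the paper's: Toeplitz sections for (N)$\Rightarrow$(P), where Avram--Parter can stand in for Lemma~\ref{lem:commontoeplitz}; a disc envelope for (P)$\Rightarrow$(S); and a spectral transfer identity for (S)$\Rightarrow$(N) and for \eqref{eq:optimalcorrection}. The gap is in the transfer step, which you rightly call the crux: it is built on the wrong object, and the identity you plan to prove cannot hold.

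\textbf{The transfer step.} Hein\"avaara's measure has infinite total mass whenever $T\ne0$. Its support is compact, yet $\int|sx+ty|^q\dd\mu_{A,B}=\tr|sA+tB|^q/(q(q+1))\to\infty$ as $q\downarrow0$. Already for $T=1$ the measure is $\int_0^1(1-r)r^{-1}\delta_{(r,0)}\dd r$, and the mass accumulates at the origin. Hence:
\begin{itemize}
\item[(a)] $\int\log|w-\zeta|\dd\mu_T(\zeta)$ diverges.
\item[(b)] $\mu_T$ is not a distribution on $\C$, so $\frac1{2\pi}\Delta\Lambda_T=\mu_T-\sum_k\delta_{\lambda_k}$ is meaningless.
\item[(c)] $\mu_T$ does not reproduce $\tr T^k$. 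In the paper's normalization your $\mu_T$ is $p(p+1)\mu_{A,B}$, so $\int\zeta^k\dd\mu_T=\frac{p(p+1)}{k(k+1)}\tr T^k$. No comparison of potentials near infinity is therefore available.
\end{itemize}

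\textbf{The correct object.} What compares with the eigenvalues is the Weyl distribution $\langle\mathfrak W_T,g\rangle=d\,g(0)+\int\EulerQ g\dd\mu_{A,B}$, which is not positive. Its logarithmic potential is $L_T(w)=d\log2+\av\log|\det\Rea(e^{-i\theta}(T-w))|$. With $E=T-w$, Jensen's formula for $\zeta\mapsto\det(I+\zeta E^{-1}E^*)$ gives $\Psi_T=L_T-\log|\det(T-w)|=\sum_j\log^+|\lambda_j(E^{-1}E^*)|\ge0$. Moreover $\Psi_T$ is supported in $\overline{W(T)}$; this compact support, not mere decay, is what makes $\int\Psi_T\Delta U$ finite against $p$-growth.

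\textbf{How to get there.} Your idea of averaging $\log|\det|$ of the self-adjoint pencils over $\theta$ is the right one. Linking that average to the ridge traces, however, needs the full ridge identity \eqref{eq:measure} for general tests, not only the $p$th-power representation, which does not determine how $\mu_{A,B}$ acts on other functions. The paper's steps are:
\begin{itemize}
\item[(i)] A Green decomposition $f=\av g_\theta(v_\theta\cdot z)$ with logarithmic kernels converts $\tr g_\theta(X_\theta)$ into $\log|\det(v_\theta\cdot w-X_\theta)|$.
\item[(ii)] Inverting $\Hs$ by $t\partial_t(t\partial_t+1)$ produces the operator $\EulerQ$.
\item[(iii)] Only for $p$-homogeneous $U$ with $U(0)=0$, where $\EulerQ U=p(p+1)U$, does $\langle\mathfrak W_T,U\rangle$ reduce to $\int U\dd\mu_T$.
\end{itemize}
So your final chain, with $\Psi_T$ in place of $\Lambda_T$, is true (it is Corollary~\ref{thm:certificate}), but the route you propose does not reach it.

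\textbf{Smaller gaps in (P)$\Rightarrow$(S).}
\begin{itemize}
\item[(1)] Condition (P) controls only discs centred at $0$, and homogeneity does not bound $U_*$ on the unit circle. The estimate $U_*(z)\le C|z|^p$ is therefore unjustified. The paper obtains finiteness from the Poletsky--Rosay dichotomy ($\mathcal P(-F)$ is subharmonic or $\equiv-\infty$), together with $\mathcal P(-F)(0)=0$ and the angular inequality $h''+p^2h\ge0$. That inequality makes the angular profile Lipschitz and rules out $-\infty$ values.
\item[(2)] Superharmonicity of the envelope is a super-mean-value property, not a sub-mean-value one. It does not follow from concatenating polynomials: gluing discs is exactly the content of the Poletsky--Rosay theorem, or of Cole's principle \cite[Theorem~2.1]{HKV}, and must be cited.
\item[(3)] $C^1\cap W^{2,q}$ regularity does not follow from piecewise smoothness of $f$, because the contact set can be complicated. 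At a contact point, $u\ge f$ gives $u'_-\le f'\le u'_+$, while superharmonicity makes the jump $u'_+-u'_-=-\rho\{\theta\}$ nonpositive. Hence $u'$ is continuous and equals $f'$ there. One still needs that $D(u'-f')$ does not charge the contact set to obtain \eqref{eq:contact}.
\end{itemize}
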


\paragraph{The spectral mechanism.}
Hein\"avaara's positive tracial joint spectral measure $\mu_{A,B}$
\cite{Hein} represents real directional trace
functionals. Corollary~\ref{thm:certificate} connects it to the
eigenvalues of $T=A+iB$ through a nonnegative potential $\Psi_T$:
for every $U\in\mathcal U_p$ with $U\ge F$,
\[
\begin{aligned}
 \sum_kU(\lambda_k(T))-\mathcal F_p(A,B)
 &=p(p+1)\int_{\C}(U-F)\dd\mu_{A,B}\\
 &\quad+\frac1{2\pi}\int_{\C}\Psi_T(-\Delta U)\dd m_2.
\end{aligned}
\]
Both remainders are finite and nonnegative. This gives the matrix
inequality, while translated Toeplitz tests establish the pointwise
optimal correction \eqref{eq:optimalcorrection}.
The mass of $\Psi_T$ is $\frac\pi2(\|T\|_2^2-\sum_k|\lambda_k(T)|^2)$,
linking Henrici's departure from normality \cite{Henrici} to the
sharp quantitative Weyl comparison in Corollary~\ref{cor:weylquant}.

\paragraph{Angular measures and inverse realization.}
Theorem~\ref{thm:density} gives a converse realization theorem:
a finite positive measure $\nu$ on $\T$ is a weak-* limit of the
weighted angular measures of single finite nilpotent matrices if
and only if
\[
 \nu=(\partial_\theta^2+p^2)\omega,
 \qquad \omega\in C(\T),\quad\omega\ge0.
\]
The matrix dimensions may grow. Separately,
Corollary~\ref{cor:momentcone} derives the finite directional moment
cones and their optimal quotients directly from the classification
by finite-dimensional separation.

At $p=4$, Theorem~\ref{thm:fourmomentdisk} identifies the closed disk
of normalized directional moments. Corollary~\ref{cor:quarticquotients}
gives the optimal quotient in closed form for every signed numerator
and every signed denominator strictly positive on this disk.
The signed classification and joint norm formulas follow by substitution.

\subsection{Operator models and the common constant}\label{subsec:hilbertintro}
Use
$Hf(x)=\pi^{-1}\operatorname{p.v.}\int f(t)/(x-t)\dd t$ and
\begin{equation}\label{eq:Ralpha}
 (R_\alpha V)_\ell=\frac{\sin(\pi\alpha)}{\pi}
       \sum_{m\in\Z}\frac{V_m}{\ell+\alpha-m},\qquad 0<\alpha<1.
\end{equation}
Initially defined on finitely supported sequences, $R_\alpha$
extends boundedly to $\ell^p(\Z;\Sp_p(\mathcal K))$ by
Lemma~\ref{lem:finitewindow}; $\mathcal K=\C$ gives the scalar case.

For a finite family $\mathcal E=(E_j)$ of pairwise orthogonal
projections with $\sum_jE_j=I$ on a finite-dimensional Hilbert
space, define
\begin{equation}\label{eq:flagH}
 D_{\mathcal E}X=\sum_jE_jXE_j,\qquad
 H_{\mathcal E}X=i\sum_{j,k}\operatorname{sgn}(j-k)E_jXE_k.
\end{equation}
Here $D_{\mathcal E}$ is the block-diagonal conditional expectation,
and $(E_j)$ are the successive difference projections of the flag.
We use $\operatorname{sgn}(0)=0$. Suprema over $\mathcal E$ allow all
finite ambient dimensions and block sizes, independently of
$\mathcal K$. Write $\Sp_p^d=\Sp_p(\C^d)$ and define, initially on
finitely supported matrices on $\ell^2(\mathbb N)$,

\begin{equation}\label{eq:standardSchur}
 \HSch(e_{jk})=i\operatorname{sgn}(j-k)e_{jk}.
\end{equation}
The spaces $L^p(\R)$ and $\Sp_p=\Sp_p(\ell^2(\mathbb N))$ carry their
canonical noncommutative $L^p$ operator-space structures. For a linear
map $T:E\to F$ its completely bounded norm means
\begin{equation}\label{eq:cbdefinition}
 \|T\|_{\mathrm{cb},E\to F}
 =\sup_{d\ge1}\|\id_{M_d}\otimes T\|_{M_d[E]\to M_d[F]}.
\end{equation}
Section~\ref{subsec:cbproof} recalls the characterization of this
norm by Schatten $p$-amplifications.

For real pairs $(a_j,b_j)_{j=1}^m$ and weights $w_j\ge0$, set
\begin{align}
 P(x,y)&=\sum_{j=1}^m w_j|a_jx+b_jy|^p,\notag\\
 \Lambda_p(P)^p
 &=\sup_{N\ne0\ {\rm finite\ nilpotent}}
 \frac{\sum_j w_j\tr|a_j\Rea N+b_j\Ima N|^p}
      {\tr|\Rea N|^p}.\label{eq:jointconstant}
\end{align}
We reserve $C_p$ for the scalar Hilbert constant and use $\Lambda_p(P)$
for the joint constant. All joint output spaces below use the $\ell^p$
direct-sum norm. Write
\[
 \mathbf H f=(w_j^{1/p}(a_jf+b_jHf))_{j=1}^m,\qquad
 \mathbf T_{\mathcal E}X
       =(w_j^{1/p}(a_jX+b_jH_{\mathcal E}X))_{j=1}^m.
\]

\begin{theorem}\label{thm:unified}\label{cor:jointschur}
The constant \eqref{eq:jointconstant} is finite and equals the least
$C\ge0$ for which $P-C^p|x|^p$ has a majorant in $\mathcal U_p$.
Such a majorant exists at this optimal value of $C$.
It also satisfies
\begin{equation}\label{eq:jointschur}
 \Lambda_p(P)^p
 =\sup_{\substack{q\ne0\ {\rm polynomial}\\q(0)=0}}
 \frac{\int_0^{2\pi}P(\Rea q(e^{it}),\Ima q(e^{it}))\dd t}
      {\int_0^{2\pi}|\Rea q(e^{it})|^p\dd t}.
\end{equation}
For every nonzero Hilbert space $\mathcal K$,
\begin{equation}\label{eq:jointmodels}
 \Lambda_p(P)
 =\|\mathbf H\|_{L^p(\R)\to\ell_p^m(L^p(\R))}
 =\|\mathbf H\|_{L^p(\R;\Sp_p(\mathcal K))\to
                     \ell_p^m(L^p(\R;\Sp_p(\mathcal K)))}
 =\sup_{\mathcal E}\|\mathbf T_{\mathcal E}\|.
\end{equation}
If $a_j=\cos(\pi\alpha_j)$ and $b_j=\sin(\pi\alpha_j)$,
$0<\alpha_j<1$, the simultaneous transform
$\mathbf R V=(w_j^{1/p}R_{\alpha_j}V)_{j=1}^m$ satisfies
\begin{equation}\label{eq:jointdiscrete}
 \|\mathbf R\|_{\ell^p(\Z)\to\ell_p^m(\ell^p(\Z))}
 =\|\mathbf R\|_{\ell^p(\Z;\Sp_p(\mathcal K))\to
                    \ell_p^m(\ell^p(\Z;\Sp_p(\mathcal K)))}
 =\Lambda_p(P).
\end{equation}
The scalar Hilbert and discrete norms and the uniform Schur norm
are unchanged on replacing them by completely bounded norms for
the canonical noncommutative $L^p$ direct sums.
At $p=2$, $\Lambda_2(P)^2=\sum_jw_j(a_j^2+b_j^2)$.
\end{theorem}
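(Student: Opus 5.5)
The plan is to apply Theorem~\ref{thm:classification} to the signed ridge sum $F_C(z)=P(x,y)-C^p|x|^p$, $z=x+iy$, and then transfer between the operator models.

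\emph{Step 1: the majorant characterization.} For fixed $C$, condition (N) for $F_C$ says exactly that $\sum_jw_j\tr|a_j\Rea N+b_j\Ima N|^p\le C^p\tr|\Rea N|^p$ for every finite nilpotent $N$. Hence $\Lambda_p(P)$ is the infimum of those $C$ for which (N) holds. By (S), this is the least $C$ for which $F_C$ has a majorant in $\mathcal U_p$.

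Finiteness comes from comparing with the scalar case. Since $|a_jx+b_jy|^p\le 2^{p-1}(|a_j|^p|x|^p+|b_j|^p|y|^p)$, it suffices to bound $\tr|\Ima N|^p\le K\tr|\Rea N|^p$ on nilpotents. I would get this from the scalar Pichorides majorant $U(z)=C_p^p|x|^p-|y|^p$, adjusted to be superharmonic in the required homogeneous $C^1\cap W^{2,q}$ sense. This gives condition (S) for the scalar ridge sum $|y|^p-C_p^p|x|^p$, so (N) holds for it with $K=C_p^p$.

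Attainment at the optimal $C$ follows because (P) is a closed condition in $C$. Each disc integral in \eqref{eq:disccondition} depends continuously and monotonically on $C$, so (P) holds at $C=\Lambda_p(P)$. Then (S) supplies a majorant. The polynomial formula \eqref{eq:jointschur} is the same equivalence (N)$\Leftrightarrow$(P), rewritten as a supremum over $q$. At $p=2$, the superharmonic homogeneous majorants are quadratic forms with $\Delta\le0$, and the disc integrals are computed by Parseval, which gives $\sum w_j(a_j^2+b_j^2)$.

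\emph{Step 2: the lower bounds.} I would show that each model norm is at least $\Lambda_p(P)$.

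For the Schur model, take $N=q(S_n)$, which is strictly upper triangular. With the rank-one flag, $H_{\mathcal E}$ acts on $X=\Rea N$ by sending $\Rea N\mapsto\Ima N$, up to sign convention. So $\mathbf T_{\mathcal E}(\Rea N)$ has components $w_j^{1/p}(a_j\Rea N+b_j\Ima N)$, and testing on $N=q(S_n)$ recovers \eqref{eq:jointconstant}.

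For the scalar Hilbert transform, take $f=\Rea q$ on the circle, for which $Hf=\Ima q$, and transfer to $\R$ by the Cayley map together with dilation. Formula \eqref{eq:jointschur} then gives the bound.

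For the discrete transform $\mathbf R$, approximate by dilation. Sampling $f(\varepsilon\ell)$ turns $R_\alpha$ into the shifted Hilbert kernel. At the symbol level this is the multiplier $e^{i\pi\alpha\operatorname{sgn}}$, whose real and imaginary parts are $\cos(\pi\alpha)$ and $\sin(\pi\alpha)H$. So $R_{\alpha_j}$ realizes $a_jf+b_jHf$ in the limit.

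Vector-valued norms dominate scalar norms, since one can tensor with a rank-one projection. The cb norms dominate the plain norms.

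\emph{Step 3: the upper bounds.} This is the main obstacle. The plan is to prove $\sup_{\mathcal E}\|\mathbf T_{\mathcal E}\|\le\Lambda_p(P)$ directly from the majorant and then transfer.

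Given $X$ on $\C^d$ with flag $\mathcal E$, write $X=D+L+L'$ with $D=D_{\mathcal E}X$, $L$ strictly block-lower and $L'$ strictly block-upper. First reduce to self-adjoint $X$ by the standard $2\times2$ Hermitian dilation with a doubled flag. For self-adjoint $X$, set $T=D+2L'$; its spectrum is that of $D$, block by block. Then $\Rea T=X$, and $\Ima T=-iH_{\mathcal E}X$ up to sign.

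Now apply the identity of Corollary~\ref{thm:certificate} to $U\ge F_{\Lambda}$. The right-hand side $\sum_kU(\lambda_k(T))$ is the obstruction. Block-diagonal eigenvalues are not zero, so $U(\lambda)$ need not be $\le0$. I would remove this term by the standard tensor trick: replace $D$ by a block-diagonal matrix conjugated with the shift, or average over $X\otimes S_n$-type amplifications. This makes the diagonal nilpotent-like as $n\to\infty$, while $\tr|\cdot|^p$ scales by $n$. If that fails, I would use instead that the majorant satisfies $U(x+iy)\le \mathrm{const}$ along the self-adjoint spectral line. Real eigenvalues give $U(\lambda)\le U$ restricted to $\R$, and this should be $\le0$ by evenness and the obstacle equation \eqref{eq:obstacle} at $\theta=0$. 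This sign check is the crux.

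Finally, transfer from Schur multipliers to $L^p(\R;\Sp_p)$ and to $\ell^p(\Z;\Sp_p)$ via the Neuwirth--Ricard Fourier--Schur transference \cite[Theorem~6.2]{NR}. The same transference gives the cb statements. Together with Step 2, this closes the chain of equalities in \eqref{eq:jointmodels} and \eqref{eq:jointdiscrete}.
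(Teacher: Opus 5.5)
Your Step~1 and the lower bounds of Step~2 are essentially fine. One caveat: as stated, the Cayley map does not transport the unweighted $L^p$ ratio of real parts. For the scalar lower bound you should localize a periodic pair, as the paper does with $f_L(t)=\chi(t/L)\Rea q(e^{it})$, $\widehat\chi$ supported in $(-1/2,1/2)$, which gives $Hf_L=\chi(t/L)\Ima q(e^{it})$ exactly.

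The genuine gaps are in Step~3, exactly where the theorem's upper bounds live. The first is that you never remove the block-diagonal obstruction in the Schur bound. Your fallback, $U_*\le0$ on $\R$, is true, but it does not follow from evenness and the obstacle relations:
\begin{itemize}
\item $U_*(-z)=U_*(z)$ gives no reflection symmetry about $\theta=0$ for a general joint family;
\item on a noncontact arc through $\theta=0$ one only knows $u=A\cos p\theta+B\sin p\theta$, with no sign on $u(0)=A$.
\end{itemize}
Worse, the appeal is circular. By \eqref{eq:optimalcorrection} and Schur triangularization, every $T$ with real spectrum has $(\Rea T,\Ima T)$ unitarily equal to a pair $(X,H_{\mathcal E}X)$ with $X=X^*$ and a rank-one flag. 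Hence ``$U_*\le0$ on $\R$'' is equivalent to the very Schur estimate for self-adjoint $X$ that you are trying to prove.

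The paper's device is dilution and compression. Put $u=n^{-1/2}(1,\dots,1)$, $P=uu^*$, and $Y_n=X\otimes P-D_{\mathcal E}X\otimes I_n/n$.
\begin{itemize}
\item $Y_n$ has zero block diagonal for the refined flag $(E_r\otimes e_{ss})$, so the nilpotent bound applies to it.
\item The isometry $J\xi=\xi\otimes u$ compresses $Y_n$ to $X-D_{\mathcal E}X/n$ and $H_{\mathcal E_n}Y_n$ to $H_{\mathcal E}X$, because the within-block signs sum to zero.
\item $\|Y_n-X\otimes P\|_p=n^{1/p-1}\|D_{\mathcal E}X\|_p\to0$.
\end{itemize}

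The second gap is more serious: the transfer of the Schur bound to $\mathbf R$ on $\ell^p(\Z;\Sp_p(\mathcal K))$ via \cite[Theorem~6.2]{NR} is not available. That theorem concerns the single Hilbert multiplier. Fourier--Schur transference over $\Z$ relates multipliers on $L^p(\T)$ to Toeplitz Schur multipliers; it says nothing about convolution operators on $\ell^p(\Z)$ such as $R_\alpha$. Indeed, the upper bound $\|R_{1/2}\|_{\ell^p}\le C_p$ is the Riesz--Titchmarsh conjecture, which remained open after \cite{NR}.

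The missing idea is the simultaneous finite realization of Proposition~\ref{prop:finiteFourier}. For Hermitian $V_0,\dots,V_{n-1}$, take the single strictly upper-triangular block matrix $N$ with entries $2\widehat V(\theta_j-\theta_k)/n$ above the diagonal.
\begin{itemize}
\item Its real part is unitarily equal to $\diag_\ell(V_\ell-\bar V)$.
\item Every rotated pencil $\cos(\pi\alpha)\Rea N-\sin(\pi\alpha)\Ima N$ is diagonalized by the twisted Fourier basis $G_\alpha$ into the finite cotangent transform. The sign is harmless, since $N\mapsto N^*$ flips all $b_j$.
\end{itemize}
The nilpotent bound, mean removal by \eqref{eq:dilution}, and a finite-window limit $n\to\infty$ then give $\|\mathbf R\|_{\Sp_p}\le\Lambda_p(P)$ (Lemma~\ref{lem:finitewindow}). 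Sampling gives $\|\mathbf H\|_{\Sp_p}\le\|\mathbf R\|_{\Sp_p}$, which closes the chain \eqref{eq:jointchain} with no Fourier--Schur transference at all.

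Without such a realization, your Step~3 gives no upper bound for $\mathbf R$. The upper bound for $\mathbf H$ would likewise need a joint-output transference plus a circle-to-line argument, which the cited theorem does not supply.
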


The simultaneous realization uses one finite nilpotent matrix for
all shifts. At $p=4$, the joint norm and the criterion for a common
extremizing sequence are explicit (Corollary~\ref{cor:jointfour});
the same moment disk classifies all signed fourth-power inequalities
(Corollary~\ref{cor:quarticcone}). For a bounded real sequence
$\boldsymbol\kappa$, let $D_{\boldsymbol\kappa}X$ have diagonal
entries $\kappa_jX_{jj}$ and zero off-diagonal entries. Then
Corollary~\ref{cor:diagonalload} gives the exact perturbation formula
\[
 \|\HSch+D_{\boldsymbol\kappa}\|
 =\|\HSch+D_{\boldsymbol\kappa}\|_{\rm cb}
 =\max\{C_p,\|\boldsymbol\kappa\|_\infty\}.
\]

\begin{corollary}
\label{thm:realpencil}\label{cor:HilbertCB}\label{cor:RT}\label{cor:conjectures}
Let $(a,b)$ be a real pair and let $\mathcal K\ne\{0\}$.

\smallskip\noindent\textup{(i) Scalar norm and optimal majorant constant.}
For a single pencil, the common norm and the least admissible
constant in Theorem~\ref{thm:unified} are given by
\begin{equation}\label{eq:Koblique}
 K_p(a,b)^p=
 \max_{0\le t\le\pi}
 \frac{|a\cos t+b\sin t|^p+
 |a\cos(t+\pi/p)+b\sin(t+\pi/p)|^p}
 {|\cos t|^p+|\cos(t+\pi/p)|^p}.
\end{equation}
Its symmetries and coordinate values are
\begin{gather}
 K_p(a,-b)=K_p(a,b),\qquad K_p(ca,cb)=|c|K_p(a,b),\qquad
 K_{p'}(a,b)=K_p(a,b),\label{eq:Kproperties}\\
 K_p(0,1)=C_p=\cot\frac{\pi}{2p^*},\qquad
 K_2(a,b)=\sqrt{a^2+b^2},\qquad K_p(a,0)=|a|.\label{eq:constant}
\end{gather}

\smallskip\noindent\textup{(ii) Schur and completely bounded norms.}
For the standard infinite Hilbert Schur multiplier,
\begin{equation}\label{eq:SchurCBpencil}
 \|aI+b\HSch\|_{\Sp_p\to\Sp_p}
 =\|aI+b\HSch\|_{\mathrm{cb},\Sp_p}
 =\|aI+bH\|_{\mathrm{cb},L^p(\R)}=K_p(a,b).
\end{equation}
In particular, the Hilbert-transform norms on $L^p(\R)$ and
$L^p(\R;\Sp_p)$, and the completely bounded norm on $L^p(\R)$,
all equal $C_p$. The infinite signed Schur
conventions with diagonal value $+1$ or $-1$ have the same operator norm
and completely bounded norm, by Lemma~\ref{lem:diagonalnormalization}.
For the Hilbert transform, the identification of the Schur multiplier
norm with the completely bounded norm is due to \cite[Theorem~6.2]{NR};
the value for every $1<p<\infty$ and the extension to finite
families of real pencils follow here.

\smallskip\noindent\textup{(iii) Shifted discrete transforms.}
For $0<\alpha<1$,
\begin{equation}\label{eq:RalphaNorm}
 \|R_\alpha\|_{\ell^p(\Z;\Sp_p(\mathcal K))}
 =\|R_\alpha\|_{\ell^p(\Z)}
 =K_p(\cos(\pi\alpha),\sin(\pi\alpha)).
\end{equation}
For $t=k+\alpha$ with $k\in\Z$ and $0<\alpha<1$, define
$(R_tV)_\ell=(-1)^k(R_\alpha V)_{\ell+k}$; at integers set
$(R_kV)_\ell=(-1)^kV_{\ell+k}$. Thus the same norm formula holds
with $(\cos(\pi t),\sin(\pi t))$ for every real $t$.
The completely bounded value is the same
by Theorem~\ref{thm:unified}. Formula~\eqref{eq:RalphaNorm} resolves
Laeng's Conjecture~5.7 and, at $\alpha=1/2$, the Riesz--Titchmarsh
conjecture.

\smallskip
The uniform Schur bounds are sharp as suprema over finite dimensions.
\end{corollary}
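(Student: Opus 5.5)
The plan is to specialize Theorem~\ref{thm:unified} to a single pencil, $m=1$, $w_1=1$, $(a_1,b_1)=(a,b)$. Then $\Lambda_p(P)$ is the common value of all the norms in (ii) and (iii), and $K_p(a,b)$ is defined as this value. What remains is to evaluate it through the majorant characterization. By Theorem~\ref{thm:unified}, $K_p(a,b)^p$ is the least $C^p$ for which
\[
F_C(re^{i\theta})=r^p\bigl(|a\cos\theta+b\sin\theta|^p-C^p|\cos\theta|^p\bigr)
\]
has a majorant in $\mathcal U_p$. By Theorem~\ref{thm:classification}, this holds exactly when the angular obstacle problem \eqref{eq:obstacle} has a solution on the $\pi$-periodic circle. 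That is, we need a $\pi$-periodic $u\ge f_C$ with $u''+p^2u\le0$.

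For the value \eqref{eq:Koblique} we use a two-point test in each direction.

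\textbf{Lower bound.} Take the polynomial $q(e^{it})$ that conformally approximates the sector map $z\mapsto z^{1/p}$-type wedge of opening $\pi/p$, as in \cite{HKV}. Its boundary values concentrate on the two rays at angles $t$ and $t+\pi/p$. Inserting it into \eqref{eq:jointschur} gives that $K_p^p$ is at least the ratio in \eqref{eq:Koblique} for each $t$. This is the standard Pichorides extremal, approximated by polynomials via truncation and dilation.

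\textbf{Upper bound.} Let $C^p$ be the maximum in \eqref{eq:Koblique}. Any $\pi$-periodic $u$ with $u''+p^2u\le 0$ satisfies $u(t)+u(t+\pi/p)\ge0$ (this is Sturm comparison on an interval of length $\pi/p$). So it suffices to construct $u$ piecewise as a solution of $u''+p^2u=0$ on arcs and equal to $f_C$ elsewhere. Concretely, take $u$ to be the least superharmonic majorant from Theorem~\ref{thm:classification}. Its contact set is nonempty, and on each gap arc it equals $A\cos(p(\theta-\theta_0))$. A gap arc of length at least $\pi/p$ would force a violation of the two-point inequality for $f_C$ at its endpoints. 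So the obstacle problem is solvable precisely when
\[
f_C(t)+f_C(t+\pi/p)\le0\quad\text{for all }t,
\]
which is the defining inequality of the maximum. Making this equivalence rigorous is the main obstacle. I would first treat the regime where $f_C$ is $p$-concave near its maxima, and in general use the Sturm comparison together with the $C^1$ fit on the coincidence set.

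The remaining items follow by direct checks.
\begin{enumerate}
\item[\textbullet] \emph{Symmetries.} $b\mapsto-b$ corresponds to $t\mapsto-t-\pi/p$ reflection, and homogeneity in $(a,b)$ is clear. For $K_{p'}=K_p$, use duality of the Hilbert pencil in $L^p(\R)$: the adjoint of $aI+bH$ is $aI-bH$, together with the first symmetry.
\item[\textbullet] \emph{$C_p=K_p(0,1)$.} For $p\le2$, maximize $(|\sin t|^p+|\sin(t+\pi/p)|^p)/(|\cos t|^p+|\cos(t+\pi/p)|^p)$. The maximum is at the symmetric point $t=\pi/2-\pi/(2p)$ and equals $\cot^p(\pi/(2p))$. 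Then use the $p'$ symmetry.
\item[\textbullet] \emph{Coordinate values.} $K_2=\sqrt{a^2+b^2}$ comes from the orthogonality $\cos^2+\cos^2(\cdot+\pi/2)=1$, and $K_p(a,0)=|a|$ is trivial.
\end{enumerate}

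For (ii), \eqref{eq:SchurCBpencil} is \eqref{eq:jointmodels} with the cb statement of Theorem~\ref{thm:unified}. Passing from flags to the infinite $\HSch$ uses compression to finite corners (giving $\le$) and embedding of flags into $\ell^2(\mathbb N)$ by grouping indices (giving $\ge$); the diagonal value is handled by Lemma~\ref{lem:diagonalnormalization}. For (iii), \eqref{eq:RalphaNorm} is \eqref{eq:jointdiscrete} with $m=1$. The extension to all real $t$ holds because shifts and signs are isometries, and $(\cos\pi t,\sin\pi t)$ changes by the sign $(-1)^k$, which $K_p$ ignores by homogeneity. At $\alpha=1/2$ this gives $K_p(0,1)=C_p$, which is the Riesz--Titchmarsh value.
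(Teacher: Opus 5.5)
\textbf{Gap in part (i).} Your treatment of (ii) and (iii) matches the paper. Both are the case $m=1$ of Theorem~\ref{thm:unified}, with Lemma~\ref{lem:diagonalnormalization} for the diagonal conventions and homogeneity for the extension to all real $t$. Your derivation of the symmetries from reflection, homogeneity and duality is also the paper's.

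The gap is in (i), identifying the common value with \eqref{eq:Koblique}. The paper does not extract this formula from the obstacle problem. Theorem~\ref{thm:unified} identifies $\Lambda_p(P)$ with the scalar norm $\|aI+bH\|_{L^p(\R)}$, and the two-point formula for that norm is quoted from \cite{HKV,DGZ}.

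Your upper bound instead relies on the claim that the angular obstacle problem for $f_C$ is solvable exactly when $f_C(t)+f_C(t+\pi/p)\le0$ for all $t$. For general homogeneous ridge obstacles this is false. Proposition~\ref{prop:counter} gives a sum at $p=8/3$ for which every two-point test holds with equality, yet no superharmonic majorant exists. So no argument built only from the two-point inequality and generic properties of the least majorant can give sufficiency. Those generic properties are a nonempty contact set, harmonic gap arcs $A\cos(p(\theta-\theta_0))$, and the $C^1$ fit.

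Your gap-arc step has two further problems. It is circular, since it assumes the least majorant is finite, which is exactly what must be shown. It also produces no contradiction: on a harmonic arc of length $\pi/p$ one just gets $u(t)+u(t+\pi/p)=0$. For $p\ne2$ the coordinate majorant of Appendix~\ref{sec:coordinate} has harmonic arcs of exactly that length.

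What is missing is an argument that uses the special structure of the two-term obstacle $|a\cos\theta+b\sin\theta|^p-C^p|\cos\theta|^p$. The two-point criterion of \cite[Theorem~3.7]{HKV} holds only under additional sector hypotheses. Establishing the formula for $aI+bH$ is precisely the content of the scalar theorem of \cite{HKV,DGZ}, which you should cite rather than sketch.

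\textbf{Two smaller errors.} First, your pairing inequality has the wrong sign: $u\equiv-1$ satisfies $\Lp u\le0$ but refutes $u(t)+u(t+\pi/p)\ge0$. Pair $u$ with the nonnegative function equal to $\sin(p(\theta-t))$ on $[t,t+\pi/p]$ and zero elsewhere; its $\Lp$ is $p(\delta_t+\delta_{t+\pi/p})$. This gives $u(t)+u(t+\pi/p)\le0$ whenever $\Lp u\le0$. Applied to a majorant $u\ge f_C$, the corrected inequality directly yields the lower bound $K_p(a,b)^p\ge$ the maximum in \eqref{eq:Koblique}, without conformal tests. It contributes nothing to the construction.

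Second, in computing $K_p(0,1)$ you swapped the regimes. At $t=\pi/2-\pi/(2p)$ the ratio is $\cot^p(\pi/(2p))$, which equals $C_p^p$ only for $p\ge2$. For $1<p<2$ that value is below one, and $C_p^p=\tan^p(\pi/(2p))$ is attained at $t\equiv-\pi/(2p)\pmod\pi$. Locating the maximizer needs proof in any case. The simplest route is to note that $K_p(0,1)^p$ is the nilpotent coordinate ratio computed as $C_p^p$ in Corollary~\ref{prop:coordinatepath}, equivalently Pichorides' theorem.
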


\subsection{Sharp Volterra bounds}
\begin{corollary}\label{cor:MM}
Let $T=A+iB$ be compact and quasinilpotent.  If $A\in\Sp_p$, then
$B\in\Sp_p$ and, for every real $a,b$,
\begin{equation}\label{eq:obliqueMM}
 \|aA+bB\|_p\le K_p(a,b)\|A\|_p.
\end{equation}
In particular, if $B\in\Sp_p$, then $A\in\Sp_p$ and
\begin{equation}\label{eq:main}
 \|A\|_p\le C_p\|B\|_p.
\end{equation}
The constants are optimal already over finite nilpotent matrices.  In
particular \eqref{eq:main} resolves, for every $1<p<\infty$, the
conjectured sharp constant recorded by Matsaev--Sodin
\cite{MS}; see also \cite{GK68}.
\end{corollary}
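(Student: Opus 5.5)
The plan is to reduce everything to the finite nilpotent case, which Theorem~\ref{thm:unified} and Corollary~\ref{thm:realpencil}(i) already control, and then pass to compact quasinilpotent operators by finite-rank approximation along a triangular representation.

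\textbf{Finite nilpotent case.} Take $P(x,y)=|ax+by|^p$ with a single weight. Then \eqref{eq:jointconstant} gives
\[
 \tr|a\Rea N+b\Ima N|^p\le \Lambda_p(P)^p\,\tr|\Rea N|^p
\]
for every finite nilpotent $N$. Corollary~\ref{thm:realpencil}(i) identifies $\Lambda_p(P)=K_p(a,b)$, so \eqref{eq:obliqueMM} holds for finite nilpotents. Optimality is immediate, since $\Lambda_p(P)$ is defined as the supremum over exactly this class.

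For \eqref{eq:main}, apply the same estimate to the nilpotent $iN$. Because $\Rea(iN)=-\Ima N$ and $\Ima(iN)=\Rea N$, the choice $(a,b)=(0,1)$ gives
\[
 \|\Rea N\|_p\le K_p(0,1)\|\Ima N\|_p=C_p\|\Ima N\|_p .
\]
Sharpness of this inequality transfers through the same substitution.

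\textbf{Extension to Volterra operators.} Let $T$ be compact and quasinilpotent. By Ringrose's theorem $T$ has a maximal nest of invariant subspaces, and its diagonal vanishes. I would realize $T$ as a limit of finite-dimensional compressions $T_n=Q_nTQ_n$, where $Q_n$ projects onto a finite chain of the nest. Each $T_n$ is block upper triangular. Its diagonal blocks are compressions of $T$ to gaps of the nest, and for a refined chain these are small.

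I would follow the classical Gohberg--Krein/Matsaev route, which rests on the triangular truncation $H_{\mathcal E}$ being bounded on $\Sp_p$. Given $A\in\Sp_p$, we have $\Rea T_n=Q_nAQ_n$ with $\|Q_nAQ_n\|_p\le\|A\|_p$. To get nilpotents exactly, replace $T_n$ by $T_n-D_{\mathcal E}T_n$. This is nilpotent, its real part is $Q_nAQ_n-D_{\mathcal E}(Q_nAQ_n)$, and its imaginary part equals $-H_{\mathcal E}(Q_nAQ_n)$ up to the skew-diagonal correction. The uniform Schur bound in \eqref{eq:jointmodels} (namely $\sup_{\mathcal E}\|\mathbf T_{\mathcal E}\|=\Lambda_p$) then shows that $\Ima$ of these truncations is controlled by $A$ in $\Sp_p$ uniformly in $n$. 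Weak lower semicontinuity of $\|\cdot\|_p$ then gives $B\in\Sp_p$ and \eqref{eq:obliqueMM}.

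\textbf{Main obstacle.} The main obstacle is controlling the diagonal part $D_{\mathcal E}T_n$. Quasinilpotence forces it toward zero only in operator norm along refinements of a continuous nest, and it must also vanish in $\Sp_p$ after pairing with the test direction. For the first attempt I would use the known identity that, for Volterra $T$ with $\Rea T\in\Sp_p$, $T$ equals the triangular integral of $2\Rea T$ along its nest (Gohberg--Krein). This expresses $B$ as a limit of $H_{\mathcal E}$ applied to $A$ over refining chains, so the uniform bound $\sup_{\mathcal E}$ in \eqref{eq:jointmodels} together with Fatou for Schatten norms closes the argument without separately estimating the diagonal. For atoms of the nest, the diagonal blocks are zero by quasinilpotence, consistent with the convention $\operatorname{sgn}(0)=0$.
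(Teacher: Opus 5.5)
Your proposal is correct, but for the passage from finite nilpotents to Volterra operators you take a genuinely different route from the paper.

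\textbf{The paper's route.} It never uses invariant nests. It cites Matsaev's qualitative theorem to get $T\in\Sp_p$, and takes a majorant $U\in\mathcal U_p$ of $|ax+by|^p-K_p(a,b)^p|x|^p$ at the optimal constant. It then applies the finite certificate \eqref{eq:certificate} to compressions $P_NTP_N\to T$ in $\Sp_p$, which need not be nilpotent. The eigenvalue correction $\sum_kU(\lambda_k(P_NTP_N))$ is removed in the limit by the Schatten-$p$ eigenvalue matching of Theorem~\ref{thm:spectralmatching}; this is Corollary~\ref{cor:compactcertificate}.

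\textbf{Your route.} You keep exact triangularity along a maximal invariant nest. You import the sharp constant from the uniform flag norm $\sup_{\mathcal E}\|aI+bH_{\mathcal E}\|=K_p(a,b)$ of Theorem~\ref{thm:unified}, and close by lower semicontinuity.

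\textbf{What each buys.} The paper's argument yields more, namely the corrected inequality \eqref{eq:obliquespectral} for every compact $T\in\Sp_p$, and needs no nest theory. Its price is assuming Schatten membership in advance. Yours is the classical Matsaev--Gohberg--Krein mechanism with the sharp constant, and it is specific to quasinilpotent $T$.

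\textbf{A streamlining that also gives membership.} You do not need the triangular-integral representation or any $\Sp_p$ control of the diagonal, so your stated main obstacle disappears.
\begin{itemize}
\item Invariance of a finite subchain gives the exact identity $B=H_{\mathcal E}A+D_{\mathcal E}B$.
\item Atoms of a maximal invariant nest are one-dimensional. Their diagonal coefficients vanish, because compression to a semi-invariant subspace is multiplicative, so $|\lambda_a|^n\le\|T^n\|$.
\item By Ringrose's diagonal theorem for compact operators, $D_{\mathcal E}T\to0$ in operator norm along refinements.
\item Hence $aA+bH_{\mathcal E}A$ is bounded in $\Sp_p$ by $K_p(a,b)\|A\|_p$ and converges in norm to $aA+bB$.
\item Fatou then gives \eqref{eq:obliqueMM} and $B\in\Sp_p$, without citing Matsaev's theorem.
\end{itemize}

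\textbf{Two small points.} First, the flag bound in Theorem~\ref{thm:unified} is stated for finite-dimensional flags. Applying it to gap projections of infinite rank needs a routine finite-rank compression inside each gap. Second, drop your preliminary step replacing $T_n$ by $T_n-D_{\mathcal E}T_n$. Since $\|X-D_{\mathcal E}X\|_p$ need not be bounded by $\|X\|_p$, it would not give the sharp constant.
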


The spectral correction determines the optimal comparison constant
for every nonempty prescribed spectral set.

\begin{corollary}\label{cor:spectralsector}
Let $1<p<\infty$ and $\varnothing\ne\Omega\subset\C$, and put
\[
 r(z)=\frac{|\Rea z|}{|\Ima z|}\quad(\Ima z\ne0),\qquad
 r(0)=0,\qquad r(z)=+\infty\quad(z\in\R\setminus\{0\}).
\]
The optimal constant in
$\|\Rea T\|_p\le L\|\Ima T\|_p$ over finite matrices with
$\spec T\subset\Omega$ is
\begin{equation}\label{eq:spectralsetconstant}
 L_\Omega=\max\{C_p,\sup_{z\in\Omega}r(z)\},
\end{equation}
where $L_\Omega=+\infty$ means that no finite bound exists.
If $L_\Omega<\infty$, every compact $T=A+iB\in\Sp_p$ with
$\spec T\subset\Omega$ satisfies
\begin{equation}\label{eq:sectorMatsaev}
 \|A\|_p\le L_\Omega\|B\|_p.
\end{equation}
In particular, for $K\ge C_p$, the bound with constant $K$ holds
for all finite matrices with spectrum in $\Omega$ exactly when
$\Omega\subset\Sigma_{p,K}$, where
\begin{equation}\label{eq:spectralsector}
 \Sigma_{p,K}=\{z:|\Rea z|\le K|\Ima z|\},
 \qquad\Sigma_p=\Sigma_{p,C_p}.
\end{equation}
For $\Omega=\Sigma_p$, sharpness holds already over finite nilpotents.
\end{corollary}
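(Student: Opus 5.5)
The plan is to derive Corollary~\ref{cor:spectralsector} from the optimal spectral correction in Theorem~\ref{thm:classification}, applied to the single ridge sum $F(x+iy)=|x|^p-L^p|y|^p$. Then $\mathcal F_p(\Rea T,\Ima T)=\|A\|_p^p-L^p\|B\|_p^p$, so the inequality $\|A\|_p\le L\|B\|_p$ for all finite $T$ with $\spec T\subset\Omega$ reads $\mathcal F_p\le0$ on that class. The proof has three parts: identify the least majorant, read off the constant, and pass to compact operators.

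\emph{Step 1: the least majorant.} For $L\ge C_p$, condition (N) holds by Corollary~\ref{cor:MM}, since $K_p(0,1)=C_p$ gives $\|A\|_p\le C_p\|B\|_p$ on nilpotents. Theorem~\ref{thm:classification} then supplies a least majorant $U_*=U_*^{(L)}$. I claim that $U_*\le0$ everywhere, and that $U_*(z)=0$ exactly when $F(z)=0$ or $r(z)$ is not too large. The approach is:
\begin{itemize}
\item Take the majorant $U_C\in\mathcal U_p$ of $|x|^p-C_p^p|y|^p$ given by Theorem~\ref{thm:unified} with $P=|y|^p$. Using the symmetry of Corollary~\ref{thm:realpencil}(i) to swap the roles of $x$ and $y$, we may take $U_C\le 0$, in the form $U_C(re^{i\theta})=r^p u(\theta)$ with $u\le0$.
\item Add $(C_p^p-L^p)|y|^p\le0$. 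This addition is not superharmonic, so instead use the minimum of $0$ and the majorant. The minimum of superharmonic functions is superharmonic, but regularity in $\mathcal U_p$ may fail.
\end{itemize}
The cleaner route avoids regularity altogether by using the first formula in \eqref{eq:optimalcorrection} directly. Take $T=zI_n+N$ with $N$ nilpotent. The condition $\mathcal F_p(\Rea T,\Ima T)\le0$ over such $T$, with $z\in\Omega$, is exactly the requirement $\sup_{z\in\Omega}U_*(z)\le0$. By homogeneity it suffices to test block-diagonal matrices, since the functional $\mathcal F_p$ is additive over blocks. Hence the bound with constant $L$ holds on $\Omega$ if and only if $U_*^{(L)}\le0$ on $\Omega$.

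\emph{Step 2: evaluation.} The bound requires $U_*\ge F$, so $U_*(z)\le0$ forces $F(z)\le0$, i.e. $r(z)\le L$ (taking $T=zI_1$). This gives necessity of $L\ge\sup_\Omega r$. Necessity of $L\ge C_p$ comes from sharpness over nilpotents, namely $z=0$ with $N$ extremal in Corollary~\ref{cor:MM}; note $0\in\Omega$ is not needed. For $z\in\Omega$, the translates $z I+N/\varepsilon$ with $\varepsilon\to0$ make the $N$ part dominate the spectral term, so the quotient tends to that of $N$. Conversely, for $L\ge\max\{C_p,r(z)\}$ I must show $U_*^{(L)}(z)\le0$. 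For this I exhibit an explicit element of $\mathcal U_p$ majorizing $F$ that is $\le0$ on the cone $\Sigma_{p,L}$. The candidate is $U_L(re^{i\theta})=r^p\min(0,\dots)$, built from the Pichorides-type function $r^p\cos(p(\theta-\theta_0))$ adapted to the cone. This candidate construction is the main obstacle. I would try first $U=U_{C_p}$ from Theorem~\ref{thm:unified}: it majorizes $|x|^p-C_p^p|y|^p\ge F$. Then I check from the obstacle ODE \eqref{eq:obstacle} that its angular profile $u$ is $\le0$ precisely on $\{|\cos\theta|\le C_p|\sin\theta|\}$. Since $\Sigma_{p,L}\supset\Sigma_p$, the points with $C_p<r(z)\le L$ are handled by monotonicity in $L$: decreasing $F$ decreases $U_*$, and on the set where $u>f$ a direct ODE comparison of $u$ against $0$ is needed.

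\emph{Step 3: compact operators.} Given compact $T\in\Sp_p$ with $\spec T\subset\Omega$, use the upper-triangular (Schur) representation $T=D+V$, with $D$ normal diagonal carrying the eigenvalues and $V$ Volterra. Approximate by finite compressions to the span of the first $n$ vectors of the Schur basis plus a piece of the quasinilpotent part. The quasinilpotent part is approximated by finite nilpotent matrices, as in the proof of Corollary~\ref{cor:MM}. The spectra of the approximants then lie in $\Omega\cup\{0\}$, and $0$ is harmless because $r(0)=0$. The finite inequality, combined with lower semicontinuity of Schatten norms under strong convergence of compressions, gives \eqref{eq:sectorMatsaev}. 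The final claim, sharpness over finite nilpotents for $\Omega=\Sigma_p$, is the nilpotent sharpness already recorded in Corollary~\ref{cor:MM}.
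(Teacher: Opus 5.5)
Your Steps 1--2 are sound as far as they go. By Theorem~\ref{thm:classification}, the bound with constant $L$ holds for finite matrices with spectrum in $\Omega$ exactly when the least majorant $U_*^{(L)}$ of $|x|^p-L^p|y|^p$ is $\le0$ on $\Omega$. The reason is the correction inequality for arbitrary $T$, not additivity over blocks. Your two lower bounds (scalar matrices, and the translates $zI+N/\varepsilon$) are the paper's.

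\textbf{The gap is the sufficiency direction.} For $L\ge C_p$ you must show $U_*^{(L)}\le0$ on all of $\Sigma_{p,L}$, and you only get it on $\Sigma_p$. Monotonicity in $L$ gives $U_*^{(L)}\le U_*^{(C_p)}$, which is nonpositive only where $r\le C_p$. The wedge $C_p<r(z)\le L$ is the whole content of the corollary beyond Corollary~\ref{cor:MM}, and you leave it to an unspecified ``direct ODE comparison''.

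The missing idea is in Proposition~\ref{prop:coordinate}. The explicit coordinate majorant $U_p$ satisfies not only $\Delta U_p\le0$ but also $\partial_y^2U_p\le0$. Hence, for $t=L/C_p\ge1$, the anisotropic dilation $U_{p,L}(x,y)=U_p(x,ty)$ has
$\Delta U_{p,L}=(\Delta U_p)(x,ty)+(t^2-1)(\partial_y^2U_p)(x,ty)\le0$. It stays in $\mathcal U_p$ and majorizes $|x|^p-L^p|y|^p$. It is nonpositive exactly on $\Sigma_{p,L}$, since the dilation maps $\Sigma_{p,L}$ onto $\Sigma_p$. Thus $U_*^{(L)}\le U_{p,L}\le0$ on $\Sigma_{p,L}$.

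Two side remarks on the routes you set aside:
\begin{itemize}
\item No majorant of $|x|^p-C_p^p|y|^p$ can be $\le0$ everywhere, since it is $\ge|x|^p$ on the real axis.
\item Adding $(C_p^p-L^p)|y|^p$ \emph{is} superharmonic, because $p>1$ and $L\ge C_p$. For $1<p\le2$ it even gives a valid majorant nonpositive on $\Sigma_{p,L}$, because there $U_p=F_p$ on $\{r\ge C_p\}$ and $U_p\le0$ on $\{r\le C_p\}$. For $p>2$ it fails: $\partial\Sigma_{p,L}$ lies inside the harmonic sector of $U_p$, where the sum equals $U_p-F_p>0$.
\end{itemize}

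\textbf{Step 3 also does not work as written.} Compressions of a Volterra operator need not be nilpotent; the paper says so before Theorem~\ref{thm:spectralmatching}. Moreover, the proof of Corollary~\ref{cor:MM} does not approximate by finite nilpotents. Producing $\Sp_p$-approximants with spectrum in $\Omega\cup\{0\}$ from a Schur/Ringrose triangular form would need a nest-truncation argument you do not supply.

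The paper avoids this through Corollary~\ref{cor:compactcertificate}:
\begin{itemize}
\item take arbitrary finite-rank compressions $P_NTP_N\to T$ in $\Sp_p$, whose spectra are uncontrolled;
\item apply the finite certificate with an admissible majorant;
\item pass $\sum_kU(\lambda_k)$ to the limit by the spectral matching of Theorem~\ref{thm:spectralmatching}.
\end{itemize}
The limiting sum runs over the eigenvalues of $T$ itself, which lie in $\Omega\subset\Sigma_{p,L_\Omega}$, so it is nonpositive.

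Your regularity worry is moot: Theorem~\ref{thm:classification} already places $U_*^{(L)}$ in $\mathcal U_p$. So once the sufficiency step is closed, the compact case follows from that corollary with either $U_*^{(L)}$ or $U_{p,L}$.
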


The compact assertion assumes $T\in\Sp_p$ and allows nonnormal operators.
The one-component membership assumption of Corollary~\ref{cor:MM}
is used only in the quasinilpotent setting.

\begin{remark}\label{rem:mergedscope}
The real ridge classification concerns finite matrices and its
Schatten-class consequences. Its extension to arbitrary finite
von Neumann algebras would require a compatible positive ridge
representation and the corresponding integrability and comparison
identities. The Riesz argument establishes its stated bounds in
arbitrary finite tracial algebras by a different spectral comparison;
it does not supply such a representation. In particular, traces of
real directional powers cannot in general be combined as scalar
radial functions to recover $\tau|A+iB|^p$.
The real-pencil norm identities and the Riesz projection identities
therefore remain complementary results. They do not assert a sharp
lifting theorem for arbitrary complex pencils $aI+bH$.
For the subdiagonal projections, sharpness means optimality uniformly
over algebras; for triangular truncation it means a supremum over
matrix sizes. The Fourier Riesz norm equals $D_p$ for every nonzero
finite coefficient algebra.
\end{remark}

The proofs follow two complementary routes. For finite matrices,
Hein\"avaara's tracial joint spectral measure and a nonnegative
logarithmic determinant defect give the exact spectral transfer
identity in Section~\ref{sec:measure}. Analytic-disc envelopes and
Toeplitz tests then yield the classification and least spectral
correction in Section~\ref{sec:toeplitz}. The transfer identity and
classification, together with the finite-dimensional realizations
of Section~\ref{sec:realizations}, give the Hilbert, Volterra, and
shifted-transform bounds. Section~\ref{sec:angular} establishes the
angular inverse realization.

For general finite tracial algebras, the Riesz argument uses the
separate comparisons in Section~\ref{rz:sec:spectral}.
Determinant covariance gives the analytic-distance estimate for
$2<p<4$, while positive-kernel comparison and independently proved
analytic regularization supply the witnesses for $p>4$.
Section~\ref{rz:sec:witnesses} completes the projection theorem by
continuity, duality, and a subdiagonal corner construction.
Section~\ref{sec:joint} collects the compact spectral, moment, and
projection consequences; Section~\ref{sec:boundary} explains the
limits of the two-point criterion. The appendices contain the
coordinate majorant, quantitative comparisons, Hardy-space
structural inputs, analytic regularization, and rotation and
independent-coordinate consequences.

\section{A spectral transfer theorem and its homogeneous consequences}\label{sec:measure}
We use the following form of Hein\"avaara's representation
\cite{Hein}. For Hermitian $A,B\in M_d(\C)$ there is
a positive Borel measure $\mu_{A,B}$, normalized by
$\mu_{A,B}(\{0\})=0$, such that the following identity holds
simultaneously for every $s,t\in\R$ and every measurable real $f$
with $f(0)=0$ and $\int_{-M}^M|f(u)/u|\dd u<\infty$ for each $M>0$:
\begin{equation}
 \int_{\R^2} f(sx+ty)\dd\mu_{A,B}(x,y)
 =\tr\Hs(f)(sA+tB),\qquad
 \Hs(f)(u)=\int_0^1\frac{1-r}{r}f(ru)\dd r.
 \label{eq:measure}
\end{equation}
The measure is unique away from the origin
\cite{Hein}. Complex tests are handled by real and
imaginary parts. The normalization at zero fixes the otherwise
irrelevant mass there.
In particular, for every $q>0$,
\begin{equation}
 q(q+1)\int|sx+ty|^q\dd\mu_{A,B}
 =\tr|sA+tB|^q.
 \label{eq:moments}
\end{equation}
For integer $k\ge1$ the same tracial formula gives
\begin{equation}\label{eq:polymoment}
 \int(sx+ty)^k\dd\mu_{A,B}
 =\frac{\tr(sA+tB)^k}{k(k+1)}.
\end{equation}
Nonnegative one-variable tests show
\begin{equation}
 \supp\mu_{A,B}\subset[-\norm A,\norm A]\times[-\norm B,\norm B].
 \label{eq:musupport}
\end{equation}
For example, choose a nonnegative $f$ vanishing on $[-\norm A,\norm A]$ and strictly positive off this interval, then use $(s,t)=(1,0)$ in \eqref{eq:measure}. The other coordinate is identical.
For every $q>0$, the elementary bound
$|(x,y)|^q\le C_q^{\mathrm{coord}}(|x|^q+|y|^q)$, where
$C_q^{\mathrm{coord}}=\max\{1,2^{q/2-1}\}$, and \eqref{eq:moments} give
\begin{equation}
 \int |(x,y)|^q\dd\mu_{A,B}<\infty.
 \label{eq:radialmoments}
\end{equation}
Thus the tracial measure has compact support and every positive radial
moment is finite, although its total mass can be infinite.

There is also an exact support-radius identity. Put $T=A+iB$,
$\ell_\theta(z)=\Rea(e^{-i\theta}z)$ and
$R_\theta=\sup_{z\in\supp\mu_{A,B}\cup\{0\}}|\ell_\theta(z)|$.
For $0<a<R_\theta$, continuity and the definition of support give
$\mu_{A,B}(E_a)>0$ for $E_a=\{|\ell_\theta|>a\}$, while
$a\mu_{A,B}(E_a)\le\int|\ell_\theta|\dd\mu_{A,B}<\infty$.
For $q\ge1$,
\[
 a^q\mu_{A,B}(E_a)\le\int|\ell_\theta|^q\dd\mu_{A,B}
 \le R_\theta^{q-1}\int|\ell_\theta|\dd\mu_{A,B}.
\]
Taking $q$th roots, then $q\to\infty$ and $a\uparrow R_\theta$,
gives convergence to $R_\theta$ despite the possibly infinite total
mass. The case $R_\theta=0$ is immediate. Equation~\eqref{eq:moments}
therefore gives $R_\theta=\|\Rea(e^{-i\theta}T)\|$, and hence
\begin{equation}\label{eq:tracialradius}
 \sup_{z\in\supp\mu_{A,B}\cup\{0\}}|z|
 =\sup_\theta\|\Rea(e^{-i\theta}T)\|
 =\max_{\|v\|=1}|v^*Tv|.
\end{equation}
The union with $\{0\}$ includes the zero matrix.

The full ridge identity \eqref{eq:measure}, rather than only its absolute
power moments, is used in the Green decomposition below; see also
Example~\ref{ex:scalarmeasure}.

\subsection{A nonnegative logarithmic determinant defect}\label{sec:potential}
Logarithmic determinants and subharmonicity have long been used to
compare real and imaginary parts; see \cite{MS} and the Hilbert-transform
formulation in \cite[Section~2]{MOS}. Here the determinant Jensen
formula gives the explicit nonnegative potential used in the exact
spectral-defect identity.
Fix a finite matrix $T=A+iB$. For $w\notin\spec T$ set $E=T-wI$ and define
\begin{align}
 L_T(w)&=d\log2+\frac1{2\pi}\int_0^{2\pi}
  \log\left|\det\Rea(e^{-i\theta}E)\right|\dd\theta,\label{eq:L}\\
 \Psi_T(w)&=L_T(w)-\log|\det(T-wI)|.\label{eq:Psi}
\end{align}
Values at the finitely many eigenvalues will be irrelevant to area integration.

\begin{lemma}\label{lem:jensen}
For $w\notin\spec T$,
\begin{equation}
 \Psi_T(w)=\sum_{j=1}^d\log^+\left|\lambda_j(E^{-1}E^*)\right|\ge0.
 \label{eq:psipositive}
\end{equation}
The function $\Psi_T$ is continuous off $\spec T$ and belongs to
$L^q(\C)$ for every finite $q\ge1$. Its essential support is contained
in the numerical range:
\begin{equation}\label{eq:numericalrangesupport}
 \supp\Psi_T\subset\overline{W(T)},\qquad
 W(T)=\{v^*Tv:\ v\in\C^d,\ \|v\|=1\}.
\end{equation}
In particular it vanishes for $|w|>\|T\|$. On each bounded set it is bounded above by a constant plus
$\sum_j\log^-|w-\lambda_j(T)|$. Moreover,
\begin{equation}
 \Psi_{rT}(w)=\Psi_T(w/r),\qquad r>0.
 \label{eq:psiscaling}
\end{equation}
\end{lemma}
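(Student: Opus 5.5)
The plan is to factor $\Rea(e^{-i\theta}E)$ through $E$ and apply the scalar Jensen formula to each eigenvalue of $E^{-1}E^*$. The remaining assertions then follow from elementary properties of $M=E^{-1}E^*$.

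\emph{Identity \eqref{eq:psipositive}.} Fix $w\notin\spec T$, so $E=T-wI$ is invertible, and put $M=E^{-1}E^*$. Then
\[
 \Rea(e^{-i\theta}E)=\tfrac12\bigl(e^{-i\theta}E+e^{i\theta}E^*\bigr)
 =\tfrac12 e^{-i\theta}E\,\bigl(I+e^{2i\theta}M\bigr),
\]
and taking determinants gives
\[
 \log|\det\Rea(e^{-i\theta}E)|=-d\log2+\log|\det E|+\sum_{j=1}^d\log|1+e^{2i\theta}\lambda_j(M)|.
\]
For each $j$, the map $\theta\mapsto\log|1+e^{2i\theta}\lambda|$ is integrable even when $|\lambda|=1$. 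By Jensen's formula its average over $[0,2\pi]$ is $\log^+|\lambda|$. Averaging in $\theta$ and substituting into \eqref{eq:L} and \eqref{eq:Psi} makes the $d\log 2$ terms cancel, and we obtain $\Psi_T(w)=\sum_j\log^+|\lambda_j(M)|\ge0$.

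\emph{Continuity, scaling and support.} Off $\spec T$, the matrix $M$ depends continuously on $w$. The unordered eigenvalue tuple is therefore continuous, and $\sum_j\log^+|\lambda_j|$ is a continuous symmetric function of that tuple, so $\Psi_T$ is continuous off $\spec T$. For the scaling law \eqref{eq:psiscaling}, write $rT-wI=r(T-(w/r)I)$; the two factors of $r$ cancel in $M$, so $M$ is unchanged. For the support claim \eqref{eq:numericalrangesupport}, suppose $E^*v=\lambda Ev$ with $\|v\|=1$. Then
\[
 \overline{v^*Ev}=v^*E^*v=\lambda\,v^*Ev .
\]
If $w\notin\overline{W(T)}$, then $z=v^*Ev\in W(T)-w$ is nonzero, so $\bar z=\lambda z$ forces $|\lambda|=1$. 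Hence every eigenvalue of $M$ is unimodular and $\Psi_T(w)=0$. Because $W(T)$ lies in the closed disk of radius $\|T\|$, this also shows that $\Psi_T$ vanishes for $|w|>\|T\|$.

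\emph{Upper bound and integrability.} On a bounded set, $|\det\Rea(e^{-i\theta}E)|\le\|E\|^d$ is uniformly bounded, so $L_T$ is bounded above by a constant. Consequently
\[
 \Psi_T(w)\le C-\log|\det(T-wI)|=C-\sum_j\log|w-\lambda_j(T)|\le C+\sum_j\log^-|w-\lambda_j(T)|,
\]
using $-\log x\le\log^-x$. Since $\Psi_T\ge0$, is supported in the bounded set $\overline{W(T)}$, and is dominated by finitely many logarithmic singularities, and since $\log^-|\cdot|$ lies in $L^q_{\mathrm{loc}}(\C)$ for every finite $q$, it follows that $\Psi_T\in L^q(\C)$ for every finite $q\ge1$.

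I expect the only delicate point to be using Jensen's formula when $|\lambda_j(M)|=1$, where the integrand has a logarithmic singularity. That case is covered by the integrable form of Jensen's formula for $\log|1+\lambda e^{i\phi}|$. The eigenvalue argument for the support claim is the one genuinely non-computational step.
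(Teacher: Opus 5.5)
Your proposal is correct and follows essentially the same route as the paper. It uses the factorization $\Rea(e^{-i\theta}E)=\tfrac12e^{-i\theta}E\,(I+e^{2i\theta}E^{-1}E^*)$ with Jensen's formula applied to the factored characteristic polynomial, the eigenvector identity $\overline{v^*Ev}=\lambda\,v^*Ev$ for the numerical-range support, and the bound $|\det\Rea(e^{-i\theta}E)|\le\|E\|^d$ for the logarithmic upper estimate. The only point you leave implicit, which the paper states, is that $E$ is invertible for $w\notin\overline{W(T)}$ because $\spec T\subset W(T)$, so $E^{-1}E^*$ is defined there.
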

\begin{proof}
The determinant factorization is
\[
 \det\Rea(e^{-i\theta}E)
 =2^{-d}e^{-id\theta}\det E\,
   \det(I+e^{2i\theta}E^{-1}E^*).
\]
For every $\zeta\in\C$,
\[
 \frac1{2\pi}\int_0^{2\pi}\log|1+\zeta e^{i\theta}|\dd\theta
 =\log^+|\zeta|.
\]
Factoring the characteristic polynomial, rather than diagonalizing the matrix, proves \eqref{eq:psipositive}. Roots on the unit circle cause only integrable logarithmic zeros. Since $E$ is invertible, the determinant polynomial in $e^{2i\theta}$ is not identically zero.

When $w\notin\overline{W(T)}$, $E$ is invertible because
$\spec T\subset W(T)$. Let $E^{-1}E^*v=\lambda v$, with $v\ne0$.
Then $E^*v=\lambda Ev$. Writing $q=v^*Ev$, we have
\[
 |q|\ge\operatorname{dist}(w,W(T))\|v\|^2>0,\qquad
 \overline q=\lambda q.
\]
Thus every eigenvalue has $|\lambda|=1$, and \eqref{eq:psipositive}
is zero. Applying this to each eigenvalue proves the support assertion.
Eigenvalue continuity as an unordered multiset
also proves continuity of $\Psi_T$ off $\spec T$.

On a fixed bounded set, the determinant norm bound in \eqref{eq:L} gives, with a sufficiently large constant $M$,
\[
 0\le\Psi_T(w)\le M-\sum_j\log|w-\lambda_j(T)|
 \le M' +\sum_j\log^-|w-\lambda_j(T)|.
\]
Consequently $\Psi_T\in L^q(\C)$ for every finite $q\ge1$. Scaling in the definition proves \eqref{eq:psiscaling}. The zero matrix is included, with $\Psi_0=0$ almost everywhere.
\end{proof}

\subsection{A compensated ridge decomposition}
Write $v_\theta=(\cos\theta,\sin\theta)$ and
$\av=(2\pi)^{-1}\int_0^{2\pi}\dd\theta$.

\begin{lemma}
\label{lem:ridgegreen}\label{lem:fubini}
Let $f\in C_c^\infty(\R^2)$ be real and satisfy $f(0)=0$. Define
\begin{equation}\label{eq:gtheta}
 g_\theta(t)=\frac1{2\pi}\int_{\R^2}\Delta f(w)
 \bigl(\log|v_\theta\cdot w-t|-\log|v_\theta\cdot w|\bigr)\dd m_2(w).
\end{equation}
Then $g_\theta$ is smooth, $g_\theta(0)=0$, and its derivatives on
bounded intervals are uniformly bounded in $\theta$. Moreover,
\begin{equation}\label{eq:ridgegreen}
 f(z)=\av g_\theta(v_\theta\cdot z).
\end{equation}
For compact $K,Z\subset\R^2$ and $R<\infty$,
\begin{equation}\label{eq:absfubini}
 \sup_{\substack{\theta\in[0,2\pi],\,z\in Z\\|a|\le R}}
 \int_K\left(
 |\log|v_\theta\cdot(w-z)-a||+|\log|v_\theta\cdot w||
 \right)\dd m_2(w)<\infty.
\end{equation}
Thus Fubini's theorem applies to integration in the angular and
planar variables for finite sums with bounded measurable shifts $a=a_j(\theta)$. For each $R_1<\infty$,
\begin{equation}\label{eq:Qglinear}
 |g_\theta(t)|+|t^2g_\theta''(t)+2tg_\theta'(t)|
 \le C_{R_1}|t|,\qquad |t|\le R_1.
\end{equation}
\end{lemma}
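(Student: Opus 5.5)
The plan is to derive \eqref{eq:ridgegreen} from the planar Green representation and a one-dimensional averaging identity for $\log|z|$. Because $f\in C_c^\infty$, we have
\[
 f(z)=\frac1{2\pi}\int\Delta f(w)\log|z-w|\dd m_2(w).
\]
The key identity is, for $\zeta\in\R^2$,
\[
 \av\log|v_\theta\cdot\zeta|=\log|\zeta|+\av\log|\cos\theta|=\log|\zeta|-\log 2 .
\]
This follows by rotating $\zeta$ to the first axis and using $\av\log|\cos\theta|=-\log2$.

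Apply this identity with $\zeta=z-w$ and with $\zeta=w$, and subtract the two instances. Up to orientation, $v_\theta\cdot w-t$ with $t=v_\theta\cdot z$ equals $v_\theta\cdot(w-z)$. The constants $-\log2$ cancel, giving
\[
 \av\bigl(\log|v_\theta\cdot w-v_\theta\cdot z|-\log|v_\theta\cdot w|\bigr)=\log|z-w|-\log|w|.
\]
Integrate this against $\Delta f(w)/(2\pi)$. The $\log|w|$ term contributes $\frac1{2\pi}\int\Delta f\log|w|=f(0)=0$. Interchanging the $\theta$ and $w$ integrals then gives $f(z)=\av g_\theta(v_\theta\cdot z)$. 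The interchange is justified by \eqref{eq:absfubini} with $a=0$ and $K=\supp f$, so I would prove \eqref{eq:absfubini} first.

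For \eqref{eq:absfubini}, rotate coordinates so that $v_\theta$ becomes $e_1$. Then $K$ and $Z$ lie in a fixed disc of radius $\rho$, independent of $\theta$. The integral is bounded by
\[
 \int_{|w_1|,|w_2|\le\rho'}\bigl(|\log|w_1-c||+|\log|w_1||\bigr)\dd w_1\dd w_2,\qquad |c|\le\rho+R .
\]
Since $\log$ is locally integrable in one variable, this is uniformly bounded in $c$. This gives the stated uniformity, and Fubini for finite sums with measurable shifts $a_j(\theta)$ follows immediately.

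For smoothness of $g_\theta$ and the derivative bounds, rotate again and write $\Delta f_\theta(w)=\Delta f(R_\theta w)$. Integrating out $w_2$ gives a compactly supported smooth function $h_\theta(s)=\frac1{2\pi}\int\Delta f_\theta(s,w_2)\dd w_2$, whose derivatives are bounded uniformly in $\theta$. In terms of $h_\theta$,
\[
 g_\theta(t)=\int h_\theta(s)\bigl(\log|s-t|-\log|s|\bigr)\dd s .
\]
The first term is $(h_\theta*\log|\cdot|)(t)$. This is smooth, and its derivatives are $h_\theta^{(k)}*\log|\cdot|$, which are uniformly bounded on bounded intervals. The second term is a constant, and subtracting it gives $g_\theta(0)=0$.

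Finally, \eqref{eq:Qglinear} follows from $g_\theta(0)=0$ and the mean value theorem. For $|t|\le R_1$ we have $|g_\theta(t)|\le\sup|g_\theta'|\,|t|$. The second-order expression $t^2g''+2tg'$ has an explicit factor $t$, and its cofactor is bounded by the uniform bounds on $g',g''$ over $|t|\le R_1$.

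I expect the main care to lie in the uniformity in $\theta$ and in the Fubini justification. The identity itself reduces to the classical average $\av\log|\cos\theta|=-\log2$.
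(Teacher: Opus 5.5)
Your proposal is correct and follows the paper's proof essentially step for step. Rotating $v_\theta$ to a coordinate axis reduces \eqref{eq:absfubini} to one-variable local integrability of $\log$. The directional integrals of $\Delta f$ turn $g_\theta$ into a logarithmic convolution whose derivatives fall on a uniformly controlled smooth factor, which gives smoothness, $g_\theta(0)=0$ and \eqref{eq:Qglinear}. The angular average $\av\log|v_\theta\cdot\zeta|=\log|\zeta|-\log2$, with cancelling constants, together with Green's representation and $f(0)=0$, yields \eqref{eq:ridgegreen}. The only difference is ordering: you state the identity first and defer its Fubini justification to \eqref{eq:absfubini}, which the paper proves first.
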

\begin{proof}
Choose $R_0$ so every $K-z$, $z\in Z\cup\{0\}$, lies in the disk
of radius $R_0$. Coordinates along $v_\theta$ bound the first
logarithmic integral in \eqref{eq:absfubini} by
\[
 2R_0\int_{-R_0}^{R_0}|\log|s-a||\dd s
 \le2R_0\int_{-R_0-R}^{R_0+R}|\log|t||\dd t<\infty.
\]
The second term is the case $a=0,z=0$; Tonelli gives the stated
Fubini consequence.

The directional integrals
$a_\theta(s)=\int_\R\Delta f(sv_\theta+uv_\theta^\perp)\dd u$
have common compact support and uniform bounds on all derivatives.
After translation, the first term of \eqref{eq:gtheta} is
$(2\pi)^{-1}\int\log|u|a_\theta(u+t)\dd u$.
Differentiation falls on $a_\theta$, proving the uniform smoothness.
The subtraction gives $g_\theta(0)=0$ and therefore
\eqref{eq:Qglinear}.

Finally,
\[
 \av\bigl(\log|v_\theta\cdot(w-z)|-\log|v_\theta\cdot w|\bigr)
 =\log|w-z|-\log|w|
\]
locally integrably in $w$, since the two constants $-\log2$ cancel.
The established Fubini bound and
$\Delta\log|w-z|=2\pi\delta_z$ give
\[
 \av g_\theta(v_\theta\cdot z)
 =\frac1{2\pi}\int\Delta f(w)
       (\log|w-z|-\log|w|)\dd m_2(w)=f(z)-f(0).
\]
Thus the logarithms are paired with smooth area tests before the
tracial ridge identity is used.
\end{proof}

\subsection{The smooth comparison identity}
Put $\Euler=z\cdot\nabla$ and $\EulerQ=\Euler(\Euler+1)$.

\begin{lemma}\label{lem:smoothcomparison}
For a real-valued smooth function $f$ on a neighbourhood of
$\{0\}\cup\supp\mu_{A,B}\cup\supp\Psi_T\cup\spec T$,
\begin{equation}\label{eq:smoothcomparison}
 d f(0)+\int\EulerQ f\dd\mu_{A,B}-\sum_j f(\lambda_j(T))
 =\frac1{2\pi}\int_\C\Psi_T\Delta f\dd m_2.
\end{equation}
\end{lemma}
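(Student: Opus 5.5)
We first reduce to the case of a compactly supported $f$ with $f(0)=0$. Both sides of \eqref{eq:smoothcomparison} depend only on the germ of $f$ near $\{0\}\cup\supp\mu_{A,B}\cup\supp\Psi_T\cup\spec T$. The right side involves only $\Delta f$ on $\supp\Psi_T$, and the measure term involves only $f$ on $\supp\mu_{A,B}$. So we may multiply $f$ by a smooth cutoff equal to $1$ near this compact set and assume $f\in C_c^\infty(\R^2)$. A constant $f\equiv c$ contributes $dc+0-dc=0$ on the left and $0$ on the right, since $\EulerQ c=0$ and $\Delta c=0$. Hence it suffices to treat $f-f(0)$, that is, $f(0)=0$. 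We then apply Lemma~\ref{lem:ridgegreen} to write $f(z)=\av g_\theta(v_\theta\cdot z)$, with $g_\theta$ given by \eqref{eq:gtheta}, and compute each of the three terms ridge by ridge.

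\emph{Measure term.} Put $Q_1=t\partial_t(t\partial_t+1)$, so that $Q_1g=t^2g''+2tg'$. A direct chain-rule computation gives $\EulerQ[g(v_\theta\cdot z)]=(Q_1g)(v_\theta\cdot z)$. By \eqref{eq:Qglinear}, $h_\theta=Q_1g_\theta$ satisfies $h_\theta(0)=0$ and $|h_\theta(t)|\le C|t|$ on the relevant bounded range. Combined with the finite first moment \eqref{eq:moments} (with $q=1$) and compact support \eqref{eq:musupport}, this justifies exchanging $\av$ with $\int\dd\mu_{A,B}$ even though $\mu_{A,B}$ may have infinite mass. The ridge identity \eqref{eq:measure} then gives $\int h_\theta(v_\theta\cdot z)\dd\mu_{A,B}=\tr\Hs(h_\theta)(\Rea(e^{-i\theta}T))$, because $\cos\theta A+\sin\theta B=\Rea(e^{-i\theta}T)$.

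The key one-variable fact is $\Hs(Q_1g)=g$ whenever $g(0)=0$. To see it, set $\varphi(r)=g(ru)$ and note $(r\partial_r)(r\partial_r+1)\varphi=\partial_r(r^2\varphi')$. One integration by parts in $\int_0^1\frac{1-r}{r}\partial_r(r^2\varphi')\dd r$ leaves a boundary term $[(1-r)r\varphi']_0^1=0$ plus $\int_0^1\varphi'\dd r=g(u)-g(0)=g(u)$. Hence
\[
\int\EulerQ f\dd\mu_{A,B}=\av\tr g_\theta\bigl(\Rea(e^{-i\theta}T)\bigr).
\]

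\emph{Logarithmic reformulation.} Insert \eqref{eq:gtheta} at the eigenvalues $s_k(\theta)$ of $\Rea(e^{-i\theta}T)$. Since $\Rea(e^{-i\theta}(T-wI))=\Rea(e^{-i\theta}T)-(v_\theta\cdot w)I$, we get
\[
\tr g_\theta(\Rea(e^{-i\theta}T))=\frac1{2\pi}\int\Delta f(w)\Bigl(\log\bigl|\det\Rea(e^{-i\theta}(T-wI))\bigr|-d\log|v_\theta\cdot w|\Bigr)\dd m_2(w).
\]
Now average in $\theta$, interchanging $\av$ and $\dd m_2$ by \eqref{eq:absfubini} with the bounded measurable shifts $a=s_k(\theta)$, $|s_k|\le\|T\|$. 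The identity $\av\log|v_\theta\cdot w|=\log|w|-\log2$ and definition \eqref{eq:L} then give
\[
\int\EulerQ f\dd\mu_{A,B}=\frac1{2\pi}\int\Delta f\,(L_T(w)-d\log|w|)\dd m_2.
\]

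\emph{Eigenvalue term and conclusion.} Green's formula $\Delta\log|w-z|=2\pi\delta_z$ and $f(0)=0$ give
\[
\sum_jf(\lambda_j)=\frac1{2\pi}\int\Delta f\Bigl(\sum_j\log|w-\lambda_j|-d\log|w|\Bigr)\dd m_2.
\]
Subtracting this from the measure term yields $\frac1{2\pi}\int\Delta f\,\Psi_T\dd m_2$ by \eqref{eq:Psi}, which is \eqref{eq:smoothcomparison} with $f(0)=0$. All integrals are finite by Lemma~\ref{lem:jensen}, which gives $\Psi_T\in L^q$.

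The main obstacle is the justification of the interchanges rather than the algebra. The potentially infinite total mass of $\mu_{A,B}$ is controlled only through the linear bound \eqref{eq:Qglinear}. Measurability in $\theta$ of the ordered eigenvalues $s_k(\theta)$ is needed to apply \eqref{eq:absfubini}; alternatively one can work with $\log|\det|$ directly, which is jointly measurable.
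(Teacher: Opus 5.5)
Your proposal is correct and follows the paper's own proof essentially step for step: the cutoff-and-subtract reduction, the ridge Green representation of Lemma~\ref{lem:ridgegreen}, and the one-variable inversion $\Hs(Qg)=g-g(0)$ combined with the ridge identity \eqref{eq:measure}, with Fubini justified by \eqref{eq:Qglinear} and the first moment. The remaining steps also match: the angular averaging of $\log|\det\Rea(e^{-i\theta}(T-wI))|$ via \eqref{eq:absfubini} with ordered eigenvalue shifts, and Green's formula for the eigenvalue term (one cosmetic point: use $\chi\,(f-f(0))$ rather than subtracting after the cutoff, so the reduced function genuinely lies in $C_c^\infty(\R^2)$ and vanishes at the origin).
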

\begin{proof}
Choose a compactly supported cutoff equal to one on a neighbourhood of
$\supp\mu_{A,B}$, $\supp\Psi_T$, and the spectrum of $T$, and subtract
$f(0)$.  It is therefore enough to prove the identity for
$f\in C_c^\infty(\R^2)$ with $f(0)=0$.

The one-variable inverse of $\Hs$ is
$Q=t\partial_t(t\partial_t+1)$
\cite[Remark~2, p.~40]{HeinThesis}. If $G(r)=g(ru)$,
integration by parts gives
\[
 \Hs(Qg)(u)=\int_0^1(1-r)\{rG''(r)+2G'(r)\}\dd r=g(u)-g(0).
\]
The Euler operator commutes with ridge composition.  Consequently
Lemma~\ref{lem:ridgegreen} and the full tracial ridge identity
\eqref{eq:measure} imply
\begin{equation}\label{eq:euler-ridge-step}
 \int\EulerQ f\dd\mu_{A,B}
 =\av\tr g_\theta(A\cos\theta+B\sin\theta).
\end{equation}
This use of Fubini is absolute: on the compact support of
$\mu_{A,B}$, \eqref{eq:Qglinear} gives
$|Q g_\theta(v_\theta\cdot z)|\le C|z|$, and
\eqref{eq:radialmoments} with exponent one gives
$\int|z|\dd\mu_{A,B}<\infty$.

Let $X_\theta=A\cos\theta+B\sin\theta$.  Inserting
\eqref{eq:gtheta} and using the spectral theorem,
\begin{align*}
 \tr g_\theta(X_\theta)
 &=\frac1{2\pi}\int\Delta f(w)
 \left\{\log\left|\det(v_\theta\cdot w\,I-X_\theta)\right|
       -d\log|v_\theta\cdot w|\right\}\dd m_2(w).
\end{align*}
Here
$|\det(v_\theta\cdot w\,I-X_\theta)|
 =|\det\Rea(e^{-i\theta}(T-wI))|$.
Every eigenvalue of $X_\theta$ lies in
$[-(\|A\|+\|B\|),\|A\|+\|B\|]$.  Taking the eigenvalues in increasing
order makes $a_j(\theta)=\lambda_j(X_\theta)$ continuous (hence measurable)
in $\theta$.  Lemma~\ref{lem:fubini} therefore applies to these bounded shifts and justifies
the interchange of angular and planar integration by absolute
integrability.  Averaging in $\theta$ therefore gives
\begin{equation}\label{eq:fixed-r}
 \av\tr g_\theta(X_\theta)
 =\frac1{2\pi}\int\Delta f(w)(L_T(w)-d\log|w|)\dd m_2(w).
\end{equation}
By definition $L_T=\Psi_T+\log|\det(T-wI)|$.  Green's formula and
$\log|\det(T-wI)|=\sum_j\log|w-\lambda_j(T)|$ yield
\[
 \frac1{2\pi}\int\Delta f(w)\log|\det(T-wI)|\dd m_2(w)
 =\sum_j f(\lambda_j(T)),
\]
while the same formula with $\log|w|$ gives $f(0)=0$.  Finally,
$\Psi_T\in L^{q}$ for every finite $q$ and $\Delta f$ is bounded and
compactly supported, so its term is absolutely integrable.  Combining
these identities with \eqref{eq:euler-ridge-step} proves
\eqref{eq:smoothcomparison}.  Restoring the subtracted constant gives
the term $df(0)$.
\end{proof}

\subsection{Nonhomogeneous transfer and homogeneous consequences}
For planar $f$ with $f(0)=0$, put
$\Hs f(z)=\int_0^1(1-r)f(rz)\dd r/r$ and define
\begin{equation}\label{eq:Theta}
 \Theta_T(w)=\int_0^1\frac{1-r}{r}\Psi_T(w/r)\dd r.
\end{equation}
This function is nonnegative and supported in $|w|\le\|T\|$.
Minkowski's inequality and planar scaling give
\begin{equation}\label{eq:Theta-bounds}
 \|\Theta_T\|_s\le\|\Psi_T\|_s
       \int_0^1(1-r)r^{2/s-1}\dd r
 =\frac{s^2}{2(s+2)}\|\Psi_T\|_s,\qquad 1\le s<\infty.
\end{equation}

\begin{theorem}\label{thm:nonhomogeneous}
Let $T=A+iB$ be a finite matrix, and let $D$ be an open disk centered
at zero containing
$\supp\mu_{A,B}\cup\supp\Psi_T\cup\spec T$.
Let $f\in W^{2,s}_{\mathrm{loc}}(D)$ be real-valued, with
$1<s<\infty$, and use its continuous representative. If $f(0)=0$, then
\begin{equation}\label{eq:generalexact}
 \sum_j(\Hs f)(\lambda_j(T))-\int f\dd\mu_{A,B}
 =\frac1{2\pi}\int\Theta_T(-\Delta f)\dd m_2.
\end{equation}
All terms are finite; the right side is nonnegative if $\Delta f\le0$.
\end{theorem}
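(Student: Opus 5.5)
The plan is to reduce \eqref{eq:generalexact} to the smooth comparison identity of Lemma~\ref{lem:smoothcomparison}, applied to the radial average $h=\Hs f$ rather than to $f$ itself. Since $D$ is a disk centered at zero, $\Hs f$ is defined on $D$ whenever $f$ is.

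\textbf{Step 1: smooth $f$.} First assume $f$ is smooth on a neighbourhood of $\overline{D'}$ with $f(0)=0$, where $D'\subset D$ is a slightly smaller concentric disk still containing the three supports. Then $h=\Hs f$ is smooth with $h(0)=0$. The computation recorded in the proof of Lemma~\ref{lem:smoothcomparison} along each ray shows that $\Hs$ inverts $Q$ there, so $\EulerQ h=f$. Lemma~\ref{lem:smoothcomparison} applied to $h$ gives
\[
 \sum_j h(\lambda_j(T))-\int f\dd\mu_{A,B}=-\frac1{2\pi}\int\Psi_T\,\Delta h\dd m_2 .
\]
Differentiating under the integral gives $\Delta h(w)=\int_0^1(1-r)r\,(\Delta f)(rw)\dd r$. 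Inserting this, using Tonelli/Fubini (justified by $\Psi_T\in L^{q}$ and boundedness of $\Delta f$), and substituting $u=rw$, $\dd m_2(w)=r^{-2}\dd m_2(u)$, turns $\int\Psi_T\Delta h$ into
\[
 \int\Bigl(\int_0^1\frac{1-r}{r}\Psi_T(u/r)\dd r\Bigr)\Delta f(u)\dd m_2(u)=\int\Theta_T\Delta f\dd m_2 .
\]
This is exactly \eqref{eq:generalexact}. Only values of $\Psi_T(u/r)$ with $|u|\le r\|T\|$ matter, so the supports stay inside $D'$.

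\textbf{Step 2: approximation.} For general $f\in W^{2,s}_{\mathrm{loc}}(D)$, take $f_n=\rho_n*f-(\rho_n*f)(0)$ on $D'$, where $\rho_n$ are standard mollifiers. Then $\Delta f_n\to\Delta f$ in $L^s(D')$. Since $\Theta_T\in L^{s'}$ by \eqref{eq:Theta-bounds} and Lemma~\ref{lem:jensen}, the right-hand side converges, is finite, and is nonnegative when $\Delta f\le0$ because $\Theta_T\ge0$.

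For the left side, use Sobolev embedding in the plane. For $1<s<2$, $W^{2,s}\subset W^{1,r}$ with $r=2s/(2-s)>2$; for $s\ge2$ any finite $r>2$ works. Morrey then gives $f_n\to f$ in $C^{0,\alpha}(\overline{D'})$ for some $\alpha>0$. Hence $|f_n-f|(z)\le\varepsilon_n|z|^\alpha$ with $\varepsilon_n\to0$, because all these functions vanish at $0$. The same estimate passes to $\Hs f_n-\Hs f$, giving convergence at the eigenvalues.

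\textbf{Main obstacle.} The delicate point is $\int f_n\dd\mu_{A,B}\to\int f\dd\mu_{A,B}$, since $\mu_{A,B}$ may have infinite total mass near the origin. The Hölder bound resolves this: $\int|f_n-f|\dd\mu_{A,B}\le\varepsilon_n\int|z|^\alpha\dd\mu_{A,B}$, which is finite by \eqref{eq:radialmoments} with $q=\alpha$ and the compact support \eqref{eq:musupport}. The same bound shows $\int f\dd\mu_{A,B}$ is finite, so all terms are finite and the identity passes to the limit.
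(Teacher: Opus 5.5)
Your proposal is correct and follows the paper's proof essentially step for step. In both, Lemma~\ref{lem:smoothcomparison} is applied to $\Hs f$ for smooth $f$, using $\EulerQ\Hs f=f$, and the potential term is turned into $\Theta_T$ by the substitution $w=rz$ with absolute Fubini. The approximation step is also the same: mollify with the value at zero subtracted, control the spectral and measure errors through Morrey--H\"older bounds and $\int|z|^\alpha\dd\mu_{A,B}<\infty$, and control the potential error through $\Theta_T\in L^{s'}$.
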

\begin{proof}
For smooth $f$, radial integration gives
$\EulerQ\Hs f=f$ and
$\Delta\Hs f(z)=\int_0^1r(1-r)\Delta f(rz)\dd r$.
Apply Lemma~\ref{lem:smoothcomparison} to $\Hs f$ and change variables
$w=rz$ in the potential term. On a closed disk containing the
supports and their radial contractions, absolute Fubini follows from
$\|\Psi_T\|_1\|\Delta f\|_\infty\int_0^1r(1-r)\dd r<\infty$.
This proves \eqref{eq:generalexact} for smooth local tests.

Choose nested closed disks in $D$ whose interiors contain all the
supports. After multiplying $f$ by an outer cutoff equal to one on
the larger disk, mollify and subtract the value at zero:
$f_\varepsilon=f*\eta_\varepsilon-(f*\eta_\varepsilon)(0)$.
Choose $0<\alpha<\min\{1,2-2/s\}$. Sobolev--Morrey embedding gives
$f_\varepsilon\to f$ in $C^{0,\alpha}$ and $W^{2,s}$ on the inner
disk. Put
$a_\varepsilon=\|f_\varepsilon-f\|_{C^{0,\alpha}}\to0$
there. Since $f_\varepsilon(0)=f(0)=0$, the measure and spectral
errors are bounded by
\[
 a_\varepsilon\int|z|^\alpha\dd\mu_{A,B},
 \qquad
 \frac{a_\varepsilon}{\alpha(\alpha+1)}
        \sum_j|\lambda_j(T)|^\alpha.
\]
They tend to zero by \eqref{eq:radialmoments}. The potential error is
at most
$\|\Theta_T\|_{s'}\|\Delta(f_\varepsilon-f)\|_s/(2\pi)$,
where $s'=s/(s-1)$, and tends to zero by
\eqref{eq:Theta-bounds}. The same estimates establish finiteness of
all three terms for $f$, and passage to the limit proves the theorem.
\end{proof}

\begin{corollary}
\label{cor:nonhomogeneouscertificate}
Let $U$ satisfy the hypotheses of Theorem~\ref{thm:nonhomogeneous}.
For finite families of real coefficients $c_j,u_j,v_j$, let
$\phi_j:\R\to\R$ be measurable, with $\phi_j(0)=0$ and
$\int_{-M}^M|\phi_j(t)/t|\dd t<\infty$ for every $M>0$, and set
$F(x,y)=\sum_jc_j\phi_j(u_jx+v_jy)$. Then
\begin{align}\label{eq:nonhomcertificate}
 \sum_k(\Hs U)(\lambda_k(T))
       -\sum_jc_j\tr(\Hs\phi_j)(u_jA+v_jB)
 &=\int(U-F)\dd\mu_{A,B}\notag\\
 &\quad+\frac1{2\pi}\int\Theta_T(-\Delta U)\dd m_2.
\end{align}
All terms are finite. Both remainders are nonnegative if
$U\ge F$ on $D$ and $\Delta U\le0$.
\end{corollary}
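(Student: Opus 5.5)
The plan is to apply Theorem~\ref{thm:nonhomogeneous} with $f=U$, which gives
\[
 \sum_k(\Hs U)(\lambda_k(T))-\int U\dd\mu_{A,B}
 =\frac1{2\pi}\int\Theta_T(-\Delta U)\dd m_2 .
\]
Here $U\in W^{2,s}_{\mathrm{loc}}(D)$ is real with $U(0)=0$. All three terms are finite by that theorem. It then remains to rewrite $\int U\dd\mu_{A,B}$ as $\int(U-F)\dd\mu_{A,B}+\int F\dd\mu_{A,B}$, and to identify $\int F\dd\mu_{A,B}$ with the trace side.

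For the identification, use the ridge representation \eqref{eq:measure} term by term. Each $\phi_j$ is real, measurable, satisfies $\phi_j(0)=0$ and is locally $|\phi_j(t)/t|$-integrable, which is exactly the admissible class. So with $(s,t)=(u_j,v_j)$,
\[
 \int\phi_j(u_jx+v_jy)\dd\mu_{A,B}=\tr(\Hs\phi_j)(u_jA+v_jB).
\]
Multiplying by $c_j$ and summing gives
$\int F\dd\mu_{A,B}=\sum_jc_j\tr(\Hs\phi_j)(u_jA+v_jB)$. Substituting this into the identity from Theorem~\ref{thm:nonhomogeneous} yields \eqref{eq:nonhomcertificate}.

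The main point needing care is finiteness, so that the splitting $\int U=\int(U-F)+\int F$ is legitimate rather than an $\infty-\infty$ manipulation, given that $\mu_{A,B}$ may have infinite mass.
\begin{enumerate}
\item For $\int|\phi_j(u_jx+v_jy)|\dd\mu_{A,B}$, apply \eqref{eq:measure} to $|\phi_j|$. It is nonnegative, still vanishes at $0$, and satisfies the same integrability hypothesis, so the integral equals $\tr(\Hs|\phi_j|)(u_jA+v_jB)$.
\item This trace is finite. The matrix $u_jA+v_jB$ has finitely many eigenvalues $\lambda$, and $\Hs|\phi_j|(\lambda)=\int_0^{|\lambda|}(1-t/|\lambda|)|\phi_j(\pm t)|\dd t/t<\infty$ by local integrability.
\item Consequently $F\in L^1(\mu_{A,B})$. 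The term $\int U\dd\mu_{A,B}$ is finite by Theorem~\ref{thm:nonhomogeneous}, so $U-F\in L^1(\mu_{A,B})$ and all terms in \eqref{eq:nonhomcertificate} are finite.
\end{enumerate}
A minor point is that $F$ need only be defined $\mu_{A,B}$-a.e. Since $\mu_{A,B}$ is supported in $D$, the hypothesis $U\ge F$ on $D$ is all that is used.

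Nonnegativity then follows directly. If $U\ge F$ on $D\supset\supp\mu_{A,B}$, the first remainder is the integral of a nonnegative function against a positive measure. If $\Delta U\le0$, the second remainder is nonnegative because $\Theta_T\ge0$; this holds since $\Theta_T$ is a positive average of dilates of $\Psi_T\ge0$ by Lemma~\ref{lem:jensen}.
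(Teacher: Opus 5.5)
Your proposal is correct and follows the paper's proof: apply Theorem~\ref{thm:nonhomogeneous} to $U$, use the ridge identity \eqref{eq:measure} on $|\phi_j|$ to get absolute integrability and on $\phi_j$ to identify $\int F\dd\mu_{A,B}$ with $\sum_jc_j\tr(\Hs\phi_j)(u_jA+v_jB)$, then read off the two positivity assertions. Your explicit check that $\tr(\Hs|\phi_j|)(u_jA+v_jB)$ is finite, via the substitution $t=r|\lambda|$, spells out a step the paper leaves implicit.
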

\begin{proof}
The full ridge identity \eqref{eq:measure}, applied to $|\phi_j|$,
proves absolute integrability of each ridge. Its signed sum for
$\phi_j$ gives
$\int F\dd\mu_{A,B}=\sum_jc_j\tr(\Hs\phi_j)(u_jA+v_jB)$.
Combining this with \eqref{eq:generalexact} proves the identity,
and positivity gives the final assertion.
\end{proof}

\begin{corollary}\label{thm:certificate}
Let $U$ be real-valued, $p$-homogeneous, and belong to
$C^1(\C)\cap W^{2,q}_{\mathrm{loc}}(\C)$ for some $q>1$.
For finite families of real coefficients $c_j,u_j,v_j$, set
\[
 F(x,y)=\sum_jc_j|u_jx+v_jy|^p,\qquad
 \mathcal F_p(A,B)=\sum_jc_j\tr|u_jA+v_jB|^p.
\]
Then
\begin{align}\label{eq:certificate}
 \sum_kU(\lambda_k(T))-\mathcal F_p(A,B)
 &=p(p+1)\int(U-F)\dd\mu_{A,B}\notag\\
 &\quad+\frac1{2\pi}\int_\C\Psi_T(-\Delta U)\dd m_2.
\end{align}
Both remainder terms are nonnegative if $U\ge F$ and $\Delta U\le0$.
We call such a function $U$ an admissible superharmonic majorant of $F$.
\end{corollary}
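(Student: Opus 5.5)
The plan is to deduce \eqref{eq:certificate} from Corollary~\ref{cor:nonhomogeneouscertificate}, using homogeneity to invert $\Hs$ explicitly.

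\emph{Choice of test function.} Take $\phi_j(t)=|t|^p$. Since $p>1$, we have $\phi_j(0)=0$ and $|\phi_j(t)/t|=|t|^{p-1}$ is locally integrable, so the hypotheses on $\phi_j$ are met. For a $p$-homogeneous $g$ with $g(0)=0$,
\[
 \Hs g=g\int_0^1(1-r)r^{p-1}\dd r=\frac{g}{p(p+1)}.
\]
Hence $(\Hs\phi_j)(u_jA+v_jB)=|u_jA+v_jB|^p/(p(p+1))$. The same computation gives $\Hs U=U/(p(p+1))$, and $U(0)=0$ by homogeneity.

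\emph{Applying the corollary.} Choose the disk $D$ as in Theorem~\ref{thm:nonhomogeneous}. The restriction of $U$ lies in $W^{2,q}_{\mathrm{loc}}(D)$ with $1<q<\infty$; if $q$ happens to be large, we may lower it, since $W^{2,q}_{\mathrm{loc}}$ increases as $q$ decreases on bounded sets. Corollary~\ref{cor:nonhomogeneouscertificate} then gives
\[
 \frac{1}{p(p+1)}\Bigl(\sum_kU(\lambda_k)-\mathcal F_p(A,B)\Bigr)
 =\int(U-F)\dd\mu_{A,B}+\frac1{2\pi}\int\Theta_T(-\Delta U)\dd m_2.
\]
Multiplying by $p(p+1)$ produces the first remainder in \eqref{eq:certificate}. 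It remains to show
\[
 p(p+1)\int\Theta_T(-\Delta U)\dd m_2=\int\Psi_T(-\Delta U)\dd m_2 .
\]

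\emph{The key step: converting $\Theta_T$ to $\Psi_T$.} Distributionally, $\Delta U$ is $(p-2)$-homogeneous, so $\Delta U(w)=r^{2-p}\Delta U(w/r)$. Insert the definition \eqref{eq:Theta} and substitute $w=rz$, which gives $\dd m_2(w)=r^2\dd m_2(z)$. Then
\[
 \int\Theta_T(-\Delta U)\dd m_2=\int_0^1\frac{1-r}{r}\,r^{2}\,r^{p-2}\dd r\int\Psi_T(z)(-\Delta U)(z)\dd m_2(z),
\]
and the radial factor is $\int_0^1(1-r)r^{p-1}\dd r=1/(p(p+1))$, which is exactly what is needed.

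\emph{Justifying the interchange.} Fubini/Tonelli requires absolute integrability of
\[
 \frac{1-r}{r}\,\Psi_T(w/r)\,|\Delta U(w)|.
\]
Two facts handle this. First, $\Psi_T\in L^{q'}$ with compact support, by Lemma~\ref{lem:jensen}. Second, after the change of variables the $z$-integral is $\int\Psi_T|\Delta U|\dd m_2$, which is finite by H\"older since $\Delta U\in L^q_{\mathrm{loc}}$ and $\Psi_T\in L^{q'}$. The remaining $r$-integral is finite because $p>1$. The scaling step is legitimate for $L^q_{\mathrm{loc}}$ functions, since homogeneity of $\Delta U$ holds almost everywhere. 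I expect this justification, together with keeping all supports inside $D$ (radial contractions of $\supp\Psi_T$ stay inside the disk), to be the only delicate point.

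\emph{Sign of the remainders.} Positivity follows from $U\ge F$ and $\mu_{A,B}\ge0$ for the first term, and from $\Psi_T\ge0$ (Lemma~\ref{lem:jensen}) together with $-\Delta U\ge0$ for the second.
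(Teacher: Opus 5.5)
Your proposal is correct and follows essentially the same route as the paper: apply Corollary~\ref{cor:nonhomogeneouscertificate} with $\phi_j(t)=|t|^p$, use $\Hs U=U/[p(p+1)]$, and convert $\Theta_T$ into $\Psi_T$ by Tonelli and the substitution $w=rz$, with finiteness of $\int\Psi_T|\Delta U|\dd m_2$ coming from H\"older and the compact support of $\Psi_T$. One slip: for $w=rz$, homogeneity gives $\Delta U(w)=r^{p-2}\Delta U(w/r)$, not $r^{2-p}$; your displayed radial factor already uses the correct exponent, so the computation is unaffected.
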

\begin{proof}
Homogeneity gives $\Hs U=U/[p(p+1)]$ and
$\Delta U(rz)=r^{p-2}\Delta U(z)$ almost everywhere.
H\"older's inequality and the compact support of $\Psi_T$ give
$J=\int\Psi_T|\Delta U|\dd m_2<\infty$.
Tonelli's theorem and the substitution $w=rz$ therefore yield
\[
 \int\Theta_T|\Delta U|\dd m_2
 =J\int_0^1(1-r)r^{p-1}\dd r
 =\frac{J}{p(p+1)}.
\]
The same change of variables is consequently valid for the signed
Laplacian integral. Apply Corollary~\ref{cor:nonhomogeneouscertificate}
with $\phi_j(t)=|t|^p$, for which
$\Hs\phi_j=\phi_j/[p(p+1)]$, and multiply by $p(p+1)$.
This proves \eqref{eq:certificate}, including its positivity assertion.
\end{proof}
For a homogeneous $V$ in the same regularity class and a nilpotent
$N=A+iB$, Corollary~\ref{thm:certificate} with $F=0$ specializes to
\begin{equation}\label{eq:homidentity}
 p(p+1)\int_\C V\dd\mu_{A,B}
 =\frac1{2\pi}\int_\C\Psi_N\Delta V\dd m_2.
\end{equation}
Corollary~\ref{thm:certificate} supplies the forward implication for
every finite signed real ridge sum.  Sections~\ref{sec:toeplitz}--\ref{sec:regularity}
prove the converse, establishing the necessity and sufficiency of
the superharmonic-majorant condition for the full class in Theorem~\ref{thm:classification}.

\subsection{The sharp coordinate specialization}\label{subsec:coordinatepath}
The sharp coordinate estimate uses the classical explicit superharmonic
majorant. The obstacle classification treats general signed ridge sums
and identifies their optimal spectral correction.

\begin{corollary}\label{prop:coordinatepath}
For every $1<p<\infty$,
\begin{equation}\label{eq:coordinatepath}
 \sup_{N\ne0\ {\rm finite\ nilpotent}}
       \frac{\|\Ima N\|_p}{\|\Rea N\|_p}
 =\sup_{\mathcal E}\|H_{\mathcal E}\|_{\Sp_p\to\Sp_p}=C_p.
\end{equation}
Both estimates hold uniformly at every matrix amplification.
\end{corollary}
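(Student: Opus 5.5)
The plan is to prove the upper bounds with the certificate identity of Corollary~\ref{thm:certificate}, using the classical explicit coordinate majorant, and the lower bound with Toeplitz tests of the form $q(S_n)$. Set
\[
 F(x+iy)=|y|^p-C_p^p|x|^p,\qquad\text{so that}\qquad \mathcal F_p(A,B)=\|B\|_p^p-C_p^p\|A\|_p^p .
\]
I take from the appendix the classical Pichorides-type superharmonic majorant $U\in\mathcal U_p$ with $U\ge F$. It is $p$-homogeneous and of the form $r^p u(\theta)$ with $u$ built from $\cos(p(\cdot))$ pieces on sectors, which makes it $C^1$ with a bounded Laplacian of sign $\le0$. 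The one extra property I will check by direct evaluation at $\theta=0,\pi$ is
\[
 U(x)\le 0\qquad\text{for } x\in\R .
\]
This is the same sign condition that the scalar Pichorides argument needs for real boundary values.

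Given this $U$, the nilpotent upper bound is immediate. For a finite nilpotent $N=A+iB$, every $\lambda_k(N)$ is $0$ and $U(0)=0$. Corollary~\ref{thm:certificate} then gives $-\mathcal F_p(A,B)\ge0$, that is, $\|B\|_p\le C_p\|A\|_p$.

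For the Schur bound I fix a flag $\mathcal E$ and first reduce to Hermitian $X$ by the dilation $\begin{pmatrix}0&X\\X^*&0\end{pmatrix}$. For this I refine the flag as $\mathcal E\oplus\mathcal E$, interleaving the blocks so that $H$ acts compatibly on both off-diagonal corners. The resulting factor $2^{1/p}$ cancels on both sides. For Hermitian $X$, write $X=D_{\mathcal E}X+L+L^*$, where $L=\sum_{j>k}E_jXE_k$ is the strictly block-lower part, and put
\[
 T=D_{\mathcal E}X+2L .
\]
Then $\Rea T=X$ and $\Ima T=-iL+iL^*=-H_{\mathcal E}X$ (up to the sign convention in \eqref{eq:flagH}). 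Since $T$ is block lower triangular, its eigenvalues are those of the Hermitian diagonal blocks $E_jXE_j$, so they are all real. Corollary~\ref{thm:certificate} with the same $U$ now gives
\[
 \|H_{\mathcal E}X\|_p^p-C_p^p\|X\|_p^p\le\sum_k U(\lambda_k(T))\le0 ,
\]
which is the bound $\sup_{\mathcal E}\|H_{\mathcal E}\|\le C_p$. Amplification is automatic. Tensoring with $I_d$, or replacing $X$ by an element of $M_d\otimes M_n$ with the flag $I_d\otimes\mathcal E$, keeps the triangular structure and the reality of the spectrum, and the certificate is dimension-free.

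For the lower bounds and the chain of equalities I argue as follows.
\begin{enumerate}
\item \emph{Nilpotents are dominated by the Schur norm.} A nilpotent $N$ is strictly lower triangular in some orthonormal basis (Schur form). With the rank-one flag of that basis, $X=\Rea N$ has $D_{\mathcal E}X=0$, hence $L=N/2$ and $H_{\mathcal E}\Rea N=\mp\Ima N$. Therefore the nilpotent supremum is at most $\sup_{\mathcal E}\|H_{\mathcal E}\|$.
\item \emph{The nilpotent supremum is at least $C_p$.} I use the Toeplitz tests $N=q(S_n)$, exactly as in the sufficiency part of condition (P) in Theorem~\ref{thm:classification}. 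As $n\to\infty$, the normalized traces $\frac1n\tr|\Rea q(S_n)|^p$ and $\frac1n\tr|\Ima q(S_n)|^p$ converge to $\int|\Rea q(e^{it})|^p\frac{\dd t}{2\pi}$ and $\int|\Ima q(e^{it})|^p\frac{\dd t}{2\pi}$, by Szegő-type convergence for banded Toeplitz matrices of polynomial symbols. Pichorides' scalar sharpness provides polynomials $q$ with $q(0)=0$ whose ratio approaches $C_p$.
\end{enumerate}

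I expect the main obstacle to be the two properties of the explicit majorant: $C^1$/$W^{2,q}$ regularity across the sector boundaries, and $U\le0$ on the real axis. The second is what lets the Schur bound absorb the nonzero real diagonal spectrum of $T$. If that sign check failed for some range of $p$, I would instead apply the bound on the real axis from Corollary~\ref{cor:spectralsector} ($r(z)=\infty$ there is not good), so the fallback is to first perturb to an exactly nilpotent model. That is done by amplifying each diagonal block with a cyclic shift $S_m$ so that the diagonal contribution becomes strictly triangular, and letting $m\to\infty$ to make its Schatten contribution negligible in the normalized ratio.
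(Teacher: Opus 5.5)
Your proposal is correct and follows the paper's proof essentially step for step. It uses the coordinate majorant $V(x,y)=U_p(y,x)$ of Appendix~\ref{sec:coordinate} and the certificate identity, applied to nilpotents and to a block-triangular matrix with Hermitian diagonal blocks. Your lower-triangular $D_{\mathcal E}X+2L$ versus the paper's upper-triangular $X+iH_{\mathcal E}X$ only flips the sign of the imaginary part. The rest also matches: Hermitian dilation with the amplified flag, and Toeplitz sections $q(S_n)$ with Pichorides sharpness for the lower bounds.

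Two small corrections:
\begin{enumerate}
\item For $1<p<2$ the majorant's Laplacian is not bounded: it blows up near the axes and at the origin. It lies only in $L^q_{\mathrm{loc}}$ for $q<1/(2-p)$, which is exactly the $W^{2,q}$ regularity Corollary~\ref{thm:certificate} requires, so nothing in your argument breaks.
\item The sign check cannot fail, since $U(x,0)=U_p(0,x)$ is a negative multiple of $|x|^p$ for every $p$. Your fallback paragraph is therefore unnecessary; it would in any case be circular, because Corollary~\ref{cor:spectralsector} is proved using this statement.
\end{enumerate}
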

\begin{proof}
Let $U_p$ be the sectorwise superharmonic majorant in
Appendix~\ref{sec:coordinate} and put $V(x,y)=U_p(y,x)$.
Its elementary verification gives
\[
 V(x,y)\ge |y|^p-C_p^p|x|^p,\qquad
 \Delta V\le0,\qquad V(x,0)\le0.
\]
For nilpotent $N=A+iB$, the spectral term in
Corollary~\ref{thm:certificate} is zero, yielding
$\|B\|_p\le C_p\|A\|_p$.
For Hermitian $X$, the matrix $T=X+iH_{\mathcal E}X$ is block upper
triangular with Hermitian diagonal blocks, so its spectrum is real.
The same corollary gives
\[
 \|H_{\mathcal E}X\|_p^p-C_p^p\|X\|_p^p
 \le\sum_{\lambda\in\spec T}V(\lambda)\le0.
\]
For arbitrary $X$, apply this to
$\left(\begin{smallmatrix}0&X\\X^*&0\end{smallmatrix}\right)$
and the amplified flag; both $p$th powers acquire the factor two.
Arbitrary block sizes also give the bound at every amplification.

For sharpness, analytic polynomials $q$ with $q(0)=0$ approach the
classical Pichorides constant \cite{Pichorides,HKV}.
Lemma~\ref{lem:commontoeplitz} transfers their ratios to $q(S_n)$.
These strictly upper-triangular matrices satisfy
$\Ima q(S_n)=H_{\mathcal E}\Rea q(S_n)$ for the standard flag,
giving both lower bounds in \eqref{eq:coordinatepath}.
\end{proof}

\section{Proof of the classification and optimal correction}\label{sec:toeplitz}
\subsection{Finite Toeplitz sections}
\label{subsec:commontoeplitz}
The lower bounds in the two parts use the same finite-section limit.
Its singular-value form handles arbitrary complex symbols; its
self-adjoint form supplies the continuous spectral tests needed for the
nilpotent classification.

\begin{lemma}\label{lem:commontoeplitz}
Let $Q(z)=\sum_{k=-r}^rQ_kz^k$ be an $M_d(\mathbb C)$-valued
trigonometric polynomial, with $d$ and $r$ fixed, and put
\[
 A_n(Q)=(Q_{j-i})_{1\le i,j\le n},\qquad n>2r.
\]
For every $1\le p<\infty$,
\begin{equation}\label{eq:commontoeplitzsingular}
 \lim_{n\to\infty}\frac1n
       \operatorname{Tr}_{nd}|A_n(Q)|^p
 =\int_0^{2\pi}\operatorname{Tr}_d|Q(e^{it})|^p
       \frac{dt}{2\pi}.
\end{equation}
If $Q(e^{it})$ is Hermitian for every $t$ and $g$ is continuous on a
common interval containing the spectra of $A_n(Q)$ and $Q(e^{it})$, then
\begin{equation}\label{eq:commontoeplitzspectral}
 \lim_{n\to\infty}\frac1n\operatorname{Tr}_{nd}g(A_n(Q))
 =\int_0^{2\pi}\operatorname{Tr}_dg(Q(e^{it}))
       \frac{dt}{2\pi}.
\end{equation}
\end{lemma}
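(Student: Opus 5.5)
The plan is to compare the finite section $A_n(Q)$ with the block circulant $C_n(Q)$ built from the same symbol, and to show that the two differ by a matrix of bounded rank. The circulant's behaviour is explicit, so the whole argument reduces to a low-rank perturbation estimate plus a Riemann-sum limit.

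\emph{Step 1 (circulant comparison).} Define the $nd\times nd$ block circulant $C_n(Q)$ with $(i,j)$ block $Q_k$ whenever $j-i\equiv k \pmod n$, for $|k|\le r$. Since $n>2r$, the residues $k$ with $|k|\le r$ are distinct modulo $n$, so $C_n(Q)$ is well defined. Conjugating by the block discrete Fourier transform $F_n\otimes I_d$ block-diagonalizes it:
\[
C_n(Q)\simeq\bigoplus_{m=0}^{n-1}Q(\omega_n^m),\qquad \omega_n=e^{2\pi i/n}.
\]
Hence $\frac1n\operatorname{Tr}|C_n(Q)|^p$ and $\frac1n\operatorname{Tr}g(C_n(Q))$ are Riemann sums of the continuous integrands $t\mapsto\operatorname{Tr}_d|Q(e^{it})|^p$ and $t\mapsto\operatorname{Tr}_dg(Q(e^{it}))$. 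These sums converge to the right-hand sides of \eqref{eq:commontoeplitzsingular} and \eqref{eq:commontoeplitzspectral}. In the Hermitian case $C_n(Q)$ is Hermitian, and its spectrum lies in the union of the spectra of the $Q(e^{it})$. The difference $E_n=C_n(Q)-A_n(Q)$ is supported in the corner blocks $\{i\le r \text{ or } i>n-r\}\times\{j\le r \text{ or } j>n-r\}$. Therefore $\operatorname{rank}E_n\le 4rd$, and $\|E_n\|\le\sum_k\|Q_k\|$, with bounds independent of $n$. Both $\|A_n(Q)\|$ and $\|C_n(Q)\|$ are also bounded by $\sum_k\|Q_k\|=:M$.

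\emph{Step 2 (singular-value form).} I will use the interlacing of singular values under a rank-$k$ perturbation: $s_{j+k}(X)\le s_j(X+E)\le s_{j-k}(X)$. This shows that the ordered singular values of $A_n$ and $C_n$ agree up to index shifts by $k=4rd$. Because all singular values lie in $[0,M]$ and $s\mapsto s^p$ is monotone, this gives
\[
\bigl|\operatorname{Tr}|A_n|^p-\operatorname{Tr}|C_n|^p\bigr|\le 2kM^p,
\]
and dividing by $n$ the difference tends to zero. This proves \eqref{eq:commontoeplitzsingular}.

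\emph{Step 3 (spectral form).} This is where I expect the main care to be needed, since $g$ is merely continuous rather than monotone. By Weierstrass approximation on the common interval $J$, I choose a polynomial $h$ with $\sup_J|g-h|<\varepsilon$; the normalized traces of $g$ and $h$ then differ by at most $d\varepsilon$ on both sides. For polynomials I argue by linearity on monomials $x^\ell$: the product $C_n^\ell-A_n^\ell$ is a telescoping sum of $\ell$ terms, each of rank at most $k$ and norm at most $M^{\ell-1}\|E_n\|$. Hence
\[
\bigl|\operatorname{Tr}(C_n^\ell-A_n^\ell)\bigr|\le \ell^2kM^{\ell},
\]
which is $o(n)$. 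Alternatively, Cauchy interlacing for Hermitian rank-$k$ perturbations, $\lambda_{j-k}\le\lambda_j\le\lambda_{j+k}$, combined with the uniform continuity of $g$ on $J$, gives the result directly. Either route requires only that both spectra lie in $J$, which is exactly the stated hypothesis; this is why the statement asks for a common interval. Letting $n\to\infty$ and then $\varepsilon\to0$ yields \eqref{eq:commontoeplitzspectral}.
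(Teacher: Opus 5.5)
Your proposal is correct and follows the same route as the paper: compare the Toeplitz section with the block circulant via a bounded-rank, bounded-norm difference, block-diagonalize the circulant by the discrete Fourier transform and take Riemann sums, then handle the spectral form by telescoping powers and Weierstrass approximation. The only deviation is the singular-value step. You use Weyl's rank-$k$ interlacing to get an $O(1)$ bound on the unnormalized trace difference, whereas the paper uses the Minkowski estimate $(n^{-1}\mathrm{Tr}_{nd}|E_n|^p)^{1/p}\le 2R(2rd/n)^{1/p}$; your version is slightly sharper and also covers $0<p<1$, while the paper's argument needs $p\ge1$.
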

\begin{proof}
Let $S_n$ be the cyclic shift with a one at $(i,j)$ when
$j-i\equiv1\pmod n$, and set
\[
 C_n(Q)=\sum_{k=-r}^rS_n^k\otimes Q_k,\qquad
 R=\sum_{k=-r}^r\|Q_k\|.
\]
The difference $E_n=A_n(Q)-C_n(Q)$ is supported in the first and last
$r$ block rows.  Hence
\[
 \operatorname{rank}E_n\le2rd,\qquad
 \|E_n\|\le2R,\qquad
 \|A_n(Q)\|,\ \|C_n(Q)\|\le R.
\]
The last bound for the Toeplitz matrix follows by compressing the
Laurent operator, or directly by the triangle inequality for its
partial-shift terms.  Therefore
\[
 \left(\frac1n\operatorname{Tr}_{nd}|E_n|^p\right)^{1/p}
 \le2R(2rd/n)^{1/p}\longrightarrow0.
\]
The discrete Fourier transform on the first tensor factor makes
$C_n(Q)$ block diagonal, with blocks $Q(\zeta_n^\ell)$,
$0\le\ell<n$, where $\zeta_n=e^{2\pi i/n}$.  The triangle inequality
in the Schatten $p$ norm scaled by $n^{-1/p}$ and the Riemann-sum limit prove
\eqref{eq:commontoeplitzsingular}.  This uses singular values, without
assuming that $Q$ or its Toeplitz sections are normal.

For the second assertion both matrices are Hermitian.  For an integer
$k\ge1$, telescoping the difference of their $k$th powers and using
$\|E_n\|_1\le2R\operatorname{rank}E_n$ gives
\[
 \frac1n\left|\operatorname{Tr}_{nd}
       (A_n(Q)^k-C_n(Q)^k)\right|
 \le\frac{2kR^k\operatorname{rank}E_n}{n}
 \longrightarrow0.
\]
The constant polynomial has equal traces on both sides.  The same
conclusion follows for every polynomial, and uniform polynomial
approximation on the common spectral interval proves the assertion for
$g$.  Fourier block diagonalization and the Riemann-sum limit now yield
\eqref{eq:commontoeplitzspectral}.
\end{proof}

\subsection{Nilpotent Toeplitz realization}
Let $S_n$ have ones on its first superdiagonal and zeros elsewhere.
For $q(z)=\sum_{k=1}^d\alpha_kz^k$ put $N_n=q(S_n)$, which is
strictly upper triangular. For real $a,b$, set
\[
 X_n=a\Rea N_n+b\Ima N_n,
 \qquad h(t)=a\Rea q(e^{it})+b\Ima q(e^{it}).
\]

For the scalar Hermitian symbol $Q(e^{it})=h(t)$, the Toeplitz
section $A_n(Q)$ is exactly $X_n$.
Lemma~\ref{lem:commontoeplitz}, with $d=1$, therefore gives, for
every continuous real function $g$ on a common interval containing
the spectra of $X_n$ and the range of $h$,
\begin{equation}\label{eq:toeplitz}
 \lim_{n\to\infty}\frac1n\tr g(X_n)
 =\int_0^{2\pi}g(h(t))\frac{\dd t}{2\pi}.
\end{equation}

Taking $g(s)=|s|^p$ and summing the finitely many directions gives
\begin{equation}\label{eq:ridgeLimit}
 \frac1n\mathcal F_p(\Rea q(S_n),\Ima q(S_n))
 \longrightarrow\int_0^{2\pi}F(q(e^{it}))\frac{\dd t}{2\pi}.
\end{equation}
Thus (N) implies (P). It also follows that a strict failure of (P)
produces a finite nilpotent counterexample for all sufficiently
large $n$.

\begin{lemma}\label{lem:polar}
For $p>0$ and $h\in L^1(\T;\R)$ put $V(re^{i\theta})=r^ph(\theta)$ for $r>0$
and $\rho=h''+p^2h$ distributionally. For every $\varphi\in C_c^\infty(\C)$,
\[
 \langle\Delta V,\varphi\rangle
 =\int_0^\infty r^{p-1}\langle\rho,\varphi(re^{i\cdot})\rangle\dd r.
\]
If $\rho$ is a finite signed measure, $h$ has a Lipschitz representative
whose extension with $V(0)=0$ is continuous; $\rho\ge0$ makes it subharmonic.
If $p>1$, $h\in W^{2,q}(\T)$, $q>1$, and $q(p-2)+2>0$,
then $V\in C^1(\C)\cap W^{2,q}_{\rm loc}(\C)$.
\end{lemma}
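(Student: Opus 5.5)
Write $\varphi_r(\theta)=\varphi(re^{i\theta})$. The plan is to compute $\langle\Delta V,\varphi\rangle=\int_\C V\Delta\varphi\dd m_2$ in polar coordinates, where
\[
 \Delta\varphi=\partial_r^2\varphi_r+r^{-1}\partial_r\varphi_r+r^{-2}\partial_\theta^2\varphi_r .
\]
Since $V=r^ph(\theta)$ with $h\in L^1$, $p>0$, and $\varphi$ is compactly supported, the product $V\Delta\varphi$ is integrable with respect to $r\dd r\dd\theta$. The angular term gives
\[
 \int_0^\infty r^{p-1}\langle h,\partial_\theta^2\varphi_r\rangle\dd r
 =\int_0^\infty r^{p-1}\langle h'',\varphi_r\rangle\dd r
\]
by the distributional definition on $\T$. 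For the radial part I will set $\Phi(r)=\langle h,\varphi_r\rangle$. This is smooth for $r>0$, bounded with bounded derivatives near $0$, and compactly supported in $r$. The radial integral is then
\[
 \int_0^\infty r^{p+1}\bigl(\Phi''+r^{-1}\Phi'\bigr)\dd r .
\]
Integrating by parts twice gives $p^2\int r^{p-1}\Phi\dd r$. The boundary terms at $0$ are $r^{p+1}\Phi'$ and $r^p\Phi$ with coefficient factors, and they vanish because $p>0$. This yields the formula with $\rho=h''+p^2h$.

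\textbf{Measure case.} If $\rho$ is a finite signed measure, then $h''=\rho-p^2h$ is a finite measure, so $h'$ has bounded variation and $h$ is Lipschitz. Then $|V(z)|\le\|h\|_\infty|z|^p\to0$, so setting $V(0)=0$ makes $V$ continuous. If $\rho\ge0$, the identity shows $\langle\Delta V,\varphi\rangle\ge0$ for $\varphi\ge0$. A continuous function with nonnegative distributional Laplacian is subharmonic, by the standard mollification argument.

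\textbf{Sobolev case.} This is the main bookkeeping step. Take $h\in W^{2,q}(\T)\subset C^1$. Away from $0$, $V$ is $W^{2,q}_{\rm loc}$ because its polar second derivatives involve $h,h',h''$ with smooth coefficients. Each second derivative is bounded by $r^{p-2}(|h|+|h'|+|h''|)(\theta)$, so its $L^q$ norm on the unit disk is at most a constant times
\[
 \Bigl(\int_0^1 r^{q(p-2)+1}\dd r\Bigr)^{1/q},
\]
which is finite exactly when $q(p-2)+2>0$. The first derivatives are $O(r^{p-1})$, which tends to $0$ since $p>1$, and they are continuous off $0$. Hence $V\in C^1$ with $\nabla V(0)=0$.

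It remains to check that the pointwise second derivatives on $\C\setminus\{0\}$ are the distributional ones on $\C$. I will test against $\varphi$ and excise a disk of radius $\epsilon$. The boundary terms are $O(\epsilon\cdot\epsilon^{p-1})\to0$, because $\nabla V$ is bounded near $0$. The integrability just shown lets $\epsilon\to0$, which completes the argument.
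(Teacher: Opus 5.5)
Your proposal is correct and follows the paper's proof step for step. That proof uses polar integration by parts with origin boundary terms of order $r^p$ and $r^{p+1}$, and a periodic primitive giving $h'\in BV$ and Lipschitz $h$. It then bounds the Hessian by $Cr^{p-2}(|h|+|h'|+|h''|)$, controls it through $\int_0^1 r^{q(p-2)+1}\dd r$, and excises an $\varepsilon$-disk whose gradient boundary terms are $O(\varepsilon^p)$. Your mollification argument for subharmonicity simply makes explicit the paper's remark that $\Delta V$ is the positive measure $r^{p-1}\dd r\,\rho$ with no atom at the origin.
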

\begin{proof}
Polar integration by parts gives the identity, with vanishing origin boundary terms
$O(r^p)+O(r^{p+1})$. A finite $\rho$ makes $h''$ finite; a periodic
primitive gives $h'\in BV$ and $h$ Lipschitz. The radial measure
$r^{p-1}\dd r\,\rho$ has no origin atom. In the Sobolev case $h\in C^1(\T)$; the gradient is $O(r^{p-1})$
and extends by zero, while the Hessian is bounded by
$Cr^{p-2}(|h|+|h'|+|h''|)$. The radial integral $\int_0^1r^{q(p-2)+1}\dd r$ is finite;
gradient boundary terms $O(\varepsilon^p)$ make it the global weak Hessian.
\end{proof}

\subsection{The homogeneous Poisson envelope}\label{sec:envelope}
For a continuous real function $G$ on $\C$, let
\begin{equation}\label{eq:envelope}
 \mathcal P(-G)(z)=
 \inf_{\substack{h\text{ holomorphic near }\overline\D\\h(0)=z}}
 \int_0^{2\pi}-G(h(e^{it}))\frac{\dd t}{2\pi}.
\end{equation}
The Poletsky--Rosay theorem says that this function is subharmonic
(possibly identically $-\infty$) and is the largest subharmonic
minorant of $-G$. The disc maps extend holomorphically past the closed
unit disc; constant discs are allowed. We use the theorem on $\C$
only; see \cite{Poletsky,Rosay} and the formulation in
\cite[Section~2]{LS}.
The analytic-test/majorant implication in the next lemma is also
Cole's principle \cite[Theorem~2.1]{HKV}, applied to $-G$ with
the test set $E=\{0\}$. We include the envelope argument to specify
the least majorant and the angular regularity used below.

\begin{lemma}
\label{lem:envelope}
Let $G$ be continuous and $p$-homogeneous. Suppose
$\int G(q(e^{it}))\dd t/(2\pi)\le0$ for every polynomial $q$ with
$q(0)=0$. Then
\[
 U_*=-\mathcal P(-G)
\]
is a finite continuous $p$-homogeneous least superharmonic majorant
of $G$. Its angular part is Lipschitz. It is harmonic on the open set
$\{U_*>G\}$.
\end{lemma}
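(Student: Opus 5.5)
We work with $W=\mathcal P(-G)$, the largest subharmonic minorant of $-G$ given by the Poletsky--Rosay theorem, and set $U_*=-W$. Then $U_*$ is superharmonic and satisfies $U_*\ge G$. Any superharmonic majorant $U$ of $G$ has $-U$ subharmonic with $-U\le -G$, hence $-U\le W$, i.e.\ $U\ge U_*$. So leastness is automatic once $U_*$ is shown to be finite. The remaining tasks are finiteness, homogeneity, continuity with Lipschitz angular part, and harmonicity off the contact set.

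\emph{Finiteness and homogeneity.} At $z=0$ we use the hypothesis. Every disc $h$ holomorphic near $\overline\D$ with $h(0)=0$ is a uniform limit on $\overline\D$ of its Taylor polynomials $q$, which satisfy $q(0)=0$. By continuity of $G$, the test $\int G(q(e^{it}))\,dt/2\pi\le0$ passes to $h$. Hence $W(0)\ge0$, and the constant disc gives $W(0)\le -G(0)=0$, so $W(0)=0$.

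For scaling, the map $h\mapsto rh$ with $r>0$ is a bijection of the admissible discs at $z$ onto those at $rz$. Since $G(rw)=r^pG(w)$, this gives $W(rz)=r^pW(z)$. If $W(z_0)=-\infty$ at some $z_0\ne0$, homogeneity makes $W=-\infty$ on the whole ray through $z_0$. That ray is a set of positive capacity, so the subharmonic function $W$ would be identically $-\infty$, contradicting $W(0)=0$. Thus $W$ is finite on $\C$. It is also bounded above on the unit circle by $\max(-G)$.

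\emph{Continuity and angular regularity.} Write $W(re^{i\theta})=r^p w(\theta)$. Lemma~\ref{lem:polar} shows that $\rho=w''+p^2w$ is the angular density of $\Delta W\ge0$ in the sense of distributions, so $\rho\ge0$. I expect this step to be the main obstacle: to apply the Lipschitz conclusion of Lemma~\ref{lem:polar} we need $\rho$ to be a \emph{finite} measure, which in turn needs $w\in L^1(\T)$.

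Upper semicontinuity bounds $w$ above on $\T$. For integrability I would use the sub-mean-value property on a small circle around a point of the unit circle where $w$ is finite, combined with homogeneity in $r$, to get $w\in L^1_{\mathrm{loc}}$ and hence $w\in L^1(\T)$. Testing $\rho$ against the constant function $1$ then gives total mass $\int\rho=p^2\int w<\infty$, so $\rho$ is a finite positive measure. Lemma~\ref{lem:polar} now yields that $w$ has a Lipschitz representative and that $W$ extends continuously with $W(0)=0$.

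We must also check that this representative is the upper semicontinuous function $W$ itself, not just a.e.\ equal to it. Both are subharmonic and agree almost everywhere, and a subharmonic function is determined pointwise by its limits of averages, so they coincide everywhere.

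\emph{Harmonicity off the contact set.} The set $\Omega=\{U_*>G\}=\{W<-G\}$ is open because both functions are continuous. We use the standard balayage step. Take a small disc $B$ with $\overline B\subset\Omega$ on which $-G>W+\varepsilon$, and form the Poisson modification $\widetilde W$ of $W$ on $B$. This function is subharmonic and satisfies $\widetilde W\ge W$.

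On $\overline B$, the function $\widetilde W$ is the Poisson integral of the continuous boundary values of $W$. These are at most $-G-\varepsilon$ on $\partial B$, and for $B$ small enough the uniform continuity of $G$ makes the Poisson integral lie below $-G$ throughout $B$. So $\widetilde W$ is still a subharmonic minorant of $-G$, and maximality gives $\widetilde W=W$. Hence $W$, and therefore $U_*$, is harmonic on $B$, and so on all of $\Omega$.
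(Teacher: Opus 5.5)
Your proposal is correct and follows essentially the same route as the paper. Both use Taylor approximation of discs to get $W(0)=0$, scaling of discs for homogeneity, local integrability plus Lemma~\ref{lem:polar} for a finite positive angular measure and a Lipschitz profile, almost-everywhere identification of the two subharmonic functions, and a Poisson replacement on small discs in the noncontact set for harmonicity. Your only addition, the polar-set argument for pointwise finiteness, is valid but unnecessary: the almost-everywhere identification with the continuous function from Lemma~\ref{lem:polar} already gives finiteness everywhere.
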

\begin{proof}
Taylor polynomials approximate each disc in \eqref{eq:envelope}
uniformly on $\overline\D$, preserving its center; continuity of $G$
on their common compact range passes the boundary averages to the
limit. Put $v=\mathcal P(-G)$. Every disc centered at zero therefore
has boundary average of $-G$ at least zero, and the zero disc has
average zero.  Hence
\begin{equation}\label{eq:vzero}
 v(0)=0.
\end{equation}
The Poletsky--Rosay theorem therefore gives a subharmonic $v$ which is
not identically $-\infty$ and is the largest subharmonic minorant of
$-G$.

For $r>0$, multiplication by $r$ is a bijection between discs centered
at $z$ and discs centered at $rz$.  Since $G(r\zeta)=r^pG(\zeta)$,
\begin{equation}\label{eq:vhomogeneous}
 v(rz)=r^p v(z)
\end{equation}
first in the extended-real sense.  We next show that no exceptional
$-\infty$ values occur.

Local integrability, Fubini and \eqref{eq:vhomogeneous} give
$h\in L^1(\T)$ with $v(re^{i\theta})=r^ph(\theta)$ almost everywhere.
Nonnegative product tests supported in an annulus in Lemma~\ref{lem:polar} give
\begin{equation}\label{eq:angular-positive}
 \rho=h''+p^2h\ge0.
\end{equation}
Positivity on the compact circle makes $\rho$ a finite measure.
The lemma gives a continuous subharmonic $V$, with $V(0)=0$ and Lipschitz $h$.
Its almost-everywhere agreement with $v$ fixes the upper-semicontinuous
subharmonic representative, so $v=V$ everywhere.

Set $U_*=-v$.  Since $v\le-G$, $U_*\ge G$; and because $v$ is the
largest subharmonic minorant of $-G$, $U_*$ is the least superharmonic
majorant of $G$.  It remains to prove harmonicity on the noncontact
set.  Fix $z_0$ with $U_*(z_0)>G(z_0)$.  By continuity, choose a closed
disc $\overline D(z_0,r)$ contained in this set so small that
\[
 \inf_{\partial D}U_* > \sup_{\overline D}G.
\]
Let $H_D$ be the harmonic extension to $D$ of $U_*|_{\partial D}$.
The minimum principle gives
$H_D\ge\inf_{\partial D}U_*>G$ on $D$, while superharmonicity gives
$H_D\le U_*$. Replace $U_*$ by $H_D$ in $D$. The replacement
$\widetilde U$ is continuous and superharmonic on either side of
$\partial D$; at a boundary point it equals $U_*$ and is everywhere
at most $U_*$, so its circle averages are at most those of $U_*$.
This proves the super-mean-value inequality there as well. Since
$\widetilde U\ge G$, minimality forces $\widetilde U=U_*$.
Thus $U_*$ is harmonic on the noncontact set.
\end{proof}

If $G(-z)=G(z)$, the transformation $h\mapsto-h$ in the envelope
shows $U_*(-z)=U_*(z)$. Notice that this lemma did not use matrices;
the connection to matrices is \eqref{eq:ridgeLimit}.

\subsection{Regularity and completion of the proof}\label{sec:regularity}
\begin{lemma}\label{lem:regularity}
In Lemma~\ref{lem:envelope}, suppose the angular obstacle
$f(\theta)=G(e^{i\theta})$ belongs to $C^1(\T)\cap W^{2,q}(\T)$
for some $q>1$. Then the angular majorant $u$ belongs to
$C^1(\T)\cap W^{2,q}(\T)$, and, almost everywhere,
\begin{equation}\label{eq:contact}
 u''=f''\mathbf1_{\{u=f\}}-p^2u\mathbf1_{\{u>f\}}.
\end{equation}
If additionally $q(p-2)+2>0$, then $U_*\in\mathcal U_p$.
\end{lemma}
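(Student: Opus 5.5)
We treat this as a one-dimensional obstacle problem on the circle. By Lemma~\ref{lem:envelope}, $U_*(re^{i\theta})=r^pu(\theta)$ with $u$ Lipschitz. By \eqref{eq:angular-positive} applied to $-U_*$, the distribution $\mu=-(u''+p^2u)$ is a nonnegative finite measure. On the open set $\{u>f\}$, harmonicity of $U_*$ on the noncontact cone together with Lemma~\ref{lem:polar} gives $u''+p^2u=0$, so $\mu$ is supported on the closed contact set $K=\{u=f\}$. The proof has four steps.

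\textbf{Step 1: $\mu$ is absolutely continuous with an $L^q$ density bounded by $(f''+p^2f)^-$.} Fix a point of $K$. Near it, $u-f\ge0$, and $u-f$ attains its minimum $0$ on $K$. Heuristically, $u''\ge f''$ at contact points, so $\mu\le(-(f''+p^2f))^+$ restricted to $K$.

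To make this rigorous, I would use the standard Lewy--Stampacchia argument in one variable. Locally, $u$ is the least supersolution of $\mathcal L w=w''+p^2w\le0$ lying above $f$. Work on a short arc of length less than $\pi/p$, where $\mathcal L$ has a maximum principle. There, compare $u$ with the solution of $\mathcal L w=-(\mathcal L f)^-\,$ on the arc with boundary values $u$. Minimality then gives $0\le\mu\le(\mathcal L f)^-\mathbf 1_K$ as measures.

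Since $f\in W^{2,q}$, the density $(\mathcal Lf)^-\in L^q$. Hence $u''=-p^2u-\mu\in L^q$, so $u\in W^{2,q}(\T)\subset C^1(\T)$.

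\textbf{Step 2: the a.e.\ identity \eqref{eq:contact}.} Off $K$ the relation $u''=-p^2u$ is already known. On $K$, both $u$ and $f$ lie in $W^{2,q}$ and $u-f\ge0$ vanishes there. By the Stampacchia property, applied twice, second derivatives of $W^{2,q}$ functions vanish a.e.\ on level sets. Precisely, $(u-f)'=0$ a.e.\ on $K$; since $(u-f)'$ is absolutely continuous, $(u-f)''=0$ a.e.\ on $\{(u-f)'=0\}$. Hence $u''=f''$ a.e.\ on $K$, which gives \eqref{eq:contact}.

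\textbf{Step 3: membership in $\mathcal U_p$.} With $u\in W^{2,q}(\T)$ and $q(p-2)+2>0$, Lemma~\ref{lem:polar} yields $U_*\in C^1(\C)\cap W^{2,q}_{\rm loc}(\C)$. Superharmonicity is already known, so $U_*\in\mathcal U_p$.

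\textbf{Main obstacle.} The delicate step is the comparison in Step~1. The operator $\partial_\theta^2+p^2$ lacks a global maximum principle on $\T$, so localization to arcs shorter than $\pi/p$ is essential. One must also check that the local replacement preserves the global superharmonic-majorant property. This parallels the harmonic-replacement argument in Lemma~\ref{lem:envelope}, now done with a $p$-homogeneous cone over the arc. If that comparison proves awkward, I would instead penalize, using $w_\varepsilon''+p^2w_\varepsilon=-\beta_\varepsilon(w_\varepsilon-f)$ on arcs, and pass to the limit with a uniform $L^q$ bound on $\beta_\varepsilon$.
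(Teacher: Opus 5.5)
Your route (a Lewy--Stampacchia bound, then Stampacchia's level-set lemma) is different from the paper's and can be made to work. However, Step~1 as written does not prove the measure bound. Comparing $u$ on a single arc $I$ with the solution $w_I$ of $\mathcal L w_I=-(\mathcal Lf)^-$, $w_I=u$ on $\partial I$, and invoking minimality gives only the pointwise inequality $u\le w_I$ on $I$. One such inequality with equal boundary values does not imply $\mathcal L(u-w_I)\ge0$. Two things are missing.

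First, the competitor must be admissible. Since $\mathcal L(w_I-f)=-(\mathcal Lf)^+\le0$ and $w_I-f=u-f\ge0$ on $\partial I$, the short-arc maximum principle gives $w_I\ge f$. Pasting $\min(u,w_I)$ into $u$ then gives a continuous $p$-homogeneous superharmonic majorant, whence $u\le w_I$.

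Second, you must use this on every subarc. Fix a short arc $I_0$ and put $h=u-w_{I_0}$. For $(\theta-t,\theta+t)\subset I_0$, the difference $w_{(\theta-t,\theta+t)}-w_{I_0}$ is the $\mathcal L$-harmonic interpolant of $h$ on that subarc. Evaluating at the midpoint gives $h(\theta+t)+h(\theta-t)\ge2\cos(pt)\,h(\theta)$. Dividing by $t^2$ and letting $t\downarrow0$ gives $h''+p^2h\ge0$ distributionally, that is, $\mu\le(\mathcal Lf)^-$ on $I_0$. With this repair, your Steps~2 and~3 are correct.

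The paper needs none of this comparison machinery; it treats $u'$ as a BV function. At $\theta\in K$, since $u-f\ge0$ vanishes at $\theta$, one has $u'_-(\theta)\le f'(\theta)\le u'_+(\theta)$. The atom of $u''$ at $\theta$ is $u'_+(\theta)-u'_-(\theta)=-\rho\{\theta\}\le0$. Hence there are no atoms, $u'$ is continuous, and $u'=f'$ on $K$.

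The paper then runs your Step~2 one derivative lower in regularity. For the continuous BV function $w=u'-f'$, the chain rule applied to $\varepsilon\chi(w/\varepsilon)$ with $\chi'(0)=1$ shows that $Dw$ does not charge $\{w=0\}\supset K$. So $u''=f''$ on $K$ as measures, and $\rho=-(f''+p^2f)\mathbf 1_K\,d\theta$. In other words, the Stampacchia property holds for continuous BV functions, which makes your a priori absolute continuity of $\mu$ unnecessary. Even in your scheme, Step~2 only needs $\mu\ll d\theta$, since it then produces the $L^q$ density itself.

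Your route gives the quantitative two-sided bound $0\le\mu\le(\mathcal Lf)^-\mathbf 1_K$ without using one-sided derivatives, so it would carry over to higher-dimensional obstacle problems. The paper's argument is shorter and purely one-dimensional, and it avoids localizing to arcs shorter than $\pi/p$.
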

\begin{proof}
The preceding lemma gives Lipschitz $u$ and
$u''=-p^2u-\rho$ for a finite positive measure $\rho$.  Thus $u'$ has
a bounded-variation representative and therefore has one-sided limits
at every point.  On $O=\{u>f\}$, harmonicity gives
$u''=-p^2u$, so $\rho$ is supported on the closed contact set
$K=\{u=f\}$.

Fix $\theta\in K$.  Since $u-f\ge0$ and $(u-f)(\theta)=0$, the left
and right difference quotients give
\[
 u'_-(\theta)\le f'(\theta)\le u'_+(\theta).
\]
The atom of the distributional derivative $u''$ at $\theta$ is the
jump of the BV function $u'$, hence
\[
 u'_+(\theta)-u'_-(\theta)=-\rho\{\theta\}\le0.
\]
The two inequalities force
$u'_-(\theta)=u'_+(\theta)=f'(\theta)$ and
$\rho\{\theta\}=0$.  There are no atoms on $O$ either.  A BV function
whose derivative has no atoms has a continuous representative, so
$u'$ is continuous and $u'=f'$ on $K$.

Put $w=u'-f'$, a continuous BV function vanishing on $K$.
The derivative of a continuous BV function charges no level set. Indeed,
choose $\chi\in C_c^1(\R)$ with $\chi'(0)=1$. The continuous BV
chain rule gives
\[
 D[\varepsilon\chi(w/\varepsilon)]
 =\chi'(w/\varepsilon)Dw.
\]
This chain rule follows from the mean-value theorem in the Stieltjes
sums, using continuity of $w$. As $\varepsilon\downarrow0$, the
left side tends to zero distributionally, whereas dominated convergence
against $|Dw|$ makes the right side tend to $Dw|_{\{w=0\}}$.
Thus $Dw|_K=0$. On $O$ we have $Dw=(-p^2u-f'')\dd\theta$, so
$Dw=(-p^2u-f'')\mathbf1_O\dd\theta$ globally.
This proves \eqref{eq:contact} and $u\in C^1(\T)\cap W^{2,q}(\T)$.

Lemma~\ref{lem:polar} gives the asserted planar regularity;
domination and superharmonicity already follow from the envelope.
\end{proof}

For a finite ridge sum, each nonzero angular direction is a multiple
of $|\cos(\theta-\alpha)|^p$. If $1<p<2$, it belongs to $W^{2,q}$
for every $1<q<1/(2-p)$; if $p\ge2$, any finite $q>1$ is allowed.
These choices also satisfy the radial condition in
Lemma~\ref{lem:regularity}.

\begin{proof}[Proof of Theorem~\ref{thm:classification}]
Lemma~\ref{lem:commontoeplitz} proves (N)$\Rightarrow$(P), even if only
the indicated Toeplitz nilpotents are tested. Lemmas
\ref{lem:envelope} and \ref{lem:regularity} prove (P)$\Rightarrow$(S)
with the claimed least majorant and contact equations.
For (S)$\Rightarrow$(N), apply \eqref{eq:certificate}. Both remainder terms on
the right are nonnegative, while every eigenvalue of $N$ is zero
and $U(0)=0$. Thus $\mathcal F_p\le0$.

It remains to prove \eqref{eq:optimalcorrection}. The spectral identity
applied to $U_*$ gives, whenever $\spec T=\{z\}$,
$n^{-1}\mathcal F_p(\Rea T,\Ima T)\le U_*(z)$.
Conversely, apply Lemma~\ref{lem:commontoeplitz} to each translated real
pencil of $zI_n+q(S_n)$ to obtain
\[
 \lim_{n\to\infty}\frac1n\mathcal F_p
    \bigl(\Rea(zI_n+q(S_n)),\Ima(zI_n+q(S_n))\bigr)
 =\int_0^{2\pi}F(z+q(e^{it}))\frac{\dd t}{2\pi}.
\]
By Taylor approximation of discs centered at $z$, the supremum of
these integrals is $-\mathcal P(-F)(z)=U_*(z)$. All the shifted
Toeplitz matrices have spectrum $\{z\}$, which proves both equalities.
If \eqref{eq:universalcorrection} holds, these same tests force
$G(z)\ge U_*(z)$. The converse follows by summing $G\ge U_*$ in the
spectral inequality.
\end{proof}

\begin{remark}
If $p=2$ and $F(x,y)=\alpha x^2+2\beta xy+\gamma y^2$, then for
nilpotent $N=A+iB$, the identity $\tr N^2=0$ gives
$\tr A^2=\tr B^2$ and $\tr AB=0$. Thus
\[
 \mathcal F_2(A,B)=(\alpha+\gamma)\tr A^2.
\]
The nilpotent inequality holds exactly when $\alpha+\gamma\le0$.
This is also exactly $\Delta F\le0$, and $U=F$ is a superharmonic majorant.
This also identifies the harmonic angular mode of order two.
\end{remark}

\begin{remark}
Angular smoothing of an arbitrary majorant preserves superharmonicity
but need not preserve exact domination of the original obstacle.
Adding $\varepsilon|z|^p$ can restore domination but may destroy
superharmonicity. The contact-set regularity argument produces an
admissible majorant at the optimal constant and preserves both
conditions.
\end{remark}

\begin{corollary}
\label{cor:momentcone}\label{cor:sharp}
Let $1<p<\infty$. For fixed real directions $(a_j,b_j)_{j=1}^m$, the closed cones
generated by
\[
 \bigl(\tr|a_j\Rea N+b_j\Ima N|^p\bigr)_{j=1}^m
 \quad\text{and}\quad
 \left(\int_0^{2\pi}|a_j\Rea q(e^{it})+b_j\Ima q(e^{it})|^p
                    \frac{\dd t}{2\pi}\right)_{j=1}^m
\]
coincide, where $N$ ranges over finite nilpotents and $q(0)=0$.
Let $P$ be a finite signed real ridge sum and let $Q\not\equiv0$
be a finite real ridge sum with nonnegative coefficients, with trace
functionals $\mathcal P_p,\mathcal Q_p$. Then
\begin{align}
 \gamma_*
 &:=\sup_{N\ne0\ {\rm nilpotent}}
       \frac{\mathcal P_p(\Rea N,\Ima N)}
            {\mathcal Q_p(\Rea N,\Ima N)}\label{eq:sharpclass}\\
 &=\sup_{\substack{q\ne0\ {\rm polynomial}\\q(0)=0}}
       \frac{\int P(q(e^{it}))\dd t}{\int Q(q(e^{it}))\dd t}
 \in\R.\label{eq:sharpdisc}
\end{align}
The parameter $\gamma_*$ is the least real $\gamma$ for which
$P-\gamma Q$ has an admissible superharmonic majorant, and such a
majorant exists at $\gamma=\gamma_*$.
\end{corollary}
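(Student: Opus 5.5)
The plan is to reduce everything to Theorem~\ref{thm:classification} applied to the signed ridge sum $F_\gamma=P-\gamma Q$, together with the Toeplitz limit \eqref{eq:ridgeLimit}.

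\emph{Cone equality.} By \eqref{eq:ridgeLimit}, applied to each direction separately, $n^{-1}$ times the trace vector of $q(S_n)$ converges to the disc-average vector of $q$. Hence the disc cone is contained in the closed nilpotent cone. Conversely, suppose some nilpotent vector $v$ lies outside the closed convex disc cone $K_D$. Finite-dimensional separation gives real $c_j$ with $\sum c_jv_j>0$ but $\sum c_jk_j\le0$ on $K_D$. The latter is condition (P) for $F=\sum c_j|a_jx+b_jy|^p$, so (N) holds and $\sum c_jv_j\le0$, a contradiction.

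Convexity of the nilpotent cone needs a remark. Direct sums of nilpotents are nilpotent, so the set of trace vectors is closed under addition, and it is closed under positive scaling by homogeneity. The disc set is likewise additive, at least asymptotically in the closure: concatenate the symbols $q_1(z)$ and $q_2(z^M)z^{K}$, or simply note that the cone generated is taken closed and convex. In any case, separation is applied to the closed convex hulls, and both closed cones are convex.

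\emph{Quotients.} Set
\[
\gamma_D=\sup_q \int P(q)\Big/\int Q(q).
\]
Finiteness comes first. Since $Q\not\equiv0$ has nonnegative coefficients, $\int Q(q(e^{it}))\,dt\ge c\int|q|^p$ for nonconstant analytic $q$? This fails if $Q$ vanishes along a line, so a subtler argument is needed. For $q(0)=0$, the analytic function $q$ cannot have boundary values on a single line through the origin unless $q\equiv0$. Taking one ridge direction $(a,b)\ne0$ with positive weight, $a\Rea q+b\Ima q$ is the real part of $(a-ib)q$, and the conjugate-function bound (the case $P=|x|^p$, $Q=|\Rea|^p$ after rotation, i.e.\ Corollary~\ref{prop:coordinatepath} or Pichorides) controls $\int|q|^p$ by $\int|\Rea((a-ib)q)|^p$. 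Since $|P|\le C|z|^p$, this gives $\gamma_D<\infty$. Taking any nonzero $q$ gives $\gamma_D>-\infty$.

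The equality $\gamma_*=\gamma_D$ follows from the cone equality: both suprema are suprema of the linear ratio $\langle p,v\rangle/\langle w,v\rangle$ over the two cones, excluding $\langle w,v\rangle=0$. The ratio is bounded by $\gamma_D$ on the closed disc cone, so $\langle p-\gamma_Dw,v\rangle\le0$ there, hence also on the nilpotent cone.

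One point needs care: a nonzero nilpotent $N$ must have $\mathcal Q_p(N)>0$. By the same conjugate estimate transferred through Corollary~\ref{prop:coordinatepath}, $\|N\|_p\lesssim\|a\Rea N+b\Ima N\|_p$, applied to the nilpotent $(a-ib)N$.

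\emph{Least majorant characterization.} For any $\gamma$, Theorem~\ref{thm:classification} says $P-\gamma Q$ has an admissible majorant iff (P) holds, i.e.\ iff $\int(P-\gamma Q)(q)\le0$ for all $q$, i.e.\ iff $\gamma\ge\gamma_D$ (using $\int Q(q)>0$ for $q\ne0$). In particular it holds at $\gamma=\gamma_*$ itself, since the inequality is non-strict, and existence follows from (P)$\Rightarrow$(S).

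The main obstacle is the nondegeneracy step, namely strict positivity of $Q$ on nonzero analytic tests and nilpotents uniformly, which I handle via the sharp coordinate estimate applied after rotating to a positive-weight direction.
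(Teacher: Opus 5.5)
Your proof is correct and follows essentially the paper's route: cone equality from the classification's (N)$\Leftrightarrow$(P) plus finite-dimensional separation, strict positivity of both denominators by rotating a positive-weight direction to the real axis and using the coordinate (Pichorides) estimate, and then the classification applied to $P-\gamma Q$ for each real $\gamma$. The differences are cosmetic. You prove finiteness on the disc side, whereas the paper bounds $|\mathcal P_p|\le C\mathcal Q_p$ directly on nilpotents. Your symbol-concatenation aside about additivity of disc vectors is unnecessary, and it is not actually additive; the cones are convex by definition, so nothing depends on it.
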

\begin{proof}
The two closed cones have the same polar by
Theorem~\ref{thm:classification}, so finite-dimensional separation
identifies them. Choose a term $\eta|ax+by|^p$ of $Q$, with
$\eta>0$ and $a^2+b^2>0$. Rotate a nilpotent by $a-ib$ and apply
Corollary~\ref{prop:coordinatepath} to obtain, for every real $u,v$,
\[
 \|uA+vB\|_p\le
 \frac{|ua+vb|+C_p|va-ub|}{a^2+b^2}\,\|aA+bB\|_p.
\]
Thus $|\mathcal P_p(A,B)|\le C\mathcal Q_p(A,B)$ for a finite
coefficient-dependent $C$. For this direction, $aA+bB=0$ would make
the rotated nilpotent skew-Hermitian, hence zero. A vanishing scalar
boundary part similarly forces $q=0$ by harmonic uniqueness.
Thus both denominators are positive. Finally, applying the classification to
$P-\gamma Q$ for each real $\gamma$ identifies the two suprema and
supplies a majorant at their common finite value.
\end{proof}

\section{Proof of the unified realization theorem}\label{sec:realizations}
The classification supplies the superharmonic majorants used below.
We first compare the optimal constant in the analytic-disc inequality
with a scalar operator norm on $L^p(\R)$, then use finite-dimensional
realization and sampling to prove the norm identities.

\subsection{Scalar localization and optimal constants}
\begin{lemma}\label{lem:threshold}
For the real pairs and weights in Theorem~\ref{thm:unified},
\begin{equation}\label{eq:thresholdupper}
 \Lambda_p(P)\le\|\mathbf H\|_{\rm scalar}
 \le\left(\sum_jw_jK_p(a_j,b_j)^p\right)^{1/p}<\infty.
\end{equation}
\end{lemma}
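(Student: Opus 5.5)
The plan is to prove the two inequalities in \eqref{eq:thresholdupper} separately. Throughout, $K_p(a,b)$ is read as the scalar norm $\|aI+bH\|_{L^p(\R)\to L^p(\R)}$; the closed form \eqref{eq:Koblique} is established only later. The right inequality is elementary. The left one goes through the analytic-disc description of $\Lambda_p(P)$ and a circle-to-line transference.

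\emph{Upper bound.} For $f\in L^p(\R)$, the definition of the $\ell_p^m$ direct-sum norm gives
\[
 \|\mathbf Hf\|^p=\sum_jw_j\|a_jf+b_jHf\|_p^p\le\sum_jw_jK_p(a_j,b_j)^p\|f\|_p^p .
\]
This yields the second inequality. Finiteness follows from $K_p(a,b)\le|a|+|b|\,\|H\|_{L^p\to L^p}$ and the classical boundedness of $H$ (Pichorides \cite{Pichorides}).

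\emph{Lower bound.} First apply Corollary~\ref{cor:momentcone} with numerator $P$ and denominator $Q(x,y)=|x|^p$. This $Q$ is a nonnegative ridge sum and is not identically zero, so \eqref{eq:sharpclass}--\eqref{eq:sharpdisc} give
\[
 \Lambda_p(P)^p=\sup_{q}\frac{\int_0^{2\pi}P(\Rea q(e^{it}),\Ima q(e^{it}))\dd t}{\int_0^{2\pi}|\Rea q(e^{it})|^p\dd t},
\]
with the supremum over nonzero polynomials $q$ with $q(0)=0$. Now fix such a $q$ and put $u=\Rea q(e^{it})$. This is a real trigonometric polynomial with mean zero, and $\Ima q(e^{it})=\widetilde u$ is its periodic conjugate function, namely the Fourier multiplier $-i\operatorname{sgn}(n)$. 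It therefore suffices to show, for each $j$,
\[
 \|a_ju+b_j\widetilde u\|_{L^p(\T)}^p\le\lim_{\varepsilon\to0}\frac{\|a_jg_\varepsilon+b_jHg_\varepsilon\|_{L^p(\R)}^p}{\|g_\varepsilon\|_{L^p(\R)}^p}\cdot\|u\|_{L^p(\T)}^p ,
\]
where $g_\varepsilon(x)=u(x)\varphi(\varepsilon x)$ for one fixed Schwartz function $\varphi$ with compactly supported Fourier transform, used simultaneously for all $j$.

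\emph{Transference step.} This is the de Leeuw-type argument. The Fourier transform of $g_\varepsilon$ is a finite sum of bumps of width $O(\varepsilon)$ centred at the nonzero frequencies of $u$; there is no bump at the origin because $u$ has mean zero. For small $\varepsilon$ each bump avoids $0$, so on it the multiplier $-i\operatorname{sgn}\xi$ is constant and equal to $-i\operatorname{sgn}(n)$. Hence $Hg_\varepsilon=\widetilde u(x)\varphi(\varepsilon x)$ exactly, once $\varepsilon$ is small. Periodicity then gives
\[
 \varepsilon\|v(\cdot)\varphi(\varepsilon\cdot)\|_{L^p(\R)}^p\to\|\varphi\|_p^p\int_0^{2\pi}|v|^p\frac{\dd t}{2\pi}
\]
for any trigonometric polynomial $v$, in particular for $v=a_ju+b_j\widetilde u$ and for $v=u$.

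\emph{Conclusion.} Summing over $j$ with the weights $w_j$, the ratio of $\|\mathbf Hg_\varepsilon\|^p$ to $\|g_\varepsilon\|_p^p$ converges to the disc ratio for $q$. Each such ratio is at most $\|\mathbf H\|_{\rm scalar}^p$, so taking the supremum over $q$ gives $\Lambda_p(P)\le\|\mathbf H\|_{\rm scalar}$.

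The step I expect to need the most care is this transference. The point to check is that the discontinuity of the symbol at $\xi=0$ is harmless because $q(0)=0$ removes the zero frequency. The same $\varphi$ must serve all components, so that the vector-valued $\ell_p^m$ ratio passes to the limit. Because the identity $Hg_\varepsilon=\widetilde u\,\varphi(\varepsilon\cdot)$ holds exactly for small $\varepsilon$, no approximation error term arises.
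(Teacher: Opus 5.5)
Your proposal is correct and follows essentially the paper's proof. The right inequality is immediate from the $\ell_p^m$ direct-sum norm and the individual pencil bounds, and the left one comes from Corollary~\ref{cor:momentcone} with $Q=|x|^p$ together with the same de~Leeuw-type localization $u(x)\varphi(\varepsilon x)$, where compact Fourier support and the missing zero frequency make $Hg_\varepsilon=\widetilde u\,\varphi(\varepsilon\cdot)$ exact. The only detail worth adding is to take $\varphi$ real and nonzero, as the paper does with its $\chi$, so that the test functions are admissible for the scalar norm and $\|g_\varepsilon\|_p>0$.
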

\begin{proof}
The scalar norm is finite by the individual real-pencil bounds
\cite{HKV,DGZ}. Let $q\ne0$, $q(0)=0$, and put
$f(t)=\Rea q(e^{it})$, $g(t)=\Ima q(e^{it})$.
They are real trigonometric polynomials with zero mean and
$H_\T f=g$, using multiplier $-i\operatorname{sgn}(k)$.
Choose a nonzero real Schwartz function $\chi$ with Fourier
transform supported in $(-1/2,1/2)$, with convention
$\widehat h(\xi)=\int h(t)e^{-it\xi}\dd t$.
For $L\ge1$, the positive- and negative-frequency components of
$f_L(t)=\chi(t/L)f(t)$ have Fourier supports in their respective
half-lines. Consequently,
\[
 Hf_L(t)=\chi(t/L)g(t).
\]
For continuous $2\pi$-periodic $h$,
\[
 \lim_{L\to\infty}\frac1L\int_\R|\chi(t/L)|^p h(t)\dd t
 =\left(\int_\R|\chi(s)|^p\dd s\right)
       \int_0^{2\pi}h(t)\frac{\dd t}{2\pi}.
\]
Approximate $h$ uniformly by trigonometric polynomials and apply
the Riemann--Lebesgue lemma to nonzero frequencies. Applying the
limit to the input and the weighted output $p$th powers shows
that each disc quotient is at most $\|\mathbf H\|_{\rm scalar}^p$.
The denominator is positive, since $\Rea q\equiv0$ and $q(0)=0$
would imply $q=0$.

Corollary~\ref{cor:sharp}, with $Q=|x|^p$, consequently gives
$\Lambda_p(P)\le\|\mathbf H\|_{\rm scalar}$.
\end{proof}

\subsection{Simultaneous finite realization}
The half-shift diagonalization of the scalar sign matrix is already
given in \cite[Lemma~1]{Zhou}: it is the case $\alpha=1/2$,
$V_0=1$, $V_m=0$ for $m\ne0$ of the model below, up to normalization.
We allow arbitrary Hermitian block coefficients and realize all
shifts using the same nilpotent matrix, as required for the joint
operator-norm identity.
For Hermitian $V_0,\ldots,V_{n-1}\in M_d$, put
$S=\sum_mV_m$, $\bar V=S/n$, $\theta_j=2\pi j/n$, and
$\widehat V(t)=\sum_{m=0}^{n-1}V_me^{-imt}$.
The indices $j,k,\ell$ in the finite matrices below range over
$0,\ldots,n-1$. Define the nilpotent block matrix
\begin{equation}\label{eq:finiteN}
 N_{jk}=\begin{cases}
 2\widehat V(\theta_j-\theta_k)/n,&j<k,\\
 0,&j\ge k.
 \end{cases}
\end{equation}
\begin{proposition}\label{prop:finiteFourier}
Let $A=\Rea N$, $B=\Ima N$, $\theta=\pi\alpha$, $0<\alpha<1$, and
\[
 F_{j\ell}=n^{-1/2}e^{-i\ell\theta_j},\qquad
 (G_\alpha)_{j\ell}=n^{-1/2}e^{-i(\ell+\alpha)\theta_j}.
\]
With tensor products by $I_d$ understood,
\begin{align}
 F^*AF&=\diag_\ell(V_\ell-\bar V),\label{eq:integerdiag}\\
 G_\alpha^*(\cos\theta\,A-\sin\theta\,B)G_\alpha
 &=\diag_\ell\left\{
 \frac{\sin\theta}{n}\sum_m\cot\frac{\pi(\ell+\alpha-m)}n\,V_m
                  -\cos\theta\,\bar V\right\}.\label{eq:twisted}
\end{align}
\end{proposition}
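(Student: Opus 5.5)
The plan is to compute both sides entrywise and reduce everything to a circulant-type (twisted circulant) diagonalization by the discrete Fourier transform. Set $M_{jk}=\widehat V(\theta_j-\theta_k)/n=n^{-1}\sum_mV_me^{-im\theta_j}e^{im\theta_k}$. Since every $V_m$ is Hermitian, $\widehat V(t)^*=\widehat V(-t)$. From \eqref{eq:finiteN}, the off-diagonal entries of $A=(N+N^*)/2$ are therefore $A_{jk}=M_{jk}$ for $j\ne k$, and $A_{jj}=0$.

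For \eqref{eq:integerdiag}, I will first note that $M=F\,\diag_m(V_m)\,F^*$, with $F_{jm}=n^{-1/2}e^{-im\theta_j}$. The diagonal of $M$ is the constant block $S/n=\bar V$. Hence $A=M-I_n\otimes\bar V$. Because $F$ is unitary and $I_n\otimes\bar V$ commutes with it, conjugating gives $F^*AF=\diag_\ell(V_\ell-\bar V)$.

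For \eqref{eq:twisted}, I compute $B=(N-N^*)/(2i)$, which gives $B_{jk}=-i\operatorname{sgn}(k-j)M_{jk}$. Thus $X=\cos\theta\,A-\sin\theta\,B$ has zero diagonal and off-diagonal entries
\[
X_{jk}=e^{i\theta\operatorname{sgn}(k-j)}M_{jk}.
\]
The key observation is that $X$ becomes a twisted circulant once the diagonal phase of $G_\alpha$ is absorbed. Conjugating by $\diag_j(e^{i\alpha\theta_j})$ multiplies $X_{jk}$ by $e^{-i\alpha(\theta_k-\theta_j)}$. Write $r=k-j$. For $r<0$, replacing $r$ by $r+n$ changes $e^{-i\alpha\theta_r}$ by the factor $e^{2\pi i\alpha}=e^{2i\theta}$. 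This exactly converts the phase $e^{-i\theta}$ into $e^{i\theta}$. Consequently the conjugated kernel is $V$-weighted and depends only on $r\bmod n$, and it equals
\[
e^{i\theta}e^{-i\alpha\theta_r}e^{im\theta_r}\quad\text{for }r\not\equiv0,
\]
and $0$ for $r\equiv0$. Equivalently, $G_\alpha=\diag_j(e^{-i\alpha\theta_j})F$, so $G_\alpha^*XG_\alpha=F^*(\text{block circulant})F$ is block diagonal. Its $\ell$th block is
\[
\frac1n\sum_mV_m\,e^{i\theta}\sum_{r=1}^{n-1}\omega^r,\qquad \omega=e^{-2\pi i(\ell+\alpha-m)/n}.
\]
Here $\omega\ne1$ because $0<\alpha<1$, and $\omega^n=e^{-2i\theta}$.

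The final step is the geometric-sum identity
\[
e^{i\theta}\sum_{r=1}^{n-1}\omega^r=e^{i\theta}\frac{\omega^n-\omega}{\omega-1}=\sin\theta\cot\frac{\pi(\ell+\alpha-m)}{n}-\cos\theta.
\]
To verify it, write $\omega=e^{-2i\phi}$ with $\phi=\pi(\ell+\alpha-m)/n$, and use
\[
\frac{e^{-2i\theta}-1}{e^{-2i\phi}-1}=\frac{e^{-i\theta}\sin\theta}{e^{-i\phi}\sin\phi},\qquad \frac{-\omega+1}{\omega-1}=-1.
\]
Summing over $m$ and using $\sum_mV_m/n=\bar V$ gives \eqref{eq:twisted}.

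I expect the main obstacle to be the bookkeeping in the twisted-circulant step. The triangular support of $N$ does not have full cyclic index ranges. The fix is the periodicity check above: the sign jump of $e^{i\theta\operatorname{sgn}(k-j)}$ is exactly compensated by $e^{2\pi i\alpha}$ when $r$ wraps around. The $r=0$ term must be excluded to match the zero diagonal, and tracking this exclusion is the delicate part.
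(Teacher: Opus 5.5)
Your proposal is correct and follows essentially the paper's argument. The first identity comes from diagonalizing the full block circulant $M$ by $F$ and removing its constant diagonal $I_n\otimes\bar V$. The second comes from the twisted periodicity of the pencil coefficients, which you implement by factoring $G_\alpha=\diag_j(e^{-i\alpha\theta_j})F$, together with the same geometric-sum/cotangent identity written in complex-conjugate form. The one point to state explicitly is that in your kernel formula $r$ means the representative of $k-j$ in $\{1,\dots,n-1\}$, since $e^{-i\alpha\theta_r}$ itself is not $n$-periodic.
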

\begin{proof}
Hermitian coefficients give $\widehat V(-t)^*=\widehat V(t)$.
The full block circulant with entries $\widehat V(\theta_j-\theta_k)/n$
is diagonalized by $F$ into $\diag(V_\ell)$; subtracting its diagonal
$I_n\otimes\bar V$ gives \eqref{eq:integerdiag}.

For $t=j-k>0$ the second pencil has coefficient
$c_t=e^{-i\theta}\widehat V(2\pi t/n)/n$; for $t<0$ the factor is
$e^{i\theta}$. Thus $c_{t-n}=e^{2i\theta}c_t$ for $1\le t<n$, and
$c_te^{2\pi i(\ell+\alpha)t/n}$ is periodic. The pencil is therefore
diagonal in the $G_\alpha$ basis. If
$q=e^{2\pi i(\ell+\alpha-m)/n}$, then $q^n=e^{2i\theta}$ and
\[
 e^{-i\theta}\sum_{t=1}^{n-1}q^t
 =\sin\theta\cot\frac{\pi(\ell+\alpha-m)}n-\cos\theta.
\]
Expanding $\widehat V$ proves \eqref{eq:twisted}, with only scalar
Fourier factors passing the matrix coefficients.
\end{proof}
At $\alpha=1/2$ this is the half-integer diagonalization of $-B$.

The same elementary norm identity removes both the mean correction here
and the block diagonal in the Schur realization below:
\begin{equation}\label{eq:dilution}
 \|(Z/n)_{\ell=1}^n\|_{\ell^p(\Sp_p)}
 =\|Z\otimes I_n/n\|_p=n^{1/p-1}\|Z\|_p\longrightarrow0.
\end{equation}
The equality follows from the repeated singular values and the limit
uses $p>1$.

\begin{lemma}\label{lem:finitewindow}
For $a_j=\cos(\pi\alpha_j)$, $b_j=\sin(\pi\alpha_j)$,
$0<\alpha_j<1$, one has
\[
\|\mathbf R\|_{\ell^p(\Z;\Sp_p(\mathcal K))\to
\ell_p^m(\ell^p(\Z;\Sp_p(\mathcal K)))}\le\Lambda_p(P).
\]
\end{lemma}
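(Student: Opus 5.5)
The plan is to realize the discrete transform $\mathbf R$ inside a single finite nilpotent matrix by Proposition~\ref{prop:finiteFourier}, and then to pass to the limit of large windows. By density it suffices to treat finitely supported $V=(V_m)$. Reducing to finite-rank coefficients by compressing $\mathcal K$, we may also assume $V_m\in M_d$. Replacing $V$ by the Hermitian dilation $\left(\begin{smallmatrix}0&V\\V^*&0\end{smallmatrix}\right)$ doubles every $p$th power on both sides, since $R_\alpha$ acts entrywise with real kernel. We may therefore take every $V_m$ Hermitian, supported in $\{0,\dots,M-1\}$.

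For $n>M$, build $N$ from \eqref{eq:finiteN} using $V_0,\dots,V_{n-1}$, padded by zeros. By \eqref{eq:integerdiag}, $\|\Rea N\|_p^p=\sum_\ell\|V_\ell-\bar V\|_p^p$. By \eqref{eq:twisted} with $\theta=\pi\alpha_j$, unitary invariance gives
\[
 \|a_j\Rea N+b_j\Ima N\|_p^p=\sum_\ell\Bigl\|\frac{\sin\theta}{n}\sum_m\cot\frac{\pi(\ell+\alpha_j-m)}n V_m-\cos\theta\,\bar V\Bigr\|_p^p .
\]
The sign of $b_j$ must be matched: \eqref{eq:twisted} concerns $\cos\theta A-\sin\theta B$. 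We therefore apply it to $N^*$ or to the transpose-conjugated nilpotent, whose imaginary part is $-B$. Nilpotency is preserved, so $\Lambda_p(P)$ controls this pencil too. The definition \eqref{eq:jointconstant} then yields
\[
 \sum_jw_j\sum_{\ell<n}\|(\text{twisted }\ell)\|_p^p\le\Lambda_p(P)^p\sum_{\ell<n}\|V_\ell-\bar V\|_p^p .
\]

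Next we pass to $n\to\infty$. The mean corrections $\bar V$ vanish in $\ell^p(\Sp_p)$ by the dilution identity \eqref{eq:dilution}, using $p>1$, and the triangle inequality. For fixed $\ell$ and $m$ in a window, $\frac{\sin\theta}{n}\cot\frac{\pi(\ell+\alpha-m)}{n}\to\frac{\sin(\pi\alpha)}{\pi(\ell+\alpha-m)}$, so each coordinate converges to $(R_\alpha V)_\ell$. The index $\ell$ must be read cyclically, with $\ell$ near $n$ corresponding to negative integers. Fatou's lemma over finite sets of $\ell$ (indexing $\ell\in(-n/2,n/2]$) then gives
\[
 \sum_jw_j\|R_{\alpha_j}V\|^p_{\ell^p(\Sp_p)}\le\Lambda_p(P)^p\|V\|^p_{\ell^p(\Sp_p)} .
\]

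The main obstacle is this limiting step. The periodized cotangent kernel must converge to the Hilbert-type kernel pointwise in $\ell$ with the cyclic index correctly identified, and lower semicontinuity suffices, so no tail bound is needed. Uniformity of the $\bar V$ correction also needs care. A second delicate point is the sign convention relating $\cos\theta A-\sin\theta B$ to $a_jA+b_jB$, which I would handle by conjugation symmetry of the nilpotent class. Finally, the bound extends from finitely supported, finite-rank $V$ to all of $\ell^p(\Z;\Sp_p(\mathcal K))$ by density.
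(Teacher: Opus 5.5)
Your proposal is correct and follows the paper's proof essentially step for step. Both arguments use the single nilpotent $N$ of Proposition~\ref{prop:finiteFourier}, passed through $N^*$ to fix the sign of the $b_j$. Both remove the mean terms by the dilution identity \eqref{eq:dilution}, embed a finitely supported sequence in $\Z/n\Z$ (the cyclic reading of $\ell$ is automatic since the cotangent kernel is $n$-periodic), take window-wise limits, and finish with Hermitian dilation and finite-rank density.
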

\begin{proof}
Write $C=\Lambda_p(P)<\infty$, by Lemma~\ref{lem:threshold}. Taking $N^*$ simultaneously changes every
$b_j$ to $-b_j$ without changing $C$.
For a Hermitian sequence $V_0,\ldots,V_{n-1}$, put $S=\sum_mV_m$,
$\bar V=S/n$, and define
\[
 (\mathscr R_{n,\alpha}V)_\ell
 =\frac{\sin(\pi\alpha)}n\sum_{m=0}^{n-1}
          \cot\frac{\pi(\ell+\alpha-m)}n\,V_m.
\]
Proposition~\ref{prop:finiteFourier} uses the same $N$ and $A$ for
every $\alpha_j$. Summing the $p$th powers after the separate output
unitary conjugations therefore yields
\[
 \left(\sum_jw_j\|\mathscr R_{n,\alpha_j}V
                 -a_j(\bar V)_\ell\|_p^p\right)^{1/p}
 \le C\|V-(\bar V)_\ell\|_p.
\]
The norm here includes the finite sequence variable. By
\eqref{eq:dilution} and the triangle inequality,
\begin{equation}\label{eq:finiteerror}
 \left(\sum_jw_j\|\mathscr R_{n,\alpha_j}V\|_p^p\right)^{1/p}
 \le C\|V\|_p+
 \left(C+\Bigl(\sum_jw_j|a_j|^p\Bigr)^{1/p}\right)
                  n^{1/p-1}\|S\|_p.
\end{equation}
For sufficiently large $n$, regard a fixed finitely supported sequence
on $\Z$ as a sequence on $\Z/n\Z$ by assigning zero to unused
residue classes. Its norm and sum are independent of $n$.
On each fixed finite set of output indices, the factors
$n^{-1}\cot(\pi(\ell+\alpha_j-m)/n)$ converge to
$[\pi(\ell+\alpha_j-m)]^{-1}$, uniformly over the input support
and the finitely many $j$. Restrict \eqref{eq:finiteerror}
to that window and let $n\to\infty$; the error vanishes because
$p>1$. Increasing the output window gives the asserted estimate by
monotone convergence.

For arbitrary matrix coefficients use the Hermitian dilation
$V\mapsto\left(\begin{smallmatrix}0&V\\V^*&0\end{smallmatrix}\right)$.
All kernels are real, and the input and every output $p$th power
norm acquire the same factor two. Density of finitely supported
finite-rank sequences and common finite-dimensional compressions
give the bounded extension for every $\mathcal K$.
\end{proof}

\subsection{Sampling}
\begin{lemma}\label{lem:sampling}\label{lem:samplingnorm}
For $f\in C_c^\infty(\R;M_d)$ and $v^h_j=h^{1/p}f(hj)$,
\begin{equation}\label{eq:sampling}
 h^{-1/p}(R_\alpha v^h)_\ell
 \longrightarrow \cos(\pi\alpha)f(x)+\sin(\pi\alpha)Hf(x),
 \qquad x=h\ell,
\end{equation}
uniformly on compact sets of such $x$.
For the shifted pairs of Lemma~\ref{lem:finitewindow},
\begin{equation}\label{eq:continuous-from-discrete}
 \|\mathbf H\|_{L^p(\R;\Sp_p(\mathcal K))\to\ell_p^m(L^p(\R;\Sp_p(\mathcal K)))}
 \le\|\mathbf R\|.
\end{equation}
\end{lemma}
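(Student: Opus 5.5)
The proof has two parts: a pointwise Riemann-sum convergence for the sampled discrete transform, followed by a norm comparison obtained by passing to the limit $h\to0$ in the discrete inequality.

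\emph{Step 1: the convergence \eqref{eq:sampling}.} For \eqref{eq:sampling} I would write
\[
h^{-1/p}(R_\alpha v^h)_\ell=\frac{\sin\pi\alpha}{\pi}\sum_m \frac{f(hm)}{\ell+\alpha-m}
=\frac{\sin\pi\alpha}{\pi}\sum_m \frac{h\,f(hm)}{x+h\alpha-hm}.
\]
This is a Riemann sum for a principal-value integral with a shifted singularity. I would split off a fixed window $|hm-x|<\delta$. Outside the window, the sum converges uniformly to $\sin(\pi\alpha)\,\pi^{-1}\int_{|t-x|>\delta}f(t)/(x-t)\dd t$, because the kernel is smooth there and $f$ has compact support. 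Inside the window, I would write $f(hm)=f(x)+(f(hm)-f(x))$. The second piece has a bounded difference quotient, so it contributes a Riemann sum of a bounded function, which converges to the principal-value integral over the window up to $O(\delta)$. The first piece is the key term: $f(x)\frac{\sin\pi\alpha}{\pi}\sum_{|k|\lesssim\delta/h}(k+\alpha')^{-1}$, where $\alpha'$ depends on the position of $x$ relative to the grid; since $x=h\ell$ lies on the grid, $\alpha'=\alpha$. The classical identity $\sum_{k\in\Z}(k+\alpha)^{-1}=\pi\cot(\pi\alpha)$, taken as a symmetric limit, makes this term tend to $f(x)\cos\pi\alpha$. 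The tails of the symmetric partial sums are $O(h/\delta)$, uniformly. Letting $h\to0$ and then $\delta\to0$ gives \eqref{eq:sampling}, uniformly for $x$ in compacts, with matrix coefficients handled entrywise.

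\emph{Step 2: the norm comparison \eqref{eq:continuous-from-discrete}.} For \eqref{eq:continuous-from-discrete} I would fix $f\in C_c^\infty(\R;\Sp_p^d)$. By density and common finite-dimensional compressions this suffices, and the operator $\mathbf H$ is already bounded by Lemma~\ref{lem:threshold} (applied coefficientwise via finite rank), so only the constant matters. On the input side, $\|v^h\|_{\ell^p}^p=h\sum_j\|f(hj)\|_p^p\to\|f\|_{L^p}^p$ as a Riemann sum. On the output side, for each compact interval $[-M,M]$ I would restrict the discrete output norm to the indices $|h\ell|\le M$. Uniform convergence on compacts then gives
\[
\sum_jw_j\,h\!\!\sum_{|h\ell|\le M}\!\!\|h^{-1/p}(R_{\alpha_j}v^h)_\ell\|_p^p
\to\sum_jw_j\int_{-M}^{M}\|a_jf+b_jHf\|_p^p\dd x,
\]
because $Hf$ is continuous. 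The left side is at most $\|\mathbf R\|^p\|v^h\|^p$. Letting $h\to0$ and then $M\to\infty$, with monotone convergence, yields the bound. Matrix-valued $f$ with finite $d$ is handled entrywise in the convergence, and general $\mathcal K$ by density.

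The main obstacle is the near-diagonal analysis in Step 1. The shift $\alpha$ must produce exactly the $\cos(\pi\alpha)f(x)$ term through the cotangent series, and the estimates must be uniform in $x$. I would handle this by the window-and-subtraction argument above, using the Lipschitz bound on $f$.
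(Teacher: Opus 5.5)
Your proposal is correct and follows essentially the same route as the paper. Both arguments use a symmetric near-diagonal window, in which the constant term yields $\cos(\pi\alpha)f(x)$ through the paired partial fractions of $\pi\cot(\pi z)$ and the Lipschitz remainder is $O(\delta)$, together with a uniform Riemann-sum limit outside the window; both then restrict the discrete output to compact windows, let $h\to0$, and conclude by monotone convergence and density of smooth finite-rank functions. (Your appeal to Lemma~\ref{lem:threshold} for a priori boundedness is unnecessary, since the estimate on the dense class already gives the bounded extension.)
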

\begin{proof}
The left side of \eqref{eq:sampling} is
$\sin(\pi\alpha)\pi^{-1}\sum_k f(x-hk)/(k+\alpha)$.
Fix $\delta>0$ and $M=\lfloor\delta/h\rfloor$.
The constant term on $|k|\le M$ tends to
$\pi\cot(\pi\alpha)f(x)$ before the prefactor is applied, by the
paired partial fractions of $\pi\cot(\pi z)$.
The remaining inner sum has norm at most
$Ch\sum_{1\le|k|\le M}|k|/|k+\alpha|\le C_\alpha\delta$.
For the outer sum write
\[
 \frac{f(x-hk)}{k+\alpha}
 =h\,\frac{f(x-hk)}{hk+\alpha h}.
\]
Fix also $R>0$.  Because $f$ has compact support, for $|x|\le R$ all
nonzero summands lie in one fixed compact $t$-interval.  On
$|t|\ge\delta/2$ the functions
$(x,t)\mapsto f(x-t)/(t+\alpha h)$ converge uniformly, together with a
uniform Lipschitz bound in $t$, to $f(x-t)/t$.  The uniform
Riemann-sum estimate therefore gives
\begin{equation}\label{eq:outer-riemann}
 \sup_{|x|\le R}\left\|
 h\!\sum_{|hk|>\delta}\frac{f(x-hk)}{hk+\alpha h}
 -\int_{|t|>\delta}\frac{f(x-t)}t\,\dd t\right\|\longrightarrow0.
\end{equation}
Since $M=\lfloor\delta/h\rfloor$, the condition $|k|>M$ is exactly
equivalent to $|hk|>\delta$ for integer $k$, so \eqref{eq:outer-riemann}
is the limit for the actual outer sum.  In the missing symmetric
interval,
\[
 \left\|\int_{|t|<\delta}
 \frac{f(x-t)-f(x)}t\,\dd t\right\|\le C_R\delta
\]
by the mean-value theorem, uniformly for $|x|\le R$; the constant part
has zero symmetric principal value.  Together with the inner-sum bound
above, this proves the asserted uniform convergence by first letting
$h\downarrow0$ with $\delta$ fixed and then letting
$\delta\downarrow0$.

For the joint norm estimate, set $v_k^h=h^{1/p}f(hk)$.
Then $\sum_k\|v_k^h\|_p^p\to\|f\|_{L^p(\Sp_p)}^p$.
The local limit gives uniform convergence on each compact
output interval for all finitely many shifts. Sum the weighted
output $p$th powers, pass to the Riemann integrals on that interval,
and then increase the interval to $\R$. Monotone convergence gives
\eqref{eq:continuous-from-discrete}. Smooth finite-rank functions
are dense, so the estimate extends to the stated spaces.

\end{proof}

\subsection{Schur realization}\label{subsec:schurrealization}
\begin{lemma}\label{prop:schurequivalence}
For arbitrary real pairs and nonnegative weights,
$\sup_{\mathcal E}\|\mathbf T_{\mathcal E}\|=\Lambda_p(P)$.
The corresponding multiplier for the standard rank-one projections
on $\ell^2(\mathbb N)$ has the same norm.
\end{lemma}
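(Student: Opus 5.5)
We prove the two inequalities $\sup_{\mathcal E}\|\mathbf T_{\mathcal E}\|\le\Lambda_p(P)$ and $\Lambda_p(P)\le\|\mathbf T_{\mathcal E}\|$ for the standard rank-one flag, together with the trivial comparison $\|\mathbf T_{\rm std}\|\le\sup_{\mathcal E}\|\mathbf T_{\mathcal E}\|$. The standard multiplier on $\ell^2(\mathbb N)$ is the increasing limit of its finite rank-one flags. Its norm therefore equals the supremum over $n$ of those finite flag norms, which lies between the two bounds.

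\emph{Upper bound.} We follow the argument of Corollary~\ref{prop:coordinatepath}. First take Hermitian $X$ and a flag $\mathcal E$, and set $T=X+iH_{\mathcal E}X$. This matrix is block upper triangular with diagonal blocks $E_jXE_j$, which are Hermitian, so $\spec T\subset\R$. By Theorem~\ref{thm:unified}, which we take as established for the constant characterization, there is $U\in\mathcal U_p$ with $U\ge P-\Lambda_p(P)^p|x|^p$. Corollary~\ref{thm:certificate} then gives
\[
 \sum_jw_j\|a_jX+b_jH_{\mathcal E}X\|_p^p-\Lambda_p(P)^p\|X\|_p^p
 \le\sum_{\lambda\in\spec T}U(\lambda).
\]
We need $U(x)\le0$ for real $x$. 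To get this we use the least majorant $U_*$ of $P-\Lambda_p^p|x|^p$. Its value on real points is given by \eqref{eq:optimalcorrection}, namely a supremum over $T$ with $\spec T=\{x\}$, $x\in\R$. We intend to show this supremum is at most $0$ by reducing to the nilpotent part $N=T-xI$ (note $\Rea T=xI+\Rea N$). This reduction is the main obstacle.

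Our first attempt at the obstacle is as follows. The function $\phi(x)=U_*(x)$ on $\R$ equals $|x|^pU_*(\pm1)$. We would bound it by testing \eqref{eq:optimalcorrection} asymptotically: $U_*(1)=\sup_q\int(P-\Lambda^p|\Rea|^p)(1+q(e^{it}))\,dt/2\pi$. Subharmonicity of $|\cdot|^p$ compositions does not immediately give a sign. An alternative route avoids the issue: embed the block diagonal of $X$ as the mean-correction term, using dilution \eqref{eq:dilution}. Replace $X$ by $X\otimes I_n$ with the flag refined by a cyclic-shift Fourier twist, so that the diagonal blocks get diluted to a vanishing contribution, exactly as in Lemma~\ref{lem:finitewindow}. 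Concretely, the standard flag Schur multiplier is realized through Proposition~\ref{prop:finiteFourier} with the diagonal removed. We prefer this dilution route, and it is the step we expect to require the most care. Non-Hermitian $X$ is handled by the Hermitian dilation $\left(\begin{smallmatrix}0&X\\X^*&0\end{smallmatrix}\right)$ with the doubled flag; this doubles every $p$th power.

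\emph{Lower bound.} Take a nonzero polynomial $q$ with $q(0)=0$ and put $N_n=q(S_n)$. This matrix is strictly upper triangular, so $\Ima N_n=H_{\mathcal E}\Rea N_n$ for the standard rank-one flag, since the diagonal of $\Rea N_n$ vanishes. Consequently $\mathbf T_{\mathcal E}(\Rea N_n)=(w_j^{1/p}(a_j\Rea N_n+b_j\Ima N_n))_j$. Lemma~\ref{lem:commontoeplitz} gives convergence of the normalized quotient to the disc quotient in \eqref{eq:jointschur}. Taking the supremum over $q$ yields $\Lambda_p(P)\le$ the norm of the standard rank-one multiplier on the finite sections $n$, and hence on $\ell^2(\mathbb N)$.
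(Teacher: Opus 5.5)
Your lower bound is sound. It differs from the paper's only in being roundabout: the paper uses Schur triangularization, which puts any finite nilpotent $N$ in strictly upper-triangular form for a rank-one flag, so the ratio in \eqref{eq:jointconstant} is literally a $\mathbf T_{\mathcal E}$ ratio. Your Toeplitz route through \eqref{eq:jointschur} also works, since that formula comes from Corollary~\ref{cor:sharp} and does not use this lemma.

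The genuine gap is the upper bound when $D_{\mathcal E}X\ne0$. When $D_{\mathcal E}X=0$ there is nothing to do: $X+iH_{\mathcal E}X$ is block strictly upper triangular, hence nilpotent, and \eqref{eq:jointconstant} gives the estimate directly, with no majorant. Your first route for the general case is circular. By \eqref{eq:optimalcorrection} and Schur triangularization, the inequality $U_*(x)\le0$ for real $x$ is equivalent to the Schur estimate for $X=xI+\Rea N$ with $N$ strictly upper triangular, that is, to a special case of the lemma itself. So it cannot serve as an input without an independent argument, and you do not supply one.

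Your second route, as written, would fail. With respect to any refinement $(E_r\otimes f_sf_s^*)$ of the flag, $X\otimes I_n$ has block diagonal $D_{\mathcal E}X\otimes I_n$, whose relative size is unchanged, so nothing is diluted. Proposition~\ref{prop:finiteFourier} does not help either: it realizes the cotangent transforms $\mathscr R_{n,\alpha}$ on sequences, not $H_{\mathcal E}$ on a matrix.

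The missing idea is a rank-one tensor factor followed by a compression. With $u=n^{-1/2}(1,\ldots,1)$, put $Y_n=X\otimes uu^*-D_{\mathcal E}X\otimes I_n/n$. Use the lexicographic flag $\mathcal E_n=(E_r\otimes e_{ss})$, for which $D_{\mathcal E_n}Y_n=0$. The isometry $J\xi=\xi\otimes u$ gives $J^*Y_nJ=X-D_{\mathcal E}X/n$ and $J^*H_{\mathcal E_n}(Y_n)J=H_{\mathcal E}X$. The extra signs inside each old block cancel because $\sum_{s,s'}\operatorname{sgn}(s-s')=0$. Compression is contractive for each output, and the zero-block-diagonal case (with Hermitian dilation) applies to $Y_n$. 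Finally $\|Y_n-X\otimes uu^*\|_p=n^{1/p-1}\|D_{\mathcal E}X\|_p\to0$, so letting $n\to\infty$ gives the bound for every $X$.
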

\begin{proof}
If $X=X^*$ and $D_{\mathcal E}X=0$, then
$N=2\sum_{r<s}E_rXE_s$ is nilpotent with
$\Rea N=X$ and $\Ima N=H_{\mathcal E}X$.
Equation~\eqref{eq:jointconstant} gives the joint estimate.
For arbitrary zero-block-diagonal $X$, Hermitian dilation and the
flag $E_r\otimes I_2$ give the same bound, because every real pencil
commutes with taking adjoints and all $p$th powers gain a factor two.

For general $X$, let $u=n^{-1/2}(1,\ldots,1)$, $P=uu^*$, and put
\[
 Y_n=X\otimes P-D_{\mathcal E}X\otimes I_n/n,\qquad J\xi=\xi\otimes u.
\]
The flag $\mathcal E_n=(E_r\otimes e_{ss})$, in lexicographic order,
has $D_{\mathcal E_n}Y_n=0$. Direct block summation gives
\begin{equation}\label{eq:compression}
 J^*Y_nJ=X-D_{\mathcal E}X/n,\qquad
 J^*H_{\mathcal E_n}(Y_n)J=H_{\mathcal E}X.
\end{equation}
The within-block signs cancel since $\sum_{r,s}\operatorname{sgn}(r-s)=0$.
Compression is contractive for each output. Summing their $p$th
powers and using
\[
 \|Y_n-X\otimes P\|_p=n^{1/p-1}\|D_{\mathcal E}X\|_p\longrightarrow0
\]
proves the upper bound on all $X$.
Conversely, Schur triangularization realizes every nilpotent as a
strictly upper-triangular matrix for a rank-one flag and supplies
the lower bound. Finite principal sections and density give the
infinite ordered transform.
\end{proof}

\subsection{Canonical amplification and trace normalization}
\label{subsec:commonamplification}\label{subsec:cbproof}
The Schatten amplification converts the tracial matrix estimates in both
parts of the paper to the usual completely bounded estimates.  We record
this conversion once, keeping the canonical operator-space matrix norm
distinct from the tracial $L^p$ norm.

\begin{lemma}\label{lem:commonamplification}
Let $1<p<\infty$, and let $\mathcal Q,\mathcal R_1,\ldots,
\mathcal R_m$ be semifinite tracial von Neumann algebras.  Equip their
$L^p$ spaces and the finite $\ell^p$ direct sum with their canonical
operator-space structures.  For a bounded linear map
\[
 T:L^p(\mathcal Q)\longrightarrow
       \bigoplus_{j=1}^m{}_{\!p}L^p(\mathcal R_j)
\]
one has
\begin{equation}\label{eq:commoncbamplification}
 \|T\|_{\mathrm{cb}}
 =\sup_{n\ge1}
 \|\operatorname{id}_{M_n}\otimes T\|_\mathrm{tr},
\end{equation}
where the norm on the right is between the corresponding tracial spaces
\[
 L^p(M_n\,\overline\otimes\,\mathcal Q)
 \quad\hbox{and}\quad
 \bigoplus_{j=1}^m{}_{\!p}
 L^p(M_n\,\overline\otimes\,\mathcal R_j).
\]
The matrix trace in these spaces is the ordinary unnormalized trace.
Replacing it simultaneously in the domain and range by
$n^{-1}\operatorname{Tr}_n$ does not change any amplification norm.
\end{lemma}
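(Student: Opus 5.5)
The plan is to reduce the statement to the standard Pisier characterization of completely bounded maps between noncommutative $L^p$ spaces: for a hyperfinite (here: matrix) amplification, the canonical operator-space matrix norm on $M_n[L^p(\mathcal Q)]$ is controlled by the vector-valued Schatten spaces $\Sp_p^n[L^p(\mathcal Q)]$. First I recall Pisier's theorem, \cite{NR} and Pisier's monograph. It says that for any linear $T$ between operator spaces $E,F$, the cb norm equals $\sup_n\|\id_{\Sp_p^n}\otimes T\|_{\Sp_p^n[E]\to\Sp_p^n[F]}$, for any fixed $1\le p<\infty$. The reason is that $M_n[E]$ and $\Sp_p^n[E]$ are related by two-sided multiplication by $\Sp_{2p}^n$ factors, and those factors commute with $\id\otimes T$.

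Second, I identify the vector-valued spaces with tracial ones. When $E=L^p(\mathcal Q)$ has its canonical (Pisier) structure, Fubini's theorem for noncommutative $L^p$ gives the isometry $\Sp_p^n[L^p(\mathcal Q)]=L^p(M_n\overline\otimes\mathcal Q)$, with the unnormalized trace $\operatorname{Tr}_n\otimes\tau$. For the range I use the definition of the $\ell^p$ direct sum operator space structure, via complex interpolation or directly as $L^p$ of the direct-sum algebra $\bigoplus_j\mathcal R_j$ with trace $\sum_j\tau_j$. This gives
\[
\Sp_p^n\Bigl[\bigoplus_{j}{}_{p}L^p(\mathcal R_j)\Bigr]=\bigoplus_j{}_pL^p(M_n\overline\otimes\mathcal R_j).
\]
Together with the first step this yields \eqref{eq:commoncbamplification}. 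Writing the direct sum as $L^p$ of one semifinite algebra avoids treating the $\ell^p$-sum separately. The main point to check is that the canonical structure on the $\ell_p^m$-sum agrees with that of $L^p(\oplus_j\mathcal R_j)$, and I expect this identification to be the only nonroutine step. Both structures are obtained by interpolating the $\ell^\infty$- and $\ell^1$-sum structures, so they agree; I would cite Pisier for this rather than reprove it.

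Finally, for the normalization claim, replacing $\operatorname{Tr}_n$ by $n^{-1}\operatorname{Tr}_n$ multiplies every $p$-norm, in domain and in each summand of the range alike, by $n^{-1/p}$. The direct-sum norm therefore scales by the same factor, and each amplification operator norm is unchanged. Taking the supremum over $n$ completes the argument.
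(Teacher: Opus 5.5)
Your proposal is correct and follows the paper's proof essentially step for step. It uses the same four ingredients: Pisier's Schatten-amplification characterization of the cb norm, the Fubini identification $\Sp_p^n[L^p(\mathcal Q)]=L^p(M_n\overline\otimes\mathcal Q)$, the range viewed as $L^p$ of the direct-sum algebra $\bigoplus_j\mathcal R_j$ with the direct-sum trace, and the common factor $n^{-1/p}$ for the normalized trace. Two differences are cosmetic. The paper additionally writes out the direction $\|T\|_{\mathrm{cb}}\le C$ through the matrix-norm recovery formula with two-sided $\Sp_{2p}^n$ multipliers, which is the mechanism you describe in words. Also, the characterization should be attributed to \cite{Pisier} (Corollary~1.4 and Lemma~1.7) rather than \cite{NR}.
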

\begin{proof}
The standard Schatten Fubini identification and matrix-norm recovery
formula are
\begin{align}
 \mathcal S_p^n[L^p(\mathcal Q)]
 &=L^p(M_n\,\overline\otimes\,\mathcal Q),
 \label{eq:commonSpFubini}\\
 \|x\|_{M_n(L^p(\mathcal Q))}
 &=\sup_{\substack{a,b\in M_n\\
       \|a\|_{\mathcal S_{2p}^n},\,
       \|b\|_{\mathcal S_{2p}^n}\le1}}
       \|(a\otimes I)x(b\otimes I)\|_
          {L^p(M_n\,\overline\otimes\,\mathcal Q)}.
 \label{eq:commonLpMatrixNorm}
\end{align}
Here the Schatten norms are unnormalized.  These identities, and the
equivalence of complete contraction with contraction at every Schatten
$p$-amplification, are recalled in
\cite[Section~2, equations~(2.8)--(2.9)]{rz:JRS}; see also
\cite[Corollary~1.4 and Lemma~1.7]{Pisier}.
The range is the $L^p$ space of the finite direct-sum algebra
$\bigoplus_j\mathcal R_j$, with the direct-sum trace, so the same
identifications apply there.

For completeness, if all tracial amplification norms are at most $C$,
entrywise amplification commutes with multiplication by the scalar
matrices $a,b$.  Formula~\eqref{eq:commonLpMatrixNorm} therefore gives
\[
 \|(\operatorname{id}_{M_n}\otimes T)x\|_{M_n(F)}
 \le C\|x\|_{M_n(L^p(\mathcal Q))},
 \qquad F=\bigoplus_{j=1}^m{}_{\!p}L^p(\mathcal R_j).
\]
Thus $\|T\|_{\mathrm{cb}}\le C$.  Conversely, the standard Schatten
amplification characterization bounds every
$\operatorname{id}_{\mathcal S_p^n}\otimes T$ by
$\|T\|_{\mathrm{cb}}$; \eqref{eq:commonSpFubini} identifies its norm
with the tracial amplification norm.  This proves
\eqref{eq:commoncbamplification}.  Normalizing the matrix trace multiplies
both the domain and range norms by $n^{-1/p}$, proving the final assertion.
\end{proof}

In particular, a uniform estimate for the amplified flags in the Hilbert
part and a uniform estimate for $M_n(\mathcal B)\subset M_n(\mathcal M)$
in the subdiagonal part give the corresponding completely bounded
estimates by the same lemma.  The sharp tracial estimates themselves
are proved separately in the two parts.

In the present cases the
Fubini identifications for vector-valued noncommutative $L^p$
spaces are
\begin{equation}\label{eq:matchingFubini}
 \Sp_p^d[L^p(\R)]=L^p(\R;\Sp_p^d),\qquad
 \Sp_p^d[\Sp_p(\ell^2)]=\Sp_p(\C^d\otimes\ell^2).
\end{equation}
The norms in the right-hand spaces use the unnormalized trace, consistently
with the rest of the paper. Formula \eqref{eq:commoncbamplification}
compares the corresponding amplification norms of a map.

\begin{proof}[Proof of Theorem~\ref{thm:unified}]
Lemma~\ref{lem:threshold} gives finiteness, and
Corollary~\ref{cor:sharp} gives the variational and superharmonic
majorant statements. For the shifted pairs, scalar embedding, sampling,
finite-window transference, and the same lemma give the following chain of inequalities:
\begin{equation}\label{eq:jointchain}
 \|\mathbf H\|_{\rm scalar}\le\|\mathbf H\|_{\Sp_p}
 \le\|\mathbf R\|_{\Sp_p}\le\Lambda_p(P)
 \le\|\mathbf H\|_{\rm scalar}.
\end{equation}
Applying sampling also with $\mathcal K=\C$ and embedding scalar
sequences shows $\|\mathbf R\|_{\rm scalar}=\Lambda_p(P)$.
Lemma~\ref{prop:schurequivalence} supplies the Schur model.

For general pairs with $b_j\ne0$, multiplying an output by $-1$
if necessary permits the representation
$(a_j,b_j)=\varepsilon_j r_j(\cos(\pi\alpha_j),
\sin(\pi\alpha_j))$, where $\varepsilon_j\in\{-1,1\}$,
$r_j>0$, and $0<\alpha_j<1$.
Absorb $r_j^p$ into $w_j$. The outputs with $b_j=0$ add the
same constant $\sum_{b_j=0}w_j|a_j|^p$ to every norm ratio to the
$p$th power, including the nilpotent and disc ratios. This proves
all real pairs, including the case of zero weights.
Finally $\tr N^2=0$ gives $\tr A^2=\tr B^2$ and $\tr AB=0$,
which proves the stated quadratic value.

For completely bounded norms, the same Fubini identifications hold for
$\ell^p(\Z)$ and finite $\ell^p$ direct sums.
Thus the $d$th Schatten amplification of $\mathbf H$ or $\mathbf R$
is exactly its $\Sp_p^d$-valued version, whose norm is $\Lambda_p(P)$.
The $d$th amplification of $\mathbf T_{\mathcal E}$ is the same
joint pencil for the block flag $(I_d\otimes E_j)$, up to interchange
of tensor factors. Hence its norm is at most $\Lambda_p(P)$, uniformly
in $d$ and the flag. Taking the supremum in the amplification
characterization and using the operator-norm lower bounds proves
the completely bounded norm assertions of the theorem.

For the standard infinite Schur order, finite principal sections
and density at every amplification level give the same upper
bound; nilpotent tests on rank-one flags give the lower bound.
\end{proof}

\begin{proof}[Proof of Corollary~\ref{thm:realpencil}]
For one output, the scalar theorem of \cite{HKV,DGZ} gives
\eqref{eq:Koblique}. Theorem~\ref{thm:unified} and the preceding
infinite-order argument give every stated norm equality.
Scalar reflection, homogeneity, and duality give
\eqref{eq:Kproperties}--\eqref{eq:constant}.
Only the signed diagonal convention remains, and it is proved next.
\end{proof}

\begin{corollary}\label{cor:rectangular}
Use the real pairs and weights of Theorem~\ref{thm:unified}. For real row
levels $\lambda_1,\ldots,\lambda_r$ and column levels $\mu_1,\ldots,\mu_s$,
define the joint rectangular multiplier
\[
 (\mathbf M_{\lambda,\mu}X)_\ell
 =\bigl(w_\ell^{1/p}
 [a_\ell+ib_\ell\operatorname{sgn}(\lambda_j-\mu_k)]X_{jk}\bigr)_{j,k}.
\]
On rectangular Schatten matrices with their canonical corner operator-space
structure, it is the restriction of a single joint flag multiplier on $\C^{r+s}$
to a rectangular corner, under a complete isometry. Hence its
operator and completely bounded norms are at most $\Lambda_p(P)$,
and their suprema over all finite lists and sizes equal $\Lambda_p(P)$.
For one output this gives the bound and uniform sharp constant $K_p(a,b)$.
\end{corollary}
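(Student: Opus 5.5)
The plan is to reduce the rectangular multiplier to a square flag multiplier on $\C^{r+s}$ by embedding $X$ as an off-diagonal corner. Given $X\in M_{r\times s}$, form the $(r+s)\times(r+s)$ matrix $\widetilde X=\left(\begin{smallmatrix}0&X\\0&0\end{smallmatrix}\right)$. The map $X\mapsto\widetilde X$ is a complete isometry from the rectangular Schatten space, with its canonical corner structure, onto the corner $e_{11}\Sp_p^{r+s}e_{22}$. At every amplification the singular values are unchanged.

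Next, order the $r+s$ basis vectors by their levels. Rows carry $\lambda_j$ and columns carry $\mu_k$, and we break ties so that the intended sign is reproduced. When $\lambda_j=\mu_k$ we need $\operatorname{sgn}(j-k)=0$, which is achieved by placing row index $j$ and column index $k$ in the same block. So let the distinct values among all $\lambda_j,\mu_k$ be $\nu_1>\nu_2>\cdots>\nu_t$, listed decreasingly. Let $E_i$ be the coordinate projection onto the basis vectors, of either kind, whose level equals $\nu_i$. We must check the orientation of the sign convention in \eqref{eq:flagH}. There $H_{\mathcal E}$ multiplies $E_jXE_k$ by $i\operatorname{sgn}(j-k)$. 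If row $j$ lies in block $\beta$ and column $k$ lies in block $\gamma$, the sign is $\operatorname{sgn}(\beta-\gamma)$. This equals $\operatorname{sgn}(\mu_k-\lambda_j)$ with decreasing order, so we take the increasing order instead to get $\operatorname{sgn}(\lambda_j-\mu_k)$. For this flag $\mathcal E$, the joint pencil $\mathbf T_{\mathcal E}$ maps the corner $e_{11}(\cdot)e_{22}$ into itself, because the flag projections are coordinate projections and therefore commute with $e_{11}$ and $e_{22}$. Restricted to the corner, $\mathbf T_{\mathcal E}$ acts exactly as $\mathbf M_{\lambda,\mu}$, entrywise with factor $w_\ell^{1/p}(a_\ell+ib_\ell\operatorname{sgn}(\lambda_j-\mu_k))$. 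Theorem~\ref{thm:unified} then bounds the operator norm by $\Lambda_p(P)$. For the completely bounded norm, the $d$th amplification is the same construction with the block flag $(I_d\otimes E_i)$, which is still a flag, so the bound is uniform in $d$. Lemma~\ref{lem:commonamplification} converts this to the cb norm.

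For the lower bound on the supremum, take $r=s=n$ and $\lambda_j=\mu_j=-j$, so that $\operatorname{sgn}(\lambda_j-\mu_k)=\operatorname{sgn}(k-j)$. If the orientation comes out opposite, use $\lambda_j=\mu_j=j$ instead, since both orientations are available. The rectangular multiplier then becomes the square multiplier $\mathbf T_{\mathcal E}$ for the rank-one standard flag on $\C^n$. By Lemma~\ref{prop:schurequivalence}, and the reduction in its proof of general flags to rank-one flags through Schur triangularization, the supremum over $n$ of these norms is $\Lambda_p(P)$. The operator-norm lower bound also bounds the cb norm from below. For one output, Corollary~\ref{thm:realpencil} identifies $\Lambda_p(P)$ with $K_p(a,b)$.

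The main obstacle is bookkeeping rather than analysis. The grouping of rows and columns into flag blocks must give sign $0$ exactly on ties and the correct orientation elsewhere. We must also check that invariance of the corner really holds for the flag pencil, including the diagonal term $a_\ell X$. Both reduce to the fact that every map involved is a Schur multiplier in the coordinate basis.
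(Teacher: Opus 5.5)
Your proposal is correct and follows the paper's proof. The common steps are the off-diagonal corner embedding, the flag obtained by ordering the combined levels with ties grouped into one block, Theorem~\ref{thm:unified} (via the amplified block flags) for both upper bounds, and equal square lists reducing to the standard rank-one orders of Lemma~\ref{prop:schurequivalence} for the lower bounds. Your orientation check, namely that increasing order yields $\operatorname{sgn}(\lambda_j-\mu_k)$ and that either rank-one orientation suffices by order reversal, supplies a detail the paper leaves implicit.
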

\begin{proof}
Embed $X$ in the off-diagonal corner from the $\mu$-space to the
$\lambda$-space of their direct sum, and order the combined levels,
grouping ties. This embedding preserves singular values at every
Schatten amplification and is completely isometric by
\eqref{eq:commonLpMatrixNorm}. The resulting joint flag preserves the
corner and has exactly the displayed symbols.
Theorem~\ref{thm:unified} gives both upper bounds. Equal square lists
recover the standard rank-one joint orders; their uniform sharpness
in Lemma~\ref{prop:schurequivalence} gives both lower bounds.
\end{proof}

\subsection{Diagonal conventions for the signed Schur multiplier}
\label{subsec:diagonalnormalization}
For $n\ge1$, define Schur multipliers on $\Sp_p^n$ by
\[
 (S_0^{(n)}X)_{jk}=\operatorname{sgn}(j-k)X_{jk},\qquad
 S_+^{(n)}=S_0^{(n)}+D_n,\qquad S_-^{(n)}=S_0^{(n)}-D_n,
\]
where $D_n$ is the diagonal conditional expectation. Thus $S_+^{(n)}$ has symbol
$+1$ on $j\ge k$ and $-1$ on $j<k$, while $S_-^{(n)}$ has diagonal
value $-1$. Notice that $\HSch=iS_0$ on finite principal sections.

\begin{lemma}\label{lem:diagonalnormalization}
Fix $1<p<\infty$. For every $d\ge1$, put
\[
 \|S\|_{p,d}
 =\|S\otimes\id_{\Sp_p^d}\|_{\Sp_p(\C^n\otimes\C^d)
                                      \to\Sp_p(\C^n\otimes\C^d)}
\]
when $S$ acts on $n\times n$ matrices. Then
\begin{equation}\label{eq:diagonalcomparisonfinite}
 \|S_0^{(n)}\|_{p,d}\le\|S_+^{(n)}\|_{p,d}
 =\|S_-^{(n)}\|_{p,d}\le\|S_0^{(2n)}\|_{p,d}.
\end{equation}
Consequently the dimension suprema agree for all three conventions,
both for operator norms and completely bounded norms. The infinite
multipliers $S_0,S_+,S_-$ extend to $\Sp_p(\ell^2)$ and satisfy
\begin{equation}\label{eq:diagonalnormsharp}
 \|S_\varepsilon\|_{\Sp_p\to\Sp_p}
 =\|S_\varepsilon\|_{\mathrm{cb},\Sp_p}=C_p,
 \qquad \varepsilon\in\{0,+,-\}.
\end{equation}
\end{lemma}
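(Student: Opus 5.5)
The plan is to prove the finite chain \eqref{eq:diagonalcomparisonfinite} by three elementary operations, each compatible with the amplification $\otimes\id_{\Sp_p^d}$. The infinite statement \eqref{eq:diagonalnormsharp} then follows from finite sections and Corollary~\ref{thm:realpencil}.

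\emph{Middle equality.} Let $J$ be the antidiagonal permutation $e_j\mapsto e_{n+1-j}$. Conjugation $X\mapsto (J\otimes I_d)X(J\otimes I_d)$ is a unitary conjugation, hence isometric on $\Sp_p(\C^n\otimes\C^d)$. It sends entry $(j,k)$ to $(n+1-j,n+1-k)$, which reverses $\operatorname{sgn}(j-k)$ and fixes the diagonal. Therefore
\[
 J\,S_+^{(n)}(JXJ)\,J=(-S_0^{(n)}+D_n)X=-S_-^{(n)}X,
\]
and so $\|S_+^{(n)}\|_{p,d}=\|S_-^{(n)}\|_{p,d}$.

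\emph{Left inequality.} Since $S_0^{(n)}=\tfrac12(S_+^{(n)}+S_-^{(n)})$, the triangle inequality and the equality just proved give
\[
 \|S_0^{(n)}\|_{p,d}\le\tfrac12\bigl(\|S_+^{(n)}\|_{p,d}+\|S_-^{(n)}\|_{p,d}\bigr)=\|S_+^{(n)}\|_{p,d}.
\]

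\emph{Right inequality.} Embed an $n\times n$ matrix $X$ into $M_{2n}$ by placing $X_{jk}$ at row $2j$ and column $2k-1$, with all other entries zero. This is a rectangular corner embedding; tensored with $I_d$, it preserves singular values and hence Schatten norms at every amplification, just as in Corollary~\ref{cor:rectangular}. Since $2j-(2k-1)>0$ exactly when $j\ge k$, the multiplier $S_0^{(2n)}$ maps the corner to itself and acts there with symbol $+1$ on $j\ge k$ and $-1$ on $j<k$. In other words, it acts there exactly as $S_+^{(n)}$, which gives $\|S_+^{(n)}\|_{p,d}\le\|S_0^{(2n)}\|_{p,d}$.

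Taking suprema over $n$ makes the three dimension suprema coincide, for each $d$ and then also after the supremum over $d$. By Lemma~\ref{lem:commonamplification} and the Fubini identification \eqref{eq:matchingFubini}, these suprema are the operator norms and the completely bounded norms.

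For the infinite multipliers, note that the compression of $S_\varepsilon$ to the first $n$ coordinates is $S_\varepsilon^{(n)}$. Finitely supported matrices are dense in $\Sp_p$ at every amplification. Hence $S_\varepsilon$ extends boundedly, and its operator norm and cb norm equal the corresponding suprema over $n$: the upper bound comes from the uniform bound on sections, and the lower bound from testing on sections. Since $\HSch=iS_0$ on finite sections, Corollary~\ref{thm:realpencil}(ii) with $(a,b)=(0,1)$ identifies the operator and cb norms of $S_0$ with $C_p$. The chain then transfers this value $C_p$ to $S_\pm$.

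The main obstacle is the right inequality: one must choose an embedding that is simultaneously completely isometric and carries the $S_+$ symbol, diagonal included. The interleaved odd/even indexing is the step I expect to need the most care. The remaining steps are bookkeeping with amplifications.
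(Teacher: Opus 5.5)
Your proposal is correct and matches the paper's proof essentially step for step: conjugation by the order-reversal permutation gives the middle equality, the averaging identity $S_0^{(n)}=\tfrac12(S_+^{(n)}+S_-^{(n)})$ gives the left inequality, and the even-row/odd-column interleaving gives the right one (the paper obtains this interleaving from Corollary~\ref{cor:rectangular} with $\lambda_j=2j$, $\mu_k=2k-1$). The passage to the infinite multipliers by finite sections and density, the completely bounded norms via Lemma~\ref{lem:commonamplification}, and the use of only the zero-diagonal case of Corollary~\ref{thm:realpencil} (which avoids circularity) are also exactly as in the paper.
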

\begin{proof}
Take $\lambda_j=2j$ and $\mu_k=2k-1$ in
Corollary~\ref{cor:rectangular}. Since
$\operatorname{sgn}(\lambda_j-\mu_k)$ is $+1$ for $j\ge k$ and $-1$ otherwise,
$iS_+^{(n)}$ is a completely isometric corner restriction of
$iS_0^{(2n)}$. This proves the last inequality in
\eqref{eq:diagonalcomparisonfinite} at every amplification level;
it is the row--column interleaving of \cite[Proposition~6.1]{NR}.

Let $R_ne_j=e_{n+1-j}$ and let $\mathcal R_n$ be conjugation by $R_n$.
Direct comparison of symbols gives
\[
 S_-^{(n)}=-\mathcal R_n S_+^{(n)}\mathcal R_n,
 \qquad S_0^{(n)}=\tfrac12(S_+^{(n)}+S_-^{(n)}).
\]
Conjugation by $R_n\otimes I_d$ is isometric, so the first identity
identifies the two norms and the second gives the remaining inequality
in \eqref{eq:diagonalcomparisonfinite}. Taking the supremum over $n$
proves equality of the dimension suprema at each $d$; then taking the
supremum over $d$ and using \eqref{eq:commoncbamplification} proves the
completely bounded norm assertion. Finite principal sections and density give
the infinite extensions. Since $\HSch=iS_0$, the already proved
zero-diagonal part of Corollary~\ref{cor:HilbertCB} gives the common
value $C_p$, proving \eqref{eq:diagonalnormsharp} and completing that
corollary.
\end{proof}

The norm equalities concern suprema over matrix dimensions.
The norms need not agree in a fixed dimension: $S_0^{(1)}=0$
whereas $S_+^{(1)}=I$. Order reversal and multiplication by $i$ preserve
the norms, reconciling the conventions in \cite[Theorem~6.2]{NR}.

\section{Accretive spectral comparison in finite algebras}\label{rz:sec:spectral}
Throughout this section and the Riesz arguments below, $(\M,\tau)$ and $\B$ are as in Section~\ref{sec:intro}. The letter $H=G+iK$ denotes a bounded accretive element; it does not denote the Hilbert transform in this section. The symbol $\operatorname{Det}_\tau$ denotes the Fuglede--Kadison determinant, whereas $\Delta$ continues to denote the planar Laplacian in the matrix spectral comparisons.

For a bounded $A\in\M$, its decreasing singular-value function is
\[
 \mu_v(A)=\inf\{c\geq0:\tau 1_{(c,\infty)}(|A|)\leq v\},\qquad 0<v<1.
\]
Spectral layer-cake integration gives $\tau f(|A|)=\int_0^1 f(\mu_v(A))\,dv$ for continuous $f$. The equivalent projection formula is
\[
 \mu_v(A)=\inf\{\|Ae\|:e=e^*=e^2\in\M,\ \tau(\Id-e)\leq v\}.
\]
To see the equivalence, use the spectral projection below any level larger than the distribution inverse for one direction. For the other, if $\|Ae\|\leq c$, the spectral projection $f=1_{(c,\infty)}(|A|)$ has $f\wedge e=0$; the polar decomposition of $(\Id-e)f$ gives $\tau f\leq\tau(\Id-e)$. In particular,
\begin{equation}\label{rz:mulipschitz}
 |\mu_v(A)-\mu_v(B)|\leq\|A-B\|.
\end{equation}

\begin{theorem}\label{rz:kernelcomparison}
Let $H=G+iK\in\M$, where $G\ge0$ and $K=K^*$, and put
\[
 g(v)=\mu_v(G),\qquad x(v)=\mu_v(H^*H),\qquad
 z(v)=g(v)+i\sqrt{x(v)-g(v)^2}.
\]
The square root is well defined. Let $w:(0,\infty)\to[0,\infty)$ be
measurable with
\begin{equation}\label{rz:kernelweight}
 \int_0^1\frac{w(u)}u\,du+
 \int_1^\infty\frac{w(u)}{u^2}\,du<\infty.
\end{equation}
For real $a,b$, define, on the closed right half-plane,
\begin{equation}\label{rz:kernelF}
 \varphi(z)=a+bz+\int_0^\infty
       w(u)[z/u-\log(1+z/u)]\,du.
\end{equation}
Define $\varphi(H)$ by the corresponding norm-convergent operator integral,
with constant term $a\Id$. Then
\begin{equation}\label{rz:kerneldefect}
 \int_0^1\Rea \varphi(z(v))\,dv-\Rea\tau \varphi(H)
 =\frac12\int_0^\infty w(u)\mathcal L_u(G,H^*H)\,du\ge0,
\end{equation}
where
\begin{equation}\label{rz:Ldef}
 \mathcal L_u(G,X)=\tau\log(\Id+2G/u+X/u^2)
 -\int_0^1\log(1+2g(v)/u+x(v)/u^2)\,dv\ge0.
\end{equation}
All assertions, including the exact defect identity, hold for
$G\ge0$ without invertibility.
\end{theorem}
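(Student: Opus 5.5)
The plan is to make both sides of \eqref{rz:kerneldefect} explicit through one elementary identity: for $\Rea z\ge0$ and $u>0$,
\[
 \Rea\log(1+z/u)=\tfrac12\log|1+z/u|^2
 =\tfrac12\log\bigl(1+2\Rea z/u+|z|^2/u^2\bigr).
\]
For the scalar side, $\Rea z(v)=g(v)$ and $|z(v)|^2=x(v)$. Hence
\[
 \Rea\varphi(z(v))=a+bg(v)+\int_0^\infty w(u)\Bigl[\frac{g(v)}u-\tfrac12\log\Bigl(1+\frac{2g(v)}u+\frac{x(v)}{u^2}\Bigr)\Bigr]du .
\]

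\emph{Step 0: $z(v)$ is well defined.} I need $g(v)^2\le x(v)$, i.e.\ $\mu_v(G)\le\mu_v(H)$, a Fan--Hoffman type inequality. I would prove it with the projection formula for $\mu_v$. Take $e$ with $\tau(\Id-e)\le v$ and $c<\mu_v(G)$, and let $f=1_{(c,\infty)}(G)$, so $\tau f>v$. Then $f\wedge e\neq0$, and for a unit vector $\xi$ in the range of $f\wedge e$ (in the standard representation, or via the trace of the compression),
\[
 \|He\|\ge\Rea\langle H\xi,\xi\rangle=\langle G\xi,\xi\rangle>c .
\]
So $\mu_v(H)\ge\mu_v(G)$, and $G\ge0$ ensures $g\ge0$.

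\emph{Step 1: the operator side.} The function $z/u-\log(1+z/u)$ is $O(|z|^2/u^2)$ for $u\ge|z|$ and $O(|z|/u+\log(1+|z|/u))$ for $u$ small. With \eqref{rz:kernelweight}, the integral \eqref{rz:kernelF} converges uniformly on bounded parts of the half-plane, and the operator integral converges in norm, since $\Id+H/u$ has numerical range in $\{\Rea\ge1\}$ and hence a holomorphic logarithm with norm bounds. Then I use $\Rea\tau\log(\Id+H/u)=\log\operatorname{Det}_\tau(\Id+H/u)$ together with multiplicativity of the Fuglede--Kadison determinant (Appendix~\ref{rz:prelim}):
\[
 2\log\operatorname{Det}_\tau(\Id+H/u)=\log\operatorname{Det}_\tau\bigl((\Id+H/u)^*(\Id+H/u)\bigr)=\tau\log(\Id+2G/u+H^*H/u^2).
\]
Combining this with $\int_0^1g\,dv=\tau G$ and Fubini (justified by the same $u$-bounds, uniformly in $v$ since $g,x$ are bounded), subtracting the two expressions yields exactly \eqref{rz:kerneldefect} with $\mathcal L_u$ as in \eqref{rz:Ldef}. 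The constants $a$ and $b$ cancel. No invertibility of $G$ is used anywhere.

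\emph{Step 2: nonnegativity of $\mathcal L_u$.} This is the main point. Set $Y=2G/u\ge0$ and $Z=X/u^2\ge0$ with $X=H^*H$, so $\mu(Y)=2g/u$ and $\mu(Z)=x/u^2$. By the Ky Fan inequality in finite von Neumann algebras, $\int_0^t\mu_v(Y+Z)\,dv\le\int_0^t(\mu_v(Y)+\mu_v(Z))\,dv$ for all $t$, with equality at $t=1$ because $\tau$ is additive. So $\mu(Y+Z)$ is majorized by the decreasing function $\mu(Y)+\mu(Z)$. Since $t\mapsto\log(1+t)$ is concave, the Hardy--Littlewood--P\'olya / Karamata inequality for functions on $(0,1)$ gives
\[
 \tau\log(\Id+Y+Z)=\int_0^1\log(1+\mu_v(Y+Z))\,dv\ge\int_0^1\log(1+\mu_v(Y)+\mu_v(Z))\,dv ,
\]
which is $\mathcal L_u\ge0$. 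Since $w\ge0$, the right side of \eqref{rz:kerneldefect} is nonnegative.

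I expect the main care to be needed in two places: the Fan--Hoffman step, namely extracting a vector or subprojection from $f\wedge e\ne0$ in a general algebra, and the measurability and Fubini bookkeeping for $u\mapsto w(u)\mathcal L_u$ near $u=0$. For the latter, $\mathcal L_u$ is only $O(\log(1/u))$, while the weight is integrable against $1/u$, so this should close once Step 1 is written out.
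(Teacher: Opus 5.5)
Your proposal is correct and follows essentially the same route as the paper: singular-value dominance $\mu_v(G)\le\mu_v(H)$ for well-definedness, then the Fuglede--Kadison trace bridge $\Rea\tau\log(\Id+H/u)=\tfrac12\tau\log(\Id+2G/u+H^*H/u^2)$ combined with the scalar identity $\Rea\log(1+z/u)=\tfrac12\log|1+z/u|^2$, integrated against $w$ with the compensations kept intact, and finally Ky Fan majorization plus concavity of $\log(1+t)$ to get $\mathcal L_u\ge0$. The differences are minor: you derive the dominance from the projection formula for $\mu_v$ rather than by comparing distribution functions through a polar decomposition, and you cite Ky Fan/Karamata and $\log\operatorname{Det}_\tau(e^B)=\Rea\tau B$ where the paper proves these directly.
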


\begin{corollary}\label{rz:comparison}
For $H,G,g,x,z$ as in Theorem~\ref{rz:kernelcomparison} and $1<s<2$,
put $\eta_s=s\sin(\pi(s-1))/\pi$.
The principal accretive powers, including boundary values, are
defined in Subsection~\ref{rz:mellin}. Then
\begin{equation}\label{rz:spectralcomparison}
 \Rea\tau H^s\le\int_0^1\Rea z(v)^s\,dv,
\end{equation}
and for every $G\ge0$ the exact defect is
\begin{equation}\label{rz:comparisondefect}
 \int_0^1\Rea z(v)^s\,dv-\Rea\tau H^s
 =\frac{\eta_s}{2}\int_0^\infty
      u^{s-1}\mathcal L_u(G,H^*H)\,du\ge0.
\end{equation}
\end{corollary}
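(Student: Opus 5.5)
The plan is to obtain Corollary~\ref{rz:comparison} as the special case of Theorem~\ref{rz:kernelcomparison} in which the kernel $\varphi$ is the principal power $z^s$. Set $a=b=0$ and $w(u)=\eta_s u^{s-1}$. The whole proof then reduces to three checks: this weight is admissible, the scalar integral \eqref{rz:kernelF} equals $z^s$ on the closed right half-plane, and the operator integral $\varphi(H)$ equals the accretive power $H^s$ of Subsection~\ref{rz:mellin}.

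\emph{Admissibility.} Condition \eqref{rz:kernelweight} reads
\[
 \int_0^1u^{s-2}\,du<\infty,\qquad \int_1^\infty u^{s-3}\,du<\infty,
\]
and these hold exactly because $1<s<2$.

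\emph{The scalar identity.} For $z>0$, substitute $u=zt$ to get
\[
 \int_0^\infty u^{s-1}\bigl[z/u-\log(1+z/u)\bigr]\,du = c_s z^s,\qquad
 c_s=\int_0^\infty t^{s-1}\bigl(1/t-\log(1+1/t)\bigr)\,dt .
\]
The bracket has derivative $-1/(t^2(t+1))$. Integrating by parts against $t^{s-1}\,dt=d(t^s/s)$ gives
\[
 c_s=\frac1s\int_0^\infty\frac{t^{s-2}}{1+t}\,dt=\frac{\pi}{s\sin(\pi(s-1))}=\eta_s^{-1}.
\]
The boundary terms vanish for the following reasons. At $0$, $t^s\cdot(1/t)\to0$ since $s>1$. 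At $\infty$ the bracket is $O(t^{-2})$, so $t^{s}\cdot t^{-2}\to0$ since $s<2$. Hence $\varphi(z)=z^s$ for $z>0$.

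To extend this, note that both sides are holomorphic in the open right half-plane. For the integral side, $\log(1+z/u)$ is analytic there, and the integrand is $O(|z|^2/u^2)$ for large $u$ and $O(|z|/u)$ for small $u$, locally uniformly in $z$. Both sides are also continuous up to the imaginary axis by dominated convergence. The identity theorem and continuity therefore give $\varphi(z)=z^s$ (principal branch) on the closed half-plane. In particular this holds at $z=z(v)$: the point $z(v)$ lies in that half-plane because $g(v)=\mu_v(G)\ge0$.

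\emph{Identification of the operator integral.} Next I would show that the norm-convergent operator integral $\varphi(H)$ of Theorem~\ref{rz:kernelcomparison} coincides with the accretive power $H^s$. If Subsection~\ref{rz:mellin} defines $H^s$ by the same Mellin-type integral, this is immediate. If it instead uses the holomorphic functional calculus, there are two cases.
\begin{itemize}
\item[(i)] For strictly accretive $H+\varepsilon\Id$ the spectrum lies in the open half-plane. There the integral is a Riesz--Dunford integral of a locally uniformly convergent family of holomorphic functions, so the scalar identity transfers.
\item[(ii)] Then let $\varepsilon\downarrow0$. Both $\varphi(H+\varepsilon\Id)$ and the boundary-value power converge in norm. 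For $\varphi$ this follows from the uniform bounds on the integrand against $w$ via \eqref{rz:kernelweight}.
\end{itemize}
I expect this boundary passage, for accretive but non-invertible $H$, to be the only delicate step. The remedy is to keep all estimates uniform in $\varepsilon$ using the weight integrability.

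\emph{Conclusion.} With these identifications, \eqref{rz:kerneldefect} becomes exactly \eqref{rz:comparisondefect}. The right-hand side is nonnegative because $\eta_s>0$ for $1<s<2$ and $\mathcal L_u\ge0$ by \eqref{rz:Ldef}. This nonnegativity yields \eqref{rz:spectralcomparison}.
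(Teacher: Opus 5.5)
Your proposal is correct and follows the paper's proof. You specialize Theorem~\ref{rz:kernelcomparison} to $a=b=0$ and $w(u)=\eta_s u^{s-1}$, check \eqref{rz:kernelweight} for $1<s<2$, and identify $\varphi$ with $z^s$ through the Mellin formula \eqref{rz:scalarMellin}, extended to the boundary by continuity. Your hedge about how $H^s$ is defined is settled by your first alternative: Subsection~\ref{rz:mellin} defines the boundary power by the same compensated integral \eqref{rz:boundaryMellin}, so $\varphi(H)=H^s$ holds by definition.
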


\begin{corollary}\label{rz:budget}
For $G\geq0$, $K=K^*$ in $\M$, and $1<s<2$, set
\[
 H=G+iK,\qquad\alpha=\frac\pi{2s},\qquad r=\sec\alpha,\qquad a=\tan\alpha.
\]
Principal accretive powers, including boundary values, are defined in Subsection~\ref{rz:mellin}. Then
\begin{equation}\label{rz:D}
 \quad\Ds(G,K)=r^s\tau G^s-\tau|H|^s-a\Rea\tau H^s\geq0.\quad
\end{equation}
Equality holds for $H=(1\pm ia)G$ with $G\geq0$.
\end{corollary}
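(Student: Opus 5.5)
The plan is to combine the spectral comparison of Corollary~\ref{rz:comparison} with a one-variable angular inequality, applied pointwise in $v$. Put $g,x,z$ as in Theorem~\ref{rz:kernelcomparison}. Then $\Rea z(v)=g(v)=\mu_v(G)$ and $|z(v)|^2=x(v)=\mu_v(H^*H)=\mu_v(|H|)^2$. Layer-cake integration therefore gives
\[
 \tau G^s=\int_0^1(\Rea z(v))^s\,dv,\qquad
 \tau|H|^s=\int_0^1|z(v)|^s\,dv.
\]
Since $a=\tan\alpha>0$, \eqref{rz:spectralcomparison} yields
\[
 \Ds(G,K)\ \ge\ \int_0^1\Bigl(r^s(\Rea z(v))^s-|z(v)|^s-a\Rea z(v)^s\Bigr)\,dv .
\]
So it suffices to prove the scalar inequality
\[
 |z|^s+a\Rea z^s\le r^s(\Rea z)^s\qquad\text{for }\Rea z\ge0,
\]
with principal powers, including the boundary rays.

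For the scalar inequality, write $z=\rho e^{i\varphi}$ with $\rho\ge0$ and $|\varphi|\le\pi/2$. By homogeneity and evenness in $\varphi$ it reduces to
\[
 h(\varphi)=\frac{\cos^s\varphi}{\cos^s\alpha}-1-\tan\alpha\cos(s\varphi)\ge0,
 \qquad 0\le\varphi\le\frac\pi2 .
\]
The point $\varphi=\alpha$ is the touching point: $s\alpha=\pi/2$ gives $h(\alpha)=0$, and
\[
 h'(\alpha)=-s\tan\alpha+s\tan\alpha\sin(s\alpha)=0 .
\]
I will split the interval according to the sign of $1+\tan\alpha\cos(s\varphi)$.
\begin{enumerate}
\item[(i)] Where this quantity is $\le0$ (near $\varphi=\pi/2$ when $s$ is large), the inequality is trivial, because $\cos^s\varphi\ge0$.
\item[(ii)] Where it is positive, I will show that
\[
 k(\varphi)=\frac{\cos^s\varphi}{1+\tan\alpha\cos(s\varphi)}
\]
attains its minimum $\cos^s\alpha$ at $\varphi=\alpha$.
\end{enumerate}
For (ii) I will study the sign of $(\log k)'$, namely
\[
 -s\tan\varphi+\frac{s\tan\alpha\sin(s\varphi)}{1+\tan\alpha\cos(s\varphi)} .
\]
Clearing denominators, its sign is that of
\[
 \tan\alpha\sin(s\varphi)-\tan\varphi\bigl(1+\tan\alpha\cos(s\varphi)\bigr)
 =\frac{\sin\bigl(s\varphi-\varphi\bigr)\sin\alpha-\sin\varphi\cos\alpha}{\cos\alpha\cos\varphi}.
\]
After multiplying by $\cos\alpha\cos\varphi>0$, this is
\[
 \sin\alpha\sin((s-1)\varphi)-\cos\alpha\sin\varphi .
\]
I expect to show that this has a single sign change, at $\varphi=\alpha$, on the relevant range, from negative to positive. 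That makes $\varphi=\alpha$ the minimum of $k$.

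This monotonicity and single-crossing check is the main obstacle. It uses $1<s<2$, so that $0<(s-1)\varphi<\varphi$ and the comparison is between two sine waves of different frequencies. Concavity of $\sin$ on $[0,\pi]$ and the value at $\varphi=\alpha$ should settle it. Boundary rays with $g(v)=0$ are covered by continuity of principal powers up to the imaginary axis.

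For the equality case, $H=(1\pm ia)G$ is normal and a function of $G$. Hence
\[
 H^s=(1\pm ia)^sG^s=r^se^{\pm is\alpha}G^s,
\]
so $\Rea\tau H^s=r^s\cos(\pi/2)\,\tau G^s=0$ and $|H|^s=r^sG^s$. Therefore $\Ds=0$.
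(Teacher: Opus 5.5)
Your reduction is the paper's: layer-cake integration and \eqref{rz:spectralcomparison}, used with $a>0$, bring $\Ds(G,K)$ down to the scalar inequality of Lemma~\ref{rz:scalar}. The paper keeps the exact defect term, which you may drop. Your $k$ is the reciprocal of the paper's $F(\varphi)=\sec^s\varphi\,[1+\tan\alpha\cos(s\varphi)]$, your sign expression is $-\cos\alpha$ times the paper's $\sin\varphi-\tan\alpha\sin((s-1)\varphi)$, and your equality case is the paper's.

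\textbf{The gap.} The single-crossing claim is the whole content of the scalar lemma, and you only announce it. The hint you give does not suffice as stated. Two concave functions vanishing at $0$ and agreeing at $\alpha$ may cross again. The direct consequences of concavity, such as $\sin((s-1)\varphi)\ge(s-1)\sin\varphi$, do not locate the sign change.

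\textbf{How to close it.} What you need is strict monotonicity of $\varphi\mapsto\sin\varphi/\sin((s-1)\varphi)$ on $(0,\pi/2)$. Its logarithmic derivative is
\[
 \cot\varphi-(s-1)\cot((s-1)\varphi)
 =\frac1\varphi\Bigl[\varphi\cot\varphi-(s-1)\varphi\cot\bigl((s-1)\varphi\bigr)\Bigr]<0 .
\]
This holds because $t\cot t$ is strictly decreasing on $(0,\pi/2)$, with derivative $(\sin 2t-2t)/(2\sin^2t)<0$, and $0<(s-1)\varphi<\varphi$. Since $(s-1)\alpha=\pi/2-\alpha$, the ratio equals $\tan\alpha$ at $\varphi=\alpha$. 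Hence $\sin\alpha\sin((s-1)\varphi)-\cos\alpha\sin\varphi$ is negative on $(0,\alpha)$ and positive on $(\alpha,\pi/2)$, exactly as you need.

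\textbf{Finishing case (ii).} You also need the set where $1+\tan\alpha\cos(s\varphi)>0$ to be an interval $[0,\varphi_0)$ containing $\alpha$. Otherwise the sign pattern of $(\log k)'$ does not give a global minimum at $\alpha$. It is an interval, because $s\varphi\le s\pi/2<\pi$ makes $\cos(s\varphi)$ decreasing on $[0,\pi/2]$.

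Alternatively, follow the paper and differentiate $F=1/k$ directly. On all of $[0,\pi/2)$,
\[
 F'(\varphi)=s\sec^{s+1}\varphi\,\bigl[\sin\varphi-\tan\alpha\sin((s-1)\varphi)\bigr].
\]
The monotonicity above then gives $F\le F(\alpha)=r^s$ there with no case split. Since $h=\cos^s\varphi\,(r^s-F)$, this gives $h\ge0$ on $[0,\pi/2)$, and the boundary ray $\varphi=\pi/2$ follows by continuity of $h$.
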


\subsection{Singular-value dominance and logarithmic rearrangement}
\begin{lemma}\label{rz:dominance}
If $H=G+iK$ with $G\geq0$, then
\begin{equation}\label{rz:mu}
 \mu_v(G)\leq\mu_v(H),\qquad 0<v<1.
\end{equation}
\end{lemma}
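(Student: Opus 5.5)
The plan is to compare $H^*H$ with $G^2$ in the operator order and then transfer this to singular values. Writing $H=G+iK$, we have
\[
 H^*H=(G-iK)(G+iK)=G^2+K^2+i(GK-KG).
\]
The commutator term is not sign-definite, so $H^*H\ge G^2$ fails in general. The positivity of $G$ must therefore enter more subtly.

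The route I would take uses the real part of a sesquilinear form. For any unit vector $\xi$, $\|H\xi\|\ge|\langle H\xi,\xi\rangle|\ge\Rea\langle H\xi,\xi\rangle=\langle G\xi,\xi\rangle$. Combining this with the projection formula for $\mu_v$ is awkward, because that formula controls $\|Ae\|$ over projections, not quadratic forms. So I would use the min--max characterization in the dual form:
\[
 \mu_v(A)=\inf_{\tau(\Id-e)\le v}\|Ae\|.
\]
Fix a projection $e$ with $\tau(\Id-e)\le v$. Then for unit $\xi\in e\mathcal H$,
\[
 \|He\|\ge\|H\xi\|\ge\langle G\xi,\xi\rangle=\langle eGe\,\xi,\xi\rangle .
\]
Since $eGe\ge0$, taking the supremum over $\xi$ gives $\|He\|\ge\|eGe\|$. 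It remains to show $\inf_e\|eGe\|\ge\mu_v(G)$, that is, to pass from the compression norm back to $\mu_v(G)$.

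For this step I would compare with the spectral projection $f=1_{(c,\infty)}(G)$ for any $c<\mu_v(G)$, which has $\tau f>v$. Then $f\wedge e\ne0$, since $\tau(f)+\tau(e)>1$ and the trace of $f\wedge e$ is at least $\tau f+\tau e-1>0$ by the parallelogram law for projections. For a unit vector $\xi$ in the range of $f\wedge e$ (in the standard representation), $\langle G\xi,\xi\rangle>c$ because $\xi\in f\mathcal H$, and $\xi\in e\mathcal H$. Hence $\|eGe\|>c$. Letting $c\uparrow\mu_v(G)$ gives $\|He\|\ge\mu_v(G)$ for every admissible $e$, and taking the infimum proves \eqref{rz:mu}.

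The main obstacle is the finite-trace lattice step $\tau(f\wedge e)\ge\tau f+\tau e-1$. It follows from $\tau(f\vee e)\le1$ and Kaplansky's parallelogram law $f\vee e-e\sim f-f\wedge e$ in a finite von Neumann algebra. This is the same mechanism the paper already invokes, through the polar decomposition of $(\Id-e)f$, when it establishes the projection formula for $\mu_v$. Apart from this step the argument is elementary, and $G\ge0$ is used only to identify $\langle G\xi,\xi\rangle$ with the real part of $\langle H\xi,\xi\rangle$.
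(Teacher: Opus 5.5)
Your argument is correct and is essentially the paper's proof. Both rest on the vector estimate $\|H\xi\|\ge\Rea\langle H\xi,\xi\rangle=\langle G\xi,\xi\rangle>c\|\xi\|^2$ for a nonzero $\xi$ in an intersection of projections, combined with trace counting in the finite algebra. Your bound $\tau(e\wedge f)\ge\tau e+\tau f-1$ is a quantitative form of the paper's polar-decomposition step $e\wedge(\Id-f)=0\Rightarrow\tau e\le\tau f$. The only difference is that you route the argument through the projection formula for $\mu_v(H)$, whereas the paper compares the distribution functions of $G$ and $|H|$ level by level.

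One correction to your closing remark: $\Rea\langle H\xi,\xi\rangle=\langle G\xi,\xi\rangle$ holds for any self-adjoint $G$. Positivity is used elsewhere: it makes $1_{(c,\infty)}(G)$ the spectral projection of $|G|$ that governs $\mu_v(G)$. Without it the lemma fails: for $T=2e_{12}\in M_2$ one has $\mu_v(\Rea T)=1>0=\mu_v(T)$ when $v\ge1/2$.
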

\begin{proof}
For $c>0$ set $e=1_{(c,\infty)}(G)$ and $f=1_{(c,\infty)}(|H|)$. Their ranges satisfy $e\wedge(\Id-f)=0$. Indeed, a nonzero vector $\xi$ in the intersection would give
\[
 \Rea\langle H\xi,\xi\rangle=\langle G\xi,\xi\rangle>c\|\xi\|^2,
 \qquad \|H\xi\|\leq c\|\xi\|,
\]
a contradiction. Take the polar decomposition $fe=v|fe|$ in $\M$. The initial support is $e$, because $e\wedge(\Id-f)=0$, and the range support is dominated by $f$. Hence
\[
 \tau e=\tau(v^*v)=\tau(vv^*)\leq\tau f.
\] Thus the distribution function of $G$ is dominated by that of $|H|$ at every $c>0$. Taking generalized inverses proves \eqref{rz:mu}.
\end{proof}

\begin{lemma}\label{rz:logsum}
For bounded $A,B\geq0$,
\begin{equation}\label{rz:logineq}
 \tau\log(\Id+A+B)\geq\int_0^1\log(1+\mu_v(A)+\mu_v(B))\,dv.
\end{equation}
\end{lemma}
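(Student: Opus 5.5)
The plan is to deduce \eqref{rz:logineq} from a submajorization of $\mu(A+B)$ by $\mu(A)+\mu(B)$, and then use concavity of $f(t)=\log(1+t)$. The first step is the tracial Ky Fan inequality of Fack--Kosaki:
\[
 \int_0^t\mu_v(A+B)\,dv\le\int_0^t\bigl(\mu_v(A)+\mu_v(B)\bigr)\,dv,\qquad 0<t\le1 .
\]
I would prove it through the variational formula
\[
 \int_0^t\mu_v(X)\,dv=\sup\{\tau(Xc):0\le c\le\Id,\ \tau c\le t\},\qquad X\ge0 .
\]
This formula follows from the spectral layer-cake representation already used for $\mu_v$. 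Taking contractions $c$ rather than projections avoids atoms in finite algebras that are not diffuse. The supremum is attained at $c=1_{(\lambda,\infty)}(X)+\theta\,1_{\{\lambda\}}(X)$ for a suitable level $\lambda$ and $0\le\theta\le1$. The upper bound is a bathtub argument against the distribution function of $X$. Subadditivity of the supremum under $X=A+B$ then gives the displayed inequality.

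At $t=1$ the inequality is an equality, since $\tau(A+B)=\tau A+\tau B$. The right-hand integrand $y(v)=\mu_v(A)+\mu_v(B)$ is nonincreasing, so $x(v)=\mu_v(A+B)$ is majorized by $y$ in the Hardy--Littlewood--P\'olya sense on $[0,1]$.

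Next I would represent $f$ on $[0,M]$, with $M=\|A\|+\|B\|$, in the form
\[
 f(t)=\alpha+\beta t-\int_0^M(t-s)_+\,d\nu(s),\qquad \nu=-f''\,ds\ge0 .
\]
This is the standard second-order Taylor formula with integral remainder, and it is valid because $f$ is smooth and concave there. It therefore suffices to show
\[
 \int_0^1(x(v)-s)_+\,dv\le\int_0^1(y(v)-s)_+\,dv \qquad\text{for every } s\ge0 ,
\]
because the linear terms agree by the equality of total integrals. For nonincreasing $x$ one has $\int_0^1(x-s)_+=\sup_{0\le t\le1}\bigl(\int_0^tx-st\bigr)$, and the same holds for $y$. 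The majorization of partial integrals then gives the inequality term by term. Integrating against $\nu$ and using Fubini, which is harmless since everything is bounded on $[0,M]$, yields
\[
 \int_0^1 f(x(v))\,dv\ge\int_0^1 f(y(v))\,dv .
\]

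Finally, $\tau\log(\Id+A+B)=\int_0^1 f(\mu_v(A+B))\,dv$ by the layer-cake identity quoted before \eqref{rz:mulipschitz}, applied with $|A+B|=A+B$. This proves \eqref{rz:logineq}.

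The main obstacle is the Ky Fan step in a general finite von Neumann algebra, where projections of arbitrary trace need not exist. I would handle it as above, using positive contractions $c$ in the variational formula so that no diffuseness is required. If a citation is preferred, I would instead invoke Fack--Kosaki's submajorization $\mu(A+B)\prec_w\mu(A)+\mu(B)$ directly.
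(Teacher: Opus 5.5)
Your proposal is correct and follows essentially the same route as the paper. Both arguments use a Ky Fan variational formula over positive contractions (so atoms cause no trouble), deduce the submajorization of $\mu(A+B)$ by $\mu(A)+\mu(B)$ with equal total integrals, compare $\int_0^1(\cdot-s)_+\,dv$ through the supremum formula for decreasing functions, and then integrate against $(1+s)^{-2}\,ds$. Your Taylor representation of $\log(1+t)$ is exactly the paper's identity $t-\log(1+t)=\int_0^\infty (t-c)_+(1+c)^{-2}\,dc$.
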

\begin{proof}
We give a direct proof of the relevant rearrangement inequality; see \cite{rz:FK} for its general formulation. For $A\geq0$ and $0\leq q\leq1$, the Ky Fan formula is
\[
 \int_0^q\mu_v(A)\,dv
 =\sup\{\tau(AQ):0\leq Q\leq\Id,\ \tau Q=q\}.
\]
Indeed, layer-cake integration and positivity give
\[
 \tau(AQ)=\int_0^\infty\tau\bigl(Q1_{(c,\infty)}(A)\bigr)\,dc
 \leq\int_0^\infty\min\{q,\tau 1_{(c,\infty)}(A)\}\,dc
 =\int_0^q\mu_v(A)\,dv.
\]
For the reverse inequality, take the spectral projection above the threshold $\mu_q(A)$ and add, if necessary, a scalar multiple of the level projection to obtain trace $q$. The endpoint cases are immediate. This choice uses positive contractions and works in algebras with atoms.

Put $h(v)=\mu_v(A+B)$ and $\nu(v)=\mu_v(A)+\mu_v(B)$. Applying the formula to the sum gives
\[
 \int_0^q h(v)\,dv\leq\int_0^q\nu(v)\,dv\quad(0\leq q\leq1),
 \qquad\int_0^1h=\int_0^1\nu.
\]
Both functions are decreasing, so for $c\geq0$,
\[
 \int_0^1(h-c)_+\,dv
 =\sup_{0\leq q\leq1}\left(\int_0^q h\,dv-cq\right)
 \leq\int_0^1(\nu-c)_+\,dv.
\]
Using the elementary identity
\[
 t-\log(1+t)=\int_0^\infty\frac{(t-c)_+}{(1+c)^2}\,dc\qquad(t\geq0)
\]
and Tonelli, we obtain
\[
 \int_0^1[h-\log(1+h)]\,dv\leq\int_0^1[\nu-\log(1+\nu)]\,dv.
\]
The linear terms cancel, which proves \eqref{rz:logineq}.
\end{proof}

\subsection{Logarithmic kernels and the exact comparison}
For a bounded accretive $A$ and $u>0$,
\begin{equation}\label{rz:logresolvent}
 \log(\Id+A/u)=\int_0^1(A/u)(\Id+tA/u)^{-1}\,dt,
 \qquad\|(\Id+tA/u)^{-1}\|\le1.
\end{equation}
The logarithm is its principal holomorphic value, since
$\Rea(\Id+A/u)\ge\Id$. Subtracting \eqref{rz:logresolvent} gives
\[
 A/u-\log(\Id+A/u)
 =\int_0^1t(A/u)^2(\Id+tA/u)^{-1}\,dt.
\]
Consequently, if $\|A\|\le M$, then for every $u>0$,
\begin{equation}\label{rz:kernelcompensation}
 \|A/u-\log(\Id+A/u)\|\le
 \min\left\{\frac{2M}{u},\frac{M^2}{2u^2}\right\}.
\end{equation}
The same bound holds for scalar $A$ in the closed right half-plane. Thus \eqref{rz:kernelweight} makes all the
compensated integrals norm convergent, uniformly on bounded sets of
accretive elements. For each fixed $u$, resolvent continuity in
\eqref{rz:logresolvent} and dominated convergence also show that $\varphi(A)$
is norm continuous under uniformly bounded accretive perturbations.

\begin{lemma}\label{rz:exponentialdeterminant}
For every $B\in\M$, $\log\operatorname{Det}_\tau(e^B)=\Rea\tau B$.
\end{lemma}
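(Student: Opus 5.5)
The plan is to reduce to a scalar computation through the spectral theorem for $|e^B|$, and then connect $e^B$ back to $B$ by continuity along the path $t\mapsto e^{tB}$. Recall from Appendix~\ref{rz:prelim} that for invertible $X\in\M$ one has $\log\operatorname{Det}_\tau(X)=\tau\log|X|$, with $|X|=(X^*X)^{1/2}$. We also use that $\operatorname{Det}_\tau$ is multiplicative and norm continuous on the invertible group.

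Put $f(t)=\log\operatorname{Det}_\tau(e^{tB})$ for $t\in\R$. Since $e^{(s+t)B}=e^{sB}e^{tB}$, multiplicativity makes $f$ additive: $f(s+t)=f(s)+f(t)$. Norm continuity of $t\mapsto e^{tB}$ within the invertibles, together with continuity of $\operatorname{Det}_\tau$ there, makes $f$ continuous. Hence $f$ is linear, $f(t)=tf(1)$, and it suffices to compute $f'(0)$.

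For small $t$, write $X_t=e^{tB}=\Id+tB+O(t^2)$, so that
\[
 X_t^*X_t=\Id+2t\Rea B+O(t^2),\qquad \Rea B=(B+B^*)/2.
\]
For $t$ small the spectrum of $X_t^*X_t$ lies near $1$, so the holomorphic functional calculus gives
\[
 \log(X_t^*X_t)=2t\Rea B+O(t^2)
\]
in norm. Therefore
\[
 f(t)=\tfrac12\tau\log(X_t^*X_t)=t\,\tau(\Rea B)+O(t^2)=t\Rea\tau B+O(t^2),
\]
using that $\tau$ is bounded and commutes with adjoints up to conjugation. Combining this with linearity gives $f(1)=\Rea\tau B$.

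The main obstacle is making sure the multiplicativity and continuity of $\operatorname{Det}_\tau$ are actually available from Appendix~\ref{rz:prelim}. Multiplicativity is the Fuglede--Kadison theorem and is nontrivial. If only $\tau\log|X|$ is at hand, I would avoid multiplicativity altogether by differentiating directly: set
\[
 \phi(t)=\tfrac12\tau\log\bigl(e^{tB^*}e^{tB}\bigr),
\]
and use the trace derivative formula $\frac{d}{dt}\tau\log Y_t=\tau(Y_t^{-1}\dot Y_t)$. This formula is valid for positive invertible $Y_t$ by cyclicity, through the resolvent integral representation of $\log$. Here $\dot Y_t=B^*Y_t+Y_tB$, so cyclicity gives
\[
 \phi'(t)=\tfrac12\tau\bigl(Y_t^{-1}B^*Y_t+B\bigr)=\tfrac12\tau(B^*+B)=\Rea\tau B.
\]
Since $\phi(0)=0$, integrating yields $\phi(1)=\Rea\tau B$. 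This second route is self-contained, and I would present it as the main proof.
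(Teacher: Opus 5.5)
Your argument is correct. Your first route is essentially the paper's proof. There, multiplicativity gives $2\log\operatorname{Det}_\tau(e^B)=n\,\tau\log(e^{B^*/n}e^{B/n})$, and the norm expansion $e^{B^*/n}e^{B/n}=\Id+(B+B^*)/n+O(n^{-2})$ finishes the argument. This uses only $f(1)=nf(1/n)$, so your continuity-and-linearity step is not needed.

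Your second route, which you propose as the main proof, is genuinely different. It replaces the multiplicativity of $\operatorname{Det}_\tau$ (Theorem~\ref{rz:standard}(2), a cited Fuglede--Kadison result) by the Jacobi-type formula $\frac{d}{dt}\tau\log Y_t=\tau(Y_t^{-1}\dot Y_t)$. You obtain this formula from $\log Y=\int_0^\infty[(1+s)^{-1}-(Y+s)^{-1}]\,ds$, $\int_0^\infty(Y+s)^{-2}\,ds=Y^{-1}$, and cyclicity. The cancellation $\tau(Y_t^{-1}B^*Y_t)=\tau B^*$ then does the work that multiplicativity does in the paper.

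What this buys is a self-contained proof in any finite tracial algebra. It uses only $\log\operatorname{Det}_\tau(X)=\frac12\tau\log(X^*X)$ for invertible $X$, which follows directly from the paper's limit definition. The same computation gives $\frac{d}{dt}\log\operatorname{Det}_\tau(X_t)=\Rea\tau(X_t^{-1}\dot X_t)$ for every $C^1$ path of invertibles, which is the usual route to multiplicativity itself.

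Its cost is a routine justification, which you indicated: a uniform lower spectral bound for $Y_t$ on $[0,1]$, and dominated convergence under the $s$-integral. The paper's version is shorter because multiplicativity is already at hand and is used again in the proof of Theorem~\ref{rz:detcovariance}.
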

\begin{proof}
Determinant multiplicativity and the norm Taylor expansion give
\[
 2\log\operatorname{Det}_\tau(e^B)
 =n\tau\log(e^{B^*/n}e^{B/n})
 =\tau(B^*+B)+O(n^{-1}).
\]
Indeed, the product inside the logarithm is
$\Id+(B^*+B)/n+O(n^{-2})$ in norm. Letting $n\to\infty$ proves the
assertion.
\end{proof}
For bounded accretive $H=G+iK$, take $B=\log(\Id+H/u)$, its bounded
principal logarithm. Since $e^B=\Id+H/u$, the lemma gives
\begin{equation}\label{rz:logbridge}
 \Rea\tau\log(\Id+H/u)
 =\tfrac12\tau\log\left(\Id+2G/u+H^*H/u^2\right).
\end{equation}

\begin{proof}[Proof of Theorem~\ref{rz:kernelcomparison}]
Lemma~\ref{rz:dominance} gives $x(v)\ge g(v)^2$ and $|z(v)|\le\|H\|$.
Lemma~\ref{rz:logsum}, applied to $2G/u$ and $H^*H/u^2$, gives
$\mathcal L_u\ge0$. The trace bridge \eqref{rz:logbridge} gives directly
\[
 \int_0^1\Rea[z(v)/u-\log(1+z(v)/u)]\,dv
 -\Rea\tau[H/u-\log(\Id+H/u)]
 =\tfrac12\mathcal L_u(G,H^*H).
\]
The bounds \eqref{rz:kernelcompensation} and \eqref{rz:kernelweight}
justify integrating this identity against $w$ with both compensations
intact. The real affine terms agree, so this proves
\eqref{rz:kerneldefect}, including absolute convergence and $G\ge0$.
\end{proof}

\subsection{The compensated determinant-pair identity}\label{rz:mellin}
Set
\[
 \eta_s=\frac{s\sin(\pi(s-1))}{\pi}>0.
\]
For $\Rea z>0$ the following convergent, compensated formula holds:
\begin{equation}\label{rz:scalarMellin}
 z^s=\eta_s\int_0^\infty u^{s-1}\left[\frac zu-\log\left(1+\frac zu\right)\right]du.
\end{equation}
For positive real $z$, integration by parts gives
\[
 \int_0^\infty u^{s-1}\left[\frac zu-\log(1+z/u)\right]du
 =\frac{z^2}{s}\int_0^\infty\frac{u^{s-2}}{u+z}\,du
 =\frac{\pi z^s}{s\sin(\pi(s-1))}.
\]
The boundary terms vanish since $1<s<2$. On compact subsets of the right half-plane the compensated integrand is uniformly $O(u^{-1}+|\log u|)$ at zero and $O(u^{-2})$ at infinity. It therefore defines a holomorphic function, and analytic continuation proves \eqref{rz:scalarMellin}. The two terms are never integrated separately.

For $\Rea A\ge\delta\Id>0$, choose a Dunford contour enclosing
its spectrum in $\{\Rea z\ge\delta/2\}$. The uniform compensated
scalar bounds and the bounded resolvent on this contour permit
interchange with the contour integral. Hence \eqref{rz:scalarMellin}
holds as a norm-convergent operator integral and gives the principal
holomorphic power $A^s$.
For every bounded accretive $A$, we use the same formula for its
boundary value:
\begin{equation}\label{rz:boundaryMellin}
 A^s=\eta_s\int_0^\infty u^{s-1}
       [A/u-\log(\Id+A/u)]\,du.
\end{equation}
The bounds \eqref{rz:kernelcompensation} show norm convergence and
continuity under uniformly bounded accretive perturbations, because
$u^{s-2}$ at zero and $u^{s-3}$ at infinity are integrable precisely
for $1<s<2$. This identifies \eqref{rz:boundaryMellin} with the
continuous extension of the principal holomorphic power.

\begin{lemma}\label{rz:mellinderivatives}
Let $0<\beta<1$, $b_\beta=\pi/\sin(\pi\beta)$, and $\Rea a>0$.
Then, locally uniformly in $a$, the following integrals converge absolutely:
\begin{align}
 K_\beta(a)&:=\int_0^\infty t^\beta
       [(t+a)^{-1}-t^{-1}]\,dt=-b_\beta a^\beta,
       \label{rz:rayresolvent}\\
 \int_0^\infty t^\beta
       [\log(1+a/t)-a/(t+a)]\,dt
   &=\frac{\beta b_\beta}{1+\beta}a^{1+\beta}.
       \label{rz:raylogprimitive}
\end{align}
\end{lemma}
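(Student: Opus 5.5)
For both formulas, the plan is to evaluate the integrals explicitly for real $a>0$, where a scaling substitution reduces everything to a Beta integral. The extension to $\Rea a>0$ then follows by analytic continuation. This needs bounds on compact subsets of the open right half-plane $\{\Rea a>0\}$ that give absolute, locally uniform convergence, so that both sides are holomorphic in $a$.

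\emph{Step 1: convergence.} Fix a compact set $\mathcal K\subset\{\Rea a>0\}$ and let $\delta=\min_{\mathcal K}\Rea a$. For $t>0$ and $a\in\mathcal K$ we have $|t+a|\ge\max\{t,\delta\}$ and $|t+a|\ge c(t+1)$ for some $c>0$. The first integrand equals $-a\,t^{\beta-1}(t+a)^{-1}$. It is $O(t^{\beta-1})$ at $0$ and $O(t^{\beta-2})$ at $\infty$, and both are integrable because $0<\beta<1$.

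For the second integrand, use the principal logarithm, which is well defined since $\Rea(1+a/t)>1$. The function $\psi(x)=\log(1+x)-x/(1+x)$ satisfies $\psi(x)=O(|x|^2)$ for $|x|\le1/2$. For large $|x|$ in the right half-plane it satisfies $\psi(x)=O(1+|\log|x||)$. With $x=a/t$ this gives the bound $O(t^{\beta-2})$ at infinity and $O(t^\beta(1+|\log t|))$ at zero, uniformly on $\mathcal K$. Both bounds are integrable. Since the integrands are holomorphic in $a$ and dominated uniformly on $\mathcal K$, Morera's theorem (or differentiation under the integral sign) shows that both integrals are holomorphic in $a$.

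\emph{Step 2: the resolvent formula \eqref{rz:rayresolvent}.} For real $a>0$, write $(t+a)^{-1}-t^{-1}=-a\,t^{-1}(t+a)^{-1}$ and substitute $t=as$. This gives
\[
K_\beta(a)=-a^\beta\int_0^\infty \frac{s^{\beta-1}}{1+s}\,ds=-\frac{\pi}{\sin(\pi\beta)}\,a^\beta ,
\]
by the standard Beta integral. Both sides are holomorphic on $\{\Rea a>0\}$ and agree on the positive axis, so they agree everywhere.

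\emph{Step 3: the logarithmic primitive \eqref{rz:raylogprimitive}.} Call the left side $I(a)$. Differentiating the integrand in $a$ gives
\[
\frac1{t+a}-\frac{t}{(t+a)^2}=\frac{a}{(t+a)^2}.
\]
Differentiation under the integral sign is justified by the domination in Step 1 applied to derivatives. This yields $I'(a)=a\int_0^\infty t^\beta(t+a)^{-2}\,dt=-a\,K_\beta'(a)$. From Step 2, $K_\beta'(a)=-\beta b_\beta a^{\beta-1}$, so $I'(a)=\beta b_\beta a^\beta$.

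Next we need $I(a)\to0$ as $a\downarrow0$ along the reals. Substituting $t=as$ gives $I(a)=a^{1+\beta}\int_0^\infty s^\beta\psi(1/s)\,ds$, and this integral is finite by Step 1. Integrating $I'$ from $0$ to $a$ then gives $I(a)=\beta b_\beta a^{1+\beta}/(1+\beta)$ for $a>0$. In fact the scaling identity gives the value directly once the constant is identified. Analytic continuation again extends the formula to $\Rea a>0$.

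\emph{Main obstacle.} The only delicate point is the uniform domination of the logarithmic integrand near $t=0$, where $a/t$ is large but stays in the right half-plane. The principal branch gives $|\log(1+a/t)|\le|\log|1+a/t||+\pi/2$. Combined with the factor $t^\beta$, this controls the logarithmic growth uniformly on compact sets, and the rest of the argument is routine.
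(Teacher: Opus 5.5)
Your proof is correct, but it runs in the opposite direction from the paper's. The paper computes neither integral afresh. It takes the already established compensated Mellin formula \eqref{rz:scalarMellin} with $s=1+\beta$, which gives
\[
 I(a)=\int_0^\infty t^\beta\bigl[a/t-\log(1+a/t)\bigr]\,dt=\frac{b_\beta a^{1+\beta}}{1+\beta}
\]
on the whole half-plane $\Rea a>0$. It then observes that the two displayed integrals are exactly $-I'(a)$ and $aI'(a)-I(a)$. So both formulas follow from one differentiation under the integral sign, justified by the $O(t^{\beta-1})$ and $O(t^{\beta-2})$ bounds on the differentiated integrand. No further continuation argument is needed, since \eqref{rz:scalarMellin} already holds for $\Rea a>0$.

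You instead evaluate \eqref{rz:rayresolvent} directly by the Beta integral for real $a>0$. You then obtain \eqref{rz:raylogprimitive} by differentiating in $a$ and fixing the constant through the scaling $t=as$. Finally you extend both by the identity theorem, using the holomorphy established in your Step 1.

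What each route buys: yours is self-contained and independent of \eqref{rz:scalarMellin}, although that formula rests on the same Beta integral. It costs two analytic continuations and an extra integration or scaling step. The paper's route is shorter and shows that \eqref{rz:scalarMellin}, \eqref{rz:rayresolvent} and \eqref{rz:raylogprimitive} are one Mellin computation. Your convergence bounds are the same as the paper's.
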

\begin{proof}
By \eqref{rz:scalarMellin},
$I(a):=\int_0^\infty t^\beta[a/t-\log(1+a/t)]\,dt
=b_\beta a^{1+\beta}/(1+\beta)$.
The differentiated integrand is locally uniformly $O(t^{\beta-1})$
at zero and $O(t^{\beta-2})$ at infinity. Thus differentiation is
valid, and the two displayed integrals are $-I'(a)$ and $aI'(a)-I(a)$.
The logarithmic integrand is $O(t^\beta(1+|\log t|))$ at zero and
$O(t^{\beta-2})$ at infinity, proving the convergence assertion.
\end{proof}

Write $X=H^*H$. Taking the real trace in \eqref{rz:scalarMellin} gives
\begin{equation}\label{rz:detpair}
 \Rea\tau H^s=\eta_s\int_0^\infty u^{s-1}
 \left[\frac{\tau G}{u}-\frac12\tau\log(\Id+2G/u+X/u^2)\right]du.
\end{equation}
This identity retains the noncommutative pair $(G,X)$.

\begin{proof}[Proof of Corollary~\ref{rz:comparison}]
Take $a=b=0$ and $w(u)=\eta_su^{s-1}$ in
Theorem~\ref{rz:kernelcomparison}. Condition~\eqref{rz:kernelweight}
holds for $1<s<2$, and the Mellin formula identifies $\varphi(z)$ with
$z^s$, including the boundary by continuity. Therefore
\eqref{rz:kerneldefect} is exactly \eqref{rz:comparisondefect},
which also gives \eqref{rz:spectralcomparison}.
\end{proof}

\subsection{The scalar sharp inequality and positive certificate}
\begin{lemma}\label{rz:scalar}
For $g\geq0$, $k\in\mathbb R$, and the constants in Corollary~\ref{rz:budget},
\begin{equation}\label{rz:scalarbudget}
 d_s(g,k):=r^s g^s-(g^2+k^2)^{s/2}-a\Rea(g+ik)^s\geq0.
\end{equation}
For $g>0$, equality holds exactly when $|k|=ag$.
\end{lemma}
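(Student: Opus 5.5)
The plan is to reduce \eqref{rz:scalarbudget} to a one-variable angular inequality by homogeneity. Both sides are $s$-homogeneous in $(g,k)$ and even in $k$, since $\Rea(g-ik)^s=\Rea\overline{(g+ik)^s}$. The case $g=k=0$ is trivial. Otherwise write $g+ik=\rho e^{i\varphi}$ with $\rho>0$ and, by the evenness, $0\le\varphi\le\pi/2$. Dividing by $\rho^s$, the claim becomes
\[
 h(\varphi):=r^s\cos^s\varphi-1-a\cos(s\varphi)\ge0,\qquad 0\le\varphi\le\frac\pi2.
\]
The condition $|k|=ag$ with $g>0$ corresponds to $\tan\varphi=\tan\alpha$, that is, $\varphi=\alpha$. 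At this angle $r\cos\alpha=1$ and $\cos(s\alpha)=\cos(\pi/2)=0$, so $h(\alpha)=0$. Since $\alpha=\pi/(2s)\in(\pi/4,\pi/2)$, it lies in the open interval.

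For $0\le\varphi<\pi/2$ (that is, $g>0$), I would divide by $\cos^s\varphi>0$. The inequality is then equivalent to
\[
 R(\varphi):=\frac{1+a\cos(s\varphi)}{\cos^s\varphi}\le\sec^s\alpha=R(\alpha).
\]
So it suffices to show that $R$ attains its strict maximum on $[0,\pi/2)$ exactly at $\alpha$. A direct differentiation, with the trigonometric identity $\cos(s\varphi)\sin\varphi-\sin(s\varphi)\cos\varphi=-\sin((s-1)\varphi)$, gives
\[
 R'(\varphi)=\frac{s\,[\sin\varphi-a\sin((s-1)\varphi)]}{\cos^{s+1}\varphi}.
\]
Put $c=s-1\in(0,1)$. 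For $0<\varphi<\pi/2$ we have $\sin(c\varphi)>0$, so the sign of $R'$ is the sign of $\sin\varphi/\sin(c\varphi)-a$.

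The key step is to show that the ratio $\sin\varphi/\sin(c\varphi)$ is strictly decreasing on $(0,\pi/2)$. Its logarithmic derivative is $\cot\varphi-c\cot(c\varphi)=\varphi^{-1}[\varphi\cot\varphi-c\varphi\cot(c\varphi)]$. This is negative because $x\mapsto x\cot x$ is strictly decreasing on $(0,\pi)$ and $c\varphi<\varphi$. At $\varphi=\alpha$ we have $(s-1)\alpha=\pi/2-\alpha$, so the ratio equals $\sin\alpha/\cos\alpha=a$. Hence $R'>0$ on $(0,\alpha)$ and $R'<0$ on $(\alpha,\pi/2)$. Therefore $R(\varphi)<R(\alpha)$ for every $\varphi\ne\alpha$ in $[0,\pi/2)$. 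This proves $h\ge0$ there, with equality only at $\varphi=\alpha$, which is the stated equality case for $g>0$.

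The remaining endpoint $\varphi=\pi/2$, i.e.\ $g=0$, follows by continuity of $h$ on the closed interval from the established inequality on $[0,\pi/2)$; equality there is not claimed. I expect the main obstacle to be only the monotonicity of $\sin\varphi/\sin(c\varphi)$. This rests on the classical monotonicity of $x\cot x$, which I would verify from $(x\cot x)'=(\sin2x-2x)/(2\sin^2x)<0$.
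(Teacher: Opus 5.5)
Your proposal is correct and follows the paper's proof essentially step for step. Both arguments reduce to the angular quotient $\sec^s\theta\,[1+a\cos(s\theta)]$, use the same derivative $s\sec^{s+1}\theta\,[\sin\theta-a\sin((s-1)\theta)]$, prove strict decrease of $\sin\theta/\sin((s-1)\theta)$ from the monotonicity of $t\cot t$, locate the crossing at $\theta=\alpha$ where this ratio equals $a$, and treat the case $g=0$ by continuity.
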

\begin{proof}
By conjugation symmetry take $k\geq0$. For $g>0$ write $\theta=\arctan(k/g)$. The quantity to be bounded above is
\[
 F(\theta)=\sec^s\theta[1+a\cos(s\theta)],\qquad 0\leq\theta<\pi/2.
\]
Its derivative is
\[
 F'(\theta)=s\sec^{s+1}\theta[\sin\theta-a\sin((s-1)\theta)].
\]
The ratio $\sin\theta/\sin((s-1)\theta)$ is strictly decreasing: its logarithmic derivative is $\cot\theta-(s-1)\cot((s-1)\theta)<0$, since $t\cot t$ strictly decreases on $(0,\pi/2)$. At $\theta=\alpha$ the ratio is $\tan\alpha=a$. Thus $F$ increases up to $\alpha$ and decreases after it, with maximum $F(\alpha)=r^s$. This proves \eqref{rz:scalarbudget} and the equality assertion. Continuity as $g\downarrow0$ covers $g=0$.
\end{proof}

\begin{proof}[Proof of Corollary~\ref{rz:budget}]
Spectral integration gives $\tau G^s=\int_0^1g(v)^s\,dv$ and
$\tau|H|^s=\int_0^1|z(v)|^s\,dv$.
Using \eqref{rz:comparisondefect}, we obtain for every $G\ge0$
the exact certificate
\begin{equation}\label{rz:certificate}
 \Ds(G,K)=\int_0^1d_s(g(v),\sqrt{x(v)-g(v)^2})\,dv
 +\frac{a\eta_s}{2}\int_0^\infty
       u^{s-1}\mathcal L_u(G,H^*H)\,du.
\end{equation}
Both terms are nonnegative by Lemma~\ref{rz:scalar} and
\eqref{rz:Ldef}, proving the bound. For $H=(1\pm ia)G$,
normal functional calculus gives $|H|^s=r^sG^s$ and $\Rea H^s=0$,
which proves attainment.
\end{proof}

\begin{remark}
The certificate compares the decreasing spectra of $G$ and $H^*H$ and retains their noncommutative logarithmic gap. No joint spectral measure for $(G,K)$ is required.
\end{remark}

\subsection{Determinant covariance and its exact residual}
We prove the trace inequality stated in Theorem~\ref{rz:detcovariance},
including its exact nonnegative residual.

\begin{proof}[Proof of Theorem~\ref{rz:detcovariance}]
Lemma~\ref{rz:exponentialdeterminant} and the norm logarithmic series give
$\log\operatorname{Det}_\tau(\Id+\zeta N)
 =\Rea(\zeta\tau N)+O(|\zeta|^2)$ near zero.
Nonnegativity for $\zeta=\pm t,\pm it$ as $t\downarrow0$ forces $\tau N=0$.

Put $\theta=\pi/[2(1+\beta)]$, $\lambda_t=te^{i\theta}$,
$R_t=(D+\lambda_t)^{-1}$, $W_t=R_t^*R_t$, and
$T_t(z)=[R_t(\lambda_t+zN)]^*[R_t(\lambda_t+zN)]$.
The determinant assumption gives $\operatorname{Det}_\tau(\lambda_t+zN)\ge t$.
Multiplicativity then gives $\operatorname{Det}_\tau(T_t(z))>0$.
Since $T_t(z)$ is bounded, $\tau\log^+T_t(z)<\infty$; positivity
of its determinant also gives $\tau\log^-T_t(z)<\infty$.
Thus $\tau\log T_t(z)$ is a finite spectral integral. This argument
uses no bounded inverse of $\lambda_t+zN$. Define
\[
 E_t(z)=\tfrac12\tau[T_t(z)-\Id-\log T_t(z)]\ge0,
 \qquad J_t(z)=\log\operatorname{Det}_\tau(\lambda_t+zN)-\log t\ge0.
\]
The first inequality is the spectral integral of $x-1-\log x\ge0$.
Writing $b_t=\log t-\tau\log|D+\lambda_t|$, multiplicativity gives
$E_t+J_t=\tfrac12\tau(T_t-\Id)-b_t$. Expanding only this sum yields
\begin{equation}\label{rz:covariance_entropy}
 E_t(z)+J_t(z)
 =F_t+\tfrac12|z|^2\tau(N^*W_tN)
       +\Rea(te^{-i\theta}z\tau(W_tN)),
\end{equation}
where $F_t=\tau\log|\Id+D/\lambda_t|-\Rea\tau(R_tD)
+\tfrac12\tau(D^2W_t)$.

Set $C_\beta=b_\beta\sin(\beta\theta)/\sin\theta$ and
$d_\beta=b_\beta\sin((1+\beta)\theta)/\sin\theta$.
For $d>0$, put $a_\pm=de^{\pm i\theta}$ and
$w_t(d)=(t^2+2dt\cos\theta+d^2)^{-1}$.
The two partial fractions of the same primitive \eqref{rz:rayresolvent} give
\begin{align}
 \int_0^\infty t^\beta w_t(d)\,dt
 &=\frac{K_\beta(a_+)-K_\beta(a_-)}{a_--a_+}
   =C_\beta d^{\beta-1},\label{rz:rayfirstaudit}\\
 \int_0^\infty t^{\beta+1}[w_t(d)-t^{-2}]\,dt
 &=\frac{a_-K_\beta(a_-)-a_+K_\beta(a_+)}{a_--a_+}
   =-d_\beta d^\beta.\label{rz:raysecondaudit}
\end{align}
For the second identity the pointwise compensated partial fraction is
$tw_t(d)-t^{-1}
=[a_-((t+a_-)^{-1}-t^{-1})-a_+((t+a_+)^{-1}-t^{-1})]/(a_--a_+)$.
The real part of \eqref{rz:raylogprimitive} at $a=de^{-i\theta}$ gives
\begin{equation}\label{rz:raylogaudit}
 \int_0^\infty t^\beta
 [\log|1+d/(te^{i\theta})|-\Rea\{d/(d+te^{i\theta})\}]\,dt
 =\frac{\beta b_\beta}{1+\beta}\cos((1+\beta)\theta)d^{1+\beta}=0.
\end{equation}
For $d\in[m,L]$, $0<m\le L<\infty$, these integrands are uniformly
$O(t^{\beta-2})$ at infinity, and respectively $O(t^\beta)$,
$O(t^{\beta-1})$, and $O(t^\beta(1+|\log t|))$ at zero.
Thus bounded spectral calculus and trace integration apply to all three
identities while their compensations remain intact.

Put $S=\tau D^{1+\beta}$, $A=\tau(D^\beta N)$, and
$v=\tau(N^*D^{\beta-1}N)$.
Since $\tau N=0$, pointwise
$\tau(W_tN)=\tau((W_t-t^{-2}\Id)N)$.
The operator-valued integrand $t^{\beta+1}(W_t-t^{-2}\Id)$ is
norm integrable: it is $O(t^{\beta-1})$ at zero and
$O(t^{\beta-2})$ at infinity. Equation~\eqref{rz:raysecondaudit}
therefore integrates the linear term with its compensation intact.
The other two identities integrate $F_t$ and the quadratic term in
\eqref{rz:covariance_entropy}, giving the exact identity
\begin{equation}\label{rz:covariance_quadratic}
 \mathcal R(z):=\int_0^\infty t^\beta[E_t(z)+J_t(z)]\,dt
 =\frac{C_\beta}{2}(S+v|z|^2)
       -d_\beta\Rea(e^{-i\theta}zA)\ge0.
\end{equation}
The compensated expansion proves absolute integrability of the sum
for each fixed $z$; its nonnegative summands are consequently integrable
individually. No separate integration of their divergent pieces is used.
If $v=0$, faithfulness and invertibility of $D$ give $N=0$ and $A=0$,
so \eqref{rz:weightedineq} is immediate. If $v>0$, set
$z_* =d_\beta e^{i\theta}\overline A/(C_\beta v)$. Then
\begin{equation}\label{rz:covariance_certificate}
 \sin^2(\beta\theta)Sv-|A|^2
 =\frac{2C_\beta v}{d_\beta^2}
     \int_0^\infty t^\beta[E_t(z_*)+J_t(z_*)]\,dt\ge0.
\end{equation}
Here $C_\beta/d_\beta=\sin(\beta\theta)$ since
$(1+\beta)\theta=\pi/2$. This proves the bound and exact residual;
the upper proof does not use the later optimality argument.
\end{proof}

\begin{corollary}\label{rz:weighted}
For bounded $N\in\B_0$ or $N\in\B_0^*$ and bounded invertible $D>0$,
the covariance bound \eqref{rz:weightedineq} and, when $v>0$, its
certificate \eqref{rz:covariance_certificate} hold for $0<\beta<1$.
\end{corollary}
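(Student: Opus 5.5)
Corollary~\ref{rz:weighted} follows from Theorem~\ref{rz:detcovariance} once its hypothesis is verified, namely $\operatorname{Det}_\tau(\Id+\zeta N)\ge1$ for every $\zeta\in\C$. Here $N\in\B_0$ or $N\in\B_0^*$ is bounded. Given the hypothesis, that theorem yields $\tau N=0$, the bound \eqref{rz:weightedineq} for every bounded invertible $D>0$ and $0<\beta<1$, and the residual identity \eqref{rz:covariance_certificate} when $v>0$. The proof therefore reduces to the determinant inequality.

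For $N\in\B_0$, the element $\Id+\zeta N$ lies in $\B$. I would invoke the general Jensen inequality for maximal subdiagonal algebras in finite tracial algebras, recalled in Appendix~\ref{rz:prelim}:
\[
 \operatorname{Det}_\tau(x)\ge\operatorname{Det}_\tau(\Phi(x)),\qquad x\in\B.
\]
This is the Arveson--Labuschagne Jensen inequality, which holds precisely for maximal subdiagonal algebras. Since $\Phi$ is multiplicative on $\B$ and vanishes on $\B_0$, we get $\Phi(\Id+\zeta N)=\Id$, so $\operatorname{Det}_\tau(\Id+\zeta N)\ge\operatorname{Det}_\tau(\Id)=1$.

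For $N\in\B_0^*$, write $N=M^*$ with $M\in\B_0$. The Fuglede--Kadison determinant is invariant under adjoints, since $|x^*|$ and $|x|$ have the same distribution in a finite algebra. Hence
\[
 \operatorname{Det}_\tau(\Id+\zeta M^*)=\operatorname{Det}_\tau\bigl((\Id+\overline\zeta M)^*\bigr)=\operatorname{Det}_\tau(\Id+\overline\zeta M)\ge1
\]
by the first case. Both cases then follow from Theorem~\ref{rz:detcovariance}.

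The only nontrivial input is Jensen's inequality itself. Since it is recorded in the appendix as a structural property of maximal subdiagonal algebras, I expect no real obstacle. The one point to check is that Jensen applies to non-invertible elements of $\B$, where the determinant may vanish. This causes no difficulty: the inequality is stated for all $x\in\B$, and here the right side equals $1$.
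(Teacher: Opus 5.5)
Your proposal is correct and matches the paper's proof. Multiplicativity of $\Phi$ gives $\Phi(\Id+\zeta N)=\Id$, the Jensen inequality of Theorem~\ref{rz:standard}(2) gives $\operatorname{Det}_\tau(\Id+\zeta N)\ge1$, and Theorem~\ref{rz:detcovariance} then finishes. The only cosmetic difference is that you handle $N\in\B_0^*$ through the adjoint invariance $\operatorname{Det}_\tau(x^*)=\operatorname{Det}_\tau(x)$, which is valid in a finite algebra, whereas the paper applies Jensen directly in the adjoint algebra $\B^*$.
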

\begin{proof}
Multiplicativity gives $\Phi(\Id+\zeta N)=\Id$; determinant Jensen in
$\B$ or its adjoint algebra gives $\operatorname{Det}_\tau(\Id+\zeta N)\ge1$.
Apply Theorem~\ref{rz:detcovariance}.
\end{proof}

\begin{corollary}\label{rz:nilpotent_covariance}
Every finite nilpotent matrix satisfies Theorem~\ref{rz:detcovariance},
with normalized matrix trace, for every positive invertible $D$.
\end{corollary}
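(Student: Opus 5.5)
The plan is to apply Theorem~\ref{rz:detcovariance} with $\M=M_n(\C)$ and the normalized trace $\tau=n^{-1}\tr$, which is a faithful normal tracial state on a finite von Neumann algebra. Everything then reduces to the determinant hypothesis $\operatorname{Det}_\tau(\Id+\zeta N)\ge1$ for all $\zeta\in\C$. After that, the conclusions of the theorem (that $\tau N=0$, the covariance bound \eqref{rz:weightedineq} for every positive invertible $D$ and $0<\beta<1$, and the residual \eqref{rz:covariance_certificate} when $v>0$) transfer verbatim.

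To check the hypothesis, we use that on $M_n(\C)$ with the normalized trace the Fuglede--Kadison determinant is $\operatorname{Det}_\tau(X)=|\det X|^{1/n}$. This follows from the definition $\exp\tau\log|X|$ recalled in Appendix~\ref{rz:prelim}: for invertible $X$, $\tau\log|X|=n^{-1}\log\det|X|=n^{-1}\log|\det X|$. If $N$ is nilpotent, then so is $\zeta N$, and $\Id+\zeta N$ is unipotent; for instance, Schur triangularization makes it upper triangular with unit diagonal. Hence $\det(\Id+\zeta N)=1$, the matrix is invertible, and $\operatorname{Det}_\tau(\Id+\zeta N)=1\ge1$ for every $\zeta$. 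As a consistency check, $\tau N=n^{-1}\tr N=0$ directly.

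An equivalent route is to observe that a nilpotent matrix, written strictly upper triangular in a Schur basis, lies in $\B_0$ for the upper-triangular subdiagonal algebra of $M_n(\C)$ with $\Phi$ the diagonal expectation. Corollary~\ref{rz:weighted} then applies at once. I would mention this only as a remark, because the direct determinant computation is shorter and does not require checking maximality of the triangular algebra.

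There is no real obstacle. The only point needing care is the identification of $\operatorname{Det}_\tau$ with $|\det|^{1/n}$ under the normalization $\tau(\Id)=1$, and the fact that the theorem's hypothesis is stated for all complex $\zeta$, not merely small ones. Both are immediate, since unipotence holds for every $\zeta$.
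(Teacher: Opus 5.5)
Your proposal is correct and follows the paper's proof, which consists of the single observation that nilpotence gives $\operatorname{Det}_{\operatorname{Tr}/n}(\Id+\zeta N)=|\det(\Id+\zeta N)|^{1/n}=1$ for every $\zeta\in\C$, so Theorem~\ref{rz:detcovariance} applies directly in $M_n(\C)$ with the normalized trace. One small correction: Appendix~\ref{rz:prelim} defines $\operatorname{Det}_\tau$ as the $\varepsilon$-regularized limit, not as $\exp\tau\log|X|$; for invertible $X$ such as $\Id+\zeta N$ the two agree, which is all you use.
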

\begin{proof}
Nilpotence gives
$\operatorname{Det}_{\operatorname{Tr}/n}(\Id+\zeta N)
=|\det(\Id+\zeta N)|^{1/n}=1$.
\end{proof}

\section{Analytic witnesses and the sharp Riesz theorem}\label{rz:sec:witnesses}
We construct analytic test elements whose pairing with the retained
summand is exact. For $2<p<4$, factorization converts determinant
covariance into an $L^2$ angle estimate. For $p>4$, the positive
canonical estimate supplies the test element directly. The analytic
regularization behind that estimate is proved in
Appendix~\ref{rz:analyticbackground} without sharp boundedness.
Continuity and duality complete the retained projection; a
subdiagonal corner construction then gives the strict projection.

\begin{lemma}\label{rz:analytic_testing}
Let $1<p<\infty$, $U\in\B$, and $0\ne W\in H^{p'}$.
For every $Y\in\B_0^*$,
\begin{equation}\label{rz:analytic_test_bound}
 \|U+Y\|_p\ge\frac{\Rea\tau(U^*W)}{\|W\|_{p'}}.
\end{equation}
\end{lemma}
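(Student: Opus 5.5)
The plan is Hölder duality plus the vanishing of the cross pairing $\tau(Y^*W)$. By Hölder's inequality for the trace pairing,
\[
 |\tau((U+Y)^*W)|\le\|U+Y\|_p\|W\|_{p'},
\]
so it suffices to prove
\[
 \tau((U+Y)^*W)=\tau(U^*W)+\tau(Y^*W)\quad\text{with}\quad\tau(Y^*W)=0 .
\]
Taking real parts and dividing by $\|W\|_{p'}>0$ then gives \eqref{rz:analytic_test_bound}. Finiteness of $\tau$ and boundedness of $U,Y$ make all products integrable.

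The key step is $\tau(Y^*W)=0$ for $Y\in\B_0^*$ and $W\in H^{p'}$. Write $Y=X^*$ with $X\in\B_0$, so that $\tau(Y^*W)=\tau(XW)$.

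First take $W\in\B$. Then $XW\in\B_0$, because $\B_0$ is an ideal in $\B$: $\Phi$ is multiplicative on $\B$, so $\Phi(XW)=\Phi(X)\Phi(W)=0$. Since $\Phi$ is trace preserving,
\[
 \tau(XW)=\tau(\Phi(XW))=0 .
\]

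For general $W\in H^{p'}=\overline{\B}^{L^{p'}}$, choose $W_n\in\B$ with $W_n\to W$ in $L^{p'}$. The functional $W\mapsto\tau(XW)$ is continuous on $L^{p'}$, since $|\tau(XW)|\le\|X\|_p\|W\|_{p'}\le\|X\|_\infty\|W\|_{p'}$ (using $\|X\|_p\le\|X\|_\infty$ for the normalized trace). Hence $\tau(XW)=\lim_n\tau(XW_n)=0$.

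This argument needs only the multiplicativity of $\Phi$ on $\B$, its trace preservation, and density; no maximality is used. The only point to check with care is the ideal property of $\B_0$ inside $\B$, which follows from multiplicativity as shown. That check is the one possible obstacle, but it is immediate.
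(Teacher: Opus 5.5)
Your proposal is correct and follows the paper's own argument: approximate $W$ in $L^{p'}$ by elements of $\B$, use multiplicativity and trace preservation of $\Phi$ to get $\tau(Y^*W)=0$, and then apply H\"older's inequality. You also share the paper's observation that no boundedness of the $L^p$ projection is needed.
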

\begin{proof}
Approximate $W$ in $L^{p'}$ by bounded analytic elements.
Multiplicativity of $\Phi$ and trace preservation give
$\tau(Y^*W)=0$, and H\"older's inequality gives the assertion.
This uses no boundedness of the $L^p$ projection.
\end{proof}

\subsection{The positive canonical Hardy estimate}\label{rz:hardy}
Let $P_2$ be the $L^2$ orthogonal projection onto $H^2$, and set $\mathcal T_{\B}=2P_2-\Phi$. Appendix~\ref{rz:prelim} proves that for bounded $g\geq0$,
\[
 h=\mathcal T_{\B}g\in H^2,\qquad\Rea h=g,\qquad\Phi h=\Phi g\geq0.
\]
Appendix~\ref{rz:analyticbackground} proves the bounded analytic regularization used below, independently of the sharp theorem.

\begin{corollary}\label{rz:canonical}
For $1<s<2$ and bounded $g\geq0$,
\begin{equation}\label{rz:canonicalbound}
 \|\mathcal T_{\B}g\|_s\leq\sec\frac\pi{2s}\,\|g\|_s.
\end{equation}
\end{corollary}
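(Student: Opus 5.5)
The plan is to apply the budget inequality of Corollary~\ref{rz:budget} to $H=h=\mathcal T_{\B}g$ itself, and then use analyticity to show that the cross term $\Rea\tau h^s$ has a sign. Put $\alpha=\pi/(2s)$, $r=\sec\alpha$, $a=\tan\alpha>0$. Then $G=\Rea h=g\ge0$ and $K=\Ima h$. If $h$ is bounded, \eqref{rz:D} gives
\[
 \tau|h|^s\le r^s\tau g^s-a\,\Rea\tau h^s .
\]
So \eqref{rz:canonicalbound} follows once we know $\Rea\tau h^s\ge0$.

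\textbf{Sign of the cross term.} I would show that for a bounded accretive $h\in\B$ one has $\tau h^s=\tau(\Phi h)^s$, with powers defined by \eqref{rz:boundaryMellin}.

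First, each resolvent $(\Id+th/u)^{-1}$ lies in $\B$. Put $Y=\Id+th/u$, so $\Rea Y\ge\Id$. For large $\lambda>0$,
\[
 \|\Id-Y/\lambda\|^2\le1-2/\lambda+\|Y\|^2/\lambda^2<1 .
\]
Hence $Y^{-1}=\lambda^{-1}\sum_k(\Id-Y/\lambda)^k$ is a norm-convergent series in the norm-closed algebra $\B$.

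Next, $\Phi$ is multiplicative on $\B$, so $\Phi(Y^{-1})=(\Id+t\Phi h/u)^{-1}$. By \eqref{rz:logresolvent} this gives $\Phi\log(\Id+h/u)=\log(\Id+\Phi h/u)$. The integral \eqref{rz:boundaryMellin} converges in norm and $\Phi$ is norm continuous, so $\Phi(h^s)=(\Phi h)^s$. Since $\Phi$ is trace preserving,
\[
 \tau h^s=\tau(\Phi h)^s=\tau(\Phi g)^s\ge0,
\]
where the last step uses $\Phi g\ge0$, which makes $(\Phi g)^s$ positive. Inserting this into the displayed budget inequality gives $\|h\|_s\le r\|g\|_s$ whenever $h$ is bounded.

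\textbf{Removing boundedness.} In general $h$ lies only in $H^2$. Here I would invoke the bounded analytic regularization of Appendix~\ref{rz:analyticbackground}. It should supply bounded $h_n\in\B$ with $\Rea h_n=g_n\ge0$, $\Phi h_n\ge0$, $g_n\to g$ in $L^s$ (indeed $\|g_n\|_s\le\|g\|_s$ or $g_n\to g$), and $h_n\to h$ in $L^2$, hence in measure. Apply the bounded case to each $h_n$. Lower semicontinuity of $\|\cdot\|_s$ under convergence in measure on bounded-in-$L^s$ sequences, or simply the inclusion $L^2\subset L^s$ for $s<2$ on a finite algebra, then gives
\[
 \|h\|_s\le\liminf_n\|h_n\|_s\le r\lim_n\|g_n\|_s=r\|g\|_s .
\]

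\textbf{Main obstacle.} The delicate point is this regularization. The approximants must stay accretive, with the diagonal part $\Phi h_n$ positive, while converging to $h$. The algebraic identity $\tau h^s=\tau(\Phi h)^s$ is routine once the resolvents are known to lie in $\B$. My first attempt at the approximants would be the Cayley-transform or Poisson-type truncation from the appendix, for example composing $h$ with a bounded holomorphic self-map of the right half-plane that converges to the identity and then smoothing.
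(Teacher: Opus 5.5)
Your proposal is correct and follows the paper's proof essentially step for step. The rescaled Neumann series puts the resolvents in $\B$. Multiplicativity and the norm-convergent Mellin integral then give $\Phi(b^s)=(\Phi b)^s$, hence $\Rea\tau b^s\ge0$, so Corollary~\ref{rz:budget} yields the bound for bounded accretive $b\in\B$ with $\Phi b\ge0$. The passage to $h=\mathcal T_{\B}g$ uses exactly the approximants of Lemma~\ref{rz:selfcontained_approx}, namely the Yosida approximants $b_\varepsilon=h(\Id+\varepsilon h)^{-1}$. This is your ``bounded holomorphic self-map of the right half-plane'' $z\mapsto z/(1+\varepsilon z)$ applied to $h$, with no further smoothing needed. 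Their membership in $\B$, the genuinely delicate point, is proved in the appendix by the semigroup argument, and the corollary simply invokes it as you propose.
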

\begin{proof}
Let $b\in\B$ be bounded and accretive with $\Phi b\ge0$.
For $u>0$ and $0\le t\le1$, $C=\Id+tb/u$ has $\Rea C\ge\Id$;
$\|\Id-rC\|^2\le1-2r+r^2\|C\|^2<1$ for sufficiently small $r>0$.
A rescaled Neumann series gives $C^{-1}\in\B$, and multiplicativity gives
$\Phi(C^{-1})=(\Id+t\Phi b/u)^{-1}$.
Equation~\eqref{rz:logresolvent} therefore intertwines the compensated
logarithmic kernels under $\Phi$. By \eqref{rz:kernelcompensation}, their
norm-convergent Mellin integrals commute with $\Phi$, including when
$\Phi b$ has a kernel. Thus \eqref{rz:boundaryMellin} gives
$\Phi(b^s)=(\Phi b)^s$ and $\tau b^s=\tau(\Phi b)^s\ge0$.
Corollary~\ref{rz:budget} yields
$\|b\|_s\le\sec(\pi/(2s))\|\Rea b\|_s$.
For $h=\mathcal T_{\B}g$, apply this bound to the bounded analytic
Yosida approximants $b_\varepsilon=h(\Id+\varepsilon h)^{-1}$ from
Lemma~\ref{rz:selfcontained_approx}. Their real parts and expectations
are nonnegative, and convergence in $L^2$, hence in $L^s$, proves
\eqref{rz:canonicalbound}. Their analytic membership is established
independently in Appendix~\ref{rz:analyticbackground}.
\end{proof}

\subsection{The middle exponent interval}\label{rz:middle}
Corollary~\ref{rz:weighted} and analytic factorization supply this interval
without any previously established sharp projection estimate.

\begin{corollary}\label{rz:middleprojection}
For $2\le p\le4$, $U\in H^p$ and $Y\in(H_0^p)^*$,
\[
 \|U+Y\|_p\ge\sin(\pi/p)\,\|U\|_p.
\]
Consequently the retained-diagonal bound \eqref{rz:Pbound} holds.
\end{corollary}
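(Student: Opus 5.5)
By density it suffices to take $U\in\B$ and $Y\in\B_0^*$ bounded, and after a small perturbation ($U\mapsto U+\varepsilon\Id$ is not allowed since $\Phi$ matters, so instead $U\mapsto U+\varepsilon h_0$ with $h_0$ invertible outer) we may assume $U$ is invertible in $\B$. The case $p=2$ is orthogonality of $H^2$ and $(H_0^2)^*$, so fix $2<p\le4$ and put $\beta=(p-2)/2\in(0,1]$. The endpoint $p=4$ ($\beta=1$) will be recovered by continuity of both sides in $p$. The target constant is $\sin^2\frac{\pi\beta}{2(1+\beta)}=\cos^2(\pi/p)$, and the plan is to read Theorem~\ref{rz:detcovariance}, via Corollary~\ref{rz:weighted}, as a weighted $L^2$ angle bound between the analytic and antianalytic parts.

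\textbf{Step 1: weight and factorization.} Let $D=(UU^*)$, regularized to $D_\delta=UU^*+\delta\Id$ if necessary, so that $\tau D^{1+\beta}=\tau|U^*|^p=\|U\|_p^p$. Using the noncommutative Riesz/outer factorization for maximal subdiagonal algebras (Appendix~\ref{rz:prelim}), I would write $D^{\beta/2}U=k$ with a suitable analytic $k$. Set
\[
N=U^{-1}Y\in\B_0^*\text{-type element},
\]
chosen so that $N^*\in\B_0$ after the factorization. The Jensen determinant hypothesis then holds by Corollary~\ref{rz:weighted}.

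\textbf{Step 2: angle estimate.} Expand the weighted $L^2$ quantity
\[
Q=\tau\bigl((U+Y)^*D^{\beta-1}(U+Y)\bigr)=\tau D^{1+\beta}+2\Rea\tau(D^\beta N)+\tau(N^*D^{\beta-1}N),
\]
where the cross term has been rewritten through $D^{\beta-1}U=D^{\beta}U^{*-1}$ and cyclicity. Applying \eqref{rz:weightedineq} with $S=\tau D^{1+\beta}$ and $v=\tau(N^*D^{\beta-1}N)$ gives
\[
|\tau D^\beta N|\le\cos(\pi/p)\sqrt{Sv}.
\]
Minimizing $S-2\cos(\pi/p)\sqrt{Sv}+v$ over $v$ then yields $Q\ge\sin^2(\pi/p)\,S$.

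\textbf{Step 3: return to $L^p$.} Hölder with exponents $p/2$ and $p/(p-2)$ gives
\[
Q\le\|U+Y\|_p^2\,\|D^{\beta-1}\|_{p/(p-2)}.
\]
Since $(\beta-1)p/(p-2)=(p-4)/(p-2)$, the power of $D$ appearing here does not match $S$, so I expect the weight must instead be chosen as $D^{\beta-1}$ sandwiched by the factor $k$. That is, I would test against $W=D^{\beta-1}U$-type elements in Lemma~\ref{rz:analytic_testing}, with the factorization making $W$ analytic, so that
\[
\Rea\tau(U^*W)=S,\qquad \|W\|_{p'}=S^{1/p'}.
\]
The Step~2 bound would then give $\|U+Y\|_p\ge\sin(\pi/p)S^{1/p}=\sin(\pi/p)\|U\|_p$. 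The projection bound $\|P\|\le\csc(\pi/p)$ follows immediately from $\|U\|_p\le\csc(\pi/p)\|U+Y\|_p$ on the dense core $\mathcal C$.

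\textbf{Main obstacle.} The difficult step is the factorization in Steps~1 and~3. One must choose $D$ and an analytic outer factor so that three things hold at once: $N$ lies in $\B_0$ or $\B_0^*$ (so the Jensen hypothesis applies); the quantities $S$, $\tau(D^\beta N)$ and $v$ coincide exactly with the $L^p$ norm, the cross pairing, and the $Y$-energy; and $D$ is bounded and invertible. I would first try the outer factorization $|U|^{p/2}=|h|^2$ with $h$ invertible in $\B$ (approximating by invertible outers), and put $N=h^{-1}\cdots Y\cdots$. This keeps $N$ in the correct one-sided algebra because $\B_0$ is an ideal in $\B$ under left and right multiplication by $\B$.
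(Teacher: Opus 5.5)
Your outline has a genuine gap at exactly the step you flag as the main obstacle, and the concrete choices you make there fail.

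\textbf{What fails.}
\begin{enumerate}
\item $N=U^{-1}Y$ (or $h^{-1}\cdots Y\cdots$) lies in neither $\B_0$ nor $\B_0^*$. The set $\B_0^*$ is an ideal of $\B^*$, not of $\B$, so multiplying $Y\in\B_0^*$ by the analytic factor $U^{-1}$ or $h^{-1}$ gives a mixed product. Corollary~\ref{rz:weighted} then does not apply; only factors from $\B^*$ may be attached to $Y$.
\item With $D=UU^*$ your expansion of $Q$ is wrong. One has $\tau(U^*D^{\beta-1}U)=\tau\bigl((U^*U)^\beta\bigr)$, not $\tau D^{1+\beta}$, and the cross and quadratic terms are not $2\Rea\tau(D^\beta N)$ and $\tau(N^*D^{\beta-1}N)$ for your $N$.
\item The negative power $D^{\beta-1}$ is what breaks H\"older in Step~3, and your proposed repair cannot be right. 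An analytic $W$ with $\Rea\tau(U^*W)=S$ and $\|W\|_{p'}=S^{1/p'}$ would give, through Lemma~\ref{rz:analytic_testing}, $\|U+Y\|_p\ge\|U\|_p$. That means $\|P\|\le1$, contradicting the circle example in Section~\ref{rz:completion}. The sine factor has to enter through the witness or the weighted norm, not be appended afterwards.
\end{enumerate}

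\textbf{The missing idea.} Factor the weight $(U^*U)^\beta$ analytically and place it on the right. Take $U\in\B$ invertible in $\M$, $C=U^*U$ and $S=\tau C^{p/2}$. Theorem~\ref{rz:standard}(1) gives $R,R^{-1}\in\B$ with $R^*R=C^\beta$. Put $X=UR^*$, $M=UR^{-1}$ and $D=(RR^*)^{1/\beta}$. For $Z\in\B_0^*$ take $N=M^*Z=(R^{-1})^*U^*Z$, which does lie in $\B_0^*$. The polar decomposition $R=VC^{\beta/2}$ gives
\[
D^\beta N=X^*Z,\qquad \tau D^{1+\beta}=\|X\|_2^2=S,\qquad MD^{\beta-1}M^*=\Id .
\]
So Corollary~\ref{rz:weighted} becomes the unweighted angle bound $|\tau(X^*Z)|\le\cos(\pi/p)\|X\|_2\|Z\|_2$.

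The paper turns this into $\|P_2X\|_2\ge\sin(\pi/p)\sqrt S$ and tests with $W=(P_2X)R\in H^{p'}$. For this witness $\Rea\tau(U^*W)=\|P_2X\|_2^2$ and $\|W\|_{p'}\le\|P_2X\|_2\|R\|_q=\|P_2X\|_2S^{1/q}$, where $q=2p/(p-2)$.

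Equivalently, your Step~2--3 scheme closes once the weight is $\tau\bigl((U+Y)C^\beta(U+Y)^*\bigr)=\|(U+Y)R^*\|_2^2$, which has a positive exponent. Taking $Z=YR^*\in\B_0^*$, your minimization gives $\|(U+Y)R^*\|_2^2\ge\sin^2(\pi/p)S$. H\"older gives $\|(U+Y)R^*\|_2\le\|U+Y\|_pS^{1/q}$, and $1/2-1/q=1/p$ finishes.

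\textbf{Reduction to invertible $U$.} This step is not justified either. $U+\varepsilon h_0$ need not be invertible for any small $\varepsilon$, and $U+\varepsilon\Id\in\B$ would be just as admissible; the obstruction is the spectrum, not $\Phi$. The paper applies the invertible case in $M_2(\B)$ to $U_\varepsilon=\left(\begin{smallmatrix}U&\varepsilon\Id\\\varepsilon\Id&0\end{smallmatrix}\right)$, whose inverse lies in $M_2(\B)$, with $Y_\varepsilon=\operatorname{diag}(Y,0)$. It then lets $\varepsilon\downarrow0$.
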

\begin{proof}
Fix $2<p<4$ and put $\beta=(p-2)/2$ and $q=2p/(p-2)$.
First let $U\in\B$ be invertible in $\M$. Set $C=U^*U$ and
$S=\tau C^{p/2}$. Theorem~\ref{rz:standard}(1) gives
$R,R^{-1}\in\B$ with $R^*R=C^\beta$. Put
\[
 X=UR^*,\qquad M=UR^{-1},\qquad D=(RR^*)^{1/\beta}.
\]
For $Z\in\B_0^*$, $N=M^*Z\in\B_0^*$ because $\B_0$ is an ideal.
The polar decomposition $R=VC^{\beta/2}$ gives
\[
 D^\beta N=X^*Z,\qquad
 \tau D^{1+\beta}=S=\|X\|_2^2,\qquad
 MD^{\beta-1}M^*=UC^{-1}U^*=\Id.
\]
Corollary~\ref{rz:weighted} therefore gives
$|\tau(X^*Z)|^2\le\cos^2(\pi/p)S\|Z\|_2^2$.
By density in $(H_0^2)^*$ and orthogonality in \eqref{rz:orthodecomp},
\begin{equation}\label{rz:gate}
 h:=\|P_2X\|_2\ge\sin(\pi/p)\sqrt S>0.
\end{equation}
The bounded $\B$ module property in Appendix~\ref{rz:prelim} gives
$W=(P_2X)R\in H^2\subset H^{p'}$, and $W\ne0$ since $R$ is invertible.
H\"older and $\beta q/2=p/2$ give
$\|W\|_{p'}\le h\|R\|_q=hS^{1/q}$.
Cyclicity gives the exact pairing
$\Rea\tau(U^*W)=\tau(X^*P_2X)=h^2$.
Lemma~\ref{rz:analytic_testing} thus yields, for every $Y\in\B_0^*$,
\[
 \|U+Y\|_p\ge hS^{-1/q}
 \ge\sin(\pi/p)S^{1/2-1/q}
 =\sin(\pi/p)\|U\|_p.
\]
For an arbitrary bounded $U\in\B$, the matrices
\[
 U_\varepsilon=\begin{pmatrix}U&\varepsilon\Id\\\varepsilon\Id&0\end{pmatrix},
 \qquad
 U_\varepsilon^{-1}=\begin{pmatrix}0&\varepsilon^{-1}\Id\\
 \varepsilon^{-1}\Id&-\varepsilon^{-2}U\end{pmatrix}
\]
belong to $M_2(\B)$. Apply the invertible case in the finite subdiagonal
algebra $M_2(\B)\subset M_2(\M)$, with entrywise $\Phi$, normalized trace and
$Y_\varepsilon=\operatorname{diag}(Y,0)\in M_2(\B_0^*)$.
As $\varepsilon\downarrow0$, operator-norm continuity gives the desired
bound for $U,Y$; the common corner factor $2^{-1/p}$ cancels.
The matrix step depends only on the invertible-case proof.
For bounded $U,Y$, continuity of their norms in $p$ gives $p=4$;
$p=2$ follows from \eqref{rz:orthodecomp}. Finally approximate the
summands separately by $\B$ and $\B_0^*$ in $L^p$ to obtain the stated
analytic distance and hence \eqref{rz:Pbound}.
\end{proof}

\subsection{Completion of the all-exponent projection theorem}\label{rz:completion}
\subsubsection{Exponents above four}
\begin{proof}[Proof of Theorem~\ref{rz:allp} for $p>4$]
For bounded $U\in\B$, $U\ne0$, set
\[
 s=\frac p{p-2}\in(1,2),\qquad
 S=\|U\|_p^p,\qquad g=(UU^*)^{(p-2)/2},\quad h=\mathcal T_{\B}g,\quad W=hU.
\]
Here $g$ is bounded and positive. Since $1/p'=1/s+1/p$, Corollary~\ref{rz:canonical} and H\"older give
\begin{equation}\label{rz:witnessnorm}
 \|W\|_{p'}\leq\sec\frac\pi{2s}\,\|U\|_p^{p-1}.
\end{equation}
Since $h\in H^2$, the bounded $\B$ module property in
Appendix~\ref{rz:prelim} and $H^2\subset H^{p'}$ give $W\in H^{p'}$.
On the other hand, $1/s+2/p=1$ and trace cyclicity give the exact pairing
\begin{equation}\label{rz:witnesspair}
 \Rea\tau U^*W=\tau\bigl((UU^*)\Rea h\bigr)=\tau\bigl((UU^*)g\bigr)=S.
\end{equation}
Lemma~\ref{rz:analytic_testing} consequently gives
\[
 \|U+Y\|_p\geq\frac S{\|W\|_{p'}}
 \geq\cos\frac\pi{2s}\,\|U\|_p
 =\sin\frac\pi p\,\|U\|_p.
\]
Core density extends this estimate to $L^p$.
\end{proof}

\subsubsection{Dual exponents}
For $1<p<2$ let $q=p'>2$. The preceding arguments bound $P$ on $L^q$. On $\mathcal C$ the trace adjoint of $P$ is itself by \eqref{rz:orthodecomp}. Its Banach adjoint therefore defines the bounded extension on $L^p$, agrees with $P$ on the core, and has norm at most $\csc(\pi/q)=\csc(\pi/p)$. This proves the retained-diagonal assertion of Theorem~\ref{rz:allp}.

\subsubsection*{The strict projection and uniform sharpness}
In $M_2(\M)$, with normalized matrix trace, consider
\[
 \widetilde\B=
 \begin{pmatrix}\B&\B_0\\\B&\B\end{pmatrix},
 \qquad
 \widetilde\Phi\begin{pmatrix}x&y\\z&w\end{pmatrix}
 =\begin{pmatrix}\Phi x&0\\0&\Phi w\end{pmatrix}.
\]
It is a weak-star closed unital subdiagonal algebra: $\B_0$ is an ideal of $\B$, the diagonal intersection is $\mathcal D\oplus\mathcal D$, and $\widetilde\Phi$ is multiplicative on it. Weak-star density and trace preservation follow entrywise. Exel's finite-trace maximality theorem \cite{rz:Exel} makes it maximal. Its retained-diagonal projection on the upper-right corner is $P_0$. Apply the already universal bound to that corner; the factor $2^{-1/p}$ in both corner norms cancels. This proves the strict bound. For bounded $U_0\in\B_0$ and
$Y_0\in\B^*$, it gives
$\|U_0+Y_0\|_p\ge\sin(\pi/p)\|U_0\|_p$, since
$P_0(U_0+Y_0)=U_0$. Approximate the two summands separately by
$\B$ and $\B_0^*$ for the retained distance, and by $\B_0$ and
$\B^*$ for the strict distance. Continuity of the $L^p$ norm extends
both estimates to $H^p+(H_0^p)^*$ and $H_0^p+(H^p)^*$,
respectively.

\subsubsection*{An explicit sharpness construction}
Take $\M=L^\infty(\mathbb T)$ with normalized Haar trace and $\B=H^\infty(\mathbb T)$. For $0<b<1/p$ put
\[
 h_b(z)=\left(\frac{1+z}{1-z}\right)^b,\qquad c_b=\cos(\pi b).
\]
The principal power is analytic in the disk and $h_b(0)=1$. Its boundary values have modulus $|\cot(t/2)|^b$ and argument $\pm\pi b/2$. Hence, pointwise almost everywhere,
\[
 |h_b-c_b\overline{h_b}|=\sin(\pi b)|h_b|.
\]
Set $F_b=h_b-c_b(\overline{h_b}-1)$. For fixed $b$, use first the bounded analytic functions $h_b(\rho z)$, $\rho<1$, and their corresponding core decompositions. As $\rho\uparrow1$ they converge in $L^p$ to the displayed functions: a bound by $C|t|^{-b}$ near zero supplies dominated convergence since $bp<1$. Thus $PF_b=h_b$. The reverse triangle inequality gives
\[
 \left|\frac{\|F_b\|_p}{\|h_b\|_p}-\sin(\pi b)\right|
 \leq\frac{|c_b|}{\|h_b\|_p}.
\]
As $b\uparrow1/p$, $\|h_b\|_p\to\infty$, since $|\cot(t/2)|\sim2/|t|$ near zero. The ratio therefore tends to $\sin(\pi/p)$, proving $\|P\|\geq\csc(\pi/p)$. On the circle, $P_0=M_zPM_{\bar z}$, where both multiplication operators are isometries; hence $\|P_0\|=\|P\|$. This proves both uniform sharpness assertions and completes Theorem~\ref{rz:allp}.

\begin{proof}[Optimality of the coefficient in Theorem~\ref{rz:detcovariance}]
Fix $0<\beta<1$ and put $p=2(1+\beta)$, so that the squared coefficient
in \eqref{rz:weightedineq} is $\cos^2(\pi/p)$.
If it could be replaced universally by $c<\cos^2(\pi/p)$, necessarily
$c\ge0$ by taking $D=\Id$ and a nonzero nilpotent $N$.
The invertible-case proof of Corollary~\ref{rz:middleprojection} would
then give $h^2\ge(1-c)S$ in \eqref{rz:gate}. Its exact pairing, matrix
argument and separate closure passage would consequently give
$\|U+Y\|_p\ge\sqrt{1-c}\,\|U\|_p$ for all the analytic pairs there.
Since $\sqrt{1-c}>\sin(\pi/p)$, this contradicts the circle sequence
$U=h_b$, $Y=-c_b(\overline{h_b}-1)$ constructed above, whose norm
ratio tends to $\sin(\pi/p)$. Thus the coefficient is optimal.
\end{proof}

\section{Weighted angular realization and cone duality}\label{sec:angular}
Tracial joint spectral measures need not have finite total mass.
Indeed, for a nonzero nilpotent matrix $N$, one has $A=\Rea N\ne0$, and
\[
 \int|x|^q\dd\mu_{A,B}
 =\frac{\tr|A|^q}{q(q+1)}\longrightarrow\infty
 \quad(q\downarrow0).
\]
The compact support shows that a finite total mass would contradict
this divergence. We therefore use finite weighted pushforward measures on the circle.
Throughout this section, $C(\T)$ denotes the real-valued continuous
functions on $\T$.

\begin{definition}
For a nilpotent $N=A+iB$, define the finite positive measure $\nu_N$
on $\T$ by
\begin{equation}\label{eq:nu}
 \int_\T g\dd\nu_N
 =p(p+1)\int_{\C\setminus\{0\}}
       |z|^p g(\arg z)\dd\mu_{A,B}(z),\qquad g\in C(\T).
\end{equation}
For a polynomial $q$ with $q(0)=0$, define
\begin{equation}\label{eq:sigma}
 \int_\T g\dd\sigma_q
 =\int_0^{2\pi}|q(e^{it})|^p g(\arg q(e^{it}))\frac{\dd t}{2\pi},
\end{equation}
with integrand zero when $q(e^{it})=0$. Put
\[
 \mathscr S_p=\{u\in C(\T):\ \Lp u:=u''+p^2u\le0
                      \text{ distributionally}\},
\]
and define its polar cone restricted to finite positive measures,
using the nonpositive-pairing convention,
\begin{equation}\label{eq:J}
 \mathscr J_p=\{\nu\ge0:\ \int u\dd\nu\le0
                                  \text{ for every }u\in\mathscr S_p\}.
\end{equation}
\end{definition}

\begin{theorem}\label{thm:density}
In the weak-* topology of finite measures paired with $C(\T)$,
\begin{equation}\label{eq:coneequality}
 \overline{\operatorname{cone}\{\nu_N:N\text{ finite nilpotent}\}}^{\,w^*}
 =\overline{\operatorname{cone}\{\sigma_q:q(0)=0\}}^{\,w^*}
=\mathscr J_p.
\end{equation}
The family $\{\nu_N\}$ is itself a convex cone, so its conic hull
can be omitted. Every measure in $\mathscr J_p$ is consequently a
weak-* limit of a sequence of weighted angular measures of single
finite nilpotent matrices.
For each fixed polynomial $q$ with $q(0)=0$,
\begin{equation}\label{eq:measurelimit}
 \frac1n\nu_{q(S_n)}\longrightarrow\sigma_q
 \qquad\text{weak-*}.
\end{equation}
Equivalently,
\begin{equation}\label{eq:positivepotentialcone}
 \mathscr J_p=\{\nu\ge0:\ \nu=\Lp\omega,\quad
                      \omega\in C(\T),\quad\omega\ge0\}.
\end{equation}
Every such potential is Lipschitz. If $m=\nu(\T)$, then
\begin{equation}\label{eq:potentialbounds}
 \int_\T\omega\dd\theta=\frac m{p^2},\qquad
 \|\omega'\|_\infty\le m,\qquad
 \|\omega\|_\infty\le\frac m{2\pi p^2}+\pi m.
\end{equation}
Moreover, $(\omega,\nu)$ is approximated by nonnegative trigonometric
polynomials $(\omega_k,v_k)$ satisfying $v_k=\Lp\omega_k$,
$\int v_k\dd\theta=m$, $\omega_k\to\omega$ uniformly, and
$v_k\dd\theta\to\nu$ weak-*.
The realization uses weighted angular measures and allows growing
matrix dimensions.
\end{theorem}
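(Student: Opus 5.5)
The plan is to prove the inclusions
\[
\overline{\operatorname{cone}\{\nu_N\}}\subset\mathscr J_p,\qquad
\overline{\operatorname{cone}\{\sigma_q\}}\subset\overline{\operatorname{cone}\{\nu_N\}},
\]
and then show that $\mathscr J_p$ is contained in the closed cone of the $\sigma_q$, using the identification \eqref{eq:positivepotentialcone} as the engine.

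\textbf{First inclusion.} For $u\in\mathscr S_p$ smooth, put $V(re^{i\theta})=r^pu(\theta)$. By Lemma~\ref{lem:polar}, $\Delta V\le0$, and \eqref{eq:homidentity} gives
\[
\int u\dd\nu_N=p(p+1)\int V\dd\mu_{A,B}=\frac1{2\pi}\int\Psi_N\Delta V\dd m_2\le0,
\]
since $\Psi_N\ge0$. General continuous $u\in\mathscr S_p$ are reduced to this case by angular mollification, which preserves $\Lp u\le0$ and converges uniformly. The set $\mathscr J_p$ is weak-* closed, so the first inclusion follows.

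\textbf{Convexity and Toeplitz limit.} The family $\{\nu_N\}$ is a convex cone. Block-diagonal sums satisfy $\nu_{N_1\oplus N_2}=\nu_{N_1}+\nu_{N_2}$, because $\mu$ is additive: by uniqueness in \eqref{eq:measure}, the trace is additive. Positive scaling of $N$ by $c$ scales $\nu_N$ by $c^p$, since the ridge identity gives $\mu_{cA,cB}=(\text{dilation})_*\mu_{A,B}$.

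For the Toeplitz limit \eqref{eq:measurelimit}, test against $g(\theta)=|\cos(\theta-\alpha)|^p$. By \eqref{eq:moments} and \eqref{eq:ridgeLimit},
\[
\frac1n\int g\dd\nu_{q(S_n)}=\frac1n\tr|\Rea(e^{-i\alpha}q(S_n))|^p\to\int g\dd\sigma_q .
\]
Total masses stay bounded: the choice $\alpha=0,\pi/2$ bounds $\int(|\cos|^p+|\sin|^p)\dd\nu$, and this weight is bounded below by a positive constant. It then remains to know that finite combinations of ridge functions $|\cos(\theta-\alpha)|^p$ are dense in $C(\T)$. I expect this density to fail only on the span of $\cos(p\theta),\sin(p\theta)$ when $p$ is an even integer, that is, when the Fourier coefficients of $|\cos|^p$ vanish. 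I will handle the degenerate modes directly: for even integer $p$ these are polynomial moments, and \eqref{eq:polymoment} together with nilpotency gives $\tr N^p=0$ and $\int z^p\dd\sigma_q=\int q^p=0$.

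\textbf{Hard inclusion $\mathscr J_p\subset\overline{\operatorname{cone}}\{\sigma_q\}$.} I will use Hahn--Banach in $C(\T)^*$ with the weak-* topology. If $\nu\in\mathscr J_p$ lies outside the closed cone, there is $u\in C(\T)$ with $\int u\dd\sigma_q\le0$ for all $q$ but $\int u\dd\nu>0$. Setting $G(re^{i\theta})=r^pu(\theta)$, condition (P) holds for $G$. The argument of Lemma~\ref{lem:envelope}, which uses only continuity, then produces a superharmonic homogeneous majorant $U_*\ge G$ with angular part $u_*\ge u$ and $\Lp u_*\le0$. So $u_*\in\mathscr S_p$, and hence $\int u\dd\nu\le\int u_*\dd\nu\le0$, a contradiction.

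\textbf{Potential description.} To prove \eqref{eq:positivepotentialcone}, I will argue by duality in the other direction. For $\nu=\Lp\omega$ with $\omega\ge0$ and $u\in\mathscr S_p$, integration by parts gives $\int u\dd\nu=\int\omega\,\Lp u\le0$. Conversely, given $\nu\in\mathscr J_p$, solve $\Lp\omega=\nu$.
\begin{itemize}
\item If $p\notin\Z$, $\Lp$ is invertible with a Green kernel, and positivity of $\omega$ is obtained by testing with $u=$ the Green function at each point, which lies in $\mathscr S_p$ (its $\Lp$ is $-\delta$).
\item If $p\in\Z$, then $\pm\cos p\theta,\pm\sin p\theta\in\mathscr S_p$, so $\nu$ annihilates them and $\Lp\omega=\nu$ is solvable. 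Positivity of some solution comes from choosing the kernel component via the Green functions modulo that kernel. I expect this step to be the delicate part.
\end{itemize}

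\textbf{Estimates and approximation.} The bounds \eqref{eq:potentialbounds} are obtained as follows. Integrating $\Lp\omega=\nu$ gives $p^2\int\omega=m$. Then $\omega'$ has total variation at most $m+p^2\int\omega=2m$, and it has zero mean, so $\|\omega'\|_\infty\le m$. The sup bound follows from the mean plus oscillation. Finally, Fejér means of $\omega$ are nonnegative trigonometric polynomials $\omega_k$, and $v_k=\Lp\omega_k$ are the Fejér means of $\nu$, hence nonnegative with mass $m$.
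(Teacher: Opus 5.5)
\textbf{Gap in the Toeplitz limit.} Your first inclusion (via \eqref{eq:homidentity} and angular mollification), the cone structure, the Hahn--Banach step and the potential estimates are sound. The Toeplitz step fails, however.

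Every ridge function $|\cos(\theta-\alpha)|^p$ is $\pi$-periodic. Their span therefore contains only even frequencies and is never dense in $C(\T)$. Convergence against these tests identifies at most the symmetrization of a weak-* limit under $\theta\mapsto\theta+\pi$. The odd part is genuinely present: for $q(z)=z+\varepsilon z^2$ one computes $\int e^{i\theta}\dd\sigma_q=\frac{p-1}{2}\varepsilon+O(\varepsilon^2)\neq0$.

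Your list of exceptional modes at even integers is also wrong. There $|\cos|^p=\cos^p$ is a trigonometric polynomial whose coefficient of $\cos(p\theta)$ is $2^{1-p}\neq0$. The missing even frequencies are $p+2,p+4,\dots$, by the recurrence $(p+2k)d_k=(p-2k+2)d_{k-1}$ from Proposition~\ref{prop:counter}. So the identity $\tr N^p=0$ patches a mode that was never missing.

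Consequently \eqref{eq:measurelimit} is not proved, and with it the inclusion of the $\sigma_q$-cone in the nilpotent cone. Nothing else in your plan closes the chain. Separating $\mathscr J_p$ directly from the nilpotent cone would need the same limit for an arbitrary continuous angular test.

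The paper's device is to work with the finite measures $\eta_n=|z|^2n^{-1}\mu_{A_n,B_n}$ on a common compact square:
\begin{itemize}
\item Every homogeneous polynomial of degree at least two is a combination of powers $(sx+ty)^k$ of real linear forms, of both parities.
\item So \eqref{eq:polymoment} and the spectral form of Lemma~\ref{lem:commontoeplitz} give convergence of all polynomial moments to those of the radial lift \eqref{eq:radiallift}, hence weak convergence.
\item Uniform bounds on $\int|z|^{\alpha-2}\dd\eta_n$, $0<\alpha<\min\{p,2\}$, then control the weight $|z|^{p-2}g(\arg z)$ at the origin.
\end{itemize}
Odd ridge tests $\cos(\theta-\alpha)|\cos(\theta-\alpha)|^{p-1}$, obtained from the full identity \eqref{eq:measure}, would rescue non-integer $p$. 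At integer $p$, however, one of the two ridge families is $\cos^p(\theta-\alpha)$, and infinitely many modes stay missing.

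\textbf{The potential description.} Your direct proof of \eqref{eq:positivepotentialcone} is a genuine alternative, and it is correct for $p\notin\Z$. The kernel $\Gamma_\phi$ with $\Lp\Gamma_\phi=-\delta_\phi$ is continuous and lies in $\mathscr S_p$, so $\omega(\phi)=-\int\Gamma_\phi\dd\nu\ge0$. This is more elementary than the paper's argument.

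For $p\in\Z$ the step you call delicate is left open. A Green function modulo the kernel cannot serve as a test, since its $\Lp$ is $-\delta_\phi+\pi^{-1}\cos(p(\cdot-\phi))$, which is not nonpositive. The step can be closed by a minimax over $a\cos p\theta+b\sin p\theta$. If no nonnegative solution exists, one gets a probability measure $\lambda$ annihilating $\cos p\theta$ and $\sin p\theta$ with $\int\omega_0\dd\lambda<0$. Then $u$ solving $\Lp u=-\lambda$ belongs to $\mathscr S_p$ and satisfies $\int u\dd\nu>0$, a contradiction.

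The paper never solves $\Lp\omega=\nu$. Once the Toeplitz limit is available, the explicit potential \eqref{eq:W} gives $\nu_N=\Lp\omega_N^{(p)}$ with $\omega_N^{(p)}\ge0$. The bounds \eqref{eq:potentialbounds} with Arzel\`a--Ascoli make the potential cone weak-* closed. The chain $\mathscr J_p\subset\overline{\operatorname{cone}}\{\sigma_q\}\subset\overline{\operatorname{cone}}\{\nu_N\}\subset\{\Lp\omega:\omega\ge0\}$ then yields \eqref{eq:positivepotentialcone} for free.
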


\begin{proof}
We first prove the approximation assertion for continuous angular tests.
Put $N_n=q(S_n)=A_n+iB_n$, $\mu_n=n^{-1}\mu_{A_n,B_n}$, and
$\eta_n=|z|^2\mu_n$. Their supports lie in a common compact square.
Define the finite measure
\begin{equation}\label{eq:radiallift}
 \eta=\int_0^{2\pi}\int_0^1r(1-r)|q(e^{it})|^2
             \delta_{rq(e^{it})}\dd r\,\frac{\dd t}{2\pi}.
\end{equation}
For every polynomial $P$, the homogeneous parts of $|z|^2P$ are
spanned by powers of real linear forms. Equation~\eqref{eq:polymoment}
and Lemma~\ref{lem:commontoeplitz} give $\int P\dd\eta_n\to\int P\dd\eta$.
Uniform polynomial approximation therefore gives $\eta_n\to\eta$ weakly.
Choose $0<\alpha<\min\{p,2\}$. The ridge moments and the uniform
operator norm bound give $\sup_n\int|z|^{\alpha-2}\dd\eta_n<\infty$, while
\[
 \int|z|^{\alpha-2}\dd\eta
 =\frac1{\alpha(\alpha+1)}
       \int_0^{2\pi}|q(e^{it})|^\alpha\frac{\dd t}{2\pi}<\infty.
\]
For both measures, the origin tail of $|z|^{p-2}g(\arg z)$ is thus
$O(\|g\|_\infty\delta^{p-\alpha})$.
Take a continuous radial cutoff $0\le\chi_\delta\le1$, zero on
$[0,\delta]$ and one on $[2\delta,\infty)$.
Test weak convergence with $\chi_\delta(|z|)|z|^{p-2}g(\arg z)$ and
then remove the cutoff. The radial integral
$\int_0^1(1-r)r^{p-1}\dd r=1/[p(p+1)]$ proves \eqref{eq:measurelimit}.

Let $\mathcal C_N,\mathcal C_P$ be the closed cones in
\eqref{eq:coneequality}, and let $\mathcal V_p$ denote the
cone of measures with nonnegative potentials on the right of \eqref{eq:positivepotentialcone}.
We prove the inclusions
\[
 \mathcal C_P\subset\mathcal C_N\subset\mathcal V_p
 \subset\mathscr J_p\subset\mathcal C_P.
\]
The first inclusion is \eqref{eq:measurelimit}. For a finite
nilpotent define
\begin{equation}\label{eq:W}
 \omega_N^{(p)}(\theta)=\frac1{2\pi}\int_0^\infty
                  \Psi_N(re^{i\theta})r^{p-1}\dd r\ge0.
\end{equation}
It is integrable by compact support and the logarithmic bound at zero.
Testing \eqref{eq:homidentity} with smooth angular profiles gives
\begin{equation}\label{eq:angularpotential}
 \nu_N=\Lp\omega_N^{(p)}.
\end{equation}
Such homogeneous tests have the required local Sobolev regularity
by choosing $q>1$ close enough to one.

If $\omega\in L^1(\T)$ is nonnegative and $\Lp\omega=\nu\ge0$,
testing against $1$ gives $p^2\int\omega\dd\theta=m=\nu(\T)$.
Hence $\omega''=\nu-p^2\omega\dd\theta$ has total variation at most
$2m$. The derivative $\omega'$ has a periodic BV representative of
mean zero. Its essential oscillation is at most half its total
variation: the two arcs joining any two Lebesgue points each
contribute at least their difference. Thus
$\|\omega'\|_\infty\le\operatorname{osc}(\omega')\le m$.
Its primitive is Lipschitz, and the shorter circular arc gives
$\operatorname{osc}(\omega)\le\pi m$.
Comparison with the mean gives the last bound in
\eqref{eq:potentialbounds}. In particular \eqref{eq:W} has a
nonnegative continuous representative, and every generator belongs
to $\mathcal V_p$.
These bounds also show that $\mathcal V_p$ is weak-* closed.
Indeed, let a net $\nu_\gamma\in\mathcal V_p$ converge weak-* to
$\nu$, and choose nonnegative continuous potentials $\omega_\gamma$
with $\Lp\omega_\gamma=\nu_\gamma$. The limit $\nu$ is positive.
Testing against $1$ shows that, after restricting to a tail, the
masses are uniformly bounded. The estimates
\eqref{eq:potentialbounds} then make the potentials uniformly
bounded and equi-Lipschitz. By Arzel\`a--Ascoli, a subnet converges
uniformly to a nonnegative continuous function $\omega$.
For every $\varphi\in C^\infty(\T)$, passage to the limit gives
\[
 \int_\T\varphi\,\dd\nu
 =\int_\T\omega\,\Lp\varphi\,\dd\theta.
\]
Thus $\nu=\Lp\omega\in\mathcal V_p$. Since $\mathcal V_p$ is a
convex cone containing every generator, this proves
$\mathcal C_N\subset\mathcal V_p$.

For $\nu=\Lp\omega\in\mathcal V_p$ and $u\in\mathscr S_p$, smooth
angular convolutions satisfy
\[
 \int u\dd\nu
 =\lim_{\varepsilon\downarrow0}
       \int\omega\Lp u_\varepsilon\dd\theta\le0.
\]
Thus $\mathcal V_p\subset\mathscr J_p$.

For the reverse inclusion, suppose a finite positive measure
$\nu\in\mathscr J_p$ lies outside $\mathcal C_P$. Hahn--Banach
separation for the weak-* closed convex cone gives $g\in C(\T)$
with
\[
 \int g\dd\sigma_q\le0\quad\text{for all }q(0)=0,
 \qquad \int g\dd\nu>0.
\]
Set $G(re^{i\theta})=r^pg(\theta)$ and $G(0)=0$.
Lemma~\ref{lem:envelope} gives a continuous homogeneous
superharmonic majorant with angular part $u\in\mathscr S_p$.
Since $g\le u$ and $\nu\ge0$, one obtains
$\int g\dd\nu\le\int u\dd\nu\le0$, a contradiction.
Thus $\mathscr J_p\subset\mathcal C_P$, which proves both descriptions of the cone.

For the final realization assertion, the tracial measures are additive
under direct sums and transform by pushforward under positive scalar
dilation, by uniqueness in Hein\"avaara's representation
\cite{Hein}. It follows that
\[
 \nu_{N\oplus M}=\nu_N+\nu_M,\qquad
 \nu_{rN}=r^p\nu_N\quad(r\ge0).
\]
Thus the first generating family is a convex cone. Explicitly, for
a finite combination $\sum_{\ell=1}^L t_\ell\sigma_{q_\ell}$,
$t_\ell\ge0$, one may use the single nilpotent matrix
\[
 M_n=\bigoplus_{\ell=1}^L
       (t_\ell/n)^{1/p}q_\ell(S_n).
\]
Its angular measure converges to that combination by
\eqref{eq:measurelimit}. Weak-* topology on mass-bounded positive
measures on the circle is metrizable. Approximating an arbitrary
member of the closed cone and using a diagonal argument gives
the required sequence; mass bounds are obtained by including the test $g=1$.

Finally let $K_k$ be the nonnegative Fej\'er kernels normalized by
$\int K_k\dd\theta=1$. Put $\omega_k=K_k*\omega$ and
$v_k=K_k*\nu$. Positivity, the mass identity, and
$v_k=\Lp\omega_k$ are preserved; the asserted convergences are the
approximate-identity properties. At an integer exponent $p\ge2$, every $\nu\in\mathscr J_p$
satisfies $\int_\T e^{ip\theta}\,\dd\nu(\theta)=0$, since both
signs of $\cos(p\theta)$ and $\sin(p\theta)$ belong to $\mathscr S_p$.
Any two potentials for the same measure differ by
$a\cos(p\theta)+b\sin(p\theta)$, with $a,b\in\R$.
The bounds above hold for every nonnegative potential, so no
normalization of this kernel component is needed in the
compactness argument.
\end{proof}

\section{Sharp consequences}\label{sec:joint}
We first pass the finite trace certificate to compact operators and
obtain the exact comparison for a prescribed spectral set, followed
by its real diagonal perturbation consequence. At exponent four,
the directional moment disk gives an explicit signed classification
and joint norm formula. We then realize the analytic-distance
theorem as sharp triangular and Fourier projection bounds.

\subsection{Compact spectral corrections}\label{sec:consequences}
Compressing a quasinilpotent operator need not preserve quasinilpotence.
The spectral correction must be passed to the limit.

\begin{theorem}\label{thm:spectralmatching}
Let $1<p<\infty$ and $T_n\to T$ in $\Sp_p$, with all operators
compact. List their nonzero eigenvalues with algebraic multiplicities
and append countably many zeros. For two such lists define
\[
 d_p(\Lambda(S),\Lambda(T))
 =\inf_{\sigma}\left(\sum_{j\ge1}
       |\lambda_j(S)-\lambda_{\sigma(j)}(T)|^p\right)^{1/p},
\]
where $\sigma$ ranges over bijections of $\mathbb N$.
Then $d_p(\Lambda(T_n),\Lambda(T))\to0$.
\end{theorem}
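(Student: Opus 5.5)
The plan is to separate the large eigenvalues, which are handled by Riesz projections, from the small ones, which are handled by a uniform Weyl-type tail estimate.

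\emph{Step 1: uniform control of the small eigenvalues.} Weyl's inequality gives $\sum_j|\lambda_j(S)|^p\le\|S\|_p^p$ for every compact $S\in\Sp_p$. We need more than this, namely a uniform bound on the eigenvalues lying in a small disc. So fix $\varepsilon>0$. Since $T_n\to T$ in $\Sp_p$, the family $\{T_n\}\cup\{T\}$ is relatively compact in $\Sp_p$. We first choose a finite-rank $F$ with $\|T-F\|_p<\varepsilon$. Then $\|T_n-F\|_p<2\varepsilon$ for all large $n$.

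It would be convenient to bound $\sum_{|\lambda_j(T_n)|<\delta}|\lambda_j(T_n)|^p$ by a small quantity. A direct route is to apply Weyl's majorization to singular values: $\prod_{j\le k}|\lambda_j|\le\prod_{j\le k}s_j$, combined with the $\Sp_p$-tail of the singular values. Alternatively, we choose $\delta$ with $\delta\notin\{|\lambda|:\lambda\in\spec T\}$ and $\sum_{|\lambda_j(T)|\le\delta}|\lambda_j(T)|^p<\varepsilon^p$. We then use Step 2 to match the eigenvalues of modulus $>\delta$. The remaining small eigenvalues of $T_n$ are handled by Weyl's inequality applied to the compression $(I-P_n)T_n(I-P_n)$, where $P_n$ is the Riesz projection of $T_n$ for $\{|\lambda|>\delta\}$. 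That compression has exactly the small eigenvalues of $T_n$ as its nonzero spectrum.

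\emph{Step 2: the large eigenvalues.} Let $\Gamma$ be the circle $|\lambda|=\delta$. Since $T_n\to T$ in operator norm, the resolvents converge uniformly on $\Gamma$ and outside it, so the Riesz projections $P_n\to P$ in norm. Here $P$ has finite rank $r$, so $P_n$ has rank $r$ for large $n$. Upper semicontinuity of spectra together with finite-dimensional continuity of the characteristic polynomial of $P_nT_nP_n$ (transported by a similarity $P_n\to P$) shows that the $r$ eigenvalues of $T_n$ outside $\Gamma$ converge, with multiplicity, to those of $T$. This lets us pair them with a total $\ell^p$ error tending to zero.

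\emph{Step 3: the tail of $T_n$ (main obstacle).} The key estimate is
\[
\|(I-P_n)T_n(I-P_n)\|_p\le\|(I-P)T(I-P)\|_p+o(1),
\]
which holds because $P_n\to P$ in norm and $T_n\to T$ in $\Sp_p$. The difficulty is that $\|(I-P)T(I-P)\|_p$ need not be small, since quasinilpotent parts carry $\Sp_p$ mass. So Weyl alone does not suffice; what is needed is the smallness of the eigenvalue sum, not of the norm.

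To get it, we replace $T$ by the finite-rank $F$. The eigenvalues of $T_n$ inside $\Gamma$ are those of the compression, which is within $2\varepsilon+o(1)$ in $\Sp_p$ of the compression of $F$. For a finite-rank operator the spectral part inside $\Gamma$ is controlled by choosing $\delta$ also below the smallest nonzero eigenvalue of $F$'s relevant block. I expect this to require a Schur-basis argument: place $T_n$ in upper-triangular form relative to a Schur flag. The diagonal then lists the eigenvalues, and the off-diagonal perturbation estimate via the Hermitian-dilation Weyl inequality, $\ell^p$ of the diagonal $\le$ $\Sp_p$ norm, bounds the small-eigenvalue sum by $C\varepsilon$.

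\emph{Step 4: conclusion.} The small eigenvalues of both $T_n$ and $T$ are then paired with zeros, and the large ones are paired as in Step 2. This gives $\limsup_n d_p\le C\varepsilon$, and letting $\varepsilon\to0$ completes the proof.
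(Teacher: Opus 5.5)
Your Steps~2 and~4 are sound and match the paper's endgame. Riesz projections on a circle $|z|=\delta$ avoiding $\spec T$ match the finitely many outer eigenvalues with multiplicity, and the inner eigenvalues of $T_n$ and $T$ are paired with appended zeros. The gap is Step~3, which carries the whole weight of the theorem. You need the uniform tail estimate
\[
 \lim_{\delta\downarrow0}\limsup_{n}\sum_{0<|\lambda_j(T_n)|\le\delta}|\lambda_j(T_n)|^p=0,
\]
and you do not prove it. You are right to discard Weyl's inequality for $(I-P_n)T_n(I-P_n)$, but the replacement does not work as described. The compression is $\Sp_p$-close to a compression of $F$ by the non-orthogonal, $n$-dependent projection $I-P_n$, but that gives no control of eigenvalues. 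Eigenvalues of non-normal operators are not Lipschitz under $\Sp_p$ perturbations, and the spectrum of $F$ plays no role. Your rule for $\delta$ (below the smallest nonzero eigenvalue of $F$) is vacuous when $F$ is nilpotent, and then no bound of the form $C\varepsilon$ holds. For the Jordan block $F=J_m$, $m\ge2$, and $S=J_m+\varepsilon e_{m1}$, one has $\|S-F\|_p=\varepsilon$. Yet $S$ has $m$ eigenvalues of modulus $\varepsilon^{1/m}$, all inside $|z|\le\delta$ once $\varepsilon\le\delta^m$, with $\ell^p$ mass $m^{1/p}\varepsilon^{1/m}\gg\varepsilon$.

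The missing ingredient is to play the $\delta$-smallness of the eigenvalues against the \emph{trace-class} size of $F$. The paper takes a Schur basis $(e_j)$ of the invariant subspace spanned by finitely many generalized eigenspaces of $S$ with $0<|\lambda_j|\le\delta$, so that $\langle Se_j,e_j\rangle=\lambda_j$. It sets $D=\sum_j\overline{\lambda_j}|\lambda_j|^{p-2}e_je_j^*$. Then $\tr(DS)=b^p:=\sum_j|\lambda_j|^p$, $\|D\|\le\delta^{p-1}$ and $\|D\|_{p'}=b^{p-1}$. H\"older applied to $S=F+(S-F)$ gives $b^p\le\delta^{p-1}\|F\|_1+b^{p-1}\|S-F\|_p$, whence
\[
 \Bigl(\sum_{0<|\lambda_j(S)|\le\delta}|\lambda_j(S)|^p\Bigr)^{1/p}\le\|S-F\|_p+\delta^{1/p'}\|F\|_1^{1/p}.
\]
Take $S=T_n$ and let $n\to\infty$, then $\delta\downarrow0$, then $F\to T$ in $\Sp_p$; this yields the tail estimate. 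Your Schur-basis instinct is the right tool: the eigenvalues are the Schur diagonal, and by pinching the $T_n-F$ part contributes at most $\|T_n-F\|_p$. But $\delta$ must be chosen small relative to $\|F\|_1$, and the bound necessarily contains the term $\delta^{1/p'}\|F\|_1^{1/p}$, as the Jordan example shows.
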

\begin{proof}
For $S\in\Sp_p$, select finitely many generalized eigenspaces with
$0<|\lambda_j(S)|\le\delta$ and write
$b^p=\sum_j|\lambda_j(S)|^p$. In a Schur basis on their invariant
sum $E$, take $D=\operatorname{diag}(\overline\lambda_j|\lambda_j|^{p-2})$
and extend it by zero. Invariance gives
$\tr(DS)=\tr_E(D S|_E)=b^p$, while
$\|D\|\le\delta^{p-1}$ and $\|D\|_{p'}=b^{p-1}$.
For every finite-rank $F$, trace H\"older therefore gives
\[
 b^p\le a+b^{p-1}\varepsilon,
 \qquad a=\delta^{p-1}\|F\|_1,\quad\varepsilon=\|S-F\|_p.
\]
If $b>\varepsilon$, then
$(b-\varepsilon)^p\le b^{p-1}(b-\varepsilon)\le a$;
otherwise $b\le\varepsilon$. Exhausting the small spectrum yields
\begin{equation}\label{eq:smalltail}
 b_\delta(S):=
 \left(\sum_{0<|\lambda_j(S)|\le\delta}|\lambda_j(S)|^p\right)^{1/p}
 \le\|S-F\|_p+\delta^{1/p'}\|F\|_1^{1/p}.
\end{equation}
Taking $F=0$ and $\delta\ge\|S\|$ also proves
$\sum_j|\lambda_j(S)|^p\le\|S\|_p^p$.
Apply \eqref{eq:smalltail} to $S=T_n$, let $\delta\downarrow0$ with
$F$ fixed, and then approximate $T$ by finite-rank $F$. Thus
\begin{equation}\label{eq:uniformtail}
 \lim_{\delta\downarrow0}\limsup_n b_\delta(T_n)=0.
\end{equation}

Choose $|z|=\delta$ avoiding $\spec T$. Norm convergence and
Riesz projections \cite[Chapter~IV]{Kato} match the finite eigenvalue
multisets outside this circle, including algebraic multiplicities.
Let their cardinality be $m_\delta$ and maximum matching distance
$\eta_{n,\delta}\to0$, with $\eta_{n,\delta}=0$ if $m_\delta=0$.
Match the remaining padded lists arbitrarily. Then
\[
 d_p(\Lambda(T_n),\Lambda(T))^p
 \le m_\delta\eta_{n,\delta}^p
       +2^{p-1}\bigl(b_\delta(T_n)^p+b_\delta(T)^p\bigr).
\]
First let $n\to\infty$, then $\delta\downarrow0$ through admissible
circles. Both tails vanish, proving the theorem.
\end{proof}

\begin{corollary}\label{lem:spectralcontinuity}
Under Theorem~\ref{thm:spectralmatching}, every continuous $h$ on
a common spectral disk with $|h(z)|\le M|z|^p$ satisfies
\begin{equation}\label{eq:spectralcontinuity}
 \sum_jh(\lambda_j(T_n))\longrightarrow\sum_jh(\lambda_j(T)).
\end{equation}
The series converge absolutely. The convergence is uniform over any
family of continuous tests $\mathcal H$ with the same growth bound and equicontinuity
on each closed annulus away from zero.
\end{corollary}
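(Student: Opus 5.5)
The approach is to combine the $\ell^p$ matching of eigenvalue lists from Theorem~\ref{thm:spectralmatching} with the small-eigenvalue tail bound \eqref{eq:uniformtail}. We split each spectral sum into eigenvalues of modulus at most $\delta$ and the finitely many of modulus greater than $\delta$. Absolute convergence is immediate: $|h(\lambda)|\le M|\lambda|^p$ and the Weyl-type bound $\sum_j|\lambda_j(S)|^p\le\|S\|_p^p$ (the case $F=0$ of \eqref{eq:smalltail}) give $\sum_j|h(\lambda_j(S))|\le M\|S\|_p^p$ for $S=T_n$ and $S=T$. The appended zeros contribute nothing, since $h(0)=0$ by the growth bound. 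Hence both sums may be taken over the padded lists.

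Fix $\delta>0$ with the circle $|z|=\delta$ avoiding $\spec T$, as in the proof of Theorem~\ref{thm:spectralmatching}. By Riesz projections, for large $n$ the eigenvalues of $T_n$ outside this circle form a multiset of the same cardinality $m_\delta$ as those of $T$. They can be matched with maximal distance $\eta_{n,\delta}\to0$, and all these points lie in a fixed closed annulus $\{\delta/2\le|z|\le R\}$, where $R$ is a common spectral radius bound (e.g. $\sup_n\|T_n\|$). Uniform continuity of $h$ on that annulus (or equicontinuity of the family $\mathcal H$ there) gives
\[
 \Bigl|\sum_{|\lambda_j(T_n)|>\delta}h(\lambda_j(T_n))-\sum_{|\lambda_j(T)|>\delta}h(\lambda_j(T))\Bigr|\le m_\delta\,\omega_\delta(\eta_{n,\delta})\to0,
\]
where $\omega_\delta$ is the common modulus of continuity. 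One must check that points near the circle are handled consistently. For large $n$, matched eigenvalues of $T_n$ lie on the same side of the circle as those of $T$, because the circle avoids $\spec T$ and Riesz projections separate the inside and outside.

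The remaining parts are bounded by the growth bound alone:
\[
 \sum_{0<|\lambda_j(S)|\le\delta}|h(\lambda_j(S))|\le M\,b_\delta(S)^p,
\]
uniformly over $h\in\mathcal H$. Letting $n\to\infty$ first and then $\delta\downarrow0$ through admissible radii, \eqref{eq:uniformtail} kills the $T_n$ tails and $b_\delta(T)\to0$ kills the tail of $T$. This yields \eqref{eq:spectralcontinuity}, with uniformity over $\mathcal H$, because every estimate depends on $h$ only through $M$ and the annulus moduli of continuity.

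The main care point is the boundary bookkeeping. The $T_n$-eigenvalues inside the circle must be exactly those matched to $T$'s interior ones, and this requires $\delta$ to be chosen off $\spec T$. The rest is routine.
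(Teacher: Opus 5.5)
Your proposal is correct and follows essentially the same route as the paper. You split off the finitely many eigenvalues outside an admissible circle $|z|=\delta$ and control the matched pairs by $m_\delta\,\omega_\delta(\eta_{n,\delta})$ using a common modulus on an annulus; you then bound the small-eigenvalue tails by $M\bigl(b_\delta(T_n)^p+b_\delta(T)^p\bigr)$ and let $n\to\infty$, then $\delta\downarrow0$, via \eqref{eq:uniformtail}, which is precisely the paper's error budget \eqref{eq:spectralErrorBudget}.
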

\begin{proof}
Absolute convergence follows from the spectral $\ell^p$ bound.
For a common disk $|z|\le R$, let $\omega_{\mathcal H,\delta}$ be
a common modulus on $\delta\le|z|\le R$. The matching above gives
\begin{align}\label{eq:spectralErrorBudget}
 \sup_{h\in\mathcal H}
 \left|\sum_jh(\lambda_j(T_n))-\sum_jh(\lambda_j(T))\right|
 &\le m_\delta\omega_{\mathcal H,\delta}(\eta_{n,\delta})\notag\\
 &\quad+M\bigl(b_\delta(T_n)^p+b_\delta(T)^p\bigr).
\end{align}
Let $n\to\infty$, then $\delta\downarrow0$ and use
\eqref{eq:uniformtail}. A singleton family gives the first assertion.
\end{proof}
\begin{corollary}\label{cor:compactcertificate}
For every $T=A+iB\in\Sp_p$ and every superharmonic majorant $U\in\mathcal U_p$ with $U\ge F$,
\begin{equation}\label{eq:compactcertificate}
 \mathcal F_p(A,B)\le\sum_kU(\lambda_k(T)),
\end{equation}
with an absolutely convergent series. When $F$ satisfies
Theorem~\ref{thm:classification}, one may take its least majorant
$U_*$; it is pointwise optimal already among finite-matrix spectral
corrections by \eqref{eq:optimalcorrection}. In particular
$\mathcal F_p(A,B)\le0$ if $T$ is quasinilpotent.
\end{corollary}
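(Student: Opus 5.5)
The plan is to reduce to finite-rank operators, apply Corollary~\ref{thm:certificate}, and pass to the limit in both sides. Finite-rank operators are dense in $\Sp_p$, so choose finite-rank $T_n\to T$ in $\Sp_p$. Each $T_n$ acts nontrivially only on a finite-dimensional subspace. Compress to a finite-dimensional space $E_n$ that contains the ranges of $T_n$ and $T_n^*$. Then $T_n$ becomes a finite matrix whose $\Rea$ and $\Im$ parts are the compressions of $A_n=\Rea T_n$ and $B_n=\Ima T_n$. On $E_n^\perp$ the operator $T_n$ vanishes, so this contributes zero trace terms and only zero eigenvalues. Since $U(0)=0$ by homogeneity, these extra zeros do not change $\sum_kU(\lambda_k)$. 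Corollary~\ref{thm:certificate}, with both remainders nonnegative, therefore gives
\[
\mathcal F_p(A_n,B_n)\le\sum_kU(\lambda_k(T_n)).
\]

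Next, take limits on the left. The map $T\mapsto\Rea T$ is contractive on $\Sp_p$, and so is $T\mapsto\Ima T$. Hence each directional operator $a_jA_n+b_jB_n$ converges in $\Sp_p$ to $a_jA+b_jB$. Continuity of $X\mapsto\tr|X|^p=\|X\|_p^p$ on $\Sp_p$ then gives $\mathcal F_p(A_n,B_n)\to\mathcal F_p(A,B)$, with finitely many signed terms.

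The right side is the main point. Apply Corollary~\ref{lem:spectralcontinuity} with $h=U$. Since $U$ is continuous and $p$-homogeneous, it satisfies $|U(z)|\le M|z|^p$ with $M=\max_{\T}|U|$. It is continuous on any common spectral disk; such a disk exists because $\|T_n\|\le\|T_n\|_p$ is bounded. The corollary gives absolute convergence of $\sum_kU(\lambda_k(T))$, which uses the spectral $\ell^p$ bound from Theorem~\ref{thm:spectralmatching}. It also gives $\sum_kU(\lambda_k(T_n))\to\sum_kU(\lambda_k(T))$. The delicate part, matching eigenvalues including small ones, is already contained in Theorem~\ref{thm:spectralmatching}, so no new obstacle arises. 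One only checks that padding with zeros is harmless, which again follows from $U(0)=0$. Passing to the limit proves \eqref{eq:compactcertificate}.

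For the remaining assertions: when $F$ satisfies Theorem~\ref{thm:classification}, the least majorant $U_*$ lies in $\mathcal U_p$, so it is admissible above. Its pointwise optimality among finite-matrix corrections is exactly the statement following \eqref{eq:optimalcorrection}: any $G$ valid for all finite $T$ satisfies $G\ge U_*$. Finally, if $T$ is quasinilpotent, every listed eigenvalue is $0$, so the right side equals $\sum_kU(0)=0$. This gives $\mathcal F_p(A,B)\le0$, with any admissible $U$ whose existence is assumed.
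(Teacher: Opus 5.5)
Your proposal is correct and follows the paper's proof. You approximate $T$ in $\Sp_p$ by finite-rank operators, apply Corollary~\ref{thm:certificate} on a finite-dimensional reducing subspace, and pass to the limit using $\Sp_p$-continuity of the ridge traces and Corollary~\ref{lem:spectralcontinuity} for $U$, while the paper uses the specific approximants $P_NTP_N$ instead of your arbitrary finite-rank $T_n$; this difference is immaterial.
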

\begin{proof}
Choose finite-rank orthogonal projections $P_N$ converging strongly
to $I$ on a separable reducing subspace containing the ranges of
$T$ and $T^*$. Finite-rank approximation gives
$P_NTP_N\to T$ in $\Sp_p$. Apply Corollary~\ref{thm:certificate} to
these compressions and Corollary~\ref{lem:spectralcontinuity} to $U$.
The Hermitian pencils converge in $\Sp_p$ as well.
\end{proof}
Schatten $p$-norm convergence is essential: viewed on $\ell^2$
by zero extension, $T_N=N^{-1/p}I_N$ tends to zero
in operator norm and has bounded $\Sp_p$ norm, but its eigenvalue
$p$-mass stays one.

Let $U_{p,a,b}$ be the least majorant of
$F_{p,a,b}(x,y)=|ax+by|^p-K_p(a,b)^p|x|^p$, supplied by the
classification and Corollary~\ref{thm:realpencil}.
Corollary~\ref{thm:certificate} and Corollary~\ref{cor:compactcertificate}
give, for finite matrices and compact $T\in\Sp_p$ respectively,
\begin{equation}\label{eq:obliquespectral}
 \tr|aA+bB|^p-K_p(a,b)^p\tr|A|^p
 \le\sum_kU_{p,a,b}(\lambda_k(T)).
\end{equation}
For finite matrices the exact difference is \eqref{eq:certificate}
with $F=F_{p,a,b}$. For quasinilpotent $T$ the sum vanishes, proving
the spectral form of Corollary~\ref{cor:MM}.

The classical coordinate superharmonic majorant $U_p$ recorded in
Appendix~\ref{sec:coordinate} gives the more symmetric specialization
\begin{equation}\label{eq:coordinate-remainder}
 \tr|A|^p-C_p^p\tr|B|^p\le\sum_kU_p(\lambda_k(T)).
\end{equation}
The scalar majorant is verified in Appendix~\ref{sec:coordinate};
its matrix and compact-operator consequences follow as in
Section~\ref{subsec:coordinatepath}. For nilpotent matrices the
spectral sum vanishes, whereas the two nonnegative integrals in
\eqref{eq:certificate} need not vanish.

\begin{proof}[Proof of Corollary~\ref{cor:MM}]
The classical qualitative theorem of Matsaev \cite{Matsaev61} gives
Schatten membership of the other self-adjoint part from either one;
see also \cite[Chapter~III]{GK70} and \cite{MS}.
The optimal-majorant assertion of Theorem~\ref{thm:unified}, with
one output and Corollary~\ref{thm:realpencil}, supplies a majorant of
$|ax+by|^p-K_p(a,b)^p|x|^p$ that vanishes at zero.
Corollary~\ref{cor:compactcertificate} applied to $T\in\Sp_p$
therefore gives \eqref{eq:obliqueMM}. For the second formulation
apply the first one to $iT=-B+iA$ and use
$K_p(0,1)=C_p$.  Sharpness follows from the finite-nilpotent supremum
in Corollary~\ref{thm:realpencil}.
\end{proof}

\begin{proof}[Proof of Corollary~\ref{cor:spectralsector}]
For $K\ge C_p$, Proposition~\ref{prop:coordinate} gives the admissible
majorant $U_{p,K}(x,y)=U_p(x,(K/C_p)y)$ of $|x|^p-K^p|y|^p$, nonpositive
on $\Sigma_{p,K}$. If $L_\Omega<\infty$, take $K=L_\Omega$ in
Corollary~\ref{cor:compactcertificate}; its absolutely convergent
spectral sum is nonpositive, proving \eqref{eq:sectorMatsaev}.
Scalar matrices give the lower bound $\sup_\Omega r$.
For the other lower bound, fix $z_0\in\Omega$ and a nonzero finite
nilpotent $N$. The matrices $z_0I+tN$ have spectrum $\{z_0\}$, and
as $t\to\infty$ their real-to-imaginary norm ratios tend to
$\|\Rea N\|_p/\|\Ima N\|_p$. Corollary~\ref{prop:coordinatepath},
applied also after rotation by $i$, makes these limits arbitrarily
close to $C_p$. This proves \eqref{eq:spectralsetconstant}, including
the absence of a finite bound when the scalar supremum is infinite.
The stated spectral-set criterion follows immediately.
\end{proof}

\subsection{Real diagonal perturbations}
\begin{corollary}\label{cor:diagonalload}
For every $\kappa\in\R$,
\begin{equation}\label{eq:diagonalload}
 \sup_{\mathcal E}\|H_{\mathcal E}+\kappa D_{\mathcal E}\|
 =\sup_{\mathcal E}\|H_{\mathcal E}+\kappa D_{\mathcal E}\|_{\rm cb}
 =\max\{C_p,|\kappa|\}.
\end{equation}
On $\Sp_p(\ell^2)$, the multiplier $\HSch+\kappa D$ has the same
operator norm and completely bounded norm, where $D$ is the
diagonal conditional expectation. More generally, for a finite
flag and real $\kappa_j$,
\begin{equation}\label{eq:variableload}
 \left\|H_{\mathcal E}X+\sum_j\kappa_jE_jXE_j\right\|_p
 \le\max\{C_p,\max_j|\kappa_j|\}\,\|X\|_p,
\end{equation}
with the same bound at every amplification level.
For every bounded real sequence $\boldsymbol\kappa=(\kappa_j)$,
define $D_{\boldsymbol\kappa}X=\sum_j\kappa_j e_{jj}Xe_{jj}$ on
$\Sp_p(\ell^2(\mathbb N))$. Then
\begin{equation}\label{eq:infinitediagonalload}
 \|\HSch+D_{\boldsymbol\kappa}\|
 =\|\HSch+D_{\boldsymbol\kappa}\|_{\rm cb}
 =\max\{C_p,\|\boldsymbol\kappa\|_\infty\}.
\end{equation}
\end{corollary}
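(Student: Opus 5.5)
The plan is to derive the upper bound \eqref{eq:variableload} from the spectral certificate, Corollary~\ref{thm:certificate}, and the lower bound from two separate families of test matrices. The block structure already used in Corollary~\ref{prop:coordinatepath} is what makes the certificate applicable.

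\textbf{Upper bound.} Fix a flag and put $K=\max\{C_p,\max_j|\kappa_j|\}$.

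\begin{enumerate}
\item First take $X$ Hermitian and form
\[
T=X_{\rm off}+\sum_j\kappa_jE_jXE_j+iH_{\mathcal E}X+i\sum_j E_jXE_j,
\]
where $X_{\rm off}=X-D_{\mathcal E}X$. This choice is not quite right, so I will instead look for a block upper triangular $T$ whose real part is $H_{\mathcal E}X+\sum\kappa_jE_jXE_j$ and whose imaginary part is $X$.
\item Set $T=\sum_j(\kappa_j+i)E_jXE_j+2i\sum_{r>s}E_rXE_s$. Its strict part, $2i$ times the lower blocks, has real part $H_{\mathcal E}X$ up to sign conventions and imaginary part $X_{\rm off}$. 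The diagonal blocks are $(\kappa_j+i)E_jXE_j$.
\item $T$ is block triangular, so its eigenvalues are those of the diagonal blocks, namely $(\kappa_j+i)\mu$ with $\mu$ real. Each lies on a line $|\Rea z|=|\kappa_j||\Ima z|$, hence inside the sector $\Sigma_{p,K}$.
\item Apply the sector majorant $U_{p,K}(x,y)=U_p(x,(K/C_p)y)$ from the proof of Corollary~\ref{cor:spectralsector}. It is nonpositive on $\Sigma_{p,K}$, so the certificate gives $\tr|\Rea T|^p\le K^p\tr|\Ima T|^p$, which is the desired estimate.
\item For general $X$, use the Hermitian dilation $\left(\begin{smallmatrix}0&X\\X^*&0\end{smallmatrix}\right)$ with the doubled flag $E_j\otimes I_2$. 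Its diagonal-block coefficients are the same real $\kappa_j$ and the maps commute with adjoints, so both $p$th powers double. Block-size freedom gives every amplification, and Lemma~\ref{lem:commonamplification} turns this into the completely bounded bound.
\item The sign convention of $\operatorname{sgn}(j-k)$ only decides whether to use upper or lower blocks. Conjugation or order reversal fixes it, as in Lemma~\ref{lem:diagonalnormalization}.
\end{enumerate}

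\textbf{Lower bound.} It comes from two families of tests.

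\begin{itemize}
\item The value $C_p$ is attained on zero-block-diagonal $X$, where $\kappa$ plays no role, by Corollary~\ref{prop:coordinatepath}. That corollary gives the supremum over flags, and Lemma~\ref{prop:schurequivalence} gives it for the standard infinite order.
\item The value $|\kappa|$, or $\|\boldsymbol\kappa\|_\infty$, is attained by matrices supported on a single diagonal entry $e_{jj}$ with $|\kappa_j|$ near the supremum.
\end{itemize}

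Hence \eqref{eq:diagonalload} holds, and the operator-norm lower bounds also bound the cb norms from below.

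\textbf{Infinite case.} For \eqref{eq:infinitediagonalload}, apply \eqref{eq:variableload} to finite principal sections with the rank-one flag and pass to density at each amplification. The only new input is the unbounded but real sequence of levels, which is handled because $K$ depends only on $\sup|\kappa_j|$.

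\textbf{Main obstacle.} The delicate step is getting the block-triangular model right: the real part must equal exactly $H_{\mathcal E}X+\sum\kappa_jE_jXE_j$, the imaginary part exactly $X$, and the spectrum must sit in $\Sigma_{p,K}$. If a sign or factor mismatch appears, the fallback is to apply the same argument to $-T$ or $T^*$, which leaves $\Sigma_{p,K}$ invariant.
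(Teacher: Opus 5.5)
Your proof is correct and essentially the paper's argument. Your $T=\sum_j(\kappa_j+i)E_jXE_j+2i\sum_{r>s}E_rXE_s$ has exactly $\Rea T=H_{\mathcal E}X+\sum_j\kappa_jE_jXE_j$ and $\Ima T=X$, so no sign fix is needed and steps~1 and~6 can be dropped. It is simply $-(-Y+iX)^*$ for the block upper-triangular matrix $-Y+iX$ that the paper feeds into Corollary~\ref{cor:spectralsector}, whose proof is precisely your certificate step with $U_{p,K}$; the dilation, amplification, density passage and the two lower-bound tests ($e_{jj}$ and the zero-diagonal $\Rea q(S_n)$) also match the paper.
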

\begin{proof}
Put $K=\max\{C_p,\max_j|\kappa_j|\}$. For $X=X^*$, set
$Y=H_{\mathcal E}X+\sum_j\kappa_jE_jXE_j$.
The block upper-triangular matrix $i(X+iY)=-Y+iX$ has diagonal
blocks $(-\kappa_j+i)E_jXE_j$, so its spectrum lies in $\Sigma_{p,K}$.
Corollary~\ref{cor:spectralsector} gives $\|Y\|_p\le K\|X\|_p$.
Hermitian dilation gives the same bound for arbitrary $X$, since
all $\kappa_j$ are real. Applying the argument to $(I_d\otimes E_j)$
proves \eqref{eq:variableload} at every amplification level.

For a bounded sequence $\boldsymbol\kappa$, the diagonal series
converges in $\Sp_p$ and contractivity of $D$ gives
$\|D_{\boldsymbol\kappa}X\|_p\le\|\boldsymbol\kappa\|_\infty\|X\|_p$.
Finite principal sections and density therefore give the upper
bound in \eqref{eq:infinitediagonalload} at every amplification level.
The tests $X=e_{jj}$ give the lower bound $\|\boldsymbol\kappa\|_\infty$.
The zero-diagonal matrices $X=\Rea q(S_n)$ from
Corollary~\ref{prop:coordinatepath} give the lower bound $C_p$.
These prove the infinite operator norm, and
\eqref{eq:commoncbamplification} gives the same complete norm.
For constant $\kappa$, the same two tests over finite flags prove
\eqref{eq:diagonalload}; the infinite assertion is the constant-sequence
case of \eqref{eq:infinitediagonalload}.
\end{proof}

This real diagonal perturbation preserves self-adjointness. The signed conventions
$S_0\pm D$ from Lemma~\ref{lem:diagonalnormalization} instead become
$H_{\mathcal E}\pm iD_{\mathcal E}$ after multiplication by $i$;
their normalization comparison is a separate assertion.

\subsection{Fourth-power moment geometry}
\begin{theorem}\label{thm:fourmomentdisk}
For every nonzero finite nilpotent $N=A+iB$, its directional
fourth moments have the form
\begin{equation}\label{eq:fourstate}
 \tr|A\cos\theta+B\sin\theta|^4
 =M(1+u\cos2\theta+v\sin2\theta),\qquad M>0.
\end{equation}
With $\rho=2\sqrt2/3$, the closure of all these normalized moment
pairs is precisely $\{(u,v):u^2+v^2\le\rho^2\}$.
For arbitrary real $a,b$, homogeneity gives
\begin{equation}\label{eq:fourdirection}
 \frac1M\tr|aA+bB|^4
 =(a^2+b^2)^2+(a^2+b^2)[(a^2-b^2)u+2abv].
\end{equation}
\end{theorem}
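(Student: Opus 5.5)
First I establish the form \eqref{eq:fourstate}. Expanding $(A\cos\theta+B\sin\theta)^4$ and taking traces gives a homogeneous quartic trigonometric polynomial in $\theta$. Its Fourier content therefore lies in the modes $0,\pm2,\pm4$. I would show that the $\pm4$ modes vanish by nilpotence. The coefficient of $e^{\pm4i\theta}$ is proportional to $\tr(A\mp iB)^4$, which equals $\tr N^4$ or $\tr (N^*)^4$ up to a constant, and both are zero. The remaining constant mode is $M=\av\tr|A\cos\theta+B\sin\theta|^4$. It is positive because $A\ne0$ for nonzero nilpotent $N$, and every rotation $e^{-i\theta}N$ is again a nonzero nilpotent. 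Then \eqref{eq:fourdirection} follows by writing $(a,b)=\sqrt{a^2+b^2}(\cos\theta,\sin\theta)$, using $4$-homogeneity, and expanding $\cos2\theta$ and $\sin2\theta$.

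Next I identify the closure of the admissible pairs $(u,v)$. By rotation invariance ($N\mapsto e^{-i\phi}N$ rotates $(u,v)$ by $2\phi$) and the convex-cone structure, the closure is a closed disk centered at the origin. Its radius is the supremum of $u$. I would characterize this through Theorem~\ref{thm:classification}, applied to the signed quartic ridge sums $F_c=|x\cos\theta+y\sin\theta|^4$-type combinations. Equivalently, by Corollary~\ref{cor:momentcone}, the closed cone of the moment vectors coincides with that of the disc tests $\int|\Rea(e^{-i\theta}q(e^{it}))|^4\dd t$. The problem therefore reduces to the scalar question: over analytic polynomials $q$ with $q(0)=0$, what is the sup of $|\widehat{m}_2|/\widehat m_0$ for the directional fourth moment $m(\theta)=\int|\Rea(e^{-i\theta}q)|^4$?

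I compute these moments. With $x=\Rea(e^{-i\theta}q)=(e^{-i\theta}q+e^{i\theta}\bar q)/2$, the quantity $x^4$ has angular mode $0$ equal to $\tfrac{6}{16}|q|^4$ and mode $2$ coefficient $\tfrac{4}{16}q|q|^2 e^{-2i\theta}$. Mode $4$ is $q^4$ and integrates to zero. So $\rho=\sup\,\tfrac{2}{3}|\int q^2\bar q\,\overline{q}^{\,0}\ldots|$. Precisely, $\rho=\sup_q \tfrac{4}{3}\,|\int q|q|^2|/\int|q|^4$ after normalizing.

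The main obstacle is solving this extremal problem: showing $\sup |\int q|q|^2\,dt|/\int|q|^4\,dt=1/\sqrt2$ over $q\in H^\infty$ with $q(0)=0$, so that $\rho=2\sqrt2/3$. I would first reduce via Corollary~\ref{cor:momentcone} to a majorant statement. The claim is that $\tfrac32|z|^4\cdot$ stuff is dominated, i.e. that $G(z)=\Rea(z|z|^2)-\tfrac1{\sqrt2}|z|^4$, whose angular part is $\cos\theta-1/\sqrt2$ times $r^{4}$ (after matching homogeneity, using $p=4$ with $r^3\cos\theta$ corrected), admits a superharmonic majorant. I would solve the angular obstacle problem \eqref{eq:obstacle} with $u''+16u\le0$ explicitly. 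Contact should occur near $\theta=0$, with pieces $c\cos(4(\theta-\theta_0))$ elsewhere, and $C^1$ matching fixes the threshold $1/\sqrt2$. For the lower bound (sharpness) I would construct near-extremal $q$ from the noncontact structure. Alternatively, I would test the outer functions $q=z(1+\epsilon z)$ and their powers, together with the Koebe-type functions $((1+z)/(1-z))^{b}-1$ near the critical exponent, and verify that the ratio approaches $1/\sqrt2$. Realizability of every interior point then follows by rotation and convexity, together with the direct sums $N\oplus 0$ trick (mixing with a disk-center point such as $u=v=0$, obtained by combining rotations).
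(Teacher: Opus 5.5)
Your derivation of \eqref{eq:fourstate}, of $M>0$ and of \eqref{eq:fourdirection} is sound. So is your reduction to a centered disk, using rotation invariance and convexity from rescaled direct sums; this agrees with the paper. The gap is the radius: the value $\rho=2\sqrt2/3$ is never derived.

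You reduce it to an extremal problem over analytic polynomials and then leave that problem open as ``the main obstacle''. You offer only a plan: an explicit angular obstacle problem for the upper bound, Koebe-type test functions for the lower bound. Neither direction is carried out.

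The setup also contains errors that would derail it as written:
\begin{enumerate}
\item The $e^{-2i\theta}$ coefficient of $x^4$ is $\frac14q^3\bar q=\frac14q^2|q|^2$, not a multiple of $q|q|^2$. The correct quantity is $\rho=\frac43\sup_q|\int q^2|q|^2\dd t|/\int|q|^4\dd t$. Your cubic version is not scale invariant, and its supremum is infinite.
\item The corresponding obstacle is $\Rea(z^2)|z|^2-c|z|^4$, with angular part $f(\theta)=\cos2\theta-c$.
\item For $c=1/\sqrt2$ one has $f''+16f=12-8\sqrt2>0$ at $\theta=0$, so contact cannot occur there, contrary to your prediction.
\end{enumerate}

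The missing idea is that this extremal problem is exactly the coordinate constant at $p=4$, which the paper already has.
\begin{itemize}
\item Let $r=\sqrt{u^2+v^2}$ and rotate $N$ so that $(u,v)=(r,0)$. Then $\tr|A|^4=M(1+r)$ and $\tr|B|^4=M(1-r)$.
\item One has $r<1$, because no directional part of a nonzero nilpotent vanishes (a rotation of $N$ would otherwise be skew-Hermitian and nilpotent, hence zero).
\item Applying Corollary~\ref{prop:coordinatepath} to $iN$, whose real part is $-B$ and imaginary part is $A$, gives $(1+r)/(1-r)\le C_4^4=(1+\sqrt2)^4=17+12\sqrt2$. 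Equivalently, $r\le(C_4^4-1)/(C_4^4+1)=2\sqrt2/3$.
\item Conversely, sharpness of $C_4$ over finite nilpotents, in the same corollary, gives nilpotents with $\tr|B|^4/\tr|A|^4=(1-u)/(1+u)\to C_4^4$. Hence $|u|\to2\sqrt2/3$, so these pairs approach the boundary circle.
\end{itemize}
Combined with your disk argument, this closes the proof in a few lines. Your obstacle-problem route would amount to re-proving Pichorides' constant at $p=4$, which the explicit majorant $U_4$ of Appendix~\ref{sec:coordinate} already encodes, so it is unnecessary here.
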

\begin{proof}
For nonzero nilpotent $N=A+iB$, write
$E_N(\theta)=\tr|A\cos\theta+B\sin\theta|^4$.
Expanding via $A\cos\theta+B\sin\theta
=(e^{-i\theta}N+e^{i\theta}N^*)/2$, the fourth harmonics vanish
because $\tr N^4=0$. Thus, for some $M>0$,
\[
 E_N(\theta)=M(1+u\cos2\theta+v\sin2\theta).
\]
Every directional part $A\cos\theta+B\sin\theta$ is nonzero:
otherwise a rotation of $N$ would be skew-Hermitian and nilpotent,
hence zero. Thus $r=(u^2+v^2)^{1/2}<1$.
Apply the sharp coordinate inequality to every rotation of $N$.
The maximum ratio of orthogonal directional fourth moments is
$(1+r)/(1-r)$, whence
\[
 r\le\frac{C_4^4-1}{C_4^4+1}=\frac{2\sqrt2}{3}=:\rho.
\]
Conversely sharpness of $C_4=1+\sqrt2$ among finite nilpotents
gives a sequence of moment pairs $(u,v)$ approaching the boundary
circle.
Rotating the nilpotents realizes every boundary direction in the
closure. Direct sums, with positive fourth-power rescaling, take
convex combinations of these moment pairs. Their closure
is therefore exactly the disk $u^2+v^2\le\rho^2$.

\end{proof}

\subsection{A complete fourth-power criterion}
The moment disk turns the signed classification at $p=4$ into an
explicit second-order cone condition.

\begin{corollary}
\label{cor:quarticcone}
For finitely many real $c_j,a_j,b_j$, put
\begin{align}
 \Gamma_0&=\sum_jc_j(a_j^2+b_j^2)^2,\notag\\
 \Gamma_1&=\sum_jc_j(a_j^2+b_j^2)(a_j^2-b_j^2),\notag\\
 \Gamma_2&=2\sum_jc_ja_jb_j(a_j^2+b_j^2).
 \label{eq:quarticcoefficients}
\end{align}
Then
\begin{equation}\label{eq:quartictrace}
 \sum_jc_j\tr|a_jA+b_jB|^4\le0
 \quad\text{for every finite nilpotent }A+iB
\end{equation}
if and only if
\begin{equation}\label{eq:quarticcone}
 \Gamma_0+\frac{2\sqrt2}{3}
                \sqrt{\Gamma_1^2+\Gamma_2^2}\le0.
\end{equation}
Equivalently, the coefficient triple belongs to a scaled negative
Lorentz cone. This criterion also characterizes the existence of
an admissible superharmonic majorant of
$\sum_jc_j|a_jx+b_jy|^4$, and the inequality then holds for every
compact quasinilpotent $A+iB\in\Sp_4$.
\end{corollary}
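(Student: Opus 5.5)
The plan is to reduce the trace condition to a linear functional on the moment disk of Theorem~\ref{thm:fourmomentdisk}. Fix a nonzero finite nilpotent $N=A+iB$ with normalized moment pair $(u,v)$ and mass $M>0$. By \eqref{eq:fourdirection}, applied term by term and summed with the coefficients $c_j$,
\[
 \frac1M\sum_jc_j\tr|a_jA+b_jB|^4=\Gamma_0+\Gamma_1u+\Gamma_2v,
\]
with $\Gamma_0,\Gamma_1,\Gamma_2$ exactly as in \eqref{eq:quarticcoefficients}. The case $N=0$ is trivial. Thus \eqref{eq:quartictrace} holds if and only if $\Gamma_0+\Gamma_1u+\Gamma_2v\le0$ for every realized pair $(u,v)$. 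Because this expression is continuous and affine in $(u,v)$, the condition is equivalent to the same inequality on the closure of the realized pairs. Theorem~\ref{thm:fourmomentdisk} identifies that closure as the disk of radius $\rho=2\sqrt2/3$.

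The maximum of an affine function over this disk is $\Gamma_0+\rho\sqrt{\Gamma_1^2+\Gamma_2^2}$, attained at the boundary point in the direction $(\Gamma_1,\Gamma_2)$. This gives \eqref{eq:quarticcone}. The reformulation as a scaled negative Lorentz cone is just a restatement: the triple $(-\Gamma_0,\rho\Gamma_1,\rho\Gamma_2)$ lies in the standard second-order cone.

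For the majorant statement, I apply Theorem~\ref{thm:classification} with $p=4$ and $F=\sum_jc_j|a_jx+b_jy|^4$. Condition (N) there is exactly \eqref{eq:quartictrace}, and (N) is equivalent to (S), the existence of $U\in\mathcal U_4$ with $U\ge F$. That is precisely an admissible superharmonic majorant in the sense of Corollary~\ref{thm:certificate}. The compact quasinilpotent extension then follows from Corollary~\ref{cor:compactcertificate} with $U=U_*$. For quasinilpotent $T$ every eigenvalue is zero and $U_*(0)=0$, so the spectral sum vanishes and $\mathcal F_4(A,B)\le0$ for every $T=A+iB\in\Sp_4$.

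The only step needing care is the passage from realized pairs to their closure. The disk theorem describes only the closure, and pairs on the boundary circle may not themselves be attained. Since the functional is continuous, the supremum over the realized set equals the maximum over its closure, so no attainment is needed and the argument goes through without this becoming a real obstacle.
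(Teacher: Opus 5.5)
Your proposal is correct and follows the paper's proof essentially step for step. It reduces the trace sum via \eqref{eq:fourdirection} to the affine functional $\Gamma_0+\Gamma_1u+\Gamma_2v$ on the closed moment disk of Theorem~\ref{thm:fourmomentdisk}, and passes to the closure by continuity; the paper phrases this step as ``a nearby realized moment pair gives a counterexample.'' It then invokes Theorem~\ref{thm:classification} and Corollary~\ref{cor:compactcertificate} for the majorant and compact quasinilpotent assertions, as the paper does.
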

\begin{proof}
For $N=A+iB\ne0$, use the normalization \eqref{eq:fourstate}.
Theorem~\ref{thm:fourmomentdisk} gives the closed moment disk
$u^2+v^2\le8/9$. Summing the
directional fourth moments with their signed coefficients gives
\[
 \frac1M\sum_jc_j\tr|a_jA+b_jB|^4
 =\Gamma_0+\Gamma_1u+\Gamma_2v.
\]
The supremum over the closed disk is the left side of
\eqref{eq:quarticcone}. If it is positive, a nearby realized
moment pair gives a finite nilpotent counterexample; if it is
nonpositive, every pair satisfies \eqref{eq:quartictrace}.
The zero matrix is immediate. Theorem~\ref{thm:classification}
and Corollary~\ref{cor:compactcertificate} give the majorant and
compact-operator assertions.
\end{proof}
The coordinate value $C_4$ used in this derivation is classical
\cite[Theorem~6.2]{NR}. The corollary makes the entire signed
fourth-power classification explicit; it requires no additional
estimate beyond the moment disk.

\begin{corollary}\label{cor:quarticquotients}
Let $P,Q$ be finite signed fourth-power ridge sums, with coefficient
triples $(\Gamma_0,\Gamma_1,\Gamma_2)$ and $(Q_0,Q_1,Q_2)$ as in
\eqref{eq:quarticcoefficients}. Assume
$Q_0>\rho\sqrt{Q_1^2+Q_2^2}$, and put
\begin{align*}
 a&=Q_0^2-\rho^2(Q_1^2+Q_2^2),\\
 b&=Q_0\Gamma_0-\rho^2(Q_1\Gamma_1+Q_2\Gamma_2),\\
 c&=\Gamma_0^2-\rho^2(\Gamma_1^2+\Gamma_2^2).
\end{align*}
The optimal real coefficient in
$\mathcal P_4(A,B)\le\lambda\mathcal Q_4(A,B)$ over finite nilpotents is
\begin{equation}\label{eq:quarticquotientclosed}
 \lambda_* =\frac{b+\sqrt{b^2-ac}}a.
\end{equation}
Equivalently, it is the least real number satisfying
\begin{equation}\label{eq:quarticquotient}
 \lambda Q_0-\Gamma_0\ge\rho
 \sqrt{(\lambda Q_1-\Gamma_1)^2+(\lambda Q_2-\Gamma_2)^2}.
\end{equation}
The same coefficient bounds every compact quasinilpotent
$A+iB\in\Sp_4$. A nonzero $Q$ with nonnegative ridge coefficients
always satisfies the stated hypothesis.
\end{corollary}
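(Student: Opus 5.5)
The plan is to reduce everything to the closed moment disk of Theorem~\ref{thm:fourmomentdisk}. For a nonzero nilpotent $N=A+iB$ with normalization \eqref{eq:fourstate}, formula \eqref{eq:fourdirection} summed over the ridge terms gives
\[
 \frac1M\mathcal P_4(A,B)=\Gamma_0+\Gamma_1u+\Gamma_2v,\qquad
 \frac1M\mathcal Q_4(A,B)=Q_0+Q_1u+Q_2v,
\]
exactly as in the proof of Corollary~\ref{cor:quarticcone}. The hypothesis $Q_0>\rho\sqrt{Q_1^2+Q_2^2}$ makes the denominator strictly positive on the closed disk $\overline{D}_\rho=\{u^2+v^2\le\rho^2\}$. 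The optimal coefficient is then
\[
 \lambda_*=\sup_{(u,v)}\frac{\Gamma_0+\Gamma_1u+\Gamma_2v}{Q_0+Q_1u+Q_2v},
\]
taken over realized pairs. Since realized pairs are dense in the disk and the quotient is continuous there, this supremum equals the maximum over $\overline{D}_\rho$. It is attained and finite by compactness.

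Next I characterize $\lambda_*$ as the least $\lambda$ satisfying \eqref{eq:quarticquotient}. For fixed $\lambda$, the inequality $\Gamma_0+\Gamma_1u+\Gamma_2v\le\lambda(Q_0+Q_1u+Q_2v)$ on the whole disk is equivalent to
\[
 \max_{\overline D_\rho}\bigl[(\Gamma_0-\lambda Q_0)+(\Gamma_1-\lambda Q_1)u+(\Gamma_2-\lambda Q_2)v\bigr]\le0,
\]
which is precisely \eqref{eq:quarticquotient}. The feasible set of $\lambda$ is closed. It is also upward closed, because increasing $\lambda$ subtracts a positive multiple of the denominator. It is nonempty because the quotient is bounded. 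Hence its minimum is $\lambda_*$. Equivalently, one may apply Corollary~\ref{cor:quarticcone} to $P-\lambda Q$.

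For the closed form \eqref{eq:quarticquotientclosed}, I set $\ell(\lambda)=\lambda Q_0-\Gamma_0$ and $m(\lambda)=\rho|\lambda\mathbf Q'-\boldsymbol\Gamma'|$, where primes denote the $(1,2)$ components. The boundary value $\lambda_*$ satisfies $\ell=m$. Squaring gives $\ell^2-m^2=a\lambda^2-2b\lambda+c=0$ with $a>0$. On the feasible side we have $\ell\ge0$, and the feasible set $\{\ell\ge m\}$ is exactly $\{\ell\ge0\}\cap\{a\lambda^2-2b\lambda+c\ge0\}$. The main obstacle is to identify which root is correct and to ensure $b^2\ge ac$.

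To handle this, I argue as follows. Since $\ell$ is increasing with slope $Q_0>\rho|\mathbf Q'|$, which bounds the Lipschitz constant of $m$, the function $\ell-m$ is strictly increasing. So $\{\ell\ge m\}$ is a half-line $[\lambda_*,\infty)$, and $\ell-m$ crosses zero exactly once. At very negative $\lambda$ we have $\ell<0$, so both $\ell<m$ and the quadratic is positive. The quadratic therefore has real roots, because it must vanish at $\lambda_*$; this gives $b^2\ge ac$. Between the roots the quadratic is negative, which means $\ell^2<m^2$, so $\ell\ne m$ there. Hence the single crossing $\lambda_*$ must be the larger root $(b+\sqrt{b^2-ac})/a$. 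I still need to check that at the smaller root one has $\ell=-m\le0$. This holds because $\ell$ increases while $\ell+m$ is also strictly increasing.

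The compact quasinilpotent assertion follows by applying Corollary~\ref{cor:quarticcone} to $P-\lambda_*Q$, which satisfies its criterion. Finally, if $Q\ne0$ has nonnegative coefficients, then $Q_0+Q_1u+Q_2v$ is a positive combination of terms
\[
 (a_j^2+b_j^2)\bigl[(a_j^2+b_j^2)+(a_j^2-b_j^2)u+2a_jb_jv\bigr]\ge(a_j^2+b_j^2)^2(1-\rho)>0,
\]
using Cauchy–Schwarz. So its minimum over the disk, $Q_0-\rho|\mathbf Q'|$, is positive.
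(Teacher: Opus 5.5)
Your proposal is correct and follows the paper's argument. You apply the Lorentz-cone criterion of Corollary~\ref{cor:quarticcone} (equivalently, the moment disk) to $P-\lambda Q$, use strict monotonicity of $\ell-m$, square to get $a\lambda^2-2b\lambda+c=0$, and select the larger root; the compact and nonnegative-coefficient assertions are handled the same way. The only difference is how you locate the root: the paper evaluates the quadratic at $\lambda_0=\Gamma_0/Q_0$, where $\ell=0$ makes it nonpositive, which gives $b^2\ge ac$ and $\lambda_*\ge\lambda_0$ in one step, whereas you order the zeros of $\ell\mp m$ by monotonicity of $\ell$. Note that your sentence about the quadratic being negative between the roots does not by itself exclude the smaller root; it is your final remark, that $\ell=-m\le0$ at the smaller root, that settles it.
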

\begin{proof}
The hypothesis is exactly positivity of the denominator on the
closed moment disk. Corollary~\ref{cor:quarticcone}, applied to
$P-\lambda Q$, gives \eqref{eq:quarticquotient}; its left side minus
its right side is strictly increasing from $-\infty$ to $+\infty$.
Squaring the boundary equation gives $a\lambda^2-2b\lambda+c=0$.
At $\lambda_0=\Gamma_0/Q_0$ this quadratic is nonpositive, so
$b^2-ac\ge0$ and its larger root is at least $\lambda_0$.
That root therefore satisfies the unsquared equation and is its
unique solution. In the proportional case the discriminant vanishes
and the formula gives $\lambda_* =\lambda_0$ directly.
The compact assertion follows from Corollary~\ref{cor:quarticcone};
nonnegative ridge coefficients give $Q_0>0$ and
$\sqrt{Q_1^2+Q_2^2}\le Q_0$, proving the final assertion.
\end{proof}

For example, $Q(x,y)=x^4+y^4-(x+y)^4/5$ has coefficient triple
$(6/5,0,-4/5)$ and satisfies the denominator condition.
Its trace is strictly positive on every nonzero finite nilpotent,
although $Q(1,1)=-6/5$.

\subsection{The full joint formula at \texorpdfstring{$p=4$}{p=4}}
\begin{corollary}\label{cor:jointfour}\label{prop:four}
For the real pairs and weights in Theorem~\ref{thm:unified}, put
\[
 A_0=\sum_jw_ja_j^2(a_j^2+b_j^2),\quad
 B_0=\sum_jw_jb_j^2(a_j^2+b_j^2),\quad
 C_0=\sum_jw_ja_jb_j(a_j^2+b_j^2).
\]
Then every joint model in that theorem at $p=4$ has norm
\begin{equation}\label{eq:jointfour}
 \Lambda_4(P)^4=A_0+17B_0+4\sqrt2\sqrt{9B_0^2+C_0^2}.
\end{equation}
Moreover
\begin{equation}\label{eq:compatibility}
 \Lambda_4(P)^4=\sum_jw_jK_4(a_j,b_j)^4
\end{equation}
if and only if all indices with $w_j>0$ and $b_j\ne0$ have the
same ratio $a_j/b_j$. Zero weights and pure identity outputs impose
no condition; the condition is vacuous with at most one such index.

For shifted pairs, let $W=\sum_jw_j$, $Z=\sum_jw_je^{2\pi i\alpha_j}$,
and $D=9W-8\Rea Z$. Then
\begin{equation}\label{eq:jointshiftfour}
 \Lambda_4(P)^4=D+\sqrt{D^2-9W^2+8|Z|^2}.
\end{equation}
In particular, two distinct shifts in $(0,1)$ with positive weights
give strict improvement over the sum of the separate fourth powers.
For one output this specializes the classical scalar formula
\cite{HKV} to
\begin{equation}\label{eq:Kfour}
 K_4(a,b)^2=K_{4/3}(a,b)^2
 =\sqrt{a^2+b^2}\left(\sqrt{a^2+9b^2}+2\sqrt2\,|b|\right).
\end{equation}
In particular, at $p=4$ and $p=4/3$ on both $\ell^p(\Z)$ and $\ell^p(\Z;\Sp_p)$,
\begin{equation}\label{eq:shiftfour}
 \|R_\alpha\|^2=\sqrt{1+8\sin^2(\pi\alpha)}
                          +2\sqrt2\sin(\pi\alpha).
\end{equation}
\end{corollary}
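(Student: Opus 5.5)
The plan is to reduce everything to the fourth-moment disk of Theorem~\ref{thm:fourmomentdisk} and then to apply the closed-form quotient of Corollary~\ref{cor:quarticquotients}.

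\textbf{Step 1: the joint norm as a disk quotient.} By Theorem~\ref{thm:unified}, every joint model at $p=4$ has norm $\Lambda_4(P)$, defined by the nilpotent quotient \eqref{eq:jointconstant}. I normalize a nonzero nilpotent as in \eqref{eq:fourstate} and apply \eqref{eq:fourdirection} to each output and to the denominator direction $(1,0)$. Using $(a^2+b^2)^2=a^2(a^2+b^2)+b^2(a^2+b^2)$, the quotient becomes
\[
 \frac{A_0(1+u)+B_0(1-u)+2C_0v}{1+u}
 =A_0+\frac{B_0(1-u)+2C_0v}{1+u}.
\]
By Theorem~\ref{thm:fourmomentdisk}, $\Lambda_4(P)^4$ is the supremum of this expression over the closed disk $u^2+v^2\le\rho^2$. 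The denominator satisfies $1+u\ge1-\rho>0$ there, so the supremum over realized pairs equals the maximum over the disk.

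\textbf{Step 2: the closed formula \eqref{eq:jointfour}.} This is Corollary~\ref{cor:quarticquotients} with $Q=|x|^4$, whose triple is $(1,1,0)$, and with the triple of $P$ equal to $(A_0+B_0,\,A_0-B_0,\,2C_0)$. I write $\lambda=A_0+\mu$. The boundary equation of \eqref{eq:quarticquotient} is then $\mu-B_0=\rho\sqrt{(\mu+B_0)^2+4C_0^2}$. Squaring with $\rho^2=8/9$ gives
\[
 \mu^2-34B_0\mu+B_0^2-32C_0^2=0.
\]
Its larger root is $\mu=17B_0+4\sqrt2\sqrt{9B_0^2+C_0^2}$, which is \eqref{eq:jointfour}.

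\textbf{Step 3: the equality criterion \eqref{eq:compatibility}.} For a single output $(a,b)$ with weight one, $(A_0,B_0,C_0)=(a^2,b^2,ab)(a^2+b^2)$. So \eqref{eq:jointfour} gives
\[
 K_4(a,b)^4=(a^2+b^2)\bigl(\sqrt{a^2+9b^2}+2\sqrt2|b|\bigr)^2,
\]
and taking square roots yields \eqref{eq:Kfour}. The equality $K_{4/3}=K_4$ follows from \eqref{eq:Kproperties}.

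The map $(A_0,B_0,C_0)\mapsto A_0+17B_0+4\sqrt2\,\|(3B_0,C_0)\|$ is additive over outputs except for the Euclidean norm term. The triangle inequality is an equality exactly when the nonzero vectors $(3B_{0,j},C_{0,j})$ are positively proportional. When $w_j>0$ and $b_j\neq0$, one has $B_{0,j}>0$, so this proportionality means all ratios $C_{0,j}/B_{0,j}=a_j/b_j$ coincide. Indices with $b_j=0$ or $w_j=0$ contribute the zero vector and impose no condition.

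\textbf{Step 4: shifted pairs.} Here $a_j=\cos\pi\alpha_j$, $b_j=\sin\pi\alpha_j$, and $a_j^2+b_j^2=1$. Then $W=A_0+B_0$, $\Rea Z=A_0-B_0$ and $\Ima Z=2C_0$. It follows that $D=9W-8\Rea Z=A_0+17B_0$. Substituting these identities into $D^2-9W^2+8|Z|^2$ should reduce it to $32(9B_0^2+C_0^2)$, which gives \eqref{eq:jointshiftfour}.

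Distinct $\alpha_j$ in $(0,1)$ give distinct ratios $\cot\pi\alpha_j$. Step 3 then yields strict improvement over the sum of the separate fourth powers. Formula \eqref{eq:shiftfour} is \eqref{eq:Kfour} at $a^2+b^2=1$, combined with \eqref{eq:RalphaNorm}.

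\textbf{Expected obstacle.} The conceptual input is entirely in Theorem~\ref{thm:fourmomentdisk} and Corollary~\ref{cor:quarticquotients}. The main risk is bookkeeping: checking that the larger root, rather than the smaller one, solves the unsquared equation (which that corollary already guarantees), and verifying the polynomial identity for $D^2-9W^2+8|Z|^2$ in Step 4.
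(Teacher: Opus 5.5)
Your proposal is correct and follows essentially the paper's proof: Corollary~\ref{cor:quarticquotients} with $Q=|x|^4$ and triple $(A_0+B_0,A_0-B_0,2C_0)$, the Euclidean triangle-inequality gap $4\sqrt2\bigl(\sum_j\|(3B_j,C_j)\|-\|\sum_j(3B_j,C_j)\|\bigr)$ for the equality criterion, and the substitutions $W=A_0+B_0$, $\Rea Z=A_0-B_0$, $\Ima Z=2C_0$ for shifted pairs. The identity you left tentative in Step~4 does hold, since $D^2-9W^2+8|Z|^2=288B_0^2+32C_0^2=32(9B_0^2+C_0^2)$.
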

\begin{proof}
Apply Corollary~\ref{cor:quarticquotients} with $Q(x,y)=|x|^4$.
Then $(Q_0,Q_1,Q_2)=(1,1,0)$ and
$(\Gamma_0,\Gamma_1,\Gamma_2)=(A_0+B_0,A_0-B_0,2C_0)$, giving
\[
 a=\frac19,\qquad b=\frac{A_0+17B_0}{9},\qquad
 b^2-ac=\frac{32}{81}(9B_0^2+C_0^2).
\]
Formula~\eqref{eq:quarticquotientclosed} gives \eqref{eq:jointfour},
including $B_0=0$, and Theorem~\ref{thm:unified} transfers it to
every joint operator model.

For the equality criterion put
$B_j=w_jb_j^2(a_j^2+b_j^2)$ and
$C_j=w_ja_jb_j(a_j^2+b_j^2)$.
Applying the single-output case of the formula yields the exact gap
\begin{equation}\label{eq:jointgap}
 \sum_jw_jK_4(a_j,b_j)^4-\Lambda_4(P)^4
 =4\sqrt2\left(\sum_j\|(3B_j,C_j)\|_2
                    -\Bigl\|\sum_j(3B_j,C_j)\Bigr\|_2\right).
\end{equation}
Every nonzero vector has first coordinate positive. Equality in
the Euclidean triangle inequality holds exactly when these vectors
are positively collinear, equivalently when their ratios
$C_j/B_j=a_j/b_j$ agree. For shifted pairs,
$A_0=(W+\Rea Z)/2$, $B_0=(W-\Rea Z)/2$, $C_0=\Ima Z/2$,
which proves \eqref{eq:jointshiftfour}. Finally $\cot(\pi\alpha)$
is strictly decreasing on $(0,1)$, proving the strictness assertion.
The single-pencil formula factors as \eqref{eq:Kfour}; scalar
duality gives the $4/3$ value, and substitution gives
\eqref{eq:shiftfour}. This duality argument applies to a single
pencil. For several pencils, the adjoint maps an $\ell^{p'}$
direct sum into a single space, so it does not give the same joint
norm identity at $p'$.
\end{proof}

For example, two unit-weight shifts $\alpha_1=1/4$, $\alpha_2=3/4$
give
\[
 \Lambda_4(P)^4=18+12\sqrt2<18+8\sqrt5
   =\|R_{1/4}\|^4+\|R_{3/4}\|^4.
\]
The compatibility condition concerns common approximate extremizers.
Indeed, a sequence saturating the joint sum must saturate every
positive-weight summand; conversely proportional nonidentity pencils
share a single-pencil extremizing sequence.

Comparisons of arbitrary real directions and products of joint
families acting in independent coordinates are recorded in
Appendix~\ref{sec:furtherrealizations}.

\begin{remark}\label{rem:realboundary}
The planar majorant applies to real linear combinations $aA+bB$
of self-adjoint operators. The norm formula does not extend to
arbitrary complex coefficients $a,b$.
For example, on $\Sp_2$ let $X=e_{12}$. Then
$H_{\mathcal E}X=-iX$, so
$\|(I+iH_{\mathcal E})X\|_2=2\|X\|_2$.
Substituting $(a,b)=(1,i)$ in the absolute-value angular
ratio in \eqref{eq:Koblique} instead gives $\sqrt2$ at $p=2$.
This explicit failure rules out that complex extension.
In particular, the sharp real-combination results, including the
completely bounded equalities of Corollary~\ref{cor:HilbertCB}, do not
establish a sharp Riesz or triangular projection bound.
\end{remark}

\subsection{Riesz projections and matrix amplification}\label{rz:applicationssection}
We apply Theorem~\ref{rz:allp} to matrix and Fourier realizations.
The amplification bound is used at every matrix level before it is
converted to a canonical completely bounded estimate.

\begin{corollary}\label{rz:amplification}
For every algebra in Theorem~\ref{rz:allp}, every $1<p<\infty$, and every $n\geq1$,
\[
 \|\operatorname{Id}_{M_n}\otimes P\|_{L^p(M_n\overline\otimes\M)\to L^p(M_n\overline\otimes\M)},
 \quad\|\operatorname{Id}_{M_n}\otimes P_0\|_{L^p(M_n\overline\otimes\M)\to L^p(M_n\overline\otimes\M)}
 \leq\csc(\pi/p).
\]
For the canonical operator space structure on $L^p(\M)$, also
\[
 \|P\|_{\mathrm{cb}},\ \|P_0\|_{\mathrm{cb}}\leq\csc(\pi/p).
\]
These are uniform upper bounds; a particular subdiagonal algebra need not attain them.
\end{corollary}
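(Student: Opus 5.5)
The plan is to obtain the amplified bounds from Theorem~\ref{rz:allp} itself, applied to the amplified algebra. Then Lemma~\ref{lem:commonamplification} converts these tracial amplification bounds into completely bounded norms. Fix $n\ge1$ and work in $\widetilde\M=M_n\overline\otimes\M$ with the normalized trace $\widetilde\tau=(n^{-1}\operatorname{Tr}_n)\otimes\tau$. This is a finite von Neumann algebra with a faithful normal tracial state.

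In it I take $\widetilde\B=M_n(\B)$ with expectation $\widetilde\Phi=\operatorname{Id}_{M_n}\otimes\Phi$, acting entrywise.
\begin{itemize}[leftmargin=*]
\item \emph{Subdiagonal axioms.} $\widetilde\B\cap\widetilde\B^*=M_n(\mathcal D)$. Also $\widetilde\Phi$ is a trace-preserving normal conditional expectation onto $M_n(\mathcal D)$. It is multiplicative on $M_n(\B)$, because $\Phi$ is multiplicative on $\B$ and matrix products are finite sums of products of entries. Weak-star density of $M_n(\B)+M_n(\B)^*$ follows entrywise. This is the same entrywise verification already used for $M_2(\B)$ in the proof of Corollary~\ref{rz:middleprojection} and for the corner algebra in Section~\ref{rz:completion}.
\item \emph{Maximality.} I will invoke Exel's finite-trace maximality theorem \cite{rz:Exel} exactly as in the strict-projection construction. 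Alternatively, I can note that maximality of $\B$ passes to $M_n(\B)$ by the Arveson characterization $\B=\{x:\Phi(x\B_0)=\Phi(\B_0x)=0\}$ applied entrywise.
\end{itemize}
Then $\widetilde\B_0=M_n(\B_0)$, and the core is $\widetilde{\mathcal C}=M_n(\mathcal C)$. On this core the retained projection of $\widetilde\B$ is $\operatorname{Id}_{M_n}\otimes P$, and the strict one is $\operatorname{Id}_{M_n}\otimes P_0$. This holds because the decomposition $U+Y$ with $U\in\B$ and $Y\in\B_0^*$ is performed entrywise.

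Theorem~\ref{rz:allp} for $(\widetilde\M,\widetilde\tau,\widetilde\B)$ now gives both norms at most $\csc(\pi/p)$ on $L^p(\widetilde\M,\widetilde\tau)$. By the final assertion of Lemma~\ref{lem:commonamplification}, replacing $n^{-1}\operatorname{Tr}_n$ by $\operatorname{Tr}_n$ in both domain and range does not change these operator norms. Core density in $L^p(M_n\overline\otimes\M)$, together with uniqueness of the bounded extensions, identifies the extended projections with $\operatorname{Id}_{M_n}\otimes P$ and $\operatorname{Id}_{M_n}\otimes P_0$. This proves the first display.

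For the completely bounded statement, apply \eqref{eq:commoncbamplification} with $\mathcal Q=\mathcal R_1=\M$ and $m=1$. Taking the supremum over $n$ gives $\|P\|_{\mathrm{cb}},\|P_0\|_{\mathrm{cb}}\le\csc(\pi/p)$. The closing remark needs no proof, since only upper bounds are asserted; sharpness for particular algebras is a separate matter.

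I expect the main obstacle to be the maximality of $M_n(\B)$. Theorem~\ref{rz:allp} is stated only for maximal subdiagonal algebras, and Hardy-space facts such as \eqref{rz:orthodecomp} and the factorization Theorem~\ref{rz:standard} depend on it. I would rely on Exel's theorem, which in finite tracial algebras makes every subdiagonal algebra with the stated density maximal, just as in the $2\times2$ corner argument. A secondary point is checking that core density and the identification of the extension hold in the amplified algebra. This follows directly from core density for $\widetilde\B$ as proved in the paper.
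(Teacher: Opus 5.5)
Your proposal is correct and follows essentially the same route as the paper. The paper also checks entrywise that $M_n(\B)$ is subdiagonal in $M_n(\M)$ with expectation $\operatorname{Id}_{M_n}\otimes\Phi$, and takes maximality from Exel's theorem (Theorem~\ref{rz:standard}(3)). It then applies Theorem~\ref{rz:allp} to the amplified algebra, notes that normalizing the matrix trace rescales both norms equally, and converts to completely bounded norms via Lemma~\ref{lem:commonamplification}.
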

\begin{proof}
The algebra $M_n(\B)$ is subdiagonal in $M_n(\M)$, with diagonal $M_n(\mathcal D)$ and expectation $\operatorname{Id}_{M_n}\otimes\Phi$. Multiplicativity, trace preservation, and weak-star density hold entrywise; Theorem~\ref{rz:standard}(3) gives maximality. Its retained and strict projections are the stated amplifications, so Theorem~\ref{rz:allp} proves the tracial bounds. The normalized matrix trace may be replaced by the ordinary trace, since this scales both norms by the same factor.

Lemma~\ref{lem:commonamplification} now converts these uniform tracial amplification bounds to the stated canonical completely bounded bounds. This application uses no additional sharp estimate and keeps the tracial and operator-space matrix norms distinct.
\end{proof}

\subsubsection{Triangular truncations}
For $n\geq1$, let $T_n((x_{ij}))=(1_{\{i\leq j\}}x_{ij})$ be retained-diagonal upper triangular truncation; $S_n^p$ denotes the Schatten $p$-class on $\mathbb C^n$.
\begin{corollary}\label{rz:triangularnorm}
Let $1<p<\infty$ and let $(\M,\tau)$ be any finite von Neumann algebra
with a faithful normal tracial state. Then
\begin{equation}\label{rz:triangularnormeq}
\begin{split}
 \sup_{n\geq1}\|T_n:S_n^p\longrightarrow S_n^p\|
 &=\csc(\pi/p),\\
 \sup_{n\geq1}\|T_n\otimes\operatorname{Id}_{\M}:
 L^p(M_n\,\overline\otimes\,\M)
 \longrightarrow L^p(M_n\,\overline\otimes\,\M)\|
 &=\csc(\pi/p).
\end{split}
\end{equation}
The tensor-product trace is $\operatorname{Tr}_n/n\otimes\tau$.
These equalities concern the supremum over matrix sizes; equality
is not asserted at a fixed size.
\end{corollary}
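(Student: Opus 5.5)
The plan has two halves: an upper bound coming from Theorem~\ref{rz:allp} applied to a triangular subdiagonal algebra, and a lower bound transferred from the circle construction by finite Fourier sections.

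\textbf{Upper bound.} For fixed $n$, let $\B_n\subset M_n(\C)$ be the upper triangular matrices, with diagonal $\mathcal D_n$ the diagonal matrices, $\Phi_n$ the diagonal conditional expectation, and trace $\operatorname{Tr}_n/n$. This is the standard finite-dimensional subdiagonal algebra. Maximality holds by Exel's finite-trace theorem \cite{rz:Exel}, exactly as in the corner construction of Section~\ref{rz:completion}. Its retained-diagonal projection $P$ is $T_n$, because every matrix splits as (upper triangular) plus (strictly lower triangular) $=\B_n+\B_{n,0}^*$.

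We then tensor with $\M$. The algebra $\B_n\overline\otimes\M$ sits inside $M_n\overline\otimes\M=M_n(\M)$, with diagonal $\mathcal D_n\otimes\M$ and expectation $\Phi_n\otimes\Id_\M$. This is again subdiagonal: multiplicativity, trace preservation and density all hold entrywise, and maximality follows from Exel's theorem or Theorem~\ref{rz:standard}(3). Its retained projection is $T_n\otimes\Id_\M$. Theorem~\ref{rz:allp} therefore bounds both operators in \eqref{rz:triangularnormeq} by $\csc(\pi/p)$. Replacing the normalized trace by the unnormalized one scales domain and range norms equally, so the scalar line is also covered with $\M=\C$.

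\textbf{Lower bound.} It suffices to prove the scalar bound, since the $\M$-valued norm dominates it via the embedding $x\mapsto x\otimes\Id$, which is isometric up to a common factor. For the scalar case I would transfer the circle sequence by Toeplitz sections, in the spirit of Lemma~\ref{lem:commontoeplitz}.

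Take trigonometric polynomials $f$ on $\T$ with $\|P_{\T}f\|_p/\|f\|_p$ close to $\csc(\pi/p)$, where $P_{\T}$ is the retained Riesz projection onto nonnegative frequencies. These exist by the explicit sharpness construction $F_b$ together with truncation of Fourier series; truncation is allowed because the Riesz projection is bounded on $L^p(\T)$. Form the Toeplitz matrices $A_n(f)=(\hat f(j-i))_{i,j}$. Their upper triangular truncation is $A_n(P_{\T}f)$, up to the orientation convention on $j-i$, which may be fixed by transposing. By \eqref{eq:commontoeplitzsingular}, $n^{-1}\operatorname{Tr}|A_n(g)|^p\to\|g\|_{L^p(\T)}^p$ for every trigonometric polynomial $g$. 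Hence $\|T_n A_n(f)\|_p/\|A_n(f)\|_p\to\|P_{\T}f\|_p/\|f\|_p$, and taking the supremum over $n$ gives $\ge\csc(\pi/p)$.

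\textbf{Main obstacle.} The delicate step is confirming that the auxiliary triangular algebras are maximal subdiagonal, so that Theorem~\ref{rz:allp} genuinely applies. I would handle it by citing Exel's finite-trace maximality, as the paper already does; the identification $P=T_n\otimes\Id$ on the dense core is then routine.
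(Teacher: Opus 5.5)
Your proposal is correct and follows essentially the same route as the paper. The upper bound views the upper-triangular algebra over $\M$ as a maximal subdiagonal algebra in $M_n\,\overline\otimes\,\M$, with maximality by Exel's theorem (Theorem~\ref{rz:standard}(3)) and retained projection $T_n\otimes\operatorname{Id}_{\M}$; the lower bound uses Toeplitz sections via Lemma~\ref{lem:commontoeplitz}, density plus the scalar circle sharpness, and the isometric embedding $A\mapsto A\otimes\Id$. Your orientation hedge is unnecessary: the entries of $A_n(f)=(\hat f(j-i))_{i,j}$ with $i\le j$ are exactly the nonnegative frequencies, so $T_nA_n(f)=A_n(P_+f)$ holds directly.
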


\begin{proof}[Proof of Corollary~\ref{rz:triangularnorm}]
The upper-triangular algebra over $\M$ is a subdiagonal algebra in
$M_n\,\overline\otimes\,\M$, with diagonal algebra
$\bigoplus_{j=1}^n\M$ and expectation that deletes the off-diagonal
entries. Its maximality follows from Theorem~\ref{rz:standard}(3),
and its Riesz projection is $T_n\otimes\operatorname{Id}_{\M}$.
Theorem~\ref{rz:allp} therefore gives both upper bounds.
The use of normalized rather than ordinary matrix trace does not
change a projection norm.

For the lower bound, we give a finite Toeplitz argument. Let
$f(z)=\sum_{k=-m}^{m}a_kz^k$ be a scalar trigonometric polynomial,
and, for $n>2m$, set
\[
 A_n(f)=(a_{j-i})_{1\leq i,j\leq n},\qquad
 \|A\|_{p,n}=\bigl(n^{-1}\operatorname{Tr}_n|A|^p\bigr)^{1/p}.
\]
If $P_+f=\sum_{k=0}^{m}a_kz^k$, then
$T_nA_n(f)=A_n(P_+f)$.
Apply the singular-value assertion of Lemma~\ref{lem:commontoeplitz} with $d=1$ to $f$ and $P_+f$. It gives
\[
\|A_n(f)\|_{p,n}\longrightarrow\|f\|_p,\qquad \|T_nA_n(f)\|_{p,n}\longrightarrow\|P_+f\|_p.
\]
For every nonzero $f$ we therefore have
\[
 \sup_n\|T_n\|\geq\frac{\|P_+f\|_p}{\|f\|_p}.
\]
Trigonometric polynomials are dense in $L^p(\mathbb T)$.
The boundedness and explicit scalar sharpness already proved above
show that the supremum of the last ratio is $\csc(\pi/p)$.
Finally, the embedding $A\mapsto A\otimes\Id$ is isometric for
the normalized $L^p$ norms and intertwines the two triangular
truncations. It supplies the same lower bound for every fixed
$(\M,\tau)$ and proves \eqref{rz:triangularnormeq}.
\end{proof}

\begin{remark}
This corollary is the exact infinite-size norm of triangular
truncation, including finite-algebra coefficients. The
Fourier--Schur transference relation proved by Neuwirth and Ricard
\cite[Proposition 6.3]{NR} places it in the corresponding completely
bounded scalar Fourier-multiplier framework; the proof here does
not require that transference result.
\end{remark}
\subsubsection{Fourier projections with finite-algebra coefficients}
Let $m$ be normalized Haar measure on $\mathbb T$, and let
$\mathcal N=L^\infty(\mathbb T)\,\overline\otimes\,\M$, with trace
$\widetilde\tau=m\otimes\tau$.
For an $\M$-valued trigonometric polynomial define
\[
 P_+^{\M}\left(\sum_{k\in\mathbb Z}z^k x_k\right)
 =\sum_{k\geq0}z^k x_k.
\]
Here the sum is finite and the coefficients lie in $\M$; these polynomials are dense in $L^p(\mathcal N)$.
\begin{corollary}\label{rz:Fourier_exact}
For every nonzero finite tracial von Neumann algebra $(\M,\tau)$ and
$1<p<\infty$,
\[
 \|P_+^{\M}\|_{L^p(\mathcal N)\to L^p(\mathcal N)}
 =\csc\frac\pi p.
\]
Equivalently, this is the norm of
$P_+\otimes\operatorname{Id}_{L^p(\M)}$ on the Bochner space
$L^p(\mathbb T;L^p(\M))$.
Moreover, for every $n\geq1$,
\begin{equation}\label{rz:Fourier_matrix_exact}
 \|\operatorname{Id}_{M_n}\otimes P_+^{\M}\|_
 {L^p(M_n\,\overline\otimes\,\mathcal N)\to
 L^p(M_n\,\overline\otimes\,\mathcal N)}
 =\csc\frac\pi p.
\end{equation}
With the canonical operator space structure on $L^p(\mathcal N)$,
\[
 \|P_+^{\M}\|_{\mathrm{cb}}=\csc\frac\pi p.
\]
The same equalities hold for the strict Fourier projection
$P_{>0}^{\M}$, which retains precisely the positive frequencies.
\end{corollary}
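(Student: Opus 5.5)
The plan is to realize $P_+^{\M}$ as the retained-diagonal Riesz projection of a maximal subdiagonal algebra in a finite tracial algebra. Then the upper bound follows from Theorem~\ref{rz:allp} and Corollary~\ref{rz:amplification}. The lower bound comes from the scalar circle construction, embedded isometrically.

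For the upper bound, take $\mathcal N=L^\infty(\T)\,\overline\otimes\,\M$ with the normalized trace $\widetilde\tau=m\otimes\tau$. Let $\B=H^\infty(\T)\,\overline\otimes\,\M$, the weak-star closure of $\M$-valued analytic polynomials. Its diagonal is $\mathcal D=1\otimes\M$, and the expectation $\Phi$ takes the zeroth Fourier coefficient, $\Phi(\sum z^kx_k)=x_0$. This $\Phi$ is normal, trace preserving, and multiplicative on $\B$, because products of nonnegative-frequency polynomials have zeroth coefficient $x_0y_0$. Weak-star density of $\B+\B^*$ holds because trigonometric polynomials are weak-star dense. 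For maximality, I would invoke Exel's finite-trace maximality theorem \cite{rz:Exel}, or Theorem~\ref{rz:standard}(3), as was done for $\widetilde\B$ and the triangular algebra.

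On the core $\mathcal C=\B+\B_0^*$, the $\M$-valued trigonometric polynomials decompose as $U+Y$ with $U$ of nonnegative frequencies and $Y\in\B_0^*$ of negative frequencies. So $P$ agrees with $P_+^{\M}$ and $P_0$ agrees with $P_{>0}^{\M}$ on a dense set. Theorem~\ref{rz:allp} then gives $\|P_+^{\M}\|,\|P_{>0}^{\M}\|\le\csc(\pi/p)$. Corollary~\ref{rz:amplification} gives the same bound for every $\operatorname{Id}_{M_n}\otimes P_+^{\M}$ and for the cb norm. To identify the norm with the Bochner-space version, I would use that $L^p(\mathcal N)=L^p(\T;L^p(\M))$ isometrically, by Fubini for the tensor trace.

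For the lower bound, embed $f\mapsto f\otimes\Id$ from $L^p(\T)$. This map is isometric because $\tau(\Id)=1$, and it intertwines $P_+$ with $P_+^{\M}$. The explicit circle sequence $F_b$, with $P_+F_b=h_b$ and $\|F_b\|_p/\|h_b\|_p\to\sin(\pi/p)$ as $b\uparrow1/p$, therefore gives $\|P_+^{\M}\|\ge\csc(\pi/p)$. Here one uses either the $L^p$-limits of the dilates $h_b(\rho z)$ or trigonometric polynomial approximants.

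The amplifications and cb norm dominate the operator norm at $n=1$, since the corner embedding $x\mapsto e_{11}\otimes x$ with normalized trace scales both sides by the same factor. Hence all of them equal $\csc(\pi/p)$. For the strict projection, $P_{>0}^{\M}=M_zP_+^{\M}M_{\bar z}$, where the multiplications by $z\otimes\Id$ are unitary isometries at every amplification. So all the norms coincide.

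The main obstacle is making the maximality of $\B$ airtight. If Exel's theorem applies directly in the finite trace setting, I would cite it; otherwise I would verify the Arveson criterion $\{x:\Phi(x\B_0)=\Phi(\B_0x)=0\}\subset\B$ by Fourier coefficients. Everything else reduces to identifications already made in the paper.
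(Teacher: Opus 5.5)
Your proposal is correct and follows the paper's proof essentially step by step. You realize $P_+^{\M}$ as the retained-diagonal projection of the subdiagonal algebra of $\M$-valued analytic elements, maximal by Exel's theorem. The upper bounds then come from Theorem~\ref{rz:allp} and Corollary~\ref{rz:amplification}, the lower bounds from the isometric scalar embedding of the circle sequence, and the strict case from $P_{>0}^{\M}=M_zP_+^{\M}M_{\bar z}$. One point needs tightening: multiplicativity of $\Phi$ and the identification $\B\cap\B^*=\Id\otimes\M$ must be passed from polynomials to the weak-star closure. Use normality of the Fourier coefficient maps, Fourier uniqueness, and separate weak-star continuity of multiplication, as the paper does.
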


\begin{proof}[Proof of Corollary~\ref{rz:Fourier_exact}]
For $F\in\mathcal N$, its Fourier coefficient
$\widehat F(k)=(m\otimes\operatorname{Id}_{\M})(z^{-k}F)$ lies in $\M$.
Consider the weak-star closed algebra
\[
 \mathcal A=\{F\in\mathcal N:\widehat F(k)=0\text{ for }k<0\}.
\]
This is the weak-star closure of the analytic polynomials
$\sum_{k\geq0}z^k x_k$.
Indeed, the Fourier coefficient maps are normal, giving one inclusion;
for the other, the Fej\'er means of an element of $\mathcal A$ are
uniformly bounded analytic polynomials converging ultraweakly to it.
Separate ultraweak continuity of multiplication and these polynomial
approximations show that $\mathcal A$ is a unital algebra and that
\[
 \widehat{FG}(0)=\widehat F(0)\widehat G(0),\qquad F,G\in\mathcal A.
\]
To justify the last identity explicitly, first approximate $G$ while
holding an analytic polynomial $F$ fixed, then approximate $F$;
for polynomials no two nonnegative frequencies add to zero except
$0+0$.
Fourier uniqueness gives
$\mathcal A\cap\mathcal A^*=\Id\otimes\M$.
The map $\Psi F=\Id\otimes\widehat F(0)$ is a normal
$\widetilde\tau$-preserving conditional expectation onto this diagonal,
and is multiplicative on $\mathcal A$.
Finally, $\mathcal A+\mathcal A^*$ contains all $\M$-valued
trigonometric polynomials, hence is weak-star dense in $\mathcal N$.
Thus $\mathcal A$ is subdiagonal and is maximal by
Theorem~\ref{rz:standard}(3).
Its retained-diagonal projection agrees with $P_+^{\M}$ on
trigonometric polynomials. Theorem~\ref{rz:allp} gives the upper bound.
On the isometric scalar subspace $f\mapsto f\Id$ it is the scalar
Riesz projection, whose sharp lower bound was proved above.
Hence its norm is exactly $\csc(\pi/p)$.
The identity
\[
 \|F\|_{L^p(\mathcal N)}^p
 =\int_{\mathbb T}\|F(z)\|_{L^p(\M)}^p\,dm(z)
\]
first holds for simple functions and extends by completion, yielding
the stated Bochner-space interpretation.
Corollary~\ref{rz:amplification}, applied to
$\mathcal A\subset\mathcal N$, gives the upper bound at every
tracial matrix level and for the complete norm. The scalar embedding
$f\mapsto f\Id$ gives the matching lower bound at each level,
proving \eqref{rz:Fourier_matrix_exact} and the complete norm equality. Finally,
$P_{>0}^{\M}=M_zP_+^{\M}M_{\bar z}$.
Multiplication by $z$ or $\bar z$ is an isometry at every tracial matrix level and a complete isometry by \eqref{eq:commonLpMatrixNorm}. This proves all strict-projection assertions.
\end{proof}

\section{Limitations of the two-point criterion}\label{sec:boundary}
The scalar two-term pencil is governed by an angular quotient involving
two points separated by $\pi/p$. For a general homogeneous obstacle, consider the necessary
conditions
\begin{equation}\label{eq:pairtest}
 f(\theta)+f(\theta+\pi/p)\le0\qquad\text{for every }\theta.
\end{equation}
These tests are necessary: extend
$\sin(p(\theta-\alpha))$ on $[\alpha,\alpha+\pi/p]$ by zero
periodically. It is nonnegative and its $\Lp$ is
$p(\delta_\alpha+\delta_{\alpha+\pi/p})$; pairing with a
superharmonic angular majorant proves \eqref{eq:pairtest}.
They are not sufficient for general finite real ridge sums.
The two-point criterion of \cite[Theorem~3.7]{HKV} requires additional
sector hypotheses on the homogeneous obstacle. The example below
shows why such restrictions cannot be dropped in the present class.

\begin{proposition}
\label{prop:counter}
Put $p=8/3$ and
\begin{equation}\label{eq:eightF}
 F(x+iy)=\sum_{j=0}^7(-1)^j
 \left|x\cos\frac{j\pi}{8}+y\sin\frac{j\pi}{8}\right|^p.
\end{equation}
Its angular restriction $f$ satisfies
$f(\theta+3\pi/8)=-f(\theta)$, so every test
\eqref{eq:pairtest} is an equality. Nevertheless $F$ has no
continuous homogeneous superharmonic majorant, and its nilpotent
trace inequality fails. More explicitly, for all sufficiently small
$\varepsilon>0$, the polynomial
\[
 q_\varepsilon(z)=z-\varepsilon z^9
\]
has $\int F(q_\varepsilon(e^{it}))\dd t/(2\pi)>0$.
Thus $N_n=S_n-\varepsilon S_n^9$ is a counterexample for all
sufficiently large $n$.
\end{proposition}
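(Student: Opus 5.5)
The plan has three stages: check the antisymmetry directly, expand the disc integral of the perturbed polynomial $q_\varepsilon$ to first order in $\varepsilon$, and then invoke Theorem~\ref{thm:classification} and \eqref{eq:ridgeLimit} to convert the failure of (P) into the remaining assertions.

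\emph{Antisymmetry.} Write $f(\theta)=\sum_{j=0}^7(-1)^j|\cos(\theta-j\pi/8)|^p$. Shifting $\theta$ by $3\pi/8$ replaces $j$ by $j-3$. The function $|\cos|^p$ is $\pi$-periodic, so the index may be read modulo $8$, and $(-1)^j$ is well defined modulo $8$ because $8$ is even. Reindexing then gives $f(\theta+3\pi/8)=(-1)^3f(\theta)=-f(\theta)$. Since $\pi/p=3\pi/8$, every pair test \eqref{eq:pairtest} is the identity $0\le0$. Applying the antisymmetry twice shows that $f$ is $3\pi/4$-periodic and has zero mean over a period, so $\int_0^{2\pi}f=0$.

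\emph{First-order expansion.} Write $q_\varepsilon(e^{it})=e^{it}(1-\varepsilon e^{8it})$. By homogeneity,
\[
 F(q_\varepsilon(e^{it}))=|1-\varepsilon e^{8it}|^pf\bigl(t+\arg(1-\varepsilon e^{8it})\bigr).
\]
To first order, $|1-\varepsilon e^{8it}|^p=1-p\varepsilon\cos 8t+O(\varepsilon^2)$ and $\arg(1-\varepsilon e^{8it})=-\varepsilon\sin 8t+O(\varepsilon^2)$. The function $f$ is $C^1$ because $p>1$. Hence
\[
 \int_0^{2\pi}F(q_\varepsilon(e^{it}))\frac{\dd t}{2\pi}
 =-\varepsilon\int_0^{2\pi}\bigl(p\cos 8t\,f+\sin 8t\,f'\bigr)\frac{\dd t}{2\pi}+O(\varepsilon^2).
\]
Integrating the $f'$ term by parts turns the bracket into $(p-8)\cos 8t\,f$, so the first-order coefficient is $(8-p)\int f\cos 8t\,\dd t/(2\pi)$. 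The zeroth-order term vanishes because $f$ has zero mean.

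\emph{Sign of the Fourier coefficient.} Since $\cos(8(t+j\pi/8))=(-1)^j\cos 8t$, each of the eight terms of $f$ contributes the same amount after translation, and
\[
 \int_0^{2\pi}f\cos 8t\,\dd t=8\int_0^{2\pi}|\cos t|^p\cos 8t\,\dd t.
\]
This is the classical Fourier coefficient
\[
 \int_0^{2\pi}|\cos t|^p\cos 2kt\,\dd t=\frac{2\pi\,\Gamma(p+1)}{2^p\,\Gamma(p/2+k+1)\,\Gamma(p/2-k+1)}
\]
with $k=4$ and $p/2=4/3$. The factor $\Gamma(p/2-k+1)=\Gamma(-5/3)$ has its argument in $(-2,-1)$, where $\Gamma$ is positive, so the coefficient is positive. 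Because $8-p>0$, the disc integral is strictly positive for all sufficiently small $\varepsilon>0$. This sign bookkeeping is the main obstacle, so I would verify the Gamma formula carefully, for instance by checking it at $p=2$. If the sign came out the other way, replacing $q_\varepsilon$ by $z+\varepsilon z^9$ would still produce a counterexample.

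\emph{Conclusion.} Condition (P) of Theorem~\ref{thm:classification} fails, so (S) fails, and hence no homogeneous superharmonic majorant in $\mathcal U_p$ exists. The classification itself only excludes majorants in $\mathcal U_p$. A merely continuous homogeneous superharmonic majorant $U\ge F$ is ruled out separately: the envelope $U_*$ of Lemma~\ref{lem:envelope} would then satisfy $U_*\le U$ and would still have the disc sub-mean property at $0$, which contradicts the positive disc integral. Finally, \eqref{eq:ridgeLimit} shows that $n^{-1}\mathcal F_p(\Rea N_n,\Ima N_n)$ tends to the positive disc integral. Therefore $N_n=S_n-\varepsilon S_n^9$ violates the nilpotent trace inequality for all large $n$.
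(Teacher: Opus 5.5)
Your proposal is correct and follows the paper's proof step for step: antisymmetric reindexing, the first variation of the disc integral reduced by parts to $(8-p)\int f\cos 8t$, the reduction to $8\int|\cos t|^p\cos 8t$, and then the super-mean-value contradiction and the Toeplitz limit \eqref{eq:ridgeLimit}. The only deviation is that you get the sign of the coefficient from the closed Gamma formula with $\Gamma(-5/3)>0$, whereas the paper uses the recurrence $(p+2k)d_k=(p-2k+2)d_{k-1}$ to obtain $d_4=d_0/364$. For excluding continuous majorants, your envelope detour is unnecessary, and Lemma~\ref{lem:envelope} formally assumes (P), which fails here: superharmonicity of $U\circ q_\varepsilon$ gives $\int F(q_\varepsilon(e^{it}))\,\dd t/(2\pi)\le U(0)=0$ directly, which is exactly the paper's argument.
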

\begin{proof}
Shifting the eight directions by $3\pi/8$ permutes them with a sign
change, using the $\pi$-periodicity of $|\cos|^p$, and
$\int f\dd\theta=0$. To construct the analytic counterexample, define
\[
 d_k=\int_0^{2\pi}|\cos t|^p\cos(2kt)\frac{\dd t}{2\pi}.
\]
Integration by parts on $[-\pi/2,\pi/2]$ gives
\[
 (p+2k)d_k=(p-2k+2)d_{k-1}.
\]
For $p=8/3$ this yields $d_4=d_0/364>0$. Therefore
\[
 \int_0^{2\pi}f(t)\cos(8t)\frac{\dd t}{2\pi}
 =8d_4=\frac{2d_0}{91}>0.
\]
Since $F$ is $C^1$ away from zero, differentiation under the compact
boundary integral is legitimate. At $\varepsilon=0$, the radial
variation of $q_\varepsilon(e^{it})$ is $-\cos8t$ and its angular
variation is $-\sin8t$. Hence
\begin{align*}
 \left.\frac{\dd}{\dd\varepsilon}\right|_{0}
 \int F(q_\varepsilon(e^{it}))\frac{\dd t}{2\pi}
 &=-\int\{pf(t)\cos8t+f'(t)\sin8t\}\frac{\dd t}{2\pi}\\
 &=(8-p)\int f(t)\cos8t\frac{\dd t}{2\pi}
 =\frac{32d_0}{273}>0.
\end{align*}
The integral is zero at $\varepsilon=0$, so it is positive for
small positive $\varepsilon$. A continuous homogeneous superharmonic
majorant $U$ would instead give $\int F(q_\varepsilon)\dd t/(2\pi)
\le U(0)=0$, a contradiction. Equation~\eqref{eq:ridgeLimit} supplies
the finite nilpotent counterexamples.
\end{proof}

\appendix
\section{An explicit superharmonic majorant}\label{sec:coordinate}
The construction uses the classical method of joining an obstacle
to homogeneous harmonic functions on sectors; see the superharmonic
proof of Ess\'en \cite{Essen} and the homogeneous-minorant treatment
in \cite[Sections~3--4]{HKV}. We record an explicit coordinate
majorant, including its nonpositivity on the cone used for real
diagonal perturbations. Put $F_p(x,y)=|x|^p-C_p^p|y|^p$.
Complex powers below use the branch continuous on the closed right
half-plane, normalized by $1^p=1$, with value zero at the origin.
For $p>2$, set $\alpha=\pi/(2p)$, $c=\cot\alpha=C_p$,
$L=\cos^{p-1}\alpha/\sin\alpha$, and define
\begin{equation}\label{eq:Upgreater}
 U_p(x,y)=
 \begin{cases}
 L\Rea(|x|+iy)^p,&|y|\le|x|/c,\\
 F_p(x,y),&|y|\ge|x|/c.
 \end{cases}
\end{equation}
For $1<p<2$, set $\beta=\pi/(2p)$, $c=\tan\beta=C_p$,
$M=\sin^{p-1}\beta/\cos\beta$, and define
\begin{equation}\label{eq:Upless}
 U_p(x,y)=
 \begin{cases}
 F_p(x,y),&|y|\le|x|/c,\\
 -M\Rea(|y|+ix)^p,&|y|\ge|x|/c.
 \end{cases}
\end{equation}
At $p=2$, take $U_2=x^2-y^2$.

\begin{proposition}\label{prop:coordinate}
The function $U_p$ is an admissible superharmonic majorant of $F_p$, even in each
coordinate. Moreover $\partial_y^2U_p\le0$ distributionally and
$U_p(z)\le0$ exactly when $|\Rea z|\le C_p|\Ima z|$.
Consequently, for every real $K\ge C_p$,
\[
 U_{p,K}(x,y)=U_p(x,(K/C_p)y)
\]
is an admissible superharmonic majorant of $|x|^p-K^p|y|^p$,
nonpositive exactly on $\Sigma_{p,K}$.
\end{proposition}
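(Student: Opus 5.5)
I plan to verify Proposition~\ref{prop:coordinate} directly in polar coordinates, using Lemma~\ref{lem:polar} to reduce superharmonicity to the angular inequality $u''+p^2u\le0$ for $U_p(re^{i\theta})=r^pu(\theta)$. I treat $p>2$ first; the case $1<p<2$ is the same construction with the roles of the sectors exchanged. Take $x>0$ and write $x+iy=re^{i\theta}$ with $|\theta|<\pi/2$. Then $|y|\le x/c$ means $|\theta|\le\alpha$, and in that sector
\[
 u(\theta)=L\cos(p\theta).
\]
This is harmonic, so $\Delta U_p=0$ there. Off it,
\[
 u=f(\theta)=|\cos\theta|^p-c^p|\sin\theta|^p .
\]
Evenness in each coordinate is immediate from the formulas.

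The first step is the $C^1$ matching at $\theta=\alpha$. At the interface I need $L\cos(p\alpha)=f(\alpha)$, $L\cos(p\alpha)$ being zero since $p\alpha=\pi/2$. Also $f(\alpha)=\cos^p\alpha-\cot^p\alpha\,\sin^p\alpha=0$. For the derivatives, the harmonic side gives $-Lp\sin(p\alpha)=-Lp$. The obstacle side gives $f'(\alpha)=-p\cos^{p-1}\alpha\sin\alpha-pc^p\sin^{p-1}\alpha\cos\alpha$, and using $c^p\sin^{p}\alpha=\cos^p\alpha$ this becomes $-p\cos^{p-1}\alpha(\sin\alpha+\cos^2\alpha/\sin\alpha)=-p\cos^{p-1}\alpha/\sin\alpha=-pL$. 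So $u\in C^1$, no singular part of $u''$ sits on the interface, and $u\in W^{2,q}$ by the remark after Lemma~\ref{lem:regularity}. Lemma~\ref{lem:polar} then gives $U_p\in\mathcal U_p$ once the angular inequality holds on each piece. On the obstacle piece I compute $\Delta F_p=p(p-1)(|x|^{p-2}-c^p|y|^{p-2})$. This is $\le0$ exactly where $|y|\ge c^{p/(p-2)}|x|$, which is not automatically implied by $|y|\ge|x|/c$.

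This is the step I expect to be the main obstacle. I will look for the sign in the angular form of $f''+p^2f$ and check monotonicity of the resulting expression in $\theta$ on $[\alpha,\pi/2]$. I will try first to show that the function $\theta\mapsto\tan^{2-p}\theta$ comparison reduces to $c^p\tan^{p-2}\theta\ge1$, i.e.\ $\tan\theta\ge c^{-p/(p-2)}$. Since $\tan\alpha=1/c$ and $c>1$, this is satisfied if $c^{-1}\ge c^{-p/(p-2)}$, which holds because $p/(p-2)>1$. The vertical pieces $y=0$ and the origin carry no mass because $u$ is $C^1$ across $\theta=0$ and $\theta=\pi/2$ by evenness and smoothness.

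Domination $U_p\ge F_p$ only needs checking in the harmonic sector, where it says $L\cos(p\theta)\ge f(\theta)$ for $|\theta|\le\alpha$. I plan to get it from a comparison-principle argument on the interval $[-\alpha,\alpha]$. The difference $L\cos(p\theta)-f(\theta)$ vanishes to first order at $\pm\alpha$. It also satisfies $(\partial_\theta^2+p^2)(\cdot)=-(f''+p^2f)$, which is $\ge0$ wherever $\Delta F_p\ge0$. Since $p\cdot2\alpha=\pi$ is exactly the first Dirichlet eigenvalue length, I instead compare with the derivative ratio as in Lemma~\ref{rz:scalar}, a one-variable monotonicity check. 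For the sign statement: in the harmonic sector $\cos(p\theta)\ge0$, with zero only on the boundary rays, and on the obstacle sector $F_p\le0$ exactly when $|x|\le c|y|$. So $U_p\le0$ precisely on $\Sigma_p$.

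For $\partial_y^2U_p\le0$, compute $\partial_y^2\Rea(|x|+iy)^p=-\partial_x^2\Rea(\cdot)$ on the harmonic sector and use the $C^1$ matching. The result $-p(p-1)L\Rea(|x|+iy)^{p-2}\le0$ holds since $|\arg|\le\alpha$ gives $(p-2)\alpha<\pi/2$. On the other piece, $\partial_y^2F_p=-c^pp(p-1)|y|^{p-2}\le0$. The case $1<p<2$ repeats these checks with $\beta$ and $M$, and $p=2$ is trivial. Finally, $U_{p,K}$ is a linear change of variables $y\mapsto ty$ with $t=K/C_p\ge1$. Then $\Delta U_{p,K}=\partial_x^2U_p+t^2\partial_y^2U_p\le\Delta U_p\le0$ follows from $\partial_y^2U_p\le0$, while the majorant and sign assertions transform directly.
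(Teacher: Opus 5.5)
Your plan follows the paper's proof essentially step for step: the sectorwise formulas, the $C^1$ matching at $\theta=\alpha$ together with Lemma~\ref{lem:polar}, the piecewise signs of $\Delta U_p$ and $\partial_y^2U_p$, and the substitution $y\mapsto ty$. Your first threshold should read $|y|\ge c^{-p/(p-2)}|x|$, which \emph{is} implied by $|y|\ge|x|/c$, as you then show yourself.

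The domination step you leave as a plan is exactly the paper's computation. For $0<\theta<\alpha$ one has $\frac{d}{d\theta}\bigl[\sec^p\theta\,(L\cos p\theta-f(\theta))\bigr]=p\sec^{p+1}\theta\,\bigl[c^p\sin^{p-1}\theta-L\sin((p-1)\theta)\bigr]$. The ratio $\sin((p-1)\theta)/\sin^{p-1}\theta$ decreases for $p>2$ to the value $c^p/L$ at $\alpha$, so the bracket is nonpositive. The quotient therefore decreases to $0$ at $\alpha$, which gives $U_p\ge F_p$. For $1<p<2$ the ratio instead increases, to $1/M$ at $\beta$, and the analogous quotient in the angle measured from the $y$-axis again decreases to $0$ at $\beta$.
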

\begin{proof}
For $p\ne2$, write $\vartheta=\pi/(2p)$, $b=\cot\vartheta$,
$L=\cos^{p-1}\vartheta/\sin\vartheta$, and
$E_p(t)=L\Rea(1+it)^p-1+b^pt^p$ on $0\le t\le\tan\vartheta$.
For $\varphi=\arctan t$,
\begin{align*}
 q(\varphi)&=\frac{\sin((p-1)\varphi)}{\sin^{p-1}\varphi},&
 q'(\varphi)&=-\frac{(p-1)\sin((p-2)\varphi)}{\sin^p\varphi},\\
 E_p'(t)&=pt^{p-1}(b^p-Lq(\varphi)),&
 q(\vartheta)&=b^p/L.
\end{align*}
Since $|(p-2)\varphi|<\pi/2$,
$\operatorname{sgn}(p-2)E_p'\le0$, and the endpoint identities
$E_p(\tan\vartheta)=E_p'(\tan\vartheta)=0$ imply
$\operatorname{sgn}(p-2)E_p\ge0$.
On the harmonic sector the difference $U_p-F_p$ is
$|x|^pE_p(|y|/|x|)$ for $p>2$, and
$-b^{-p}|y|^pE_p(|x|/|y|)$ for $p<2$.
Thus $U_p\ge F_p$, with limiting axis values understood.

The endpoint identities give $h_p=U_p|_{\T}\in C^1(\T)$ with no angular atoms;
its piecewise second derivative is $O(1+|\theta-\theta_0|^{p-2})$ at the axes.
Hence $h_p\in W^{2,q}(\T)$ for $1<q<1/(2-p)$ when $p<2$,
and any finite $q>1$ otherwise. These choices satisfy $q(p-2)+2>0$;
Lemma~\ref{lem:polar} supplies the planar regularity.
On harmonic sectors $\Delta U_p=0$. Elsewhere
\[
 \Delta U_p=p(p-1)(|x|^{p-2}-c^p|y|^{p-2})\le0:
\]
for $p>2$ use $|y|\ge|x|/c$, and for $p<2$ use $|y|\le|x|/c$
with the negative exponent. In either case the second term is at
least $c^2|x|^{p-2}$ and $c\ge1$.
At $p=2$, $U_2$ is harmonic and $\partial_y^2U_2=-2$. The sign description follows from the
cosine on the harmonic sector and from $F_p$ on its complement.
On contact sectors, $\partial_y^2U_p=-C_p^pp(p-1)|y|^{p-2}\le0$.
On harmonic sectors the same derivative is
$-Lp(p-1)\Rea z^{p-2}$ for $p>2$, with $z=|x|+iy$, and
$-Mp(p-1)\Rea z^{p-2}$ for $p<2$, with $z=|y|+ix$.
In both cases $|\arg z|\le\pi/(2p)$ and
$|(p-2)\arg z|<\pi/2$, so the derivative is nonpositive.
The established $C^1$ and Sobolev regularity excludes singular
interface terms. For $t=K/C_p\ge1$, almost everywhere,
\[
 \Delta[U_p(x,ty)]
 =(\Delta U_p)(x,ty)+(t^2-1)(\partial_y^2U_p)(x,ty)\le0.
\]
Linear change of variables preserves the regularity, and the
majorization and sign assertions follow directly from those of $U_p$.
\end{proof}
Consequently \eqref{eq:coordinate-remainder} has exactly the two remainder terms
in \eqref{eq:certificate}, with no further operator argument.
Its spectral correction can be dropped when $\spec T\subset\Sigma_p$,
or whenever the signed eigenvalue sum is nonpositive.
The Laplacian vanishes on a full sector, so positivity of $\Psi_T$
alone gives no uniform stability lower bound.

\section{Weyl comparison and normalization}\label{sec:boundaries}
The spectral transfer theorem in Section~\ref{sec:measure} also identifies
a compactly supported Weyl distribution and quantifies its difference
from the eigenvalue measure. We record those consequences and the
normalizations used in their potential-theoretic interpretation.

\begin{example}\label{ex:scalarmeasure}
For scalar $T=a+ib$ one can take
$\mu_{a,b}=\int_0^1(1-r)r^{-1}\delta_{(ra,rb)}\dd r$ on
$\R^2\setminus\{0\}$. Its positive radial moments are finite, whereas
its mass is infinite if $(a,b)\ne(0,0)$.
For $T=1$, reflection of this measure preserves every absolute directional
moment but changes $\int x\dd\mu$ from $1/2$ to $-1/2$.
Thus the full ridge identity, not just power moments, is necessary.
\end{example}

The distribution below is the scalar trace of Anderson's Weyl
functional calculus \cite{Anderson69,Anderson70}, characterized by
its Fourier transform $\tr e^{i(\xi_1A+\xi_2B)}$. Its relation to
tracial joint spectral measures is discussed in
\cite[Sections~0.1.1--0.1.3]{HeinThesis}. The corollary records
its radial reconstruction from $\mu_{A,B}$ and its comparison with
the eigenvalue measure through $\Psi_T$.

\begin{corollary}\label{prop:Weyl}
With $\EulerQ=\Euler(\Euler+1)$,
\begin{equation}\label{eq:EulerW}
 \langle \mathfrak W_T,g\rangle=d g(0)+\int\EulerQ g\dd\mu_{A,B}
\end{equation}
defines a real compactly supported distribution of order at most two,
with mass $d$ and directional spectral marginals; in particular
$\langle \mathfrak W_T,e^{i(\xi_1x+\xi_2y)}\rangle
=\tr e^{i(\xi_1A+\xi_2B)}$.
For smooth real $g$ near all the supports,
\begin{align}
 \langle \mathfrak W_T,g\rangle-\sum_jg(\lambda_j(T))
 &=\frac1{2\pi}\int\Psi_T\Delta g\dd m_2,\label{eq:weylcomparison}\\
 \Delta\Psi_T&=2\pi\left(\mathfrak W_T-\sum_j\delta_{\lambda_j(T)}\right),
 \qquad(\log|\cdot|)*\mathfrak W_T=L_T.\label{eq:distributionalidentity}
\end{align}
Furthermore,
\begin{equation}\label{eq:psimass}
 \int_\C\Psi_T\dd m_2
 =\frac\pi2\left(\|T\|_2^2-\sum_j|\lambda_j(T)|^2\right).
\end{equation}
Thus $\Psi_T=0$ almost everywhere if and only if $T$ is normal.
\end{corollary}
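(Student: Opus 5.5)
The plan is to read everything off Lemma~\ref{lem:smoothcomparison}, which already contains the comparison identity. The definition \eqref{eq:EulerW} is exactly its first two terms. So \eqref{eq:weylcomparison} is \eqref{eq:smoothcomparison} rearranged, for $g$ smooth near $\{0\}\cup\supp\mu_{A,B}\cup\supp\Psi_T\cup\spec T$.

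\textbf{Distributional properties.} These come from the same identity together with compact support.
\begin{itemize}
\item Since $\EulerQ g$ involves at most second derivatives and $\int|z|^k\dd\mu_{A,B}<\infty$ for $k=1,2$ by \eqref{eq:radialmoments}, $\mathfrak W_T$ is a distribution of order at most two. Its support lies in the compact set $\{0\}\cup\supp\mu_{A,B}$, and it is real because every term is real.
\item The mass follows by testing with $g\equiv1$: then $\EulerQ g=0$, so the mass is $d$.
\item For the marginals and Fourier transform, take $g(z)=\phi(sx+ty)$ with $\phi(0)=0$. Then $\EulerQ g=(Q\phi)(sx+ty)$, with $Q=t\partial_t(t\partial_t+1)$ the one-variable operator. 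The ridge identity \eqref{eq:measure} together with $\Hs Q\phi=\phi-\phi(0)$ gives $\tr\phi(sA+tB)-d\phi(0)$. Adding back $d\phi(0)$ yields $\tr\phi(sA+tB)$ for every smooth $\phi$. Taking $\phi(u)=e^{iu}$ through real and imaginary parts gives the Fourier formula.
\item These ridge tests need no cutoff, since all supports are compact and $|Q\phi(u)|\le C|u|$ on bounded sets.
\end{itemize}

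\textbf{Distributional identities \eqref{eq:distributionalidentity}.} Take $g\in C_c^\infty$ in \eqref{eq:weylcomparison}. This reads $\langle\Psi_T,\Delta g\rangle=2\pi\langle\mathfrak W_T-\sum_j\delta_{\lambda_j},g\rangle$, using $\Psi_T\in L^1$ from Lemma~\ref{lem:jensen}. That is the first identity.

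For the second, I combine $\Psi_T=L_T-\log|\det(T-w)|$ with $\Delta\log|\det(T-w)|=2\pi\sum_j\delta_{\lambda_j}$. This gives $\Delta L_T=2\pi\mathfrak W_T=\Delta(\log|\cdot|*\mathfrak W_T)$. Both functions are harmonic off a compact set, so their difference is a harmonic function on $\C$. I would bound it by expanding both at infinity:
\begin{itemize}
\item $L_T(w)=d\log|w|+O(1/|w|)$, since $\det\Rea(e^{-i\theta}E)\sim(-\Rea(e^{-i\theta}w))^d$ and the average of $\log|2\Rea(e^{-i\theta}w)|$ is $\log|w|$.
\item $\log|\cdot|*\mathfrak W_T=d\log|w|+O(1/|w|)$, because $\mathfrak W_T$ has mass $d$.
\end{itemize}
So the difference is a bounded harmonic function, hence constant, and the constant is zero by the $O(1/|w|)$ decay. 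Alternatively, I would pair directly via \eqref{eq:fixed-r} and the ridge representation of $\log$, but Liouville seems cleanest.

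\textbf{Mass formula \eqref{eq:psimass}.} This is the main obstacle, since $|w|^2$ is not compactly supported. I would apply \eqref{eq:weylcomparison} with $g(w)=|w|^2$, multiplied by a cutoff equal to one near all supports; this is allowed because the lemma is local. Since $\Delta g=4$, the right side is $\frac{2}{\pi}\int\Psi_T$. On the left, $\Euler|z|^2=2|z|^2$, so $\EulerQ|z|^2=6|z|^2=6(x^2+y^2)$. By \eqref{eq:moments} with $q=2$, $6\int x^2\dd\mu=\tr A^2$ and $6\int y^2\dd\mu=\tr B^2$. The left side therefore equals $\tr A^2+\tr B^2-\sum_j|\lambda_j|^2=\|T\|_2^2-\sum_j|\lambda_j|^2$. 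Dividing by $2/\pi$ gives $\frac{\pi}{2}(\|T\|_2^2-\sum_j|\lambda_j|^2)$.

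Finally, $\Psi_T\ge0$ by Lemma~\ref{lem:jensen}, so $\Psi_T=0$ almost everywhere exactly when $\sum_j|\lambda_j|^2=\|T\|_2^2$. By Schur's theorem this equality characterizes normal $T$.
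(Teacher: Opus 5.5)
Your proposal is correct, and for the definition, the marginals, the first identity in \eqref{eq:distributionalidentity}, the mass formula \eqref{eq:psimass} and the normality criterion it is the paper's argument. It differs in one step, the identity $(\log|\cdot|)*\mathfrak W_T=L_T$, and your justification there needs a repair.

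\textbf{How the paper proves this step.} The paper convolves $\Delta\Psi_T=2\pi(\mathfrak W_T-\sum_j\delta_{\lambda_j(T)})$ with $(2\pi)^{-1}\log|\cdot|$. Because $\Psi_T$ is a compactly supported integrable function, this convolution returns $\Psi_T$ itself. Adding $\log|\det(T-wI)|=\sum_j\log|w-\lambda_j(T)|$ then gives $L_T$, with no analysis at infinity.

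\textbf{Your Liouville route.} It also works, but the reason you give for $L_T(w)=d\log|w|+O(1/|w|)$ does not hold as stated. The approximation $\det\Rea(e^{-i\theta}E)\approx(-\Rea(e^{-i\theta}w))^d$ breaks down near the two angles where $\Rea(e^{-i\theta}w)=0$. So it is not uniform in $\theta$, and you cannot simply average it.

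\textbf{The repair.} The expansion follows at once from Lemma~\ref{lem:jensen}. Since $\Psi_T=0$ for $|w|>\|T\|$, you have $L_T(w)=\sum_j\log|w-\lambda_j(T)|$ there, which is exactly $d\log|w|+O(1/|w|)$.

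With that fix, both routes rest on the same input, the compact support of $\Psi_T$. The paper's direct inversion is the shorter way to use it, and it avoids the second asymptotic for $(\log|\cdot|)*\mathfrak W_T$ and the Liouville step.
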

\begin{proof}
Since $\EulerQ g=z^{\mathsf T}D^2g\,z+2z\cdot\nabla g$,
the first two radial moments give the order and support of $\mathfrak W_T$.
The right-inverse calculation in Lemma~\ref{lem:smoothcomparison}
and the ridge identity give its mass and directional marginals.
Equation~\eqref{eq:weylcomparison} is exactly
\eqref{eq:smoothcomparison}. Its distributional form gives the first
identity in \eqref{eq:distributionalidentity}; convolution with
$\log|\cdot|$ and compact support of $\Psi_T$ give the second.

For $g(z)=|z|^2$, one has $\EulerQ g=6g$ and
$\int\EulerQ g\dd\mu_{A,B}=\tr(A^2+B^2)=\|T\|_2^2$
by \eqref{eq:moments}. Since $\Delta g=4$,
\eqref{eq:smoothcomparison} proves \eqref{eq:psimass}.
Schur triangularization identifies this spectral difference with the
squared Hilbert--Schmidt norm of the strictly upper-triangular part;
together with $\Psi_T\ge0$, this proves the normality assertion.
\end{proof}

For a finite matrix, write
\begin{equation}\label{eq:henricideparture}
 \delta_2(T)^2=\|T\|_2^2-\sum_j|\lambda_j(T)|^2.
\end{equation}
The number $\delta_2(T)$ is Henrici's departure from normality
\cite{Henrici}. Thus \eqref{eq:psimass} represents its square as
the mass of a nonnegative potential supported in the numerical range.

\begin{corollary}
\label{cor:weylquant}
For a finite matrix $T$ and every real-valued
$g\in C^\infty(\C)$,
\begin{equation}\label{eq:weylquant}
 \left|\langle\mathfrak W_T,g\rangle
                 -\sum_jg(\lambda_j(T))\right|
 \le\frac{\delta_2(T)^2}{4}
          \sup_{w\in W(T)}|\Delta g(w)|.
\end{equation}
The numerical factor $1/4$ is optimal.
\end{corollary}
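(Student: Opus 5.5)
The bound follows from \eqref{eq:weylcomparison} together with \eqref{eq:psimass}; optimality comes from a single $2\times2$ example with a quadratic test function.

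\textbf{Upper bound.} By \eqref{eq:weylcomparison},
\[
 \langle\mathfrak W_T,g\rangle-\sum_jg(\lambda_j(T))=\frac1{2\pi}\int_\C\Psi_T\,\Delta g\dd m_2 .
\]
Lemma~\ref{lem:jensen} gives $\Psi_T\ge0$ with essential support in $\overline{W(T)}$. The numerical range is compact and $\Delta g$ is continuous, so the supremum of $|\Delta g|$ over $\overline{W(T)}$ equals the supremum over $W(T)$. Hence the modulus of the right side is at most
\[
 \frac1{2\pi}\Bigl(\sup_{W(T)}|\Delta g|\Bigr)\int\Psi_T\dd m_2 .
\]
Inserting the mass identity \eqref{eq:psimass} gives
\[
 \frac1{2\pi}\cdot\frac\pi2\,\delta_2(T)^2\sup_{W(T)}|\Delta g|=\frac{\delta_2(T)^2}4\sup_{W(T)}|\Delta g|,
\]
which is \eqref{eq:weylquant}.

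One technical point must be checked first. Corollary~\ref{prop:Weyl} states \eqref{eq:weylcomparison} only for $g$ smooth near the supports, while here $g$ is an arbitrary $C^\infty(\C)$ function. Since $\mathfrak W_T$ is compactly supported, its support lies in the closed disk of radius given by \eqref{eq:tracialradius}. The potential $\Psi_T$ is supported in $\overline{W(T)}$ and the spectrum lies in $W(T)$. So $g$ may be multiplied by a cutoff equal to one near all of these sets without changing any term.

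\textbf{Optimality of $1/4$.} Take $g(z)=|z|^2$, so $\Delta g\equiv4$. Then the integral is exactly $\frac{4}{2\pi}\int\Psi_T=\delta_2(T)^2$, and the right side of \eqref{eq:weylquant} is also $\frac{\delta_2(T)^2}{4}\cdot4=\delta_2(T)^2$. Equality therefore holds for every non-normal $T$, for instance $T=e_{12}$.

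It remains to confirm the sign and value independently, as a check on the normalization. The proof of Corollary~\ref{prop:Weyl} gives
\[
 \langle\mathfrak W_T,|z|^2\rangle=\|T\|_2^2,
\]
so the left side equals $\|T\|_2^2-\sum_j|\lambda_j(T)|^2=\delta_2(T)^2$, as required.

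I expect no serious obstacle. The only point needing care is this localization of an arbitrary smooth $g$ to a neighbourhood of the supports, and the compact support of all three objects settles it.
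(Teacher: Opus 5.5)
Your proposal is correct and follows the paper's proof: the comparison identity \eqref{eq:weylcomparison}, together with $\Psi_T\ge0$, its support in $\overline{W(T)}$, and the mass formula \eqref{eq:psimass}, gives the bound, and the test $g(z)=|z|^2$ on any nonnormal $T$ gives equality. Your cutoff remark is harmless but unnecessary, since Lemma~\ref{lem:smoothcomparison} already applies to any real function that is smooth near the supports, and $g\in C^\infty(\C)$ is such a function.
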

\begin{proof}
By \eqref{eq:weylcomparison}, nonnegativity of $\Psi_T$, and its
support property \eqref{eq:numericalrangesupport}, the left side
is at most
$(2\pi)^{-1}\sup_{W(T)}|\Delta g|\int\Psi_T\dd m_2$.
Equation~\eqref{eq:psimass} gives \eqref{eq:weylquant}.
For $g(z)=|z|^2$, one has $\Delta g=4$ and the difference is
$\delta_2(T)^2$. Any nonnormal matrix therefore proves optimality.
\end{proof}
The comparison concerns a distribution and the eigenvalue measure;
it does not assert positivity of $\mathfrak W_T$.

\begin{corollary}\label{cor:radialquant}
For a finite matrix $T=A+iB$, put
$K_T=\{rw:0\le r\le1,\ w\in\overline{W(T)}\}$.
Let $D$ be as in
Theorem~\ref{thm:nonhomogeneous}. Every real-valued smooth $f$ on $D$
with $f(0)=0$ satisfies
\begin{equation}\label{eq:radialquant}
 \left|\sum_j(\Hs f)(\lambda_j(T))-\int f\dd\mu_{A,B}\right|
 \le\frac{\delta_2(T)^2}{24}
           \sup_{w\in K_T}|\Delta f(w)|.
\end{equation}
The numerical factor $1/24$ is optimal.
\end{corollary}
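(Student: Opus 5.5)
The plan is to obtain \eqref{eq:radialquant} from the exact identity \eqref{eq:generalexact} of Theorem~\ref{thm:nonhomogeneous}. That identity gives
\[
 \sum_j(\Hs f)(\lambda_j(T))-\int f\dd\mu_{A,B}
 =\frac1{2\pi}\int\Theta_T(-\Delta f)\dd m_2 ,
\]
with $\Theta_T\ge0$ by \eqref{eq:Theta}. It therefore suffices to control two things: the support of $\Theta_T$, and its total mass.

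\emph{Support.} Since $\Psi_T$ is supported in $\overline{W(T)}$ by \eqref{eq:numericalrangesupport}, the function $\Psi_T(w/r)$ vanishes unless $w\in r\overline{W(T)}$. Hence $\Theta_T$ is supported in $K_T$. This step also uses that $K_T\subset D$: the disk $D$ contains $\supp\Psi_T$ and is centered at $0$, so it contains all radial contractions of $\overline{W(T)}$, up to closure, which the cutoff/continuity of $\Delta f$ handles. Consequently the modulus of the right side is at most
\[
 \frac1{2\pi}\sup_{K_T}|\Delta f|\int\Theta_T\dd m_2 .
\]

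\emph{Mass.} Tonelli and the substitution $w=rz$ give
\[
 \int\Theta_T\dd m_2=\int\Psi_T\dd m_2\int_0^1(1-r)r\dd r
 =\tfrac16\int\Psi_T\dd m_2 ,
\]
which is the case $s=1$ of the computation behind \eqref{eq:Theta-bounds}, taken with equality since the integrand is nonnegative. By \eqref{eq:psimass} this equals $\frac{\pi}{12}\delta_2(T)^2$. Multiplying by $1/(2\pi)$ yields the constant $\delta_2(T)^2/24$.

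\emph{Optimality.} I will take $f(z)=|z|^2$, which is smooth, satisfies $f(0)=0$, and has $\Delta f\equiv4$. The right side of \eqref{eq:generalexact} is then exactly $-\frac{4}{2\pi}\cdot\frac{\pi}{12}\delta_2(T)^2=-\delta_2(T)^2/6$. Its modulus is $\frac{1}{24}\cdot4\,\delta_2(T)^2$, so equality holds in \eqref{eq:radialquant}. Any nonnormal $T$, for example a nonzero nilpotent, has $\delta_2(T)>0$, so the factor $1/24$ cannot be lowered. As a consistency check, $\Hs f=f/6$ and $\int|z|^2\dd\mu=\|T\|_2^2/6$ by \eqref{eq:moments}, which gives the same value directly.

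The only delicate point is the support inclusion, specifically that $\sup_{K_T}$ and not a larger set appears. This follows cleanly from the essential support of $\Psi_T$, so I expect no real obstacle.
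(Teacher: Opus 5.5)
Your argument is correct and is essentially the paper's proof. The paper applies Corollary~\ref{cor:weylquant} to $g=\Hs f$ and bounds $\Delta g(w)=\int_0^1 r(1-r)\Delta f(rw)\dd r$ by $\frac16\sup_{K_T}|\Delta f|$; this is the same change of variables $w=rz$ that you instead place on the potential side, through $\Theta_T$ and its mass $\frac16\int\Psi_T\dd m_2$.

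The one step to repair is $K_T\subset D$. Deduce it from $\supp\mu_{A,B}\subset D$ and the numerical-radius identity \eqref{eq:tracialradius}, as the paper does, not from $\supp\Psi_T\subset D$. The latter does not suffice, since $\supp\Psi_T$ can be strictly smaller than $\overline{W(T)}$ (it is empty when $T$ is normal).
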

\begin{proof}
Since $D$ is centered at zero and contains $\supp\mu_{A,B}$,
\eqref{eq:tracialradius} implies $K_T\subset D$.
Apply Corollary~\ref{cor:weylquant} to $g=\Hs f$, with an outer
cutoff if necessary. Since $g(0)=0$ and $\EulerQ g=f$,
\eqref{eq:EulerW} identifies its defect with minus the radial defect.
The identity $\Delta g(w)=\int_0^1r(1-r)\Delta f(rw)\dd r$ gives
$\sup_{W(T)}|\Delta g|\le\frac16\sup_{K_T}|\Delta f|$.
Thus $1/24=(1/4)(1/6)$; the quadratic test $f(z)=|z|^2$ gives
equality for every nonnormal $T$ and proves optimality.
\end{proof}

\begin{example}\label{ex:nilpotentpotential}
Let $N=\left(\begin{smallmatrix}0&2\\0&0\end{smallmatrix}\right)$.
For $E=N-wI$ the matrix $(w/\bar w)E^{-1}E^*$
has determinant one and trace $2-4/|w|^2$. The Jensen formula therefore
gives
\[
 \Psi_N(w)=
 \begin{cases}2\operatorname{arcosh}(1/|w|),&0<|w|<1,\\
 0,&|w|\ge1.\end{cases}
\]
Its mass is $2\pi$ by \eqref{eq:psimass}, although the eigenvalue
correction is zero. Also
$\langle \mathfrak W_N,e^{i\xi\cdot z}\rangle=2\cos|\xi|$, so
\[
 \langle \mathfrak W_N,1\rangle=2,\qquad
 \langle \mathfrak W_N,|z|^2\rangle=4,\qquad
 \langle \mathfrak W_N,|z|^4\rangle=16/3.
\]
Hence $\langle \mathfrak W_N,(1-|z|^2)^2\rangle=-2/3$.
A nonnegative cutoff equal to one on the support shows that $\mathfrak W_N$
is not positive. This nonpositivity test is the Pauli-matrix example
in \cite[Section~0.1.2, Example~2]{HeinThesis}, after simultaneous
unitary conjugation and division of its quartic polynomial by six.
The calculation above also displays its nonzero positive defect
potential. Finally $\|N\|_2=2$ but
$\|\Rea N\|_2=\|\Ima N\|_2=\sqrt2$:
the whole operator cannot replace its real part in \eqref{eq:main}.
\end{example}

\begin{remark}\label{rem:brown}
For this comparison only, normalize the trace by $\tau=d^{-1}\tr$.
The Fuglede--Kadison determinant \cite{FK} of an invertible matrix $S$
is $\operatorname{Det}_\tau(S)=\exp\tau(\log|S|)=|\det S|^{1/d}$.
The Brown measure of $T$ \cite{Brown} is the normalized eigenvalue measure
$\nu_T^{\mathrm B}=d^{-1}\sum_j\delta_{\lambda_j(T)}$. For
$w\notin\spec T$,
\[
 \log\operatorname{Det}_\tau(T-wI)
 =\int\log|w-z|\dd\nu_T^{\mathrm B}(z).
\]
By \eqref{eq:distributionalidentity}, the normalized defect satisfies
\begin{equation}\label{eq:browncomparison}
 \frac1d\Psi_T=(\log|\cdot|)*
       \left(\frac1d\mathfrak W_T-\nu_T^{\mathrm B}\right)\ge0
       \quad\text{almost everywhere}.
\end{equation}
Thus the logarithmic potential of the trace Weyl distribution dominates
that of the Brown measure, although the Weyl distribution itself can
fail positivity. This gives an operator-algebraic interpretation of
the finite-dimensional spectral defect. Matsaev--Sodin already pointed
to Brown's determinant theory as a setting for extensions of spectral
inequalities \cite{MS}. Extending this real-directional mechanism to
a finite von Neumann algebra would require a compatible positive ridge
representation and the corresponding integrability and comparison
identities; it is a further problem beyond the real-directional matrix and compact
Schatten results proved here.
\end{remark}

\section{Hardy-space preliminaries and structural inputs}\label{rz:prelim}
The conditional expectation extends contractively to every $L^p$; its trace adjoint is itself. The multiplicativity of $\Phi$ and $\tau\circ\Phi=\tau$ show that $\B$ is orthogonal to $\B_0^*$ in $L^2$: if $U\in\B$ and $b\in\B_0$, then $\tau bU=\tau\Phi(b)\Phi(U)=0$. Consequently the decomposition of $\mathcal C$ is unique.

Moreover, $\B^*=\B_0^*+\mathcal D$, so $\mathcal C=\B+\B^*$. This core is norm dense in each $L^p$, $1\leq p<\infty$. Indeed, if a dual functional annihilates it, that functional is given by trace pairing with an element of $L^{p'}\subset L^1$ (or an element of $\M$ for $p=1$), hence is normal on $\M$ and vanishes by weak-star density. Hahn--Banach proves the claim. Taking closures gives the orthogonal decompositions
\begin{equation}\label{rz:orthodecomp}
 L^2(\M)=H^2\oplus(H_0^2)^*,\qquad
 H^2=H_0^2\oplus L^2(\mathcal D).
\end{equation}
Thus $P_2$ is the orthogonal projection onto $H^2$, agrees with $P$ on $\mathcal C$, and is self-adjoint for the trace pairing. If $g=g^*\in L^2$, write $g=a+a^*+d$, where $a\in H_0^2$ and $d=\Phi g=d^*$. Then
\begin{equation}\label{rz:canonicalidentities}
 \mathcal T_{\B}g:=(2P_2-\Phi)g=2a+d\in H^2,
 \qquad \Rea \mathcal T_{\B}g=g,\qquad\Phi \mathcal T_{\B}g=\Phi g.
\end{equation}
Multiplication on either side by a bounded element of $\B$ preserves $H^p$, by approximation and H\"older.

Products of one $H_0^2$ element and one $H^2$ element have zero trace, by bounded analytic approximation and H\"older. We next prove the bounded intersection identity. Put
\[
 \mathcal A=H^2\cap\M
 =\{x\in\M:\tau(bx)=0\text{ for every }b\in\B_0\},
\]
where the second description follows from \eqref{rz:orthodecomp}.
These normal equations show that $\mathcal A$ is ultraweakly closed.
It is a unital algebra containing $\B$: for $x,y\in\mathcal A$, take
$y_j\in\B$ with $y_j\to y$ in $L^2$. The right $\B$ module property
puts $xy_j\in H^2$, and boundedness of $x$ gives $xy_j\to xy$ in $L^2$.
If $x\in\mathcal A\cap\mathcal A^*$, the orthogonal decomposition
forces $x\in L^2(\mathcal D)$, hence $x=\Phi x\in\mathcal D$.
For $x,y\in\mathcal A$, put $x_0=x-\Phi x\in H_0^2$. For $d\in\mathcal D$,
\[
 \tau(d^*x_0y)=\tau(x_0yd^*)=0,
\]
since $yd^*\in H^2$. Thus $\Phi(x_0y)=0$, and
$\Phi(xy)=\Phi x\,\Phi y$.
The inclusion $\B\subset\mathcal A$ supplies weak-star density of
$\mathcal A+\mathcal A^*$. Therefore $\mathcal A$ is a subdiagonal
algebra containing $\B$, and maximality gives $\mathcal A=\B$.
Finally, if $x\in H^q\cap\M$ with $q\geq1$, then
$\tau(bx)=0$ for every $b\in\B_0$, by $L^1$ convergence of its analytic
approximants. Since $x\in L^2$, orthogonality in
\eqref{rz:orthodecomp} gives $x\in H^2$.
We have proved
\begin{equation}\label{rz:selfcontained_intersection}
 H^q\cap\M=\B,\qquad 1\leq q<\infty.
\end{equation}

For clarity we state the published structural inputs used in the proof. These are results about finite subdiagonal algebras, rather than assumptions about the particular ambient algebra.
\begin{theorem}\label{rz:standard}
\begin{enumerate}
\item If $w\in\M$ is positive and boundedly invertible, there exists $R\in\B$ with $R^{-1}\in\B$ and $R^*R=w$.
\item Define the Fuglede--Kadison determinant by
\[
 \operatorname{Det}_\tau(x)=\lim_{\varepsilon\downarrow0}
 \exp\bigl(\tfrac12\tau\log(x^*x+\varepsilon\Id)\bigr).
\]
It is multiplicative on bounded operators, and $\operatorname{Det}_\tau(\Phi b)\leq\operatorname{Det}_\tau(b)$ for $b\in\B$. The same assertion holds in the adjoint algebra $\B^*$.
\item Every finite trace-preserving subdiagonal algebra is maximal.
\end{enumerate}
\end{theorem}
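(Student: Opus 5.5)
The three items are published structural theorems about finite subdiagonal algebras, so the plan is mainly to assemble them from the literature in the right order. The one extra requirement is that every maximality hypothesis used in those sources is available to us. For that reason item (3) comes first, because it removes any maximality assumption from (1) and (2). We cite Exel's theorem \cite{rz:Exel}: when $\tau$ is a faithful normal finite trace and $\Phi$ is $\tau$-preserving, a weak-star closed subdiagonal algebra already equals $\{x\in\M:\Phi(bxc)=\Phi(b)\Phi(x)\Phi(c)\ \text{for}\ b,c\in\B_0\}$, and Arveson showed this algebra is the maximal subdiagonal algebra for $\Phi$. As a consistency check we would point to the bounded intersection identity \eqref{rz:selfcontained_intersection}, which needs maximality exactly where (3) supplies it.

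For (1), we invoke Arveson's factorization theorem for maximal subdiagonal algebras in finite von Neumann algebras, which applies to every algebra considered here by (3). The argument we would recall runs as follows.
\begin{enumerate}
\item Let $\xi=w^{1/2}$, and let $\mathcal K$ be the $L^2$-closure of $\B_0\xi$.
\item Project $\xi$ onto $\mathcal K^\perp$. The resulting wandering vector $\eta$ satisfies $\Phi$-orthogonality relations that force $\eta^*\eta$ to lie in $L^1(\mathcal D)$.
\item Factor this $\mathcal D$-valued part by the ordinary polar decomposition inside the von Neumann algebra $\mathcal D$.
\item Use $w\ge\delta\Id$ to see that the resulting $R$ and its inverse are bounded. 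Then (3) together with \eqref{rz:selfcontained_intersection} places both in $\B$.
\end{enumerate}
This gives $R^*R=w$. Alternatively, one could quote Arveson's Szegő-type statement in the form refined by Labuschagne.

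For (2), multiplicativity of $\operatorname{Det}_\tau$ on bounded operators, including non-invertible ones via the $\varepsilon$-regularized definition, is the classical theorem of Fuglede--Kadison \cite{rz:FK}. The Jensen inequality $\operatorname{Det}_\tau(\Phi b)\le\operatorname{Det}_\tau(b)$ is the step I expect to be the main obstacle. It was Arveson's long-standing conjecture, proved by Labuschagne for finite maximal subdiagonal algebras; again (3) removes the maximality hypothesis.

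The mechanism we would sketch starts from the invertible case. There, Szegő's formula $\operatorname{Det}_\tau(|b|^2)=\inf\{\tau(|b|^2|d+a_0|^2)\}$, with the infimum over $a_0\in\B_0$ and $d\in\mathcal D$ satisfying $\operatorname{Det}_\tau(d)\ge1$, is combined with the orthogonality $\B\perp\B_0^*$. The general case then follows by the $\varepsilon$-regularization of the determinant and its upper semicontinuity.

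The adjoint statement is immediate from two facts:
\[
 \operatorname{Det}_\tau(x^*)=\operatorname{Det}_\tau(x),
 \qquad
 \Phi(b^*)=\Phi(b)^*.
\]
The first holds because $|x|$ and $|x^*|$ are equidistributed under a finite trace.
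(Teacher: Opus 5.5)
Your proposal is correct and is essentially the paper's own treatment, which likewise only assembles published results: Exel's Theorem~7 \cite{rz:Exel} for (3), Arveson's factorization (Corollary~4.2.4(ii) of \cite{rz:Arveson}) for (1), and Fuglede--Kadison multiplicativity together with Labuschagne's Szeg\H{o} theorem and Arveson's Theorem~4.4.3 (or \cite[Corollary~3.3]{rz:BlecherLabuschagne} for noninvertible $b$) for (2); your adjoint step via $\operatorname{Det}_\tau(x^*)=\operatorname{Det}_\tau(x)$ and $\Phi(b^*)=\Phi(b)^*$ is an equally valid substitute for applying Jensen to the subdiagonal algebra $\B^*$. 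Your supporting remarks contain slips that should be fixed, as follows.
\begin{enumerate}
\item Arveson's maximal algebra is $\{x\in\M:\Phi(bxc)=\Phi(b)\Phi(x)\Phi(c)\ \text{for}\ b,c\in\B\}$; with $b,c\in\B_0$ the set contains $\bar z\notin H^\infty$ on the circle.
\item The Fuglede--Kadison reference is \cite{FK}, not \cite{rz:FK}, which is Fack--Kosaki.
\item For $\mathcal K=[\B_0\xi]$ the orthogonality puts $\eta\eta^*$, not $\eta^*\eta$, in $L^1(\mathcal D)$.
\item Step~4 of your sketch of (1) is not a proof. Boundedness of the inverse of the constructed factor does not place it in any $H^q$, so \eqref{rz:selfcontained_intersection} cannot yet be applied. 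That outerness step is precisely the content of Arveson's theorem, so (1) rests on the citation rather than on the sketch.
\end{enumerate}
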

The positive factorization in (1) is Arveson's
Corollary~4.2.4(ii) \cite{rz:Arveson}.
For (2), determinant multiplicativity and its analytic extension
to noninvertible bounded elements are recalled in
\cite[Section~3]{rz:deLaHarpe}; see also \cite{FK}.
The general Jensen inequality, including noninvertible $b\in\B$,
follows directly from \cite[Corollary~3.3]{rz:BlecherLabuschagne},
which applies to all $h\in H^1$.
Labuschagne's Szeg\H{o} theorem and Jensen formula
\cite{rz:Labuschagne}, together with
\cite[Theorem~4.4.3]{rz:Arveson}, also give the bounded Jensen
inequality. Applying it to the adjoint algebra gives the assertion
for $\B^*$. Assertion (3) is Exel's Theorem~7 \cite{rz:Exel}.
When $\operatorname{Det}_\tau(x)>0$,
$\tau\log(x^*x)=2\log\operatorname{Det}_\tau(x)$ is a finite
spectral integral, even if $x$ is not boundedly invertible.
We use no inverse of $x$ in that case.

\section{Analytic regularization and its nonsharp background}\label{rz:analyticbackground}
The operator $\mathcal T_{\B}$ and its real-part identities are given in \eqref{rz:canonicalidentities}. Write
$\mathcal H_{\B}=-i(\mathcal T_{\B}-\Id)$ for conjugation. The three orthogonal components in \eqref{rz:orthodecomp} give
\[
 \mathcal H_{\B}^2=-\Id+\Phi,\qquad \Phi\mathcal H_{\B}=0,
 \qquad\|\mathcal H_{\B}x\|_2\leq\|x\|_2.
\]
For self-adjoint $g$, $\mathcal H_{\B}g$ is self-adjoint and $\mathcal T_{\B}g=g+i\mathcal H_{\B}g$. The following auxiliary estimates justify analytic regularization
independently of the sharp theorem. The moment estimate first puts
the powers of the canonical element in $H^1$. Yosida approximation
then identifies their exponential series with the contraction
semigroup, establishing analytic resolvent membership before any
sharp $L^p$ estimate is used.

\subsection{A nonsharp moment estimate used only for regularization}
\begin{lemma}\label{rz:selfcontained_moment}
For dyadic $q=2^n$, $n\geq1$, the conjugation satisfies
\[
 \|\mathcal H_{\B}u\|_q\leq(q-1)\|u\|_q
 \qquad(u=u^*\in L^q).
\]
Consequently, for bounded $g=g^*\in\M$ and $h=\mathcal T_{\B}g$,
$h\in H^q$ at every dyadic exponent, and for every integer $k\geq1$,
\begin{equation}\label{rz:selfcontained_growth}
 h^k\in H^1,\qquad
 \|h^k\|_1\leq(2k\|g\|_\infty)^k.
\end{equation}
\end{lemma}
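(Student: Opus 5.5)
The plan is the classical squaring argument (Riesz--Cotlar) carried out in the subdiagonal setting, proved by induction on $n$. The case $q=2$ is the bound $\|\mathcal H_{\B}x\|_2\le\|x\|_2$ recorded above. For the induction we work first with self-adjoint $u$ in the bounded core $\mathcal C=\B+\B^*$. Then $h=\mathcal T_{\B}u=2a+d$ with $a\in\B_0$ and $d=\Phi u\in\mathcal D$, so $h\in\B$ is bounded, and $\mathcal H_{\B}u=\Ima h$ is bounded and self-adjoint.

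\emph{Step 1: the squaring identity.} Since $h\in\B$, we have $h^2\in\B$, and multiplicativity gives $\Phi(h^2)=(\Phi h)^2=(\Phi u)^2$, which is self-adjoint. For any $x\in\B$ with $\Phi x$ self-adjoint, the decomposition \eqref{rz:canonicalidentities} applied to $g=\Rea x$ gives $\mathcal T_{\B}(\Rea x)=x$. Indeed, write $x=x_0+\Phi x$ with $x_0\in\B_0$; then $\Rea x=x_0/2+(x_0/2)^*+\Phi x$, and $\mathcal T_{\B}$ returns $x_0+\Phi x$. Applying this to $x=h^2$, and writing $v=\mathcal H_{\B}u$, we get
\[
 uv+vu=\mathcal H_{\B}\bigl(u^2-v^2\bigr).
\]
Apply $\mathcal H_{\B}$ once more, using $\mathcal H_{\B}^2=-\Id+\Phi$ and $\Phi(u^2-v^2)=\Phi(\Rea h^2)=(\Phi u)^2$. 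This yields
\[
 v^2=u^2-(\Phi u)^2+\mathcal H_{\B}(uv+vu).
\]

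\emph{Step 2: the induction.} Suppose $\|\mathcal H_{\B}w\|_q\le c\|w\|_q$ for self-adjoint $w$, where $c=q-1$. Take $L^q$ norms in the last display, using $\|v^2\|_q=\|v\|_{2q}^2$, H\"older, and contractivity of $\Phi$ on $L^{2q}$. With $x=\|v\|_{2q}/\|u\|_{2q}$ this gives $x^2\le2+2cx$, hence $x\le c+\sqrt{c^2+2}$. For $c=q-1$ and $q\ge2$ the elementary inequality $(q-1)^2+2\le q^2$ (equivalently $3\le2q$) gives $x\le2q-1=(2q)-1$, which closes the induction. Density of self-adjoint core elements in the self-adjoint part of $L^q$ (take real parts of core approximants) extends the bound and defines $\mathcal H_{\B}$ continuously on $L^q$.

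\emph{Step 3: the consequences.} For bounded $g=g^*$, choose self-adjoint core elements $g_j\to g$ in $L^q$ at a fixed dyadic $q$. Then $\mathcal T_{\B}g_j\in\B$, and these converge in $L^q$ to $h$ by Step 2, so $h\in H^q$. For an integer $k\ge1$, choose the dyadic $q$ with $k\le q<2k$, and use the core approximants at this fixed $q$. Since $\mathcal T_{\B}=\Id+i\mathcal H_{\B}$ and the trace is normalized, $\|h\|_q\le q\|g\|_q\le2k\|g\|_\infty$. H\"older then gives $\|h^k\|_1\le\|h\|_k^k\le\|h\|_q^k\le(2k\|g\|_\infty)^k$. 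The bounded powers $(\mathcal T_{\B}g_j)^k\in\B$ converge to $h^k$ in $L^1$, by telescoping and H\"older in $L^q$ with $q\ge k$. Hence $h^k\in H^1$, which is \eqref{rz:selfcontained_growth}.

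I expect the main care to be needed in Step 1. One must check that squaring stays inside $\B$ with the diagonal part $\Phi(h^2)=(\Phi u)^2$ self-adjoint. This is what allows $\mathcal T_{\B}$ to reconstruct $h^2$ from its real part, and what identifies the $\Phi$-correction term exactly as $(\Phi u)^2$. Working on the bounded core avoids every domain issue there.
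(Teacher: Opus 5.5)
Your proposal is correct and follows the paper's proof essentially step for step: the same squaring identity $v^2=u^2-(\Phi u)^2+\mathcal H_{\B}(uv+vu)$ on self-adjoint bounded core elements, the same recursion $C_{2q}\le C_q+\sqrt{C_q^2+2}\le 2q-1$, and the same H\"older and telescoping argument for $h^k\in H^1$. One small slip to fix: for $k=1$ there is no dyadic $q=2^n$, $n\ge1$, with $k\le q<2k$, so choose $q$ with $k\le q\le 2k$ instead (taking $q=2$ when $k=1$, as the paper does); your bound $\|h\|_q\le q\|g\|_q\le 2k\|g\|_\infty$ only needs $q\le 2k$.
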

\begin{proof}
The $q=2$ bound was proved above. Assume a bound with constant $C_q$
on self-adjoint core elements. For $u=u^*\in\mathcal C$, let
$v=\mathcal H_{\B}u$, $d=\Phi u$ and $f=u+iv=\mathcal T_{\B}u\in\B$.
Then $\Phi f=d$ is self-adjoint. Since
\[
 f^2=u^2-v^2+i(uv+vu),\qquad \Phi(f^2)=d^2,
\]
conjugation of its real part yields
$\mathcal H_{\B}(u^2-v^2)=uv+vu$. Applying $\mathcal H_{\B}$ once more gives
\begin{equation}\label{rz:selfcontained_Cotlar}
 v^2=u^2+\mathcal H_{\B}(uv+vu)-d^2.
\end{equation}
Both $u^2-v^2$ and $uv+vu$ belong to the bounded core, as the real and
imaginary parts of the analytic element $f^2$.
Contractivity of conditional expectation in $L^{2q}$,
\eqref{rz:selfcontained_Cotlar}, and H\"older give
\[
 \|v\|_{2q}^2
 \leq2\|u\|_{2q}^2+2C_q\|u\|_{2q}\|v\|_{2q}.
\]
Solving the quadratic gives $C_{2q}\leq C_q+\sqrt{C_q^2+2}$.
If $C_q\leq q-1$ and $q\geq2$, then
\[
 C_{2q}\leq q-1+\sqrt{(q-1)^2+2}\leq2q-1.
\]
Induction proves the stated bounds on the self-adjoint core.
Core density extends them to self-adjoint $L^q$, consistently with
$\mathcal H_{\B}$ on $L^2$ because $q\geq2$.
Taking self-adjoint core approximants to $g$ in $L^q$ proves
$h=\mathcal T_{\B}g\in H^q$, with $\|h\|_q\leq q\|g\|_q$.
Choose a dyadic $q\geq k$ with $q\leq2k$, allowing $q=2$ for $k=1$.
Then $\|h\|_k\leq2k\|g\|_\infty$, and H\"older gives the norm estimate
in \eqref{rz:selfcontained_growth}. Products of bounded analytic
approximants converging to $h$ in $L^q$ converge to $h^k$ in
$L^{q/k}\subset L^1$, proving $h^k\in H^1$.
\end{proof}

\subsection{Bounded analytic accretive approximants: a proof}
We use the standard maximally accretive semigroup theorem in its usual
operator form: a maximally accretive operator $A$ generates a contraction
semigroup $e^{-tA}$, its resolvent is the Laplace integral of that
semigroup, and its bounded Yosida approximants
$A_n=A(\Id+A/n)^{-1}$ have semigroups converging strongly to it.
These statements are the Lumer--Phillips theorem, the Laplace resolvent formula, and the Yosida approximation theorem; see \cite[Theorems II.3.15, II.1.10 and Section III.4.10]{rz:EngelNagel}. They are independent of Hardy or
Riesz bounds. The following proof establishes all the analytic
membership and approximation assertions needed here.

\begin{lemma}\label{rz:selfcontained_approx}
Let $g\in\M$ be bounded and positive and put $h=\mathcal T_{\B}g$, $d=\Phi g$.
For every $\varepsilon>0$,
\[
 R_\varepsilon=(\Id+\varepsilon h)^{-1}\in\B,
 \qquad \|R_\varepsilon\|_\infty\leq1,
\]
and
\[
 b_\varepsilon=hR_\varepsilon\in\B,
 \qquad\Rea b_\varepsilon\ge0,
 \qquad\Phi b_\varepsilon=d(\Id+\varepsilon d)^{-1}\ge0.
\]
Moreover $b_\varepsilon\to h$ in $L^2$, and hence in $L^s$ for
$1<s<2$, while $\Rea b_\varepsilon\to g$ in the same spaces.
\end{lemma}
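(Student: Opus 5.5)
The plan is to realize $h$ as (minus) the generator of a contraction semigroup on $L^2(\M)$ acting by left multiplication, and to identify $R_\varepsilon$ with a Laplace integral of analytic exponentials. Since $h=g+i\mathcal H_{\B}g$ with $g\ge0$ bounded, $\Rea h=g\ge0$ and $h\in H^q$ for every dyadic $q$ by Lemma~\ref{rz:selfcontained_moment}. However, $h$ need not be bounded. I will therefore treat the closed operator $L_h$ of left multiplication by $h$ on $L^2(\M)$, with domain $\{x\in L^2:hx\in L^2\}$. This is left multiplication by an affiliated operator with nonnegative real part, so it is maximally accretive: accretivity follows from $\Rea\tau(x^*hx)=\tau(x^*gx)\ge0$ on bounded $x$, and maximality from $-1\notin\spec h$ in the measurable-operator sense. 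The Lumer--Phillips theorem then gives the contraction semigroup $e^{-tL_h}$, and the Laplace formula gives
\[
 (\Id+\varepsilon h)^{-1}=\varepsilon^{-1}\int_0^\infty e^{-t/\varepsilon}e^{-th}\,dt .
\]
Evaluating at $\Id$ shows that $R_\varepsilon$ is left multiplication by a bounded element of norm at most $1$.

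The main obstacle is analytic membership, $R_\varepsilon\in\B$. Here I first try to show $e^{-th}\in\B$ for each $t$ via the exponential series $\sum_k(-t)^kh^k/k!$. By \eqref{rz:selfcontained_growth} we have $h^k\in H^1$ with $\|h^k\|_1\le(2k\|g\|_\infty)^k$. Since $k^k/k!\sim e^k$, the series converges in $H^1$ only for $t<1/(2e\|g\|_\infty)$. For such $t$ it converges in $L^1$ to an element of $H^1$. To identify this sum with the semigroup, I use bounded Yosida approximants $h_n=h(\Id+h/n)^{-1}$: for these, power series and semigroup agree trivially, and the Yosida theorem gives $e^{-th_n}\to e^{-th}$ strongly. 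I would pair against bounded test elements to pass from the series to the limit. This gives $e^{-th}\in H^1\cap\M=\B$ by \eqref{rz:selfcontained_intersection} for small $t$, and the semigroup law then extends membership to all $t\ge0$. The circularity risk is that the approximants $h_n$ involve the same resolvent; I would avoid it by only using the $L^1$ pairing identity and never claiming $h_n\in\B$ in advance. Since $\B$ is weak-star closed, the Laplace integral (weak-star convergent, bounded integrand) then yields $R_\varepsilon\in\B$.

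The remaining claims are routine. First, $b_\varepsilon=hR_\varepsilon=\varepsilon^{-1}(\Id-R_\varepsilon)\in\B$. Next, accretivity follows from
\[
 \Rea b_\varepsilon=R_\varepsilon^*(\Rea h)R_\varepsilon+\varepsilon R_\varepsilon^*h^*hR_\varepsilon\ge0 ,
\]
computed by writing $b_\varepsilon=R_\varepsilon^*(\Id+\varepsilon h^*)hR_\varepsilon$. Multiplicativity of $\Phi$ on $\B$ together with $\Phi h=d$ gives $\Phi R_\varepsilon=(\Id+\varepsilon d)^{-1}$, and hence $\Phi b_\varepsilon=d(\Id+\varepsilon d)^{-1}\ge0$. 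Finally, $h-b_\varepsilon=\varepsilon hR_\varepsilon h=h(\Id-R_\varepsilon)$ tends to $0$ in $L^2$. For this I apply strong convergence of $R_\varepsilon\to\Id$ (a Yosida property) to $h\in L^2$, viewing $R_\varepsilon$ on the right via the adjoint semigroup. The bound $\|R_\varepsilon\|\le1$ supplies domination. Convergence in $L^s$ for $s<2$ follows from the normalized trace, and $\Rea b_\varepsilon\to\Rea h=g$ follows likewise.
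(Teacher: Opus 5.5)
Your route is the paper's route. It runs through maximal accretivity and Lumer--Phillips, then the $H^1$-convergent exponential series for $0<t<(2e\|g\|_\infty)^{-1}$ from \eqref{rz:selfcontained_growth} and identification through Yosida approximants. It finishes with $H^1\cap\M=\B$, the semigroup law, the Laplace integral, and the same algebra for $b_\varepsilon$.

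\textbf{Main gap.} The step you call the main obstacle is not actually carried out. To let $n\to\infty$ in $e^{-th_n}=\sum_k(-t)^kh_n^k/k!$, you need two things:
\begin{enumerate}
\item termwise convergence of $h_n^k$ to $h^k$;
\item a summable bound on $\|h_n^k\|_1$, or on $|\tau(y^*h_n^k)|$, that is uniform in $n$.
\end{enumerate}
Pairing with bounded $y$ supplies neither. The only a priori bound, $\|h_n\|_\infty\le2n$, gives the non-uniform majorant $e^{2nt}$.

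The missing observation is that the bounded resolvent $R_n=(\Id+h/n)^{-1}$ commutes with $h$. Hence, in the algebra of measurable operators, $h_n^k=h^kR_n^k$, and since $\|R_n\|\le1$ this gives $\|h_n^k\|_1\le\|h^k\|_1$. Strong convergence $R_n\to\Id$ then gives $h^kR_n^k\to h^k$ in $L^1$, by truncating $h^k$ to bounded elements and using the finite trace. Dominated convergence over $k$ yields $e^{-th_n}\to\sum_k(-t)^kh^k/k!$ in $L^1$. Meanwhile $e^{-th_n}\Id\to S_t\Id$ in $L^2$, and this identifies $S_t$ with the $H^1$ series. This is exactly how the paper closes the step.

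\textbf{Smaller points.}
\begin{enumerate}
\item Saying that maximality follows from ``$-1\notin\spec h$'' restates what must be proved. It can be justified by finiteness: accretivity makes $\Id+h$ injective with closed range, and in a finite algebra injectivity forces dense range. Alternatively, as in the paper, surjectivity of $\lambda+h$ for $\lambda>\|g\|_\infty$ follows from $\lambda+h=[\Id+g(\lambda+iK)^{-1}](\lambda+iK)$ and a Neumann series.
\item The same factorization argument applied to $h^*=g-iK$ gives the maximal accretivity of $h^*$. Your final adjoint-semigroup step uses this. You can avoid it: $hR_\varepsilon=R_\varepsilon h$ lets you apply $R_\varepsilon\to\Id$ directly to the vector $h\in L^2$.
\item Since $h\notin\B$, deriving $\Phi R_\varepsilon=(\Id+\varepsilon d)^{-1}$ from $R_\varepsilon(\Id+\varepsilon h)=\Id$ needs multiplicativity of $\Phi$ extended to products of a bounded analytic element and an $H^2$ element. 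This extension follows by $L^2$ approximation.
\end{enumerate}
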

\begin{proof}
Put $K=\mathcal H_{\B}g$, viewed as a self-adjoint affiliated
operator, and realize $h=g+iK$ on $\operatorname{Dom}K$.
Since $g$ is bounded, $h$ is closed and $h^*=g-iK$ on the same
domain. Both operators are accretive. For
$\lambda>\|g\|_\infty$, the factorization
\[
 \lambda+h=
 [\Id+g(\lambda+iK)^{-1}](\lambda+iK)
\]
is onto by a Neumann series, since
$\|g(\lambda+iK)^{-1}\|\le\|g\|_\infty/\lambda<1$.
Thus $h$, and likewise $h^*$, is maximally accretive.
Affiliation and uniqueness of the generated semigroup imply that
its resolvents and contraction semigroup commute with every
unitary in the commutant and hence belong to $\M$.
In particular, $\|(\Id+\varepsilon h)^{-1}\|\le1$.

We prove that its contraction semigroup $S_t=e^{-th}$ lies in $\B$.
By \eqref{rz:selfcontained_growth}, for
$0<t<(2e\|g\|_\infty)^{-1}$ the formal series
\[
 \sum_{k=0}^\infty\frac{(-t)^kh^k}{k!}
\]
converges in $H^1$: the estimate $k!\geq(k/e)^k$ makes its norm
majorant geometric. If $g=0$, the assertion is immediate instead.
To identify the series, let $R_n=(\Id+h/n)^{-1}$ and $h_n=hR_n$.
The bounded Yosida semigroups $e^{-th_n}$ converge strongly to $S_t$
and are contractions. Resolvents commute with $h$ on its domain, so
in the algebra of measurable operators
\[
 h_n^k=h^kR_n^k,
 \qquad\|h_n^k\|_1\leq\|h^k\|_1.
\]
Both $h$ and $h^*$ are maximally accretive, so $R_n\to\Id$
strongly and $R_n^*\to\Id$ strongly. Therefore bounded
multiplier convergence gives $h_n^k\to h^k$ in $L^1$ for every fixed
$k$. This multiplier convergence follows directly by truncating a
fixed $L^1$ element to bounded operators and using the finite trace.
Dominated convergence of the exponential series therefore gives
convergence in $L^1$ to the displayed $H^1$ series. Strong convergence
of the uniformly bounded semigroups, applied to the identity vector
in the standard $L^2$ representation, also gives convergence in
$L^2$ and hence $L^1$ to $S_t$. Consequently $S_t\in H^1\cap\M=\B$
for small positive $t$. The semigroup law then yields $S_t\in\B$ for
all $t>0$.

The resolvent formula
\[
 R_\varepsilon=\int_0^\infty e^{-t}S_{\varepsilon t}\,dt
\]
now shows $R_\varepsilon\in\B$, since $\B$ is ultraweakly closed and
the integrand has norm at most $e^{-t}$.
The identity $b_\varepsilon=hR_\varepsilon=(\Id-R_\varepsilon)/\varepsilon$
shows that $b_\varepsilon$ is bounded and analytic.
Multiplicativity extends to a bounded analytic factor times an $H^2$
factor by $L^2$ approximation. Applying it to
$R_\varepsilon(\Id+\varepsilon h)=\Id$ gives
\[
 \Phi R_\varepsilon=(\Id+\varepsilon d)^{-1},\qquad
 \Phi b_\varepsilon=d(\Id+\varepsilon d)^{-1}\ge0.
\]
Using $(\Id+\varepsilon h)R_\varepsilon=\Id$ and $\Rea h=g$ gives the bounded quadratic-form identity
\[
 \Rea b_\varepsilon
 =R_\varepsilon^*gR_\varepsilon+
    \varepsilon b_\varepsilon^*b_\varepsilon\ge0.
\]
All terms are bounded since $g$ and $b_\varepsilon$ are bounded.
Finally $R_\varepsilon,R_\varepsilon^*\to\Id$ strongly. Uniform boundedness
and truncation of $h\in L^2$ give $b_\varepsilon=hR_\varepsilon\to h$
in $L^2$, and taking real parts gives the remaining convergence assertions.
\end{proof}

\section{Rotations and independent coordinates}\label{sec:furtherrealizations}
The real-pencil and joint realization theorems have further
consequences obtained by rotation and by iteration over independent
variables. We record the full statements, including one-component
Schatten membership and the canonical completely bounded norms.

\begin{corollary}\label{cor:twodirections}
Let $(c,d)\ne(0,0)$. The optimal constant, uniform over finite
nilpotent $T=A+iB$, in
$\|aA+bB\|_p\le L\|cA+dB\|_p$ is
\begin{equation}\label{eq:twodirections}
 L=K_p\left(\frac{ac+bd}{c^2+d^2},
                 \frac{bc-ad}{c^2+d^2}\right).
\end{equation}
The same bound holds for compact quasinilpotent $T\in\Sp_p$.
It also holds if $T$ is compact and quasinilpotent and only
$cA+dB\in\Sp_p$ is assumed.
\end{corollary}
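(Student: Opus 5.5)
The plan is to reduce the statement to Corollary~\ref{cor:MM} and Corollary~\ref{thm:realpencil} by a complex rotation of the nilpotent. Put $\omega=c-id$, so $|\omega|^2=c^2+d^2>0$, and set $T'=\omega T=A'+iB'$. Expanding $\omega(A+iB)$ gives
\[
 A'=cA+dB,\qquad B'=cB-dA.
\]
Since $c^2+d^2\neq0$, we can solve for $A,B$:
\[
 A=\frac{cA'-dB'}{c^2+d^2},\qquad B=\frac{dA'+cB'}{c^2+d^2}.
\]
Substituting these expressions shows
\[
 aA+bB=\frac{(ac+bd)A'+(bc-ad)B'}{c^2+d^2},
\]
so $aA+bB=a'A'+b'B'$, where $(a',b')$ is the pair appearing in \eqref{eq:twodirections}. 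The inequality $\|aA+bB\|_p\le L\|cA+dB\|_p$ for $T$ is therefore the same as $\|a'A'+b'B'\|_p\le L\|A'\|_p$ for $T'$.

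The class of finite nilpotents is invariant under $T\mapsto\omega T$ with inverse $T'\mapsto\omega^{-1}T'$. Likewise, the class of compact quasinilpotent operators is invariant, and the map carries $cA+dB\in\Sp_p$ exactly to $\Rea T'\in\Sp_p$. The optimal constant over finite nilpotents is then the optimal constant for $\|a'\Rea N+b'\Ima N\|_p\le L\|\Rea N\|_p$. By \eqref{eq:obliqueMM} together with the sharpness statement in Corollary~\ref{cor:MM}, that constant is $K_p(a',b')$.

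For the compact statements, Corollary~\ref{cor:MM} assumes only $A'\in\Sp_p$. It then gives $B'\in\Sp_p$ together with \eqref{eq:obliqueMM}, which covers both the case where only $cA+dB\in\Sp_p$ is assumed and the case $T\in\Sp_p$. The one point to check is that the one-component membership transfers correctly: $A'=cA+dB$ exactly, so it does, and nothing beyond linear algebra is needed.
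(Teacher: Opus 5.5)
Your proof is correct and follows essentially the paper's argument: rotate $T$ by a complex scalar so that $cA+dB$ becomes the real part, rewrite $aA+bB$ in the rotated coordinates, and apply the single-pencil bound and its finite-nilpotent sharpness, using that the rotation bijects both finite nilpotents and compact quasinilpotents. The differences are cosmetic. You use the unnormalized scalar $c-id$ and absorb the factor $c^2+d^2$ into the coefficient pair, where the paper normalizes by $\sqrt{c^2+d^2}$ and uses homogeneity of $K_p$. You also cite Corollary~\ref{cor:MM}, which already contains the qualitative Matsaev membership step that the paper invokes separately.
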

\begin{proof}
Put $r=\sqrt{c^2+d^2}$ and rotate $T$ so that its self-adjoint parts are
$A'=(cA+dB)/r$ and $B'=(-dA+cB)/r$.
Then $aA+bB=((ac+bd)/r)A'+((bc-ad)/r)B'$.
Corollary~\ref{thm:realpencil} and homogeneity prove the formula.
The rotation bijects the class of finite nilpotents, so sharpness
transfers in both directions. If $cA+dB=0$ for a nilpotent $T$,
the rotated operator is skew-Hermitian and nilpotent and hence zero.
The weaker membership assumption follows from the qualitative
Matsaev theorem \cite{Matsaev61,GK70,MS}, applied to $i$ times
the rotated operator.
\end{proof}

\begin{corollary}\label{cor:products}
For $1\le j\le d$, let $\Omega_j$ be $\R$ with Lebesgue measure or
$\Z$ with counting measure. Let $\mathbf S_j=(S_{j,k})_{k=1}^{m_j}$
be respectively a joint Hilbert-pencil family or a shifted discrete
family from Theorem~\ref{thm:unified}, with common norm $L_j$.
Put $\Omega=\prod_j\Omega_j$, $m=\prod_jm_j$, and
\[
 \mathbf S=\left(\prod_{j=1}^d S_{j,k_j}^{(j)}\right)_{k_1,\ldots,k_d}.
\]
For every nonzero Hilbert space $\mathcal K$, this map from
$L^p(\Omega;\Sp_p(\mathcal K))$ to
$\ell_p^m(L^p(\Omega;\Sp_p(\mathcal K)))$ has norm $\prod_jL_j$.
Its scalar and canonical completely bounded scalar norms have the
same value. For unweighted single-output families,
$L_j=K_p(a_j,b_j)$ on $\R$ and
$L_j=K_p(\cos(\pi\alpha_j),\sin(\pi\alpha_j))$ on $\Z$.
\end{corollary}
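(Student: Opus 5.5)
The plan is to reduce the product family to its one-dimensional factors by tensor factorization, and to treat the upper and lower bounds separately.

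\emph{Upper bound.} Each operator $S^{(j)}_{j,k_j}$ acts only in the $j$th variable. Write $\mathbf S=\mathbf S^{(1)}\circ\cdots\circ\mathbf S^{(d)}$, where $\mathbf S^{(j)}$ applies the family $\mathbf S_j$ in coordinate $j$, with the other coordinates and the already created output indices frozen. For $j=1$, freeze $(x_2,\dots,x_d)$. Then $L^p(\Omega;\Sp_p(\mathcal K))=L^p(\Omega_1;L^p(\Omega_2\times\cdots;\Sp_p(\mathcal K)))$. The inner space is a noncommutative $L^p$ space of a semifinite algebra, $L^\infty(\Omega')\bar\otimes B(\mathcal K)$, so it is not of the form $\Sp_p(\mathcal K')$.

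The proof of Theorem~\ref{thm:unified} gives the $\Sp_p$-valued bound, and by Lemma~\ref{lem:commonamplification} the completely bounded norm equals $L_j$. A completely bounded map $T$ on $L^p(\Omega_j)$ extends with the same norm to $T\otimes\id$ on $L^p(\Omega_j)\otimes_p L^p(\mathcal R)$ for any hyperfinite semifinite $\mathcal R$. This is Pisier's vector-valued extension for hyperfinite coefficient algebras. Alternatively, approximate $L^\infty(\Omega')\bar\otimes B(\mathcal K)$ by finite-dimensional matrix blocks using conditional expectations onto step functions in $\Omega'$, and apply the $\Sp_p$ bound directly, since $\ell^p_N(\Sp_p^n)$ embeds completely isometrically in $\Sp_p^{Nn}$ as a block diagonal.

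The second route is more elementary, and I will use it. For simple (step) functions in the frozen variables, the $p$th power norms split as sums over cells. Applying $\mathbf S^{(1)}$ cellwise gives the factor $L_1$, with the $\ell^p_{m_1}$ outputs summed inside the $p$th power. Iterating over $j$, with the previously created indices absorbed into the $\ell^p$ sum, gives $\prod_j L_j$. Density then passes this to general inputs. Applying the same argument at every matrix amplification gives the completely bounded bound.

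\emph{Lower bound.} Use elementary tensors $f=f_1\otimes\cdots\otimes f_d$ with scalar $f_j$. Then
\[
\mathbf S f=(\textstyle\prod_j S_{j,k_j}f_j)_{k},
\qquad
\|\mathbf Sf\|^p=\prod_j\sum_{k_j}\|S_{j,k_j}f_j\|_p^p .
\]
Choosing near-extremal $f_j$ for each scalar family gives a ratio arbitrarily close to $\prod_j L_j$, since the scalar norm equals $L_j$ by Theorem~\ref{thm:unified}. The scalar lower bound transfers to $\Sp_p(\mathcal K)$ by tensoring with a rank-one projection. The completely bounded norm is at least the scalar norm.

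The main obstacle is the rigorous Fubini step. The fibrewise operator must be identified with the global operator on a dense class. The Hilbert transform is only defined by a principal value, and the discrete $R_\alpha$ by extension from finitely supported sequences. To handle this, I will work with finite tensor sums of $C_c^\infty$ or finitely supported scalar functions with finite-rank coefficients. On this class each factor operator is explicit and commutes with the others, and $\mathbf S$ is defined and bounded on it by the cellwise estimate. Density then completes the argument.
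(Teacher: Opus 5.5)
Your proposal is correct and follows the paper's proof. The upper bound applies the one-coordinate $\Sp_p(\mathcal K)$-valued estimate to slices, with Fubini--Tonelli over the frozen coordinates and the accumulated output indices, iterated over $j$; the lower bound uses products of scalar near-extremizers times a fixed rank-one projection; and the $\Sp_p^n$ coefficient case together with Lemma~\ref{lem:commonamplification} gives the completely bounded norm. The detour through step functions (or Pisier's hyperfinite extension) is unnecessary, since on your dense class of finite tensor sums each slice at fixed values of the other coordinates is already an $\Sp_p(\mathcal K)$-valued test function to which the one-variable bound applies directly; and the closing identification of $L_j$ is just Corollary~\ref{thm:realpencil}.
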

\begin{proof}
Apply each one-coordinate estimate to the corresponding slices,
and sum the other coordinates and output indices by Fubini--Tonelli.
Iteration gives the upper product bound, equally for mixed discrete
and continuous coordinates. Products of scalar approximate extremizers,
multiplied by a fixed rank-one projection, give the lower bound because
both input and joint output norms factor. A zero factor is immediate.
The same estimates hold for every $\Sp_p^n$ coefficient space, so
Lemma~\ref{lem:commonamplification} gives the complete norm equality.
\end{proof}

\section*{Declaration of AI use}
The author used OpenAI models as assistive tools in preparing this
manuscript. Mathematical arguments, proofs, and computations in the
manuscript were verified and finalized by the author.

\raggedbottom


\begin{thebibliography}{99}
\small\raggedright
\setlength{\itemsep}{0pt}
\bibitem{BKRT} R. Ba\~nuelos and M. Kwa\'snicki,
\emph{The $\ell^p$ norm of the Riesz--Titchmarsh transform for even integer $p$},
J. Lond. Math. Soc. \textbf{109} (2024), no.~4, e12888.
\href{https://doi.org/10.1112/jlms.12888}{doi:10.1112/jlms.12888}.
\bibitem{Hein} O. Hein\"avaara,
\emph{Tracial joint spectral measures},
Invent. Math. \textbf{239} (2025), 505--526.
\href{https://doi.org/10.1007/s00222-024-01309-6}{doi:10.1007/s00222-024-01309-6}.
\bibitem{HeinThesis} O. Hein\"avaara,
\emph{Tracial joint spectral measures},
Ph.D. thesis, Princeton University, 2024.
\href{https://arks.princeton.edu/ark:/88435/dsp01z316q4969}{Princeton repository}.
\bibitem{Anderson69} R. F. V. Anderson,
\emph{The Weyl functional calculus},
J. Functional Analysis \textbf{4} (1969), 240--267.
\href{https://doi.org/10.1016/0022-1236(69)90013-5}{doi:10.1016/0022-1236(69)90013-5}.
\bibitem{Anderson70} R. F. V. Anderson,
\emph{On the Weyl functional calculus},
J. Functional Analysis \textbf{6} (1970), 110--115.
\href{https://doi.org/10.1016/0022-1236(70)90050-9}{doi:10.1016/0022-1236(70)90050-9}.
\bibitem{FK} B. Fuglede and R. V. Kadison,
\emph{Determinant theory in finite factors},
Ann. of Math. (2) \textbf{55} (1952), no.~3, 520--530.
\href{https://doi.org/10.2307/1969645}{doi:10.2307/1969645}.
\bibitem{Brown} L. G. Brown,
\emph{Lidskii's theorem in the type II case},
in \emph{Geometric Methods in Operator Algebras} (Kyoto, 1983),
Pitman Res. Notes Math. Ser., vol.~123, Longman Scientific \& Technical,
Harlow, 1986, pp.~1--35.
\bibitem{Henrici} P. Henrici,
\emph{Bounds for iterates, inverses, spectral variation and fields of values
of non-normal matrices},
Numer. Math. \textbf{4} (1962), 24--40.
\href{https://doi.org/10.1007/BF01386294}{doi:10.1007/BF01386294}.
\bibitem{HKV} B. Hollenbeck, N. J. Kalton, and I. E. Verbitsky,
\emph{Best constants for some operators associated with the Fourier and Hilbert transforms},
Studia Math. \textbf{157} (2003), 237--278.
\href{https://doi.org/10.4064/sm157-3-2}{doi:10.4064/sm157-3-2}.
\bibitem{DGZ} Y. Ding, L. Grafakos, and K. Zhu,
\emph{On the norm of the operator $aI+bH$ on $L^p(\mathbb R)$},
Bull. Korean Math. Soc. \textbf{55} (2018), no.~4, 1209--1219.
\href{https://doi.org/10.4134/BKMS.b170676}{doi:10.4134/BKMS.b170676}.
\bibitem{LS} F. L\'arusson and R. Sigurdsson,
\emph{Plurisubharmonicity of envelopes of disc functionals on manifolds},
J. Reine Angew. Math. \textbf{555} (2003), 27--38.
\href{https://doi.org/10.1515/crll.2003.013}{doi:10.1515/crll.2003.013}.
\bibitem{Laeng} E. Laeng,
\emph{Remarks on the Hilbert transform and on some families of multiplier
operators related to it},
Collect. Math. \textbf{58} (2007), no.~1, 25--44.
\bibitem{Zhou} W. Zhou,
\emph{Properties and applications of a conjugate transform on Schatten classes},
J. Inequal. Appl. \textbf{2022} (2022), article 126.
\href{https://doi.org/10.1186/s13660-022-02863-4}{doi:10.1186/s13660-022-02863-4}.
\bibitem{Pichorides} S. K. Pichorides,
\emph{On the best values of the constants in the theorems of M.~Riesz,
Zygmund and Kolmogorov},
Studia Math. \textbf{44} (1972), 165--179.
\href{https://doi.org/10.4064/sm-44-2-165-179}{doi:10.4064/sm-44-2-165-179}.
\bibitem{Essen} M. Ess\'en,
\emph{A superharmonic proof of the M.~Riesz conjugate function theorem},
Ark. Mat. \textbf{22} (1984), 241--249.
\href{https://doi.org/10.1007/BF02384381}{doi:10.1007/BF02384381}.
\bibitem{Poletsky} E. A. Poletsky,
\emph{Holomorphic currents},
Indiana Univ. Math. J. \textbf{42} (1993), 85--144.
\bibitem{Rosay} J.-P. Rosay,
\emph{Poletsky theory of disks on holomorphic manifolds},
Indiana Univ. Math. J. \textbf{52} (2003), 157--169.
\bibitem{NR} S. Neuwirth and \'{E}. Ricard,
\emph{Transfer of Fourier multipliers into Schur multipliers and sumsets in a discrete group},
Canad. J. Math. \textbf{63} (2011), no.~5, 1161--1187.
\href{https://doi.org/10.4153/CJM-2011-053-9}{doi:10.4153/CJM-2011-053-9}.
\bibitem{Pisier} G. Pisier,
\emph{Non-commutative vector valued $L_p$-spaces and completely $p$-summing maps},
Ast\'erisque \textbf{247} (1998).
\bibitem{Kato} T. Kato,
\emph{Perturbation Theory for Linear Operators},
Classics in Mathematics, Springer, Berlin, 1995.
\bibitem{Matsaev61} V. I. Matsaev,
\emph{Volterra operators obtained from self-adjoint operators by perturbation},
Dokl. Akad. Nauk SSSR \textbf{139} (1961), no.~4, 810--813;
English translation, Soviet Math. Dokl. \textbf{2} (1961), 1013--1016.
\href{https://www.mathnet.ru/eng/dan25330}{Math-Net.Ru record}.
\bibitem{GK68} I. Ts. Gohberg and N. Ya. Krupnik,
\emph{Norm of the Hilbert transformation in the $L_p$ space},
Funct. Anal. Appl. \textbf{2} (1968), no.~2, 180--181.
\href{https://doi.org/10.1007/BF01075955}{doi:10.1007/BF01075955}.
\bibitem{GK70} I. C. Gohberg and M. G. Krein,
\emph{Theory and Applications of Volterra Operators in Hilbert Space},
Translations of Mathematical Monographs, vol.~24,
American Mathematical Society, Providence, RI, 1970.
\href{https://doi.org/10.1090/mmono/024}{doi:10.1090/mmono/024}.
\bibitem{MS} V. Matsaev and M. Sodin,
\emph{Entire functions and compact operators with $S_p$-imaginary component},
in \emph{Entire Functions in Modern Analysis: Boris Levin Memorial Conference},
Israel Math. Conf. Proc., vol.~15, Bar-Ilan University, Ramat Gan,
2001, pp.~243--260.
\bibitem{MOS} V. Matsaev, I. Ostrovskii, and M. Sodin,
\emph{The Hilbert transform, rearrangements, and logarithmic determinants},
Banach Center Publ. \textbf{56} (2002), 95--105.
\href{https://doi.org/10.4064/bc56-0-8}{doi:10.4064/bc56-0-8}.

\bibitem{rz:Arveson} W. B. Arveson, Analyticity in operator algebras, \emph{Amer. J. Math.} 89 (1967), 578--642. \href{https://doi.org/10.2307/2373237}{doi:10.2307/2373237}.

\bibitem{rz:Exel} R. Exel, Maximal subdiagonal algebras, \emph{Amer. J. Math.} 110 (1988), 775--782. \href{https://doi.org/10.2307/2374650}{doi:10.2307/2374650}.

\bibitem{rz:FK} T. Fack and H. Kosaki, Generalized $s$-numbers of $\tau$-measurable operators, \emph{Pacific J. Math.} 123 (1986), 269--300. \href{https://doi.org/10.2140/pjm.1986.123.269}{doi:10.2140/pjm.1986.123.269}.

\bibitem{rz:deLaHarpe} P. de la Harpe, Fuglede--Kadison determinant: theme and variations, \emph{Proc. Natl. Acad. Sci. USA} 110 (2013), 15864--15877. \href{https://doi.org/10.1073/pnas.1202059110}{doi:10.1073/pnas.1202059110}.

\bibitem{rz:EngelNagel} K.-J. Engel and R. Nagel, \emph{One-Parameter Semigroups for Linear Evolution Equations}, Graduate Texts in Mathematics 194, Springer, New York, 2000. \href{https://doi.org/10.1007/b97696}{doi:10.1007/b97696}.

\bibitem{rz:Labuschagne} L. E. Labuschagne, A noncommutative Szeg\H{o} theorem for subdiagonal subalgebras of von Neumann algebras, \emph{Proc. Amer. Math. Soc.} 133 (2005), 3643--3646. \href{https://doi.org/10.1090/S0002-9939-05-08064-0}{doi:10.1090/S0002-9939-05-08064-0}.

\bibitem{rz:BlecherLabuschagne} D. P. Blecher and L. E. Labuschagne,
Applications of the Fuglede--Kadison determinant:
Szeg\H{o}'s theorem and outers for noncommutative $H^p$,
\emph{Trans. Amer. Math. Soc.} \textbf{360} (2008), no.~11,
6131--6147.
\href{https://doi.org/10.1090/S0002-9947-08-04506-6}{doi:10.1090/S0002-9947-08-04506-6}.

\bibitem{rz:Rand} N. Randrianantoanina, Hilbert transform associated with finite maximal subdiagonal algebras, \emph{J. Austral. Math. Soc. Ser. A} 65 (1998), 388--404. \href{https://doi.org/10.1017/S1446788700035953}{doi:10.1017/S1446788700035953}.

\bibitem{rz:HV} B. Hollenbeck and I. E. Verbitsky, Best constants for the Riesz projection, \emph{J. Funct. Anal.} 175 (2000), 370--392. \href{https://doi.org/10.1006/jfan.2000.3616}{doi:10.1006/jfan.2000.3616}.

\bibitem{rz:JRS} M. Junge, Z.-J. Ruan and D. Sherman, A classification for 2-isometries of noncommutative $L^p$-spaces, \emph{Israel J. Math.} 150 (2005), 285--314. \href{https://doi.org/10.1007/BF02762384}{doi:10.1007/BF02762384}.
\end{thebibliography}
\end{document}